\newtheorem {thm}{Theorem}[section]
\newtheorem {dfn}[thm]{Definition}
\newtheorem {pro}[thm]{Proposition}
\newtheorem {lem}[thm]{Lemma}
\newtheorem {cor}[thm]{Corollary}
\newtheorem {rem}[thm]{Remark}
\newtheorem {ex}[thm]{Example}
\newcommand {\Y} {\mathrm{Y}}
\newcommand {\A} {\mathbb{A}}
\newcommand {\F} {\mathbb{F}}
\newcommand {\I} {\mathbb{I}}
\renewcommand {\S} {\mathbb{S}}
\newcommand {\T} {\mathbb{T}}
\newcommand {\Z} {\mathbb{Z}}
\renewcommand {\AA} {N}
\newcommand {\CC} {\mathcal{C}}
\newcommand {\DD} {\mathcal{D}}
\newcommand {\EE} {\mathcal{E}}
\newcommand {\FF} {\mathcal{F}}
\newcommand {\GG} {\mathcal{G}}
\newcommand {\LL} {L}
\newcommand {\LLL} {\mathcal{L}}
\newcommand {\NN} {N}
\newcommand {\NNN} {\mathcal{N}}
\newcommand {\DDD} {\mathsf{D}}
\newcommand {\XXX} {\mathsf{X}}
\newcommand {\YYY} {\mathsf{Y}}
\newcommand {\Hom}{\mathop{\mathrm{Hom}}\nolimits} 
\newcommand {\End} {\mathop{\mathrm{End}}\nolimits}
\newcommand {\Ext} {\mathop{\mathrm{Ext}}\nolimits}
\newcommand{\Rhom}
{\mathop{\mathbf{R}\mathrm{Hom}}\nolimits}
\newcommand{\REnd}
{\mathop{\mathbf{R}\mathrm{End}}\nolimits}
\newcommand{\Lotimes}{\mathop{\stackrel{\mathbf{L}}{\otimes}}\nolimits}
\newcommand {\Mod} {\mathop{\mathrm{Mod}}\nolimits}
\renewcommand {\mod} {\mathop{\mathrm{mod}}\nolimits}
\newcommand {\proj} {\mathop{\mathrm{proj}}\nolimits}
\newcommand {\inj} {\mathop{\mathrm{inj}}\nolimits}
\newcommand {\hd} {\mathop{\mathrm{gl.dim}}\nolimits}
\newcommand {\Id} {\mathop{\mathrm{Id}}\nolimits}
\newcommand {\Coker} {\mathop{\mathrm{Coker}}\nolimits}
\newcommand {\op} {\mathop{\mathrm{op}}\nolimits}
\newcommand {\rad} {\mathop{\mathrm{rad}}\nolimits}
\newcommand {\vp} {\mathsf{p}}
\newcommand {\vq} {\mathsf{q}}
\newcommand {\np} {\bar{p}}
\newcommand {\nq} {\bar{q}}
\newcommand {\supp}{\mathop{\mathrm{supp}}\nolimits}
\newcommand {\per} {\mathop{\mathrm{per}}
\nolimits}
\newcommand {\fin} {\mathop{\mathrm{fin}}\nolimits}
\newcommand{\nak} {\nu}
\newcommand {\fP} {\mathfrak{P}}
\numberwithin{equation}{section}
\begin{document}
\title{On derived equivalences of Nakayama algebras }
\author{Taro Ueda}
\address{T. Ueda: Graduate School of Mathematics, Nagoya University}
\email{m15013z@math.nagoya-u.ac.jp}
\begin{abstract}
In this paper, 
we investigate the derived category of the Nakayama algebra 
$\AA(n,\ell)=K\A_{n}/\rad(K\A_{n})^{\ell}$. 
We construct 
a derived equivalence between Nakayama algebras $\AA(n,\ell)$ and $\AA(n,\ell+1)$ 
where $n=p(p+1)q+p(p-1)r$ and $\ell=(p+1)q+pr$ 
for each integers $p\geqslant 2$, $q\geqslant 1$, $r\geqslant 0$. 
To achieve it, we introduce families of idempotent subalgebras of $K\A_{s}\otimes K\A_{t}$ and characterize their derived categories by the existence of a certain family of objects called $S$-families. 
\end{abstract}
\maketitle
\tableofcontents

\section{Introduction}
In the representation theory of finite dimensional algebras, it is an important problem to determine whether two given finite dimensional algebras are derived equivalent or not. 
We refer to \cite{HHK,Ric} for derived equivalences. 

In this paper, we study this problem for a certain class of Nakayama algebras 
(i.e.\,right and left serial algebras 
\cite{Nak}) over a field $K$. 
The structure of the module category of finite dimensional modules over Nakayama algebras is well-known \cite{ASS}, but little is known about the structure of their derived categories. 

We consider the Nakayama algebra 
\begin{equation*}
\AA(n,\ell)=K\A_{n}/(\rad K\A_{n})^{\ell}
\end{equation*} 
where 
\begin{gather*}
\A_{n}=[1\stackrel{a_{1}}{\to}2
\stackrel{a_{2}}{\to}3\stackrel{a_{3}}{\to}
\dots\stackrel{a_{n-1}}{\to}n]
\end{gather*}
and $\rad K\A_{n}$ is the Jacobson radical of the path algebra $K\A_{n}$. 
For any integers $s\geqslant 1$, $t\geqslant 1$, 
$0\leqslant u\leqslant t$, 
let \[\LL(s,t,u)
=(\AA(s,2)^{\op}\otimes \AA(t,2)^{\op})
/\langle\sum_{i=0}^{u-1} e_{s}\otimes e_{t-i}\rangle\] 
\[\LL^{!}(s,t,u)
=(K\A_{s}\otimes K\A_{t})
/\langle\sum_{i=0}^{u-1} e^{!}_{s}
\otimes e^{!}_{t-i}\rangle\]
where $\otimes=\otimes_{K}$ and 
$e_{i}\otimes e_{j}$ 
(resp.\,$e^{!}_{i}\otimes e^{!}_{j}$) is the 
idempotent of 
$\AA(s,2)^{\op}\otimes \AA(t,2)^{\op}$ 
(resp.\,$K\A_{s}\otimes K\A_{t}$) 
corresponding the vertex $(i,j)$. 
As is clear from the definition, 
$\LL(s,t,t)=\LL(s-1,t,0)$. 
In his paper \cite[Cor.\,1.2]{Lad}, Ladkani gave a triangle equivalence 
\begin{equation}\label{IntroLad}
\text{$\per\AA(st,t+1)
\to\per(K\A_{s}\otimes K\A_{t})$\ \  for any integers $s\geqslant1,t\geqslant 1$ such that $st>t+1$.} 
\end{equation}
We generalize this equivalence as follows: 
\begin{thm}[Proposition \ref{Duality}, Theorem \ref{Nak}]\label{Intro 1}
There exist triangle equivalences 
\[\per\AA(st-u,t+1)\to\per\LL(s,t,u)
\to\per\LL^{!}(s,t,u)\]
for any integers $s,t\geqslant 1$, 
$0\leqslant u\leqslant t$. 
In particular, $\AA(st-u,t+1)$ is derived equivalent to a $K$-algebra with 
global dimension $\leqslant 2$.
\end{thm}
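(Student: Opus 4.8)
The plan is to establish the two triangle equivalences separately and then read off the bound on global dimension. Throughout write $N=\AA(st-u,t+1)$, and note that $\A_{st-u}$ has exactly $st-u$ vertices, matching the number of primitive idempotents surviving in both $\LL(s,t,u)$ and $\LL^{!}(s,t,u)$ after the deletion of the $u$ corner idempotents $e_{s}\otimes e_{t-i}$, respectively $e^{!}_{s}\otimes e^{!}_{t-i}$, for $0\leqslant i\leqslant u-1$.

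For the equivalence $\per\LL(s,t,u)\to\per\LL^{!}(s,t,u)$ of Proposition \ref{Duality} I would argue by Koszul duality. The radical-square-zero algebra $\AA(s,2)=K\A_{s}/(\rad K\A_{s})^{2}$ is quadratic and Koszul with quadratic dual the hereditary path algebra $K\A_{s}$, and likewise with $t$ in place of $s$; since a tensor product of Koszul algebras is Koszul with dual the tensor product of the duals, $\AA(s,2)^{\op}\otimes\AA(t,2)^{\op}$ is Koszul with dual $K\A_{s}\otimes K\A_{t}$. The two families of idempotents are supported on the same grid vertices, so passing to the idempotent quotients is compatible with the duality and identifies the Koszul dual of $\LL(s,t,u)$ with $\LL^{!}(s,t,u)$. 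Since $\LL^{!}(s,t,u)$ is a quotient of the commutative-grid algebra $K\A_{s}\otimes K\A_{t}$ it has finite global dimension, so $\per\LL^{!}(s,t,u)=D^{\bo}(\mod\LL^{!}(s,t,u))$, and the Koszul duality functor restricts to the asserted triangle equivalence of perfect categories.

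For the equivalence $\per N\to\per\LL(s,t,u)$ of Theorem \ref{Nak} I would invoke Rickard's Morita theory for derived categories \cite{Ric}: it suffices to produce a tilting complex $T\in\per\LL(s,t,u)$ with $\End_{\per\LL(s,t,u)}(T)\cong N$, whose indecomposable summands constitute the $S$-family. Generalizing the tilting complex implicit in Ladkani's equivalence \eqref{IntroLad} (the case $u=0$), I would fix the row-major bijection between the vertices $k=1,\dots,st-u$ of $\A_{st-u}$ and the surviving grid cells $(i,j)$ obtained by reading each length-$t$ row and then passing to the next; under it the $u$ deleted cells are exactly the last $u$ vertices of the line, so the general case arises from Ladkani's by truncation. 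I would attach to each $k$ a short complex $T_{k}$ of indecomposable projective $\LL(s,t,u)$-modules, with differentials induced by the horizontal and vertical arrows of the grid, arranged so that the passage from the end of one row into the next reproduces the length-$(t+1)$ relation of $N$. The $S$-family axioms then encode precisely that $\{T_{k}\}$ is \emph{exceptional}, i.e.\ $\Hom_{\per\LL(s,t,u)}(T_{k},T_{l}[m])=0$ for $m\neq0$, that it generates $\per\LL(s,t,u)$ as a thick subcategory, and that the nonzero morphism spaces are one-dimensional and compose as the arrows of the linear Nakayama quiver; granting these, $T=\bigoplus_{k}T_{k}$ is the required tilting complex.

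The step I expect to be the main obstacle is the verification of the $S$-family axioms, above all the endomorphism-algebra identification, carried out uniformly in $u$. The relevant $\Hom$ and $\Ext$ computations between the short complexes reduce, via the radical-square-zero structure of the $\AA(\cdot,2)$ factors and the standard projective resolutions over the grid algebra, to counting lattice paths in the $s\times t$ rectangle, and the delicate point is to confirm that truncating the last row neither destroys the generation and orthogonality of the family nor creates spurious extensions at the corner. Once both equivalences are in hand, the final assertion is immediate: $\LL^{!}(s,t,u)$ is the quotient of $K\A_{s}\otimes K\A_{t}$ by the idempotent ideal at a rectangular corner, hence again of the form ``sub-grid modulo commutativity relations'' and of global dimension at most $2$, so $N$ is derived equivalent to an algebra of global dimension $\leqslant 2$.
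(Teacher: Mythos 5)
Your overall two-step plan matches the paper's architecture (Proposition \ref{Duality} for $\per\LL(s,t,u)\simeq\per\LL^{!}(s,t,u)$, Theorem \ref{Nak} for $\per\AA(st-u,t+1)\simeq\per\LL(s,t,u)$), and your first step is essentially the paper's argument in Koszul-duality language: the paper realizes the duality concretely by the tilting object of shifted simples $\bigoplus S(i,j)[-i-j]$ over $K\A_{s}\otimes K\A_{t}$ (Example \ref{Sq}) and then restricts it to the surviving vertices. But the point you assert without proof --- ``passing to the idempotent quotients is compatible with the duality'' --- is exactly the nontrivial step. It requires the semi-orthogonality $\Hom_{\per A}(P',P)=0$ between deleted and kept projectives (which holds only because the deleted vertices sit at a corner of the grid) together with the identification $\langle P\rangle=\langle T\rangle$ for $T$ the sum of kept shifted simples; this is what Example \ref{tilt2} and the diagram (\ref{cd}) in the paper supply, and without it the claim that the Koszul dual of $\LL(s,t,u)$ is $\LL^{!}(s,t,u)$ is unsupported.

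For the second equivalence the proposal contains a genuine gap: you reduce everything to producing a tilting complex $T=\bigoplus_k T_k$ over $\LL(s,t,u)$ with $\End(T)\cong\AA(st-u,t+1)$, but the complexes $T_k$ are never actually constructed, and you yourself defer the verification of orthogonality, generation, and the endomorphism computation as ``the main obstacle.'' That verification is precisely the content of the paper's Sections 3--4: the abstract characterization of $\per\LL(S)$ by $S$-families (Theorem \ref{Triviality}), the criterion of Proposition \ref{Lad0} in terms of Serre-functor orbits $\DD=\langle\S^{p-1}(E)\rangle\perp\dots\perp\langle E\rangle$, the explicit family of Serre twists of shifted simples in $\per\AA(pq,q+1)$ (Proposition \ref{Lad1}), and the truncation to $\per\AA(pq-r,q+1)$ via Example \ref{tilt2}; none of the corresponding Hom/Ext computations appear in your sketch, so the equivalence is not established. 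There is also a conceptual slip: the $S$-family axioms (S1)--(S3) characterize families whose endomorphism algebra is the grid-type algebra $\LL(S)$, not the Nakayama algebra $N$, so they cannot simultaneously ``encode'' that $\End(T)\cong N$ for complexes over $\LL(s,t,u)$; in the paper the $S$-family is built inside $\per\AA(st-u,t+1)$ and Theorem \ref{Triviality} then maps that category onto $\per\LL(s,t,u)$, which is the opposite of the reduction you describe.
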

For $u=0$, we obtain a triangle equivalence 
(\ref{IntroLad}). Equivalently, $\AA(n,\ell)$ is derived equivalent to $\LL\left(\lceil
\frac{n}{\ell-1}\rceil,\ell-1,r\right)$ 
for any integers $n\geqslant 3$ and 
$\ell\geqslant 2$, 
$r=(\ell-1)\lceil\frac{n}{\ell-1}\rceil-n$, 
where for any real number $x$, 
$\lceil x \rceil=\sup \{m\in\Z\mid x\leqslant m\}$. 
In this paper, we show the following result. 

\begin{thm}[Theorem \ref{Main 1}]
\label{Intro 2}
Let $s$, $t$, $u$ be 
three positive integers such that $1\leqslant u\leqslant t$. 
Suppose that one of the following 
conditions is satisfied. 
\begin{enumerate}[\rm (a)]
\item $u\in \mathbb{Z}s$ 
and $t-u\in \mathbb{Z}(s+1)$. 
\item $s=2$ and 
$t-u\in 3\mathbb{Z}$. 
\end{enumerate}
Then there exists a triangle equivalence 
$\per\LL(s,t,u)
\to \per\LL(s,t-1,u-s)$. 
\end{thm}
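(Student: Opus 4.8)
The plan is to deduce the equivalence from the intrinsic characterization of $\per\LL(s,t,u)$ by the existence of an $S$-family, established earlier in the paper, rather than to write down a tilting complex and its endomorphism algebra by hand (though the two approaches are equivalent via Rickard's theorem). Concretely, I would exhibit inside $\per\LL(s,t,u)$ an $S$-family of the combinatorial type attached to $\LL(s,t-1,u-s)$; the characterization theorem then produces the triangle equivalence $\per\LL(s,t,u)\to\per\LL(s,t-1,u-s)$ directly. It is worth recording that, under the equivalences of Theorem \ref{Intro 1}, this is exactly a derived equivalence between $\AA(st-u,t+1)$ and $\AA(st-u,t)$: since $s(t-1)-(u-s)=st-u$ and $(t-1)+1=t$, the second parameter drops from $t+1$ to $t$ while $n=st-u$ is unchanged. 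Thus the theorem realizes the passage from $\AA(n,\ell+1)$ to $\AA(n,\ell)$ with $\ell=t$ announced in the abstract.

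First I would fix the tautological $S$-family in $\per\LL(s,t,u)$, whose members are indexed by the vertices $(i,j)$ of the product quiver $\A_{s}\otimes\A_{t}$ with the $u$ prescribed corner vertices removed. From it I would build a new family by a single mutation: a distinguished subset of the objects is replaced by the cones of the canonical maps to (or from) their neighbours in one coordinate direction, the remaining objects being kept up to shift. The two hypotheses pin down the admissible subset. Under (a), the divisibilities $u\in\Z s$ and $t-u\in\Z(s+1)$ let the $t$ columns be grouped into blocks of length $s$ (accounting for the removed corner, of total size $u$) and blocks of length $s+1$ (of total size $t-u$); the mutation is then carried out block by block, and the block lengths $s$ and $s+1$ are precisely what is needed to lower the column count by one and reduce $u$ by $s$ while keeping the ambient quiver $\A_{s}\otimes\A_{t}$. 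Under (b) the base case $s=2$ is small enough that the single residue condition $t-u\in 3\Z$, giving blocks of length $3$, already makes the construction consistent.

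Next I would verify that the mutated family is again an $S$-family, now with parameters $(s,t-1,u-s)$. This has three ingredients: that it still generates $\per\LL(s,t,u)$ as a thick subcategory, which is the standard fact that mutation preserves the thick closure; that the graded self-homomorphisms of the family vanish outside degree $0$ and that the degree-$0$ part is isomorphic, as an algebra, to $\LL(s,t-1,u-s)$; and the $\Hom$- and $\Ext$-orthogonality required between distinct members of an $S$-family. I expect this last point to be the main obstacle. Computing $\Hom_{\per\LL(s,t,u)}(X,Y[m])$ between the constructed cones forces one to resolve each cone and to track how the morphism spaces of the original family cancel across the mutation, and it is exactly here that conditions (a), (b) are used: the block lengths $s$, $s+1$ (respectively $3$) guarantee that the a priori obstructing extension groups telescope to zero, leaving only the degree-$0$ contributions that reassemble into $\LL(s,t-1,u-s)$.

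Finally, feeding the verified family into the $S$-family characterization identifies $\per\LL(s,t,u)$ with $\per\LL(s,t-1,u-s)$. I would treat the two cases in parallel, the only difference being the block decomposition of the index set; a routine check that the endomorphism algebra is $\LL(s,t-1,u-s)$ on the nose, rather than merely something Morita equivalent to it, completes the argument.
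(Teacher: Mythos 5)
Your overall strategy coincides with the paper's: start from the tautological family in $\per\LL(s,t,u)$, mutate it, and feed the result into the characterization of $\per\LL(S)$ by $S$-families (Theorems \ref{Triviality}, \ref{Lpq}). But the core of the argument --- the actual construction and the reason the arithmetic hypotheses (a), (b) make it work --- is missing, and the mechanism you gesture at is not the one that succeeds. First, the passage from $\Y(s,t,u)$ to (the transpose of) $\Y(s,t-1,u-s)$ is not ``a single mutation'' replacing some objects by cones of maps to neighbours: in the paper it is a composition of three mutation operations (Theorem \ref{Mutation Lemma 1}, then \ref{Mutation Lemma 2op}, then \ref{Mutation Lemma 1} again), and the intermediate index sets $S$, $S'$ are not Young diagrams, so the simplified criterion \ref{Lpq} does not apply to them; one needs the general $S$-family axioms (S1)--(S3) together with the gluing results (Propositions \ref{Glu1}, \ref{Glu2}) to control the intermediate steps. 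Your mutated objects are obtained by applying powers of the Serre functors of whole row/column subcategories, $\S_{\langle X_i\rangle}^{k}$, not by forming a single cone, and verifying the axioms for them is exactly what Theorems \ref{Mutation Lemma 1}--\ref{Mutation Lemma 2op} do.

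Second, the place where (a) and (b) enter is not a ``telescoping'' of extension groups between cones. The divisibility conditions $t-u\in\Z(s+1)$ and $u-s\in\Z s$ (resp.\ $t-u\in 3\Z$ for $s=2$) are used so that Mutation I\hspace{-.01em}I applies: a block of $k$ consecutive identical columns of height $h$ can be mutated by the $k$-th power of the blockwise Serre functor, and the hypothesis $k\in(h+1)\Z$ guarantees, via the fractional Calabi--Yau property of $\per K\A_{h}$ (Lemma \ref{CY lemma}, dimension $\tfrac{h-1}{h+1}$), that this power acts on the complementary part of the family simply as the shift $[\tfrac{(h-1)k}{h+1}]$; only then does the glued family remain an $S$-family. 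Without this input your ``main obstacle'' step has no route: nothing in the cone description explains why block lengths exactly $s$ and $s+1$ (or $3$) are needed, nor how the relations of $\LL(s,t-1,u-s)$ (the commutativity and zero relations, which in the paper come for free from (S1$'$), (S2$'$) via Lemma \ref{TS Lemma 1}) would be verified on the nose. So the proposal reproduces the paper's framework but leaves the decisive idea --- Serre-functor powers becoming shifts on blocks, hence gluable mutations --- unsupplied.
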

The integers satisfying the condition of Theorem \ref{Intro 2} (a) can be written as 
\[(s,t,u)=(p,(p+1)q+pr,pr).\] 
Applying it to Theorem \ref{Intro 2}, 
we obtain a triangle equivalence 
\[\per\LL(p,(p+1)q+pr,pr)
\to \per\LL(p,(p+1)q+pr-1,p(r-1))\]
for any integers $p\geqslant 1$, $q\geqslant 0$, 
$r\geqslant 1$. 
By Theorem \ref{Intro 1}, we get the 
following result:
\begin{cor}[Corollary \ref{Main 2}]
\label{Intro 3}
Let $p,q$ be two integers such that 
$p\geqslant 2$, $q\geqslant 1$. 
Suppose that one of the following 
conditions is satisfied. 
\begin{enumerate}[\rm (a)]
\item $r\in\Z_{\geqslant 0}$.
\item $p=2$ and 
$r\in \frac{1}{2}\Z_{\geqslant 0}$. 
\end{enumerate}
Then there exists a triangle equivalence 
\[\text{$\per\AA(n,\ell+1)
\to \per\AA(n,\ell)$\ \ 
where $n=p(p+1)q+p(p-1)r$, 
$\ell=(p+1)q+pr$}.\]
\end{cor}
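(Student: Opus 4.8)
The plan is to read off the asserted equivalence by inserting the $\LL$-level equivalence of Theorem~\ref{Intro 2} between two applications of the translation in Theorem~\ref{Intro 1}. Put $s=p$, $t=\ell=(p+1)q+pr$ and $u=pr$. A direct computation gives $st-u=p\big((p+1)q+pr\big)-pr=p(p+1)q+p(p-1)r=n$, so Theorem~\ref{Intro 1} supplies a triangle equivalence $\per\AA(n,\ell+1)\to\per\LL(p,\ell,pr)$ (note $t+1=\ell+1$). The target of Theorem~\ref{Intro 2} applied to $(s,t,u)$ is $\LL(s,t-1,u-s)=\LL(p,\ell-1,p(r-1))$, and the arithmetic that makes the scheme work is the identity $s(t-1)-(u-s)=st-u=n$ together with $(t-1)+1=\ell$: these say that the second application of Theorem~\ref{Intro 1} turns $\per\LL(p,\ell-1,p(r-1))$ into $\per\AA(n,\ell)$ with the \emph{same} first index $n$ as before. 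Composing the three triangle equivalences (inverting the last) then yields $\per\AA(n,\ell+1)\to\per\AA(n,\ell)$.

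First I would verify that $(s,t,u)=(p,(p+1)q+pr,pr)$ satisfies the hypotheses of Theorem~\ref{Intro 2}. In case (a) one has $u=pr\in\Z p=\Z s$ and $t-u=(p+1)q\in\Z(p+1)=\Z(s+1)$, so condition (a) holds; in case (b) one has $s=p=2$ and $t-u=3q\in3\Z$, so condition (b) holds. The constraint $u\leqslant t$ is automatic, while the target $\LL(s,t-1,u-s)$ is legitimate exactly when $u\geqslant s$, i.e.\ when $pr\geqslant p$, that is $r\geqslant 1$. For every such $r$ (integer in case (a), half-integer in case (b)) the previous paragraph applies unchanged.

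The main obstacle is the range $0\leqslant r<1$, where $u=pr<s$ so that the formal target $\LL(s,t-1,u-s)$ leaves the admissible range $u-s\geqslant 0$; this occurs only for $r=0$ (both cases) and for $r=\tfrac12$ when $p=2$. Here I would reindex with the identity $\LL(s,t,t)=\LL(s-1,t,0)$ recorded above. For $r=0$ one rewrites $\LL(p,(p+1)q,0)=\LL(p+1,(p+1)q,(p+1)q)$; the triple $(p+1,(p+1)q,(p+1)q)$ meets condition (a) (its $u=(p+1)q$ is a multiple of $s=p+1$ and $t-u=0$ is a multiple of $s+1$) and satisfies $1\leqslant u=t$ since $q\geqslant 1$, so Theorem~\ref{Intro 2} applies, and Theorem~\ref{Intro 1} sends the two ends to $\per\AA(n,\ell+1)$ and $\per\AA(n,\ell)$ after the checks $(p+1)\big((p+1)q-1\big)-(p+1)(q-1)=p(p+1)q=n$ and $\big((p+1)q-1\big)+1=(p+1)q=\ell$. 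The case $p=2$, $r=\tfrac12$ is the most delicate: there $\LL(2,3q+1,1)$ must be matched with $\LL(3,3q,3q-1)$, and applying Theorem~\ref{Intro 2} produces the formal target $\LL(2,3q,-1)$, which sits just outside the admissible range. I expect to need the natural extension $\LL(s,t,-1)=\LL(s+1,t,t-1)$ of the identity in order to identify $\LL(2,3q,-1)$ with $\LL(3,3q,3q-1)$, which Theorem~\ref{Intro 1} in turn matches with $\per\AA(n,\ell)$. Justifying this extended identity is the one genuinely new step; everything else is index bookkeeping, the substantive content being already contained in Theorems~\ref{Intro 1} and~\ref{Intro 2}.
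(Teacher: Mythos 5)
Your proposal is correct and follows essentially the same route as the paper: the proof of Corollary \ref{Main 2} likewise sets $(s,t,u)=(p,(p+1)q+pr,pr)$, applies Theorem \ref{Nak} twice to identify $\per\AA(n,\ell+1)$ with $\per\LL(s,t,u)$ and $\per\AA(n,\ell)$ with $\per\LL(s,t-1,u-s)$, and concludes by Theorem \ref{Main 1}. You are in fact more careful than the paper, whose three-line argument passes silently over the boundary cases $r=0$ and $(p,r)=(2,\tfrac12)$, where $u=0$ or $u-s<0$ and the reindexing via $\LL(s,t,t)=\LL(s-1,t,0)$ (and its extension) that you describe is genuinely needed.
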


\begin{figure} 
\begin{tikzpicture}
\draw[->,>=stealth,semithick] 
(0,0)--(13.5,0)node
[above]{$n$};
\draw[->,>=stealth,semithick] 
(0,0)--(0,-13.5)node
[left]{$\ell$}; 
\draw[thick,domain=0:13] 
plot(\x,-\x);

\draw[thick,domain=1.5:13] 
plot(\x,-\x+0.5); 
\draw[thick,domain=1.5:13] 
plot(\x,-\x+0.75);

\draw[thick,domain=3:13] 
plot(\x,-\x+1.25);
\draw[thick,domain=3:13] 
plot(\x,-\x+1.5);  
 
\draw[thick,domain=4.5:13] 
plot(\x,-\x+2); 
\draw[thick,domain=4.5:13] 
plot(\x,-\x+2.25);  

\draw[thick,domain=6:13] 
plot(\x,-\x+2.75); 
\draw[thick,domain=6:13] 
plot(\x,-\x+3); 

\draw[thick,domain=7.5:13] 
plot(\x,-\x+3.5);
\draw[thick,domain=7.5:13] 
plot(\x,-\x+3.75); 

\draw[thick,domain=9:13] 
plot(\x,-\x+4.25);
\draw[thick,domain=9:13] 
plot(\x,-\x+4.5); 

\draw[thick,domain=10.5:13] 
plot(\x,-\x+5); 
\draw[thick,domain=10.5:13]
plot(\x,-\x+5.25); 

\draw[thick,domain=12:13] 
plot(\x,-\x+5.75); 
\draw[thick,domain=12:13]
plot(\x,-\x+6); 

\draw[thick,domain=1.5:6.5] 
plot(2*\x,-\x+0.5); 
\draw[thick,domain=1.5:6.5]
plot(2*\x,-\x+0.25); 

\draw[thick,domain=1.5:6.5] 
plot(2*\x,-\x+0.5); 
\draw[thick,domain=1.5:6.5]
plot(2*\x,-\x+0.25); 

\draw[thick,domain=1.5:5] 
plot(2*\x+3,-\x-0.5); 
\draw[thick,domain=1.5:5]
plot(2*\x+3,-\x-0.75); 

\draw[thick,domain=1.5:3.5] 
plot(2*\x+6,-\x-1.5); 
\draw[thick,domain=1.5:3.5]
plot(2*\x+6,-\x-1.75); 

\draw[thick,domain=1.5:2] 
plot(2*\x+9,-\x-2.5); 
\draw[thick,domain=1.5:2]
plot(2*\x+9,-\x-2.75); 

\draw[thick,domain=6/4:25/6] 
plot(3*\x+2/4,-\x+1/4); 
\draw[thick,domain=6/4:25/6]
plot(3*\x+2/4,-\x); 

\draw[thick,domain=6/4:30/12] 
plot(3*\x+22/4,-\x-4/4); 
\draw[thick,domain=6/4:30/12]
plot(3*\x+22/4,-\x-5/4);

\draw[thick,domain=6/4:23/8] 
plot(4*\x+6/4,-\x); 
\draw[thick,domain=6/4:23/8]
plot(4*\x+6/4,-\x-1/4);  

\draw[thick,domain=1.5:2] 
plot(5*\x+3,-\x-0.25); 
\draw[thick,domain=1.5:2]
plot(5*\x+3,-\x-0.5);  

\foreach \z in {0,0.25,0.5,0.75,1,1.25,1.5,1.75,2,2.25,2.5,
2.75,3,3.25,3.5,3.75,4,4.25,4.5,4.75,5,5.25,5.5,5.75,6,6.25,6.5,6.75,7,7.25,7.5,7.75,8,8.25,8.5,8.75,9,9.25,9.5,9.75,10,10.25,10.5,10.75,11,11.25,11.5,11.75,12,12.25,12.5,12.75,13} 
\draw[ultra thin] (\z,0)--(\z,-13);

\foreach \w in 
{0,0.25,0.5,0.75,1,1.25,1.5,1.75,2,2.25,2.5,
2.75,3,3.25,3.5,3.75,4,4.25,4.5,4.75,5,5.25,5.5,5.75,6,6.25,6.5,6.75,7,7.25,7.5,7.75,8,8.25,8.5,8.75,9,9.25,9.5,9.75,10,10.25,10.5,10.75,11,11.25,11.5,11.75,12,12.25,12.5,12.75,13}
\draw[ultra thin] (0,-\w)--(13,-\w);

\foreach \z in {1.25,2.5,3.75,5,6.25,7.5,8.75,10,
11.25,12.5} 
\draw[thick] (\z,0)--(\z,-13);

\foreach \z in {1.25,2.5,3.75,5,6.25,7.5,8.75,10,
11.25,12.5} 
\draw[thick] (0,-\z)--(13,-\z);

\foreach \s in 
{0,0.25,0.5,0.75,1,1.25,1.5,1.75,2,2.25,2.5,
2.75,3,3.25,3.5,3.75,4,4.25,4.5,4.75,5,5.25,5.5,5.75,6,6.25,6.5,6.75,7,7.25,7.5,7.75,8,8.25,8.5,8.75,9,9.25,9.5,9.75,10,10.25,10.5,10.75,11,11.25,11.5}
 \draw[<->,very thick] (\s+1.5,-\s-1)--
(\s+1.5,-\s-0.75);

\foreach \s in 
{0,0.25,0.5,0.75,1,1.25,1.5,1.75,2,2.25,2.5,
2.75,3,3.25,3.5,3.75,4,4.25,4.5,4.75,5,5.25,5.5,5.75,6,6.25,6.5,6.75,7,7.25,7.5,7.75,8,8.25,8.5,8.75,9,9.25,9.5,9.75,10}
\draw[<->,very thick] (\s+3,-\s-1.75)--
(\s+3,-\s-1.5);

\foreach \t in 
{0,0.25,0.5,0.75,1,1.25,1.5,1.75,2,2.25,2.5,
2.75,3,3.25,3.5,3.75,4,4.25,4.5,4.75,5,5.25,5.5,5.75,6,6.25,6.5,6.75,7,7.25,7.5,7.75,8,8.25,8.5} 
\draw[<->,very thick] (\t+4.5,-\t-2.5)--
(\t+4.5,-\t-2.25); 

\foreach \u in 
{0,0.25,0.5,0.75,1,1.25,1.5,1.75,2,2.25,2.5,
2.75,3,3.25,3.5,3.75,4,4.25,4.5,4.75,5,5.25,5.5,5.75,6,6.25,6.5,6.75,7} 
\draw[<->,very thick] (\u+6,-\u-3.25)--
(\u+6,-\u-3); 

\foreach \v in 
{0,0.25,0.5,0.75,1,1.25,1.5,1.75,2,2.25,2.5,
2.75,3,3.25,3.5,3.75,4,4.25,4.5,4.75,5,5.25,5.5}
\draw[<->,very thick] (\v+7.5,-\v-4)--
(\v+7.5,-\v-3.75); 

\foreach \x in 
{0,0.25,0.5,0.75,1,1.25,1.5,1.75,2,2.25,2.5,
2.75,3,3.25,3.5,3.75,4} 
\draw[<->,very thick] (\x+9,-\x-4.75)--
(\x+9,-\x-4.5);

\foreach \y in 
{0,0.25,0.5,0.75,1,1.25,1.5,1.75,2,2.25,2.5} 
\draw[<->,very thick] (\y+10.5,-\y-5.5)--
(\y+10.5,-\y-5.25); 

\foreach \y in 
{0,0.25,0.5,0.75,1} 
\draw[<->,very thick] (\y+12,-\y-6.25)--
(\y+12,-\y-6); 

\foreach \y in 
{0,3/4,6/4,9/4,12/4,15/4,18/4} 
\draw[<->,very thick] (2*\y+3,-\y-1)--
(2*\y+3,-\y-5/4); 

\foreach \y in 
{0,3/4,6/4,9/4,12/4} 
\draw[<->,very thick] (2*\y+6,-\y-2)--
(2*\y+6,-\y-9/4); 

\foreach \y in 
{0,3/4,6/4} 
\draw[<->,very thick] (2*\y+9,-\y-3)--
(2*\y+9,-\y-13/4); 

\foreach \y in 
{0} 
\draw[<->,very thick] (2*\y+12,-\y-4)--
(2*\y+12,-\y-17/4); 

\foreach \y in 
{0,1,2} 
\draw[<->,very thick] (3*\y+5,-\y-5/4)--
(3*\y+5,-\y-6/4); 

\foreach \y in 
{0,1} 
\draw[<->,very thick] (3*\y+10,-\y-10/4)--
(3*\y+10,-\y-11/4);

\foreach \y in 
{0,5/4} 
\draw[<->,very thick] (4*\y+30/4,-\y-6/4)--
(4*\y+30/4,-\y-7/4); 

\foreach \y in 
{0} 
\draw[<->,very thick] (5*\y+42/4,-\y-7/4)--
(5*\y+42/4,-\y-8/4); 

\draw (0,0) node[above]{0};
\draw (1.25,0) node[above]{5};
\draw (2.5,0) node[above]{10};
\draw (3.75,0) node[above]{15}; 
\draw (5,0) node[above]{20};
\draw (6.25,0) node[above]{25}; 
\draw (7.5,0) node[above]{30};
\draw (8.75,0) node[above]{35};
\draw (10,0) node[above]{40};
\draw (11.25,0) node[above]{45};
\draw (12.5,0) node[above]{50};

\draw (0,-1.25) node[left]{5};
\draw (0,-2.5) node[left]{10};
\draw (0,-3.75) node[left]{15}; 
\draw (0,-5) node[left]{20};
\draw (0,-6.25) node[left]{25}; 
\draw (0,-7.5) node[left]{30};
\draw (0,-8.75) node[left]{35};
\draw (0,-10) node[left]{40};
\draw (0,-11.25) node[left]{45};
\draw (0,-12.5) node[left]{50};
\end{tikzpicture}
\caption{ 
In the above graph, the arrow 
$\leftrightarrow$ shows the pairs of Nakayama algebras in Corollary \ref{Intro 3}.}
\end{figure}
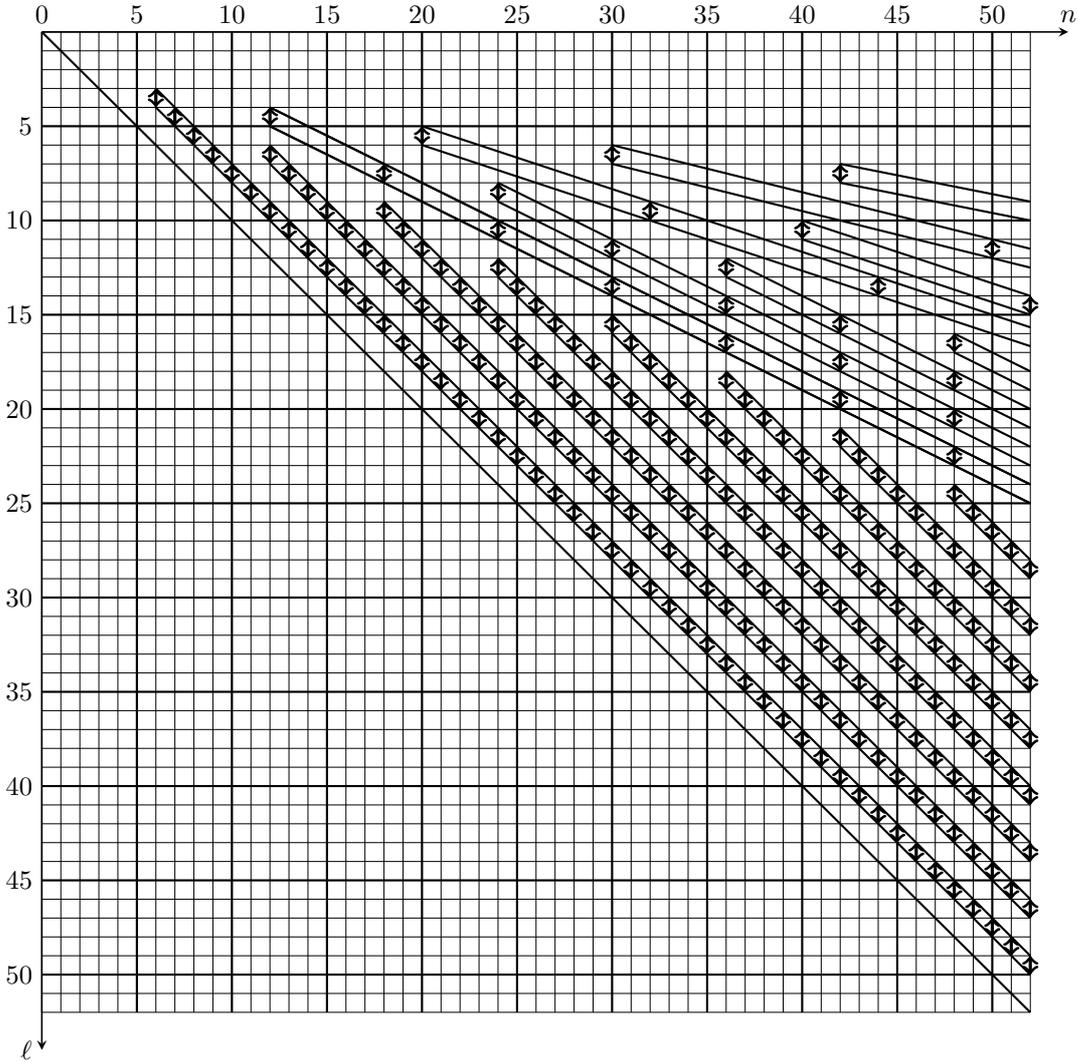
For $p=2$, $r=\frac{s}{2}$, we obtain 
a triangle equivalence 
\begin{equation}\label{HS sing}
\text{$\per \NN(s+6q,s+3q+1)\to \per \NN(s+6q,s+3q)$\ \ for any integers 
$q\geqslant 1$, $s\geqslant 0$.}
\end{equation}
For $q=1$, 
(\ref{HS sing}) is a triangle equivalence 
\[\text{$\per N(s+6,s+4)
\to \per N(s+6,s+3)$\ \ for any integer 
$s\geqslant 0$}\]
due to Happel-Seidel \cite[Prop.\,2.3]{HS}
where they proved the above two derived categories are triangle equivalent to a derived category of a path algebra of a star quiver with three branches of length $2,3,s+3$ respectively. 

For $q=2$, (\ref{HS sing}) is a triangle equivalence 
\[\text{$\per N(s+12,s+7)
\to \per N(s+12,s+6)$\ \ for any integer 
$s\geqslant 0$}\]
due to Lenzing-Meltzer-Ruan \cite[Thm.\,6.3]{LMR} where they proved the above two derived categories are triangle equivalent 
to a stable category of vector bundles for 
a weighted projective line 
$\mathbb{X}(2,3,s+7)$. 

A central role in our proof above is played 
by the notion of $S$-families 
(see Definition \ref {def LS}). 
By using the following result, 
we can construct a triangle equivalence 
between a given triangulated category and 
$\per \LL(S)$. 

\begin{thm}[Theorem \ref{Triviality}]\label{Intro 0}
Let $\DD$ be an algebraic, idempotent complete, 
and $\Ext$-finite triangulated category with a Serre functor. 
If there exists a full $S$-family 
$(X_{i,j})_{(i,j)\in S}$, then there exists a 
triangle equivalence $F:\DD\to \per \LL(S)$ 
such that $F(X_{i,j})\simeq P(i,j)$ for any 
$(i,j)\in S$. 
\end{thm}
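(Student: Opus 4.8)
The plan is to realise $T=\bigoplus_{(i,j)\in S}X_{i,j}$ as a tilting object and then invoke the differential graded Morita theory of Keller. Since $\DD$ is algebraic, I would fix a pretriangulated dg enhancement $\mathcal{A}$ with $H^{0}(\mathcal{A})\simeq\DD$ and form the dg endomorphism algebra $\EE=\REnd_{\mathcal{A}}(T)$, whose cohomology is $H^{n}(\EE)\cong\bigoplus_{(i,j),(k,l)\in S}\Hom_{\DD}(X_{i,j},X_{k,l}[n])$. Keller's theorem then produces a triangle equivalence $\thick_{\DD}(T)\xrightarrow{\ \sim\ }\per\EE$ carrying $T$ to $\EE$ and each indecomposable summand $X_{i,j}$ to the idempotent summand $e_{i,j}\EE$. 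That the $S$-family is \emph{full} should mean precisely that $T$ generates $\DD$ as a thick subcategory, so that $\thick_{\DD}(T)=\DD$; together with idempotent completeness of $\DD$ (used to match it with the idempotent-complete category $\per\EE$) this upgrades the statement to a triangle equivalence $\DD\xrightarrow{\ \sim\ }\per\EE$.

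It then remains to identify $\per\EE$ with $\per\LL(S)$, and here I would extract from the defining axioms of an $S$-family the two facts that make $\EE$ formal and ordinary. First, the axioms are designed so that $\Hom_{\DD}(X_{i,j},X_{k,l}[n])=0$ for all $n\neq 0$; hence $H^{n}(\EE)=0$ for $n\neq 0$, and the canonical truncations $\tau_{\leqslant 0}\EE\hookrightarrow\EE$ and $\tau_{\leqslant 0}\EE\twoheadrightarrow H^{0}(\EE)$ form a zig-zag of quasi-isomorphisms of dg algebras. Thus $\EE$ is formal and quasi-isomorphic to the ordinary $K$-algebra $\End_{\DD}(T)$, which is finite dimensional because $\DD$ is $\Ext$-finite. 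Second, the remaining axioms should pin down the quiver and relations of $\End_{\DD}(T)$, identifying it — after fixing the convention that sends indecomposable summands to indecomposable projectives — with $\LL(S)$. Feeding these two identifications into $\per$ gives $\per\EE\simeq\per\LL(S)$ with $e_{i,j}\EE\mapsto P(i,j)$, and composing with the equivalence of the previous paragraph yields $F:\DD\to\per\LL(S)$ with $F(X_{i,j})\simeq P(i,j)$.

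The Serre functor enters in the verification of the vanishing $\Hom_{\DD}(X_{i,j},X_{k,l}[n])=0$: Serre duality $\Hom_{\DD}(X_{i,j},X_{k,l}[n])\cong D\Hom_{\DD}(X_{k,l},\nak X_{i,j}[-n])$ converts positive-degree vanishing into a negative-degree Hom computation, so that it suffices to control the action of $\nak$ on the members of the $S$-family and to check vanishing in a single range of degrees. I expect the main obstacle to be exactly this bookkeeping: reading off from Definition \ref{def LS} that $\EE$ has cohomology concentrated in degree $0$ and that $H^{0}(\EE)\cong\LL(S)$ as algebras — that is, that the prescribed Hom-data really assemble into the quiver with relations defining $\LL(S)$, and not merely into a Morita-equivalent or deformed algebra. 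Once formality and this algebra isomorphism are established, the passage to $\per$ and the tracking of the correspondence $X_{i,j}\mapsto P(i,j)$ are purely formal.
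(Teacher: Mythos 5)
Your overall strategy coincides with the paper's: show that $T=X_{S}$ is a pretilting object whose endomorphism algebra is $\LL(S)$, then invoke Keller's Morita theorem for algebraic, idempotent complete triangulated categories (Proposition \ref{tilt1}; your dg-enhancement/formality phrasing is just a repackaging of the same step). The problem is that you defer exactly the two verifications that constitute the proof. First, the vanishing $\Hom_{\DD}(X_{i,j},X_{k,l}[n])=0$ for $n\neq 0$ is \emph{not} "designed into" the axioms and does not follow from a direct Serre-duality bookkeeping as you suggest: conditions (S1)--(S3) only control Serre functors of the subcategories $\langle X_{i}\rangle$, $\langle X^{j}\rangle$, $\langle X_{S}\rangle$, and the needed Ext-vanishing between horizontally and vertically adjacent members is obtained in the paper by first proving that (S1), (S2) are equivalent (in the presence of (S3)) to the projection-functor conditions (S1$'$), (S2$'$) -- Proposition \ref{nu condition}, whose proof is a genuine argument with semiorthogonal decompositions, mutations of exceptional sequences and Lemma \ref{CY lemma} -- and then using the adjunction, e.g.\ $\Hom_{\DD}(X_{i,j},X_{i,j-1}[n])\simeq\Hom_{\DD}(\F_{\langle X^{j-1}\rangle}(X_{i,j}),X_{i,j-1}[n])\simeq\Hom_{\DD}(X_{i,j-1},X_{i,j-1}[n])$ (Lemmas \ref{Row Lemma}, \ref{Column Lemma}); only the diagonal Homs come directly from (S3) and Serre duality.

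Second, you explicitly flag the identification $\End_{\DD}(T)\cong\LL(S)$ (as opposed to a deformed or merely Morita-equivalent algebra) as "the main obstacle" without resolving it; but this is the heart of the theorem. The paper settles it by choosing basis morphisms $\upsilon^{i,j}_{i',j'}$ of the one-dimensional Hom spaces, using (S1$'$), (S2$'$) to show that the compositions around each unit square are \emph{nonzero} multiples of the diagonal morphism (conditions (\ref{LSM3}), (\ref{LSM4})), and then rescaling via Lemma \ref{TS Lemma 1} to obtain the relations $uu=0$, $vv=0$, $uv-vu=0$ of $\LL(S)$ on the nose; the zero relations outside $S_{i,j}$ are condition (L2). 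Without these steps your argument establishes only that \emph{if} the Hom-data happen to assemble into $\LL(S)$ then the equivalence follows, which is the easy, formal part; so as it stands the proposal has a genuine gap precisely where the paper's Sections 3.3 (Proposition \ref{nu condition} through Lemma \ref{TS Lemma 1}) do the work.
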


The following figure presents the 
relationship of definitions of $S$-families.
\\
\begin{tikzpicture}
\draw[ultra thin] (-1,0)--(3,0)--(3,-1.5)
--(-1,-1.5)--(-1,0); 
\draw (1,-1/2) node {weak $S$-family};
\draw (1,-1) node 
{\text{Definition \ref{def WLS} (L1), (L2)}};
\draw (3.5,-3/4) node {$\Leftarrow$};
\end{tikzpicture}
\begin{tikzpicture}
\draw[ultra thin] (-1,0)--(3,0)--(3,-1.5)
--(-1,-1.5)--(-1,0); 
\draw (1,-0.5) node {$S$-family};
\draw (1,-1) node 
{\text{Definition \ref{def LS} (S1)-(S3)}};
\draw (3.5,-3/4) node {$\Leftarrow$};\end{tikzpicture}
\begin{tikzpicture}
\draw[ultra thin] (-1.2,0)--(3.2,0)--(3.2,-1.5)
--(-1.2,-1.5)--(-1.2,0); 
\draw (1,-0.5) node {$\Y(\vp;\vq)$-family};
\draw (1,-1) node {\text{Proposition \ref{Lpq} 
(Y1)-(Y4)}};
\end{tikzpicture}

As another application of Theorem \ref{Intro 0}, we have the following result related to (\ref{IntroLad}).
\begin{thm}[Theorem \ref{Main3}]\label{Intro 6}
There exists a triangle equivalence 
\[\text{$\per\LL(p+1,q,q-1)
\to \per\LL(q+1,p,p-1)$ 
\ \ for any integers $p\geqslant 2$, 
$q\geqslant 2$. }\]
\end{thm}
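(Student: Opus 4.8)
The plan is to realise the equivalence through Theorem~\ref{Intro 0}, by producing inside the source category an $S$-family indexed by the vertices of the target algebra. Put $\DD=\per\LL(p+1,q,q-1)$. As the perfect derived category of a finite-dimensional $K$-algebra, $\DD$ is algebraic, idempotent complete and $\Ext$-finite; it also admits a Serre functor, because by Theorem~\ref{Intro 1} it is triangle equivalent to $\per\AA((p+1)q-(q-1),q+1)=\per\AA(pq+1,q+1)$, and the latter is in turn equivalent to $\per B$ for an algebra $B$ of global dimension $\leqslant 2$, whose Serre functor is the derived Nakayama functor and which transports back along the equivalence. Hence $\DD$ meets the hypotheses of Theorem~\ref{Intro 0}. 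The same bookkeeping identifies the target with $\per\AA(pq+1,p+1)$, so the assertion is the symmetric fact that $\AA(pq+1,q+1)$ and $\AA(pq+1,p+1)$ are derived equivalent, i.e.\ that $\AA(n,\ell)$ and $\AA(n,\ell')$ are derived equivalent whenever $n-1=(\ell-1)(\ell'-1)$ with $\ell-1,\ell'-1\geqslant 2$; this symmetry between $p$ and $q$ is what the construction must make visible.

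By Theorem~\ref{Intro 0} it now suffices to exhibit a full $S$-family $(X_{i,j})_{(i,j)\in S}$ in $\DD$ with $\LL(S)=\LL(q+1,p,p-1)$, that is, with index set $S=(\{1,\dots,q\}\times\{1,\dots,p\})\cup\{(q+1,1)\}$. I would build it as a $\Y(\vp;\vq)$-family in the sense of Proposition~\ref{Lpq}: by the implications recorded in the figure, any $\Y(\vp;\vq)$-family is in particular a full $S$-family, so this reduces the verification to the concrete axioms (Y1)-(Y4). Writing $P(i,j)$ for the indecomposable projectives of $\LL(p+1,q,q-1)$, whose index set is $(\{1,\dots,p\}\times\{1,\dots,q\})\cup\{(p+1,1)\}$, the objects $X_{i,j}$ should be explicit complexes assembled from the $P(i,j)$ by iterated cones and shifts, arranged so that the staircase region of $\LL(p+1,q,q-1)$ is transported to that of $\LL(q+1,p,p-1)$ by the reflection in the anti-diagonal (transpose composed with a point reflection) that exchanges the two shapes in the small cases; the sequences $\vp,\vq$ are read off from these two boundary staircases.

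The core of the argument is the verification of (Y1)-(Y4). This comes down to computing $\Hom_{\DD}(X_{i,j},X_{i',j'}[m])$ for all pairs of indices and all $m\in\Z$ and checking three things: that these groups vanish outside the prescribed degrees; that the surviving morphisms reproduce exactly the arrows and relations of the quiver of $\LL(q+1,p,p-1)$; and that the family generates $\DD$ as a thick subcategory. Each Hom/Ext group is accessible from the minimal projective resolutions over $\LL(p+1,q,q-1)$, whose shape is dictated by the combinatorics of the staircase. I expect the main obstacle to be twofold: getting these resolutions, and the ensuing degree bookkeeping in (Y1)-(Y4), to close up uniformly in $p$ and $q$; and, separately, establishing fullness, where one must show every projective $P(i,j)$ lies in $\thick\{X_{i',j'}\mid(i',j')\in S\}$, which I would do by exhibiting triangles that express the $P(i,j)$ in terms of the $X_{i',j'}$ and inducting along the staircase. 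With (Y1)-(Y4) and generation in hand, Theorem~\ref{Intro 0} yields a triangle equivalence $F\colon\DD\to\per\LL(q+1,p,p-1)$ with $F(X_{i,j})\simeq P(i,j)$, which is precisely the claimed equivalence.
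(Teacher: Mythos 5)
There is a genuine gap. Your reduction is the right one and matches the paper's frame: both aim to produce a full $S$-family in $\DD=\per\LL(p+1,q,q-1)$ whose index set is (up to translation/transposition) $\Y(q+1,p,p-1)$, and then invoke Theorem \ref{Triviality} (equivalently the (Y1)--(Y4) criterion of Theorem \ref{Lpq}); your identification of $\DD$ with $\per\AA(pq+1,q+1)$ and the verification of the standing hypotheses $(\ref{Good})$ are also fine. But the family itself is never constructed: you only say the $X_{i,j}$ ``should be explicit complexes assembled from the $P(i,j)$ by iterated cones and shifts'' guided by an anti-diagonal reflection heuristic, and you yourself flag the computation of all $\Hom_{\DD}(X_{i,j},X_{i',j'}[m])$, the degree bookkeeping in (Y1)--(Y4), and fullness as unresolved obstacles. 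That verification \emph{is} the theorem; a plan to do it by writing down minimal projective resolutions over the staircase algebra, uniformly in $p$ and $q$, is not an argument, and nothing in the proposal indicates how the resolutions or the required vanishing would actually close up.

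For comparison, the paper avoids any ad hoc Hom computations by using the mutation machinery of Section 5. One starts from the canonical full $\Y(p+1,q,q-1)$-family $(P(i,j))$ of Example \ref{Trivial family}, applies Mutation I (Theorem \ref{Mutation Lemma 1}) $q$ times with $\sigma_{\leqslant p}$ (replacing the objects in rows $\leqslant p$ by their relative Serre twists $\S_{\langle X_{i}\rangle}(X_{i,j+1})$), and then Mutation ${}^{t}$I (Theorem \ref{Mutation Lemma 1op}) $p$ times with $\rho_{\leqslant 0}^{-1}$; each of these theorems already asserts that a full $S$-family is carried to a full $S'$-family, so the only thing left is the combinatorial identity ${}^{t}\Y(q+1,p,p-1)\equiv\rho_{\leqslant 0}^{-p}\sigma_{\leqslant p}^{q}(\Y(p+1,q,q-1))$, after which Theorem \ref{Triviality} gives the equivalence. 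If you want to complete your proposal along your own lines, you would in effect have to reprove the content of these mutation lemmas (the gluing statements, the behaviour of relative Serre functors on rows and columns, and preservation of generation) for your hand-built complexes; identifying and using that mutation step is the missing idea.
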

By Theorem \ref{Intro 1}, the above result gives the following 
triangle equivalence due to  
Lenzing-Meltzer-Ruan \cite[Prop.\,4.1]{LMR}. 
\begin{cor}[Corollary \ref{Main 4}]
\label{Intro 7}
There exists a triangle equivalence 
\[\text{$\per\AA(pq+1,q+1)
\to \per\AA(pq+1,p+1)$ 
\ \ for any integers $p\geqslant 2$, 
$q\geqslant 2$. }\]
\end{cor}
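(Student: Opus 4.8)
The plan is to reduce both Nakayama algebras appearing in the statement to the idempotent subalgebras $\LL$ via Theorem \ref{Intro 1}, and then invoke the equivalence of Theorem \ref{Intro 6}. Concretely, I would first match the parameters on each side so that $\AA(pq+1,q+1)$ and $\AA(pq+1,p+1)$ correspond exactly to the two algebras $\LL(p+1,q,q-1)$ and $\LL(q+1,p,p-1)$ linked by Theorem \ref{Intro 6}.

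For the left-hand side I want to write $\AA(pq+1,q+1)$ in the form $\AA(st-u,t+1)$. Comparing the second index gives $t=q$, and then the equation $st-u=pq+1$ forces $u=sq-pq-1$. Choosing $s=p+1$ yields $st=(p+1)q=pq+q$, hence $u=q-1$, so that $\AA(pq+1,q+1)=\AA(st-u,t+1)$ with $(s,t,u)=(p+1,q,q-1)$. Since $p\geqslant 2$ and $q\geqslant 2$ we have $s=p+1\geqslant 1$, $t=q\geqslant 1$, and $0\leqslant u=q-1\leqslant q=t$, so the hypotheses of Theorem \ref{Intro 1} are met and there is a triangle equivalence $\per\AA(pq+1,q+1)\to\per\LL(p+1,q,q-1)$.

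For the right-hand side the identical computation with the roles of $p$ and $q$ interchanged gives $\AA(pq+1,p+1)=\AA(st-u,t+1)$ with $(s,t,u)=(q+1,p,p-1)$, where $st=(q+1)p=pq+p$ and $u=p-1$. The constraints $0\leqslant p-1\leqslant p$ again hold because $p\geqslant 2$, so Theorem \ref{Intro 1} provides a triangle equivalence $\per\AA(pq+1,p+1)\to\per\LL(q+1,p,p-1)$.

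Finally I would compose these two equivalences (one of them with a quasi-inverse) with the triangle equivalence $\per\LL(p+1,q,q-1)\to\per\LL(q+1,p,p-1)$ supplied by Theorem \ref{Intro 6}, which applies precisely because $p\geqslant 2$ and $q\geqslant 2$. Chaining them yields the asserted triangle equivalence $\per\AA(pq+1,q+1)\to\per\AA(pq+1,p+1)$. I do not expect any genuine obstacle at this stage: the entire substance of the result lives in Theorem \ref{Intro 6}, and the only point requiring care is to verify that the two arithmetic identifications $\AA(pq+1,q+1)\simeq\LL(p+1,q,q-1)$ and $\AA(pq+1,p+1)\simeq\LL(q+1,p,p-1)$ land in the admissible range $0\leqslant u\leqslant t$ where Theorem \ref{Intro 1} holds, which the bounds $p,q\geqslant 2$ guarantee.
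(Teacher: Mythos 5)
Your argument is correct and is essentially the paper's own proof: the paper likewise identifies $\per\AA(pq+1,q+1)\simeq\per\LL(p+1,q,q-1)$ and $\per\AA(pq+1,p+1)\simeq\per\LL(q+1,p,p-1)$ via Theorem \ref{Nak} (the body version of Theorem \ref{Intro 1}) and then composes with the equivalence of Theorem \ref{Main3} (Theorem \ref{Intro 6}). Your parameter bookkeeping $(s,t,u)=(p+1,q,q-1)$ and $(q+1,p,p-1)$ matches the paper exactly, so there is nothing to add.
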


In the rest, we describe the summary of each chapter of this paper. 
In Chapter 2, we give basic results for 
Serre functors, semi-orthogonal decompositions, 
admissible subcategories, tilting objects, 
and exceptional sequences. 
In Chapter 3, we characterize the 
perfect derived category of the algebra 
$L(S)$ by using the terminology of $S$-families 
which are sequences of exceptional objects satisfying some conditions 
(Definition \ref{def LS}). By showing the existence of an $S$-family, 
we can construct a triangle equivalence between a 
triangulated category satisfying some conditions and the perfect derived category of the algebra $L(S)$ (Theorem \ref{Triviality}). 
In Chapter 4, we study $S$-families when 
$S$ is a Young diagram. And we show that 
if $S$ is the Young diagram $Y(s,t,u)$,  
the algebra $L(S)$ is derived 
equivalent to the Nakayama algebra 
$N(st-u,t+1)$ (Theorem \ref{Nak}). 
In Chapter 5, we introduce mutations of 
$S$-families under some assumption for $S$ 
which is a mutation as exceptional sequences 
(Theorem \ref{Mutation Lemma 1}, 
\ref{Mutation Lemma 2}). 
By using results for mutations of $S$-families, we prove main results in this paper 
(Theorem \ref{Main 1}). 

\medskip\noindent{\bf  Conventions}
In this paper, $K$ is a field and all modules over $K$-linear categories are right modules. 
We denote by $\otimes$ the tensor product over $K$. For any $K$-vector space $V$, we denote 
by $V^{\ast}$ the dual of $V$. 
For any set $\{X_{i}\mid i\in J\}$ 
of objects in a triangulated category $\DD$, we denote by $\langle X_{i}\mid i\in J \rangle$ 
the thick subcategory of $\DD$ 
generated by $\{X_{i}\mid i\in J\}$. 
For any $K$-linear category $\CC$, we denote by 
$\DDD(\CC)$ the derived category of $\CC$, and 
$\per\CC$ the perfect derived category of $\CC$ i.e.\,the thick subcategory $\langle P_{\CC}(i)\mid i\in \CC\rangle$ of $\DDD(\CC)$ where 
$P_{\CC}(i):=\Hom_{\CC}(-,i)$.
 
For any two arrows $\alpha:i\to j$ and $\beta:j\to k$ in a quiver $Q$, 
we denote the their composition  
by $\beta\alpha:i\to k$. 
For any admissible ideal $I$ of $KQ$, we denote by $S_{KQ/I}(i)$ (resp.\,$P_{KQ/I}(i)$, $I_{KQ/I}(i)$), 
the simple (resp.\,projective, injective) 
$KQ/I$-module corresponding a vertex $i$ of $Q$. 
In our conventions, for any source $i$ in $Q$, $S_{KQ/I}(i)\simeq P_{KQ/I}(i)$. 
We often simply denote 
by $S(i)$ (resp.\,$P(i)$, $I(i)$) instead of 
$S_{A}(i)$ (resp.\,$P_{A}(i)$, $I_{A}(i)$). 

\medskip\noindent{\bf Acknowledgements}
I am grateful to Osamu Iyama for 
his comments to this paper and to 
Ryo Takahashi for his support. 
I would like to thank the referees 
for their careful reading of this paper.

\section{Preliminaries} 
\subsection{$K$-linear categories and modules}
We refer to \cite{GR} for the representation theory of $K$-linear categories. 
A category $\CC$ is called a \emph{$K$-linear category} if each hom-set is a $K$-vector space and each composition map 
\[\Hom_{\CC}(X,Y)\times\Hom_{\CC}(Y,Z)
\to \Hom_{\CC}(X,Z);
(f,g)\mapsto g\circ f\]
is a bilinear map. 
Let $\CC$ and $\CC'$ be two $K$-linear categories. 
A functor $F:\CC\to \CC'$ is called a 
\emph{$K$-linear functor} if each mapping 
\[\Hom_{\CC}(X,Y)\to \Hom_{\CC'}(F(X),F(X'));
f\mapsto F(f)\]
is a $K$-linear map for any two objects $X$ and $X'$ in $\CC$. 

Let $\Mod K$ be the $K$-linear category consisting of $K$-vector spaces. A \emph{(right) $\CC$-module} is a $K$-linear functor 
$X:\CC^{\op}\to\Mod K$. For any small $K$-linear 
category $\CC$, we denote by $\Mod\CC$ the $K$-linear category consisting of $\CC$-modules. Then we define $\CC$-module $P_{\CC}(i)$ as
\[\text{$P_{\CC}(i)=P(i):=\Hom_{\CC}(-,i)$}.\]
Then $P_{\CC}(i)$ is a projective module for any $i\in \CC$. For any $\CC$-module $X$, there exists a surjective morphism 
\[\bigoplus_{i\in \CC} P_{\CC}(i)^{\oplus E_{i}}\to X\]
where $E_{i}$ is a basis of 
$\Hom_{\Mod\CC}(P_{\CC}(i),X)$. 
In particular, any projective $\CC$-module $P$ is a direct summand of a direct sum of projective modules $P_{\CC}(i)$. 

A $\CC$-module $X$ is \emph{finitely generated} if 
there exist a family $(n_{i})_{i\in I}$ of nonnegative integers $n_{i}$ indexed by 
a finite set $I$ of objects in $\CC$ 
and a surjective morphism 
$\displaystyle p:\bigoplus_{i\in I} P_{\CC}(i)^{\oplus n_{i}}\to X$. We denote by $\mod \CC$, the category of finitely generated modules. 
A $K$-linear category $\CC$ is said to be 
\emph{$Hom$-finite} over $K$ if
$\dim_{K}\Hom_{\CC}(i,j)<\infty$ for any 
$i$, $j\in\CC$. 
 Let $\CC$ be a $\Hom$-finite $K$-linear category. For any $i\in\CC$, define 
\[S_{\CC}(i):=P_{\CC}(i)/\rad P_{\CC}(i).\]
Then $S_{\CC}(i)$ is a finitely generated 
semi-simple $\CC$-module. 

A category $\CC$ is said to be \emph{svelte} if 
for any objects $i,j\in\CC$, the 
relation $i\simeq j$ implies the relation 
$i=j$. 
A svelte $K$-linear category $\CC$ is said to be 
\emph{locally bounded} if for any $i\in\CC$, 
the set \[\{j\in\CC\mid 
\text{$\Hom_{\CC}(i,j)\neq 0$ or $\Hom_{\CC}(j,i)\neq 0$}\}\] is a finite set. 
We define the support of a $\CC$-module $X$ as the following:
\[\supp X=\{i\in \CC\mid X(i)\neq 0\}.\]
A svelte $K$-linear category $\CC$ is locally bounded if and only if $\supp P_{\CC}(i)$ is a finite set for any object $i$. 
 
Let $\CC$ be a $\Hom$-finite and locally bounded $K$-linear category, $X\in\Mod\CC$. 
It is elementary that the following conditions are equivalent:
\begin{enumerate}[\rm (i)]
\item $X$ is a finitely generated module.
\item $\supp X$ is a finite set and 
$\dim_{K} X(i)<\infty$ for any $i\in\CC$.
\end{enumerate}
If $\CC$ is a $\Hom$-finite and locally bounded 
$K$-linear category, 
then $\mod \CC$ is a $\Hom$-finite abelian category.  
 We define $\CC$-module $I_{\CC}(i)$ by 
 \[I_{\CC}(i)=I(i):=\Hom_{\CC}(i,-)^{\ast}.\]
 We denote by $\proj \CC$ (resp.\,$\inj \CC$), the full subcategory of $\mod\CC$ consisting of finitely generated projective 
(resp.\,injective) 
$\CC$-modules. 

Let $C$ be a $\Hom$-finite and locally bounded $K$-linear category. 
Then the $K$-linear functor $D=(-)^{\ast}:(\mod \CC^{\op})^{\op}\to\mod \CC$ is an equivalence of $K$-linear categories and the restriction functor 
$D|_{(\proj\CC^{\op})^{\op}}:(\proj\CC^{\op})^{\op}\to\inj\CC$ is an equivalence of $K$-linear categories. 
Let 
\[\text{$\nak=(-)\otimes_{\CC}\CC^{\ast}:
\Mod \CC\to \Mod \CC$, \ \ 
$\nak^{-}=\Hom_{\CC}(\CC^{\ast},-):
\Mod \CC\to \Mod \CC$}\]
be Nakayama functors. More precisely, they are 
defined as 
\[\nak(X)(i)=\Coker(\bigoplus_{j,k\in\CC}X_{j}\otimes \Hom_{\CC}(k,j)\otimes 
\Hom_{\CC}(k,i)^{\ast}\stackrel{f}{\to} 
\bigoplus_{k\in\CC} X_{k}\otimes \Hom_{\CC}(k,i)^{\ast}),\] 
\[\nak^{-}(X)(i)=\Hom_{\Mod\CC}(\Hom_{\CC}(i,-)^{\ast},X)\]
for any $X\in\Mod\CC$ and $i\in\CC$ where 
\[f: x\otimes\alpha\otimes y
\mapsto x\alpha\otimes y
-x\otimes \alpha y.\]
By the definitions of $\nak$ and $\nak^{-}$, 
$(\nak^{-},\nak)$ is a pair of adjoint functors. 

\begin{pro}
Let $\CC$ be a $\Hom$-finite and locally bounded 
$K$-linear category. 
Then the functor 
\[\nak|_{\proj \CC}:\proj \CC\to\inj \CC\]
is an equivalence of $K$-linear categories 
satisfying $\nak(P_{\CC}(i))\simeq I_{\CC}(i)$ 
and there exists a functorial isomorphism 
\[\text{$\Hom_{\mod \CC}(P,\nak(Q))
\simeq\Hom_{\mod\CC}(Q,P)^{\ast}$ 
for any $P,Q\in\proj\CC$.}\]
\end{pro}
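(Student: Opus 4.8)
The plan is to first establish the object-level isomorphism $\nak(P_{\CC}(i))\simeq I_{\CC}(i)$ directly from the cokernel (coend) formula defining $\nak$, and then to bootstrap all the remaining assertions from the Yoneda isomorphism, additivity, and the duality $D$ recorded just above the statement.

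First I would compute $\nak(P_{\CC}(i))$. Substituting $X=P_{\CC}(i)$, so that $X_{k}=\Hom_{\CC}(k,i)$, into the presentation of $\nak(X)(j)$, I would introduce the \emph{composition--evaluation} map $\theta\colon\bigoplus_{k}\Hom_{\CC}(k,i)\otimes\Hom_{\CC}(k,j)^{\ast}\to\Hom_{\CC}(i,j)^{\ast}=I_{\CC}(i)(j)$ sending $\phi\otimes\psi$, with $\phi\colon k\to i$ and $\psi\in\Hom_{\CC}(k,j)^{\ast}$, to the functional $g\mapsto\psi(g\phi)$ on $\Hom_{\CC}(i,j)$. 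A short unwinding of the two module actions shows $\theta f=0$, so $\theta$ descends to $\bar{\theta}\colon\nak(P_{\CC}(i))(j)\to I_{\CC}(i)(j)$. An inverse is induced by $\psi'\mapsto 1_{i}\otimes\psi'$ into the $k=i$ summand: the identity $f(1_{i}\otimes\phi\otimes\psi)=\phi\otimes\psi-1_{i}\otimes\phi\psi$ shows that modulo $\Im f$ every generator is congruent to one supported at $k=i$ of the form $1_{i}\otimes(-)$, on which $\bar{\theta}$ is plainly bijective. Naturality in $j$ (and in $i$) is routine, so $\nak(P_{\CC}(i))\simeq I_{\CC}(i)$ naturally. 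Since every object of $\proj\CC$ is a summand of a finite sum of representables $P_{\CC}(i)$ and $\nak$ is additive, $\nak$ restricts to a functor $\proj\CC\to\inj\CC$.

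Next I would prove the functorial isomorphism. As both sides are additive in $P$ and $Q$ and survive passage to direct summands, it suffices to treat $P=P_{\CC}(i)$ and $Q=P_{\CC}(j)$. By the Yoneda isomorphism $\Hom_{\mod\CC}(P_{\CC}(i),M)\simeq M(i)$ together with the computation above, $\Hom_{\mod\CC}(P_{\CC}(i),\nak P_{\CC}(j))\simeq I_{\CC}(j)(i)=\Hom_{\CC}(j,i)^{\ast}$, while Yoneda also gives $\Hom_{\mod\CC}(P_{\CC}(j),P_{\CC}(i))\simeq\Hom_{\CC}(j,i)$; dualizing identifies the two naturally in $i$ and $j$, which is the asserted isomorphism $\Hom_{\mod\CC}(P,\nak Q)\simeq\Hom_{\mod\CC}(Q,P)^{\ast}$.

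Finally I would deduce that $\nak|_{\proj\CC}$ is an equivalence. For injectivity on morphisms, if $g\colon Q\to Q'$ satisfies $\nak g=0$, then naturality of the above isomorphism in $Q$ forces the map $(-\circ g)\colon\Hom_{\mod\CC}(Q',P)\to\Hom_{\mod\CC}(Q,P)$ to vanish for all $P\in\proj\CC$; taking $P=Q'$ and evaluating at $\mathrm{id}_{Q'}$ yields $g=0$, so each $\nak\colon\Hom_{\mod\CC}(Q,Q')\to\Hom_{\mod\CC}(\nak Q,\nak Q')$ is injective. To promote this to a bijection I would match dimensions on representables: $\dim_{K}\Hom_{\mod\CC}(P_{\CC}(i),P_{\CC}(j))=\dim_{K}\Hom_{\CC}(i,j)$, whereas the contravariant equivalence $D\colon\inj\CC\to(\proj\CC^{\op})^{\op}$ gives $\Hom_{\mod\CC}(I_{\CC}(i),I_{\CC}(j))\simeq\Hom_{\mod\CC^{\op}}(P_{\CC^{\op}}(j),P_{\CC^{\op}}(i))\simeq\Hom_{\CC^{\op}}(j,i)=\Hom_{\CC}(i,j)$, so the two finite dimensions agree and an injection between them is bijective; additivity then gives full faithfulness on all of $\proj\CC$. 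Essential surjectivity follows because the image of the fully faithful additive functor $\nak$ contains every $I_{\CC}(i)$ and is closed under summands (as $\proj\CC$ is idempotent complete), while every object of $\inj\CC$ is a summand of a finite sum of the $I_{\CC}(i)$ via $D$. The main obstacle is the very first step: carefully unwinding the bimodule actions in the coend formula to verify $\theta f=0$ and to produce the inverse, and checking naturality; once $\nak(P_{\CC}(i))\simeq I_{\CC}(i)$ is secured, everything else is a formal consequence of Yoneda, additivity, and the duality $D$ from the preliminaries.
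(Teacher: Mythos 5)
The paper states this proposition without proof (it is recorded as a known fact from the preliminaries), so there is no argument of the paper's to compare against; your proposal supplies the standard proof and it is correct. The key computation $\nak(P_{\CC}(i))\simeq I_{\CC}(i)$ via the relation $f(1_{i}\otimes\phi\otimes\psi)=\phi\otimes\psi-1_{i}\otimes\phi\psi$ checks out, the reduction of the functorial isomorphism to representables by Yoneda and additivity is sound, and the deduction of the equivalence (faithfulness from the functorial isomorphism, fullness by matching the finite dimensions $\dim_{K}\Hom_{\CC}(i,j)$ through the duality $D$, essential surjectivity from $I_{\CC}(i)=\nak(P_{\CC}(i))$ together with idempotent splitting in $\proj\CC$) is complete.
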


\subsection{Serre functors}
Let $\DD$ be a triangulated category. 
Recall that a \emph{Serre functor} of $\DD$ is a $K$-linear autoequivalence $\S:\DD\to\DD$ such that 
there exists a functorial isomorphism   
\[\Hom_{\DD}(X,\S(Y))\tilde{\to} 
\Hom_{\DD}(Y,X)^{\ast}\]
for any two objects $X,Y\in\DD$. 
By the definition of a Serre functor, 
any two Serre functors are isomorphic. 
\begin{pro}\label{SF}\cite{BK}
Let $\DD$ be a triangulated category with 
a Serre functor $\S:\DD\to\DD$. 
\begin{enumerate}[\rm (a)]
\item There exists a natural isomorphism 
$\alpha:\S[1]\to[1]\S$ such that 
$(\S,\alpha):\DD\to\DD$ is 
a triangle autoequivalence. 
\item Any Serre functor of $\DD$ is isomorphic to $\S$. 
\item For any triangle equivalence 
$F:\DD\to\DD'$, $F\S F^{-1}:\DD'\to\DD'$ is a Serre functor of $\DD'$. 
\end{enumerate}
\end{pro}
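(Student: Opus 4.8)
The statement to prove is Proposition \ref{SF}, attributed to Bondal-Kapranov \cite{BK}. Let me sketch the proof plan.

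\medskip

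The plan is to establish each of the three parts in turn, working directly from the defining functorial isomorphism $\Hom_{\DD}(X,\S(Y))\cong\Hom_{\DD}(Y,X)^{\ast}$. For part (a), I first recall that a Serre functor $\S$ is additive and $K$-linear, and I would construct the natural isomorphism $\alpha:\S[1]\to[1]\S$ by comparing the two functorial isomorphisms that the shift induces. Concretely, both $\Hom_{\DD}(X,\S(Y)[1])$ and $\Hom_{\DD}(X,[1]\S(Y))$ are computed via the Serre duality isomorphism together with the natural isomorphisms $\Hom_{\DD}(X,Z[1])\cong\Hom_{\DD}(X[-1],Z)$ coming from the shift being an autoequivalence; chasing these identifications yields a canonical isomorphism of functors $\S[1]\cong[1]\S$, and I would define $\alpha$ to be this isomorphism. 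The remaining content of (a) is that the pair $(\S,\alpha)$ commutes with triangles, i.e.\ sends distinguished triangles to distinguished triangles compatibly with $\alpha$; the standard argument is that an additive autoequivalence equipped with a compatible shift isomorphism automatically preserves the triangulated structure, since any candidate triangle $\S(X)\to\S(Y)\to\S(Z)\to\S(X)[1]$ can be completed to a genuine distinguished triangle and then compared via the five lemma in cohomological functors, using that $\S$ is an equivalence.

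\medskip

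For part (b), the uniqueness, the key observation is that if $\S'$ is another Serre functor then for every $X,Y$ one has natural isomorphisms
\[\Hom_{\DD}(X,\S(Y))\cong\Hom_{\DD}(Y,X)^{\ast}\cong\Hom_{\DD}(X,\S'(Y)),\]
so $\Hom_{\DD}(-,\S(Y))\cong\Hom_{\DD}(-,\S'(Y))$ as functors on $\DD^{\op}$. By the Yoneda lemma this forces $\S(Y)\cong\S'(Y)$, and one checks the isomorphism is natural in $Y$, giving $\S\cong\S'$ as functors. This is the argument the excerpt already alludes to in the sentence ``any two Serre functors are isomorphic,'' so part (b) is essentially a restatement with the naturality made explicit.

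\medskip

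For part (c), given a triangle equivalence $F:\DD\to\DD'$ with quasi-inverse $F^{-1}$, I would verify the Serre property of $F\S F^{-1}$ directly. For $X',Y'\in\DD'$, set $X=F^{-1}(X')$ and $Y=F^{-1}(Y')$ and compute
\[\Hom_{\DD'}(X',F\S F^{-1}(Y'))\cong\Hom_{\DD}(X,\S(Y))\cong\Hom_{\DD}(Y,X)^{\ast}\cong\Hom_{\DD'}(Y',X')^{\ast},\]
where the outer isomorphisms use that $F$ is fully faithful and the middle one is Serre duality in $\DD$. Naturality in both variables is inherited from the naturality of each step, so $F\S F^{-1}$ is a Serre functor of $\DD'$.

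\medskip

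I expect the main obstacle to be part (a): establishing the \emph{naturality} of $\alpha$ and, more delicately, checking that $(\S,\alpha)$ genuinely respects distinguished triangles rather than merely commuting with the shift up to isomorphism. The bookkeeping with the sign conventions relating $\alpha$ to the pentagon/compatibility axioms for a triangle functor is where care is needed; parts (b) and (c) are formal consequences of Serre duality plus Yoneda and should go through routinely.
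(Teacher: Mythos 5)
The paper itself gives no proof of this proposition: it is quoted from Bondal--Kapranov \cite{BK}, so your attempt has to be measured against that argument. Parts (b) and (c) of your sketch are correct and routine: (b) is the Yoneda argument the paper already alludes to, and (c) is transport of structure along the equivalence $F$. The construction of $\alpha$ in (a), by comparing $\Hom_{\DD}(X,\S(Y)[1])$ and $\Hom_{\DD}(X,\S(Y[1]))$ through the Serre isomorphism and Yoneda, is also fine.

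The genuine gap is the exactness claim in (a). It is not true that an additive autoequivalence equipped with a compatible shift isomorphism automatically sends distinguished triangles to distinguished triangles; that is precisely the nontrivial content of the proposition, and no such general principle can be invoked. Your fallback comparison argument does not close the gap either: given a distinguished triangle $X\xrightarrow{u}Y\xrightarrow{v}Z\xrightarrow{w}X[1]$, complete $\S(u)$ to a distinguished triangle $\S(X)\to\S(Y)\xrightarrow{v'}W\to\S(X)[1]$. Since only one of the two rows is known to be distinguished, TR3 gives no filler; from the long exact sequence of the distinguished row one obtains a morphism $f\colon W\to\S(Z)$ with $f\circ v'=\S(v)$, but nothing guarantees that $f$ also makes the square involving $w$ commute, and without that second commutativity the five lemma cannot be applied to compare the long exact sequences (nor can $f$ be adjusted to repair it by a formal diagram chase). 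Moreover, the fact that the candidate row $\S(X)\to\S(Y)\to\S(Z)\to\S(X)[1]$ induces long exact sequences under every $\Hom_{\DD}(T,-)$ is itself not formal: it comes from dualizing the long exact sequence of $\Hom_{\DD}(-,T)$ on the original triangle via the bifunctoriality of the Serre isomorphism and its compatibility with $\alpha$. In \cite{BK} the comparison morphism is constructed directly from the Serre pairing (trace maps), and both commutativities are verified using its naturality; some such essential use of the Serre structure is unavoidable and is missing from your sketch, so part (a) remains unproved as written.
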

A $K$-linear category $\CC$ is called an \emph{Iwanaga-Gorenstein category} if 
$\CC$ is $\Hom$-finite and for any $i\in \CC$, 
$I_{\CC}(i)\in \per\CC$ and $I_{\CC^{\op}}(i)\in \per \CC^{\op}$. 
The following result is well-known \cite{Hap}: 
\begin{pro}\label{Iwa}
For any $\Hom$-finite $K$-linear category $\CC$, 
the following conditions are equivalent:
\begin{enumerate}[\rm (i)]
\item $\CC$ is an Iwanaga-Gorenstein category. 
\item $\per \CC$ has a Serre functor $\S:\per\CC\to\per\CC$.
\end{enumerate}
If the above conditions are satisfied, the functor 
$\nak=(-)\Lotimes_{\CC} \CC^{\ast}:\per \CC\to\per \CC$ is a Serre functor of $\per \CC$ and 
$\nak^{-1}=\Rhom_{\CC}(C^{\ast},-):\per \CC\to\per \CC$ is an inverse of $\nak$. 
\end{pro}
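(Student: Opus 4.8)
The plan is to derive both implications from one ``Serre duality on generators'' computation together with a description of $\per\CC$ in terms of injectives. Throughout I write $\nak=(-)\Lotimes_{\CC}\CC^{\ast}$ and $\nak^{-}=\Rhom_{\CC}(\CC^{\ast},-)$ for the two derived Nakayama functors; they are triangulated, form an adjoint pair $(\nak^{-},\nak)$ on $\DDD(\CC)$, and by the last proposition of the previous subsection satisfy $\nak P_{\CC}(i)\simeq I_{\CC}(i)$ and $\nak^{-}I_{\CC}(i)\simeq P_{\CC}(i)$ together with the module-level duality $\Hom(P,\nak Q)\simeq\Hom(Q,P)^{\ast}$ for $P,Q\in\proj\CC$. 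Since $\Hom$-finiteness makes each $P_{\CC}(i)$ and $I_{\CC}(i)$ an honest module, $\nak$ (resp.\ $\nak^{-}$) carries any finite complex of representable projectives (resp.\ injectives) to a finite complex of representable injectives (resp.\ projectives) in $\DDD(\CC)$.

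First I would establish, for all $P,Q\in\per\CC$, a natural isomorphism
\[\Hom_{\DDD(\CC)}(P,\nak Q)\simeq\Hom_{\DDD(\CC)}(Q,P)^{\ast}.\qquad(\star)\]
It is enough to check this on the generators $P=P_{\CC}(i)$, $Q=P_{\CC}(j)$, where the left-hand side is $\Hom_{\DDD(\CC)}(P_{\CC}(i),I_{\CC}(j))=I_{\CC}(j)(i)=\Hom_{\CC}(j,i)^{\ast}$ by evaluating representables, and the right-hand side is $\Hom_{\DDD(\CC)}(P_{\CC}(j),P_{\CC}(i))^{\ast}=\Hom_{\CC}(j,i)^{\ast}$; the two agree. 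As both sides of $(\star)$ are cohomological functors in each variable and $\per\CC=\langle P_{\CC}(i)\mid i\in\CC\rangle$, a dévissage (the five lemma along the triangles defining the thick closure, plus closure under shifts and summands) extends the isomorphism from the generators to all of $\per\CC$. I expect the only delicate point to be organizing the naturality of $(\star)$ so that it is compatible with triangles in each variable simultaneously; this bookkeeping is the main technical obstacle, though it is routine.

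For $(\mathrm{i})\Rightarrow(\mathrm{ii})$ I use the contravariant duality $D=(-)^{\ast}\colon\DDD(\CC^{\op})\to\DDD(\CC)$, which sends $P_{\CC^{\op}}(i)\mapsto I_{\CC}(i)$ and $I_{\CC^{\op}}(i)\mapsto P_{\CC}(i)$ and hence restricts to $\per\CC^{\op}\to\langle I_{\CC}(i)\mid i\in\CC\rangle$. Reading the two Iwanaga--Gorenstein conditions through $D$ turns them into $\langle I_{\CC}(i)\rangle\subseteq\per\CC$ and $\per\CC\subseteq\langle I_{\CC}(i)\rangle$, i.e.\ into the single statement $\per\CC=\langle I_{\CC}(i)\mid i\in\CC\rangle$. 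Granting this, $\nak$ and $\nak^{-}$ both preserve $\per\CC$ (as $\nak$ sends the generators $P_{\CC}(i)$ to $I_{\CC}(i)\in\per\CC$, and $\nak^{-}$ sends the generators $I_{\CC}(i)$ to $P_{\CC}(i)\in\per\CC$), and the unit $\Id\to\nak\nak^{-}$ and counit $\nak^{-}\nak\to\Id$ are isomorphisms on the respective generators $I_{\CC}(i)$ and $P_{\CC}(i)$, hence on all of $\per\CC$. Thus $\nak\colon\per\CC\to\per\CC$ is an autoequivalence with inverse $\nak^{-}=\Rhom_{\CC}(\CC^{\ast},-)$, and by $(\star)$ it is a Serre functor, which also proves the final assertion of the statement.

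For $(\mathrm{ii})\Rightarrow(\mathrm{i})$, given a Serre functor $\S$, I first identify $\S P_{\CC}(i)\simeq I_{\CC}(i)$. For $X\in\per\CC$ Serre duality gives $\Hom_{\DDD(\CC)}(X,\S P_{\CC}(i))\simeq\Hom_{\DDD(\CC)}(P_{\CC}(i),X)^{\ast}=(H^{0}X)(i)^{\ast}$, while injectivity of $I_{\CC}(i)$ yields $\Hom_{\DDD(\CC)}(X,I_{\CC}(i))\simeq(H^{0}X)(i)^{\ast}$ as well; testing against $X=P_{\CC}(j)[-n]$ shows that the complex $\S P_{\CC}(i)$ has cohomology concentrated in degree $0$ and equal, naturally in $j$, to $I_{\CC}(i)$, whence $\S P_{\CC}(i)\simeq I_{\CC}(i)$ in $\DDD(\CC)$. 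Since $\S$ is an autoequivalence of $\per\CC$ it follows that $I_{\CC}(i)\in\per\CC$ and, as $\S$ carries the generating set $\{P_{\CC}(i)\}$ onto $\{I_{\CC}(i)\}$, that $\per\CC=\S(\per\CC)=\langle I_{\CC}(i)\mid i\in\CC\rangle$. Reading this equality back through $D$ recovers both Iwanaga--Gorenstein conditions, so $\CC$ is Iwanaga--Gorenstein. Finally, since a Serre functor is unique up to isomorphism by Proposition \ref{SF}, the $\S$ here must agree with the $\nak$ produced above.
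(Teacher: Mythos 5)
Your proposal is correct, and the comparison splits by direction. For (ii)$\Rightarrow$(i) you reproduce the paper's own argument: testing against $P_{\CC}(j)[-n]$ and using Serre duality shows $\S P_{\CC}(i)$ has cohomology $I_{\CC}(i)$ concentrated in degree $0$, hence $I_{\CC}(i)\simeq\S P_{\CC}(i)\in\per\CC$; where the paper then simply says ``dually'' to get the condition on $\CC^{\op}$, you instead note $\per\CC=\S(\per\CC)=\langle I_{\CC}(i)\mid i\in\CC\rangle$ and transport the inclusion $P_{\CC}(i)\in\langle I_{\CC}(j)\rangle$ through the duality $(-)^{\ast}$ -- equivalent bookkeeping. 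The genuine difference is in (i)$\Rightarrow$(ii): the paper outsources this direction, and with it the final assertion that $\nak=(-)\Lotimes_{\CC}\CC^{\ast}$ is the Serre functor with inverse $\Rhom_{\CC}(\CC^{\ast},-)$, to the citation \cite[10.4]{Ke}, whereas you prove it directly via the duality isomorphism $(\star)$ checked on generators and extended by d\'evissage, plus the observation that $\nak$ and $\nak^{-}$ exchange the two generating families $\{P_{\CC}(i)\}$ and $\{I_{\CC}(i)\}$ (the latter perfect by hypothesis) and are quasi-inverse on $\per\CC$; this buys a self-contained proof of the whole statement. Two points deserve an explicit line each, though neither is a gap: the natural transformation underlying $(\star)$ must be constructed globally (for instance from the evaluation/trace pairing) before the d\'evissage can be run, as you yourself flag; and ``the unit and counit are isomorphisms on the generators'' should be verified on the nose rather than inferred from the abstract isomorphisms $\nak\nak^{-}I_{\CC}(i)\simeq I_{\CC}(i)$ and $\nak^{-}\nak P_{\CC}(i)\simeq P_{\CC}(i)$ -- at the module level it reduces to the double-dual isomorphism $\Hom_{\CC}(i,j)\to\Hom_{\CC}(i,j)^{\ast\ast}$, which is where $\Hom$-finiteness enters.
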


\begin{proof}
(i)$\Rightarrow$(ii): It follows from \cite[10.4]{Ke}.\\
(ii)$\Rightarrow$(i): Let $\DD=\per \CC$. 
Since 
\[\Hom_{\DD}(P_{\CC}(j),\mathbb{S}(P_{\CC}(i))[n])
\simeq\Hom_{\DD}(P_{\CC}(i),P_{\CC}(j)[-n])^{\ast}\simeq \begin{cases}
\Hom_{\DD}(P_{\CC}(j),I_{\CC}(i)) & n=0,\\
0 & n\neq 0, 
\end{cases}\]
for any $j\in\CC$, we have 
\[H^{n}\S(P_{\CC}(i))\simeq
\begin{cases}
I_{\CC}(i) & n=0,\\
0 & n\neq 0.
\end{cases}\]
Thus $I_{\CC}(i)\simeq
\S(P_{\CC}(i))\in\DD$. 
Dually, we see that $I_{\CC^{\op}}(i)\in\per\CC^{\op}$. 
Thus the assertion follows. 
\end{proof}

\begin{lem}\label{CY lemma}
For any positive integer $p$, 
the category $\per K\A_{p}$ is a 
fractional Calabi-Yau category of dimension 
$\frac{p-1}{p+1}$ i.e.\,there exists an 
isomorphism $\nu_{K\A_{p}}^{p+1}
\tilde{\to} [p-1]$ 
of functors. 
\end{lem}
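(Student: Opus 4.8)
The plan is to compute $\nu$ directly on indecomposable objects and then promote the result to an isomorphism of functors. First I would use that $K\A_{p}$ is hereditary, so that $\per K\A_{p}$ is the bounded derived category of $\mod K\A_{p}$ and every indecomposable object is a shift $M[k]$ of an indecomposable module. The indecomposable modules are the interval modules $M_{[a,b]}$ with $1\leqslant a\leqslant b\leqslant p$, and in the conventions of the paper $P(i)=M_{[1,i]}$, $I(i)=M_{[i,p]}$, $S(i)=M_{[i,i]}$. Since $\nu$ is the Serre functor of $\per K\A_{p}$ by Proposition \ref{Iwa}, I may freely use the identification $\nu(P(i))\simeq I(i)$ as the computational input, together with the fact that $\nu$ commutes with $[1]$.

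Next I would determine $\nu$ on each $M_{[a,b]}$. When $a\geqslant 2$, the short exact sequence $0\to P(a-1)\to P(b)\to M_{[a,b]}\to 0$ is a projective resolution; applying the triangle functor $\nu$ yields a triangle $I(a-1)\to I(b)\to\nu M_{[a,b]}\to I(a-1)[1]$, whose first map is $\nu$ applied to the inclusion, hence nonzero, and so is the surjection $M_{[a-1,p]}\to M_{[b,p]}$ with kernel $M_{[a-1,b-1]}$. Taking cones gives $\nu M_{[a,b]}\simeq M_{[a-1,b-1]}[1]$. When $a=1$ the module $M_{[1,b]}=P(b)$ is projective, so $\nu M_{[1,b]}\simeq I(b)=M_{[b,p]}$ with no shift.

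Then I would repackage these two rules combinatorially. Encoding $M_{[a,b]}$ by the two-element subset $\{a-1,b\}$ of $\Z/(p+1)=\{0,1,\dots,p\}$ gives a bijection with the indecomposable modules, and under it both rules become the single rotation $\{x,y\}\mapsto\{x-1,y-1\}$, the case $a=1$ being exactly the wrap $0\mapsto p$. This rotation has order $p+1$, so $\nu^{p+1}$ fixes the underlying module. A shift $[1]$ is produced precisely on the steps with $0\notin\{x,y\}$, and no shift on the steps with $0\in\{x,y\}$; over one period of $p+1$ rotations the subset contains $0$ at exactly two steps, so there are $p-1$ shifting steps. Hence $\nu^{p+1}(X)\simeq X[p-1]$ for every object $X$.

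The hard part will be promoting this object-level equality to an isomorphism of functors. I would set $H:=\nu^{p+1}\circ[1-p]$, a triangle autoequivalence of $\per K\A_{p}$ with $H(X)\simeq X$ for all $X$, and invoke the standard fact that $\nu$ and $[1]$ are standard autoequivalences, so that $H$ is given by a two-sided tilting complex $Y$. Because $H$ fixes $K\A_{p}$ as a right module, $Y\simeq K\A_{p}$ as a right complex, hence $Y$ is $K\A_{p}$ with its left action twisted by a $K$-algebra automorphism; since $H$ fixes every $P(i)$, this automorphism fixes each vertex and therefore only rescales the arrows $a_{1},\dots,a_{p-1}$, so it is inner. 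Thus $H\simeq\Id$ and $\nu^{p+1}\simeq[p-1]$. Equivalently, the statement amounts to an isomorphism $(K\A_{p})^{*}\Lotimes_{K\A_{p}}\cdots\Lotimes_{K\A_{p}}(K\A_{p})^{*}\simeq K\A_{p}[p-1]$ of $K\A_{p}$-bimodule complexes (with $p+1$ tensor factors), predicted by the computation above; I expect this passage from objects to a canonical natural transformation, rather than the combinatorics, to be the only delicate point.
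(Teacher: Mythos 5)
The paper states Lemma \ref{CY lemma} without any proof, treating it as a known result (it is due essentially to Happel and to Miyachi--Yekutieli in their computation of the derived Picard group of $K\A_{p}$), so there is no in-paper argument to compare against; I can only assess your proposal on its own terms, and it is correct. Your computation of $\nu$ on interval modules is right in the paper's conventions ($P(i)=M_{[1,i]}$, $I(i)=M_{[i,p]}$), the encoding of $M_{[a,b]}$ by $\{a-1,b\}\subset\Z/(p+1)$ does turn both cases into a single rotation with a shift $[1]$ exactly at the steps where $0$ is absent, and the count of $p-1$ shifting steps per period is correct (check against $p=2$: $\nu^{3}\simeq[1]$ on $\per K\A_{2}$). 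You also correctly identify the only delicate point, the passage from the object-level identity $\nu^{p+1}(X)\simeq X[p-1]$ to an isomorphism of functors, and your resolution is the standard one: $\nu^{p+1}[1-p]$ is given by the invertible bimodule complex $(K\A_{p}^{\ast})^{\Lotimes_{K\A_p}(p+1)}[1-p]$, which is isomorphic to $K\A_{p}$ as a right complex, hence to a twisted bimodule ${}_{\sigma}K\A_{p}$; since the $P(i)$ are fixed, $\sigma$ fixes the vertices, hence only rescales the arrows (there being a unique path between comparable vertices of $\A_{p}$, no higher-order terms can occur), and every such automorphism is inner, so the bimodule is trivial. This is exactly the argument one finds in the literature, and I see no gap.
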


\subsection{Semi-orthogonal decompositions and 
Admissible subcategories}
Let $\DD_{1}$ and $\DD_{2}$ be full subcategories of $\DD$. 
We denote by $\DD_{1}\ast \DD_{2}$ the full subcategory consisting of objects $X$ in $\DD$ 
such that there exist 
objects $X_{1}\in \DD_{1}$, $X_{2}\in\DD_{2}$ and a triangle 
\[X_{1}\to X\to X_{2}\to X_{1}[1]\ \ 
\text{in $\DD$.}\] 
Then the operation 
$\ast$ is associative and we define the full subcategory 
$\DD_{1}\ast\DD_{2}\ast\dots\ast \DD_{n}$ 
for full subcategories 
$\DD_{1},\DD_{2},\dots,\DD_{n}$ 
inductively. 

The sequence $(\DD_{1},\DD_{2},\dots,\DD_{n})$ of thick subcategories $\DD_{i}$ of $\DD$ is called a \emph{semi-orthogonal decomposition} of $\DD$ if 
$\DD=\DD_{1}\ast \DD_{2}\ast\dots\ast \DD_{n}$ 
and $\Hom_{\DD}(\mathcal{D}_{k},\mathcal{D}_{k'})=0$ for any integers $k, k'\in[1,n]$ such that $k<k'$. 
In this case, we denote 
$\DD_{1} \ast\DD_{2}\ast\dots\ast\mathcal{D}_{n}$
by \[\DD_{1} \perp\DD_{2}\perp\dots\perp\DD_{n}.\]

The properties of admissible subcategories are detailed in \cite{Huy}. Let $\DD$ be a triangulated category. 
A thick subcategory $\EE$ of $\DD$ is said to be 
\emph{right admissible} 
(resp.\,\emph{left admissible}) 
if the natural embedding $\I_{\EE}^{\DD}
:\EE\to \DD$ 
has a right adjoint functor 
$\T^{\DD}_{\EE}:\DD\to\EE$ 
(resp.\,a left adjoint functor 
$\F^{\DD}_{\EE}:\DD\to\EE$). 
If a thick subcategory $\EE$ of $\DD$ is right admissible and left admissible, $\EE$ is said to be \emph{admissible}. 
We often simply denote 
$\I_{\EE}^{\DD}$ (resp.\,$\T_{\EE}^{\DD}$, 
$\F_{\EE}^{\DD}$) by 
$\I_{\EE}$ (resp.\,$\T_{\EE}$, $\F_{\EE}$). 
By the definitions of $\T^{\DD}_{\EE}$ and 
$\F^{\DD}_{\EE}$, we have 
\begin{gather}\numberwithin{equation}{section}
\text{$\Hom_{\DD}(X,\I_{\EE}^{\DD}(X'))
\simeq\Hom_{\EE}(\F_{\EE}^{\DD}(X),X')$ for 
any objects $X\in \DD$, $X'\in \EE$,}\\
\text{$\Hom_{\DD}(\I_{\EE}^{\DD}(Y'),Y)\simeq\Hom_{\EE}
(Y',\T_{\EE}^{\DD}(Y))$ for any objects 
$Y\in \DD$, $Y'\in \EE$.}
\end{gather}
A thick subcategory $\EE$ of $\DD$ is 
admissible if and only if 
\begin{gather}
\DD=\EE\perp\EE^{\perp}
={}^{\perp}\EE\perp\EE
\end{gather}
where 
\[\EE^{\perp_{\DD}}=\EE^{\perp}
:=\{X\in\DD\mid 
\text{$\Hom_{\DD}(Y,X)=0$ 
for any $Y\in\EE$}\},\]
\[{}^{\perp_{\DD}}\EE={}^{\perp}\EE
:=\{X\in\DD\mid 
\text{$\Hom_{\DD}(X,Y)=0$ 
for any $Y\in\EE$}\}.\]
In particular, the unit and counit morphisms induce triangles 
\begin{gather}\numberwithin{equation}{section}
\T^{\DD}_{\EE}(X)\to X
\to \F^{\DD}_{\EE^{\perp}}(X)\to \T^{\DD}_{\EE}(X)[1],\\
\T^{\DD}_{^{\perp}\EE}(X)\to X\to \F^{\DD}_{\EE}(X)
\to \T^{\DD}_{^{\perp}\EE}(X)[1].
\end{gather} 

The following three facts are elementary:
\begin{lem}\cite{Mac}\label{Adj1}
Let $\DD$ be a triangulated category, 
$\EE$ an admissible subcategory of $\DD$, 
and $\FF$ a thick subcategory of $\EE$.
\begin{enumerate}[\rm (a)]
\item $\FF$ is an admissible subcategory of $\EE$ if and only if 
$\FF$ is an admissible subcategory of $\DD$. 
\item If the condition in $(\rm a)$ is satisfied, then 
\[\T^{\DD}_{\FF}\simeq\T^{\EE}_{\FF}\T^{\DD}_{\EE} 
,\ \ \F^{\DD}_{\FF}
\simeq\F^{\EE}_{\FF}\F^{\DD}_{\EE} ,\ \  
\T^{\EE}_{\FF}\simeq\T^{\DD}_{\FF}|_{\EE},\ \ 
\F^{\EE}_{\FF}\simeq\F^{\DD}_{\FF}|_{\EE}.\]
\end{enumerate}

\end{lem}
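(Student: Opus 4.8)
The plan is to derive both statements from two elementary facts about adjunctions: the right (resp.\ left) adjoint of a composite of functors is the composite of the right (resp.\ left) adjoints in the reverse order, and a fully faithful functor admitting a one-sided adjoint has an invertible unit, resp.\ counit. The only structural input is that, since $\FF\subseteq\EE\subseteq\DD$ are full subcategories, the inclusion factors as $\I^{\DD}_{\FF}\simeq\I^{\DD}_{\EE}\I^{\EE}_{\FF}$, while $\I^{\DD}_{\EE}$ is fully faithful with both adjoints by the admissibility of $\EE$; in particular $\T^{\DD}_{\EE}\I^{\DD}_{\EE}\simeq\Id_{\EE}\simeq\F^{\DD}_{\EE}\I^{\DD}_{\EE}$.

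For the implication ``$\FF$ admissible in $\EE$ $\Rightarrow$ $\FF$ admissible in $\DD$'' I would argue as follows. Assuming $\T^{\EE}_{\FF}$ and $\F^{\EE}_{\FF}$ exist, the composite formula applied to $\I^{\DD}_{\FF}=\I^{\DD}_{\EE}\I^{\EE}_{\FF}$ produces a right adjoint $\T^{\EE}_{\FF}\T^{\DD}_{\EE}$ and a left adjoint $\F^{\EE}_{\FF}\F^{\DD}_{\EE}$. Hence $\FF$ is admissible in $\DD$, and uniqueness of adjoints immediately yields the first two isomorphisms of (b): $\T^{\DD}_{\FF}\simeq\T^{\EE}_{\FF}\T^{\DD}_{\EE}$ and $\F^{\DD}_{\FF}\simeq\F^{\EE}_{\FF}\F^{\DD}_{\EE}$.

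For the converse I would exhibit the restriction $\T^{\DD}_{\FF}|_{\EE}:=\T^{\DD}_{\FF}\I^{\DD}_{\EE}\colon\EE\to\FF$ as a right adjoint of $\I^{\EE}_{\FF}$. For $X\in\EE$ and $Y'\in\FF$ the chain
\[
\Hom_{\FF}(Y',\T^{\DD}_{\FF}\I^{\DD}_{\EE}X)
\simeq\Hom_{\DD}(\I^{\DD}_{\FF}Y',\I^{\DD}_{\EE}X)
=\Hom_{\DD}(\I^{\DD}_{\EE}\I^{\EE}_{\FF}Y',\I^{\DD}_{\EE}X)
\simeq\Hom_{\EE}(\I^{\EE}_{\FF}Y',X)
\]
uses the adjunction $\I^{\DD}_{\FF}\dashv\T^{\DD}_{\FF}$, the factorization of the inclusion, and full faithfulness of $\I^{\DD}_{\EE}$; it exhibits the desired adjunction, so $\FF$ is right admissible in $\EE$ with $\T^{\EE}_{\FF}\simeq\T^{\DD}_{\FF}|_{\EE}$. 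The dual computation with $\F^{\DD}_{\FF}$ gives left admissibility and $\F^{\EE}_{\FF}\simeq\F^{\DD}_{\FF}|_{\EE}$, finishing (a) together with the remaining two isomorphisms of (b). Alternatively, once the forward direction is in hand, these two restriction formulas drop out of the first two by composing with $\I^{\DD}_{\EE}$ and invoking $\T^{\DD}_{\EE}\I^{\DD}_{\EE}\simeq\Id_{\EE}\simeq\F^{\DD}_{\EE}\I^{\DD}_{\EE}$.

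Since the argument is purely formal there is no serious obstacle; the single point that demands care is to check that each natural isomorphism of functors is triangulated rather than merely additive. This is automatic here: adjoints of exact functors between triangulated categories are exact, every functor in sight is assembled from inclusions of thick subcategories and their adjoints, and the $\Hom$-level isomorphisms above are natural in both variables, so they upgrade to isomorphisms of triangle functors.
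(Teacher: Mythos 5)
Your argument is correct and is exactly the standard adjunction formalism that the paper has in mind: the paper gives no proof of Lemma \ref{Adj1}, citing \cite{Mac} and calling it elementary, and your two steps (right/left adjoints of the composite inclusion $\I^{\DD}_{\FF}\simeq\I^{\DD}_{\EE}\I^{\EE}_{\FF}$ for one direction, and the $\Hom$-chain using full faithfulness of $\I^{\DD}_{\EE}$ to exhibit $\T^{\DD}_{\FF}|_{\EE}$ and $\F^{\DD}_{\FF}|_{\EE}$ as the adjoints in $\EE$ for the other) are precisely the intended argument, with uniqueness of adjoints yielding all four isomorphisms in (b). The only point left implicit, which is harmless, is that a thick subcategory of $\EE$ is automatically thick in $\DD$, so the phrase ``$\FF$ is an admissible subcategory of $\DD$'' makes sense; your remark that the resulting isomorphisms are compatible with the triangulated structure is also fine, since adjoints of triangle functors are triangle functors.
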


\begin{lem}\label{Adj2}
Let $\DD$ be a triangulated category, 
$\EE$ a thick subcategory of $\DD$. If there exist admissible 
subcategories $\FF$ and $\FF'$ of $\DD$ 
such that $\EE=\FF\perp \FF'$, 
then $\EE$ is an admissible 
subcategory of $\DD$.
\end{lem}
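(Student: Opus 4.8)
The plan is to establish the two semi-orthogonal decompositions
\[\DD=\EE\perp\EE^{\perp}\quad\text{and}\quad\DD={}^{\perp}\EE\perp\EE,\]
which together are equivalent to the admissibility of $\EE$ by the characterization recorded above. First I would note two elementary facts. Since $\EE=\FF\ast\FF'$, every object of $\EE$ is an extension of an object of $\FF'$ by an object of $\FF$; applying $\Hom_{\DD}(-,X)$ and $\Hom_{\DD}(X,-)$ to the defining triangles yields
\[\EE^{\perp}=\FF^{\perp}\cap(\FF')^{\perp},\qquad {}^{\perp}\EE={}^{\perp}\FF\cap{}^{\perp}\FF'.\]
Moreover the semi-orthogonality $\Hom_{\DD}(\FF,\FF')=0$ built into $\EE=\FF\perp\FF'$ says precisely that $\FF'\subseteq\FF^{\perp}$ and $\FF\subseteq{}^{\perp}\FF'$.

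For right admissibility, fix $X\in\DD$ and peel off the two pieces in the order $\FF$, then $\FF'$, using that $\FF$ and $\FF'$ are admissible in $\DD$. Applying $\T^{\DD}_{\FF}$ gives a triangle $\T^{\DD}_{\FF}(X)\to X\to X_{1}\to \T^{\DD}_{\FF}(X)[1]$ with $\T^{\DD}_{\FF}(X)\in\FF$ and $X_{1}\in\FF^{\perp}$. Applying $\T^{\DD}_{\FF'}$ to $X_{1}$ gives a triangle $\T^{\DD}_{\FF'}(X_{1})\to X_{1}\to X_{2}\to \T^{\DD}_{\FF'}(X_{1})[1]$ with $\T^{\DD}_{\FF'}(X_{1})\in\FF'$ and $X_{2}\in(\FF')^{\perp}$. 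The key point is that $\T^{\DD}_{\FF'}(X_{1})\in\FF'\subseteq\FF^{\perp}$ and $X_{1}\in\FF^{\perp}$, so $X_{2}$, being the cone of a morphism inside the triangulated subcategory $\FF^{\perp}$, satisfies $X_{2}\in\FF^{\perp}\cap(\FF')^{\perp}=\EE^{\perp}$. Finally I would apply the octahedral axiom to the composite $X\to X_{1}\to X_{2}$: its cone $E$ fits into a triangle $\T^{\DD}_{\FF}(X)\to E\to \T^{\DD}_{\FF'}(X_{1})\to \T^{\DD}_{\FF}(X)[1]$, whence $E\in\FF\ast\FF'=\EE$, and simultaneously into a triangle $E\to X\to X_{2}\to E[1]$. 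Together with $\Hom_{\DD}(\EE,\EE^{\perp})=0$ this gives $\DD=\EE\perp\EE^{\perp}$.

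Left admissibility is proved dually, now peeling off in the order $\FF'$, then $\FF$, using the left adjoints. Applying $\F^{\DD}_{\FF'}$ produces a triangle $X^{1}\to X\to \F^{\DD}_{\FF'}(X)\to X^{1}[1]$ with $X^{1}\in{}^{\perp}\FF'$ and $\F^{\DD}_{\FF'}(X)\in\FF'$, and then applying $\F^{\DD}_{\FF}$ to $X^{1}$ gives $Y\to X^{1}\to \F^{\DD}_{\FF}(X^{1})\to Y[1]$ with $Y\in{}^{\perp}\FF$ and $\F^{\DD}_{\FF}(X^{1})\in\FF$. Since $\FF\subseteq{}^{\perp}\FF'$ and $X^{1}\in{}^{\perp}\FF'$, the object $Y$ lies in ${}^{\perp}\FF\cap{}^{\perp}\FF'={}^{\perp}\EE$. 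The octahedral axiom applied to $Y\to X^{1}\to X$ then yields a triangle $\F^{\DD}_{\FF}(X^{1})\to E'\to \F^{\DD}_{\FF'}(X)\to \F^{\DD}_{\FF}(X^{1})[1]$, so $E'\in\FF\ast\FF'=\EE$, together with a triangle $Y\to X\to E'\to Y[1]$; hence $\DD={}^{\perp}\EE\perp\EE$.

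The argument is essentially bookkeeping with triangles, and the only real subtlety—the thing to get right—is that the second peeling preserves the orthogonality gained from the first. This is exactly where the semi-orthogonality $\Hom_{\DD}(\FF,\FF')=0$ enters, through the inclusions $\FF'\subseteq\FF^{\perp}$ and $\FF\subseteq{}^{\perp}\FF'$; without it the intermediate object $X_{2}$ (resp.\ $Y$) would not be forced into $\EE^{\perp}$ (resp.\ ${}^{\perp}\EE$), and the decompositions would break down.
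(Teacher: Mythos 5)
Your proof is correct, but it follows a genuinely different route from the paper. The paper argues structurally: since $\FF$ is admissible, $\DD=\FF\perp\GG$ with $\GG=\FF^{\perp}$; since $\FF'\subset\GG$ is admissible in $\DD$, Lemma \ref{Adj1} makes $\FF'$ admissible in $\GG$, so $\GG=\FF'\perp\GG'$, and then $\DD=\FF\perp\FF'\perp\GG'=\EE\perp\GG'$ gives right admissibility, with the left half done dually. You instead verify the characterization $\DD=\EE\perp\EE^{\perp}={}^{\perp}\EE\perp\EE$ by hand: you peel off $\FF$ and then $\FF'$ via the adjunction triangles, observe that the semi-orthogonality $\FF'\subseteq\FF^{\perp}$ (resp.\ $\FF\subseteq{}^{\perp}\FF'$) forces the final cone into $\EE^{\perp}=\FF^{\perp}\cap(\FF')^{\perp}$ (resp.\ ${}^{\perp}\EE={}^{\perp}\FF\cap{}^{\perp}\FF'$), and assemble the two steps with the octahedral axiom to produce the required triangle $E\to X\to X_{2}\to E[1]$ with $E\in\FF\ast\FF'=\EE$. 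Your identification of where semi-orthogonality is indispensable is exactly right, and all the triangle bookkeeping checks out. What the paper's route buys is brevity and reuse of the nesting lemma (Lemma \ref{Adj1}) together with associativity of $\perp$, so no octahedron is needed explicitly; what your route buys is self-containedness at the level of axioms, since it only invokes the adjunction triangles and the standard equivalence between admissibility and the two semi-orthogonal decompositions, which the paper itself records.
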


\begin{proof}
Since $\FF$ is an admissible subcategory of 
$\DD$, we have 
$\DD=\FF\perp\GG$ 
where $\GG=\FF^{\perp_{\DD}}$. 
Since $\FF'$ is an admissible subcategory of $\DD$ and $\FF'\subset\GG$, 
it follows from Lemma \ref{Adj1} that 
$\FF'$ is an admissible subcategory of $\GG$. 
So $\GG=\FF'\perp\GG'$ where 
$\GG'=\FF'^{\perp_{\GG}}$. 
Since \[\DD=\FF\perp\GG
=\FF\perp 
\FF'\perp\GG'=\EE\perp\GG',\] 
$\EE$ is a right admissible 
subcategory of $\DD$. 
Dually,  
$\EE$ is a left admissible subcategory of $\DD$.  
\end{proof}

\begin{lem}\label{Adj3}
Let $\DD$ be a triangulated category and 
let $\EE$, $\EE'$, $\FF$, $\FF'$ be four admissible subcategories of $\DD$ such that 
$\DD=\EE\perp\FF,\ \ \EE'\subset\EE,\ \ 
\FF'\subset\FF.$ 
Then for $\DD'=\EE'\perp\FF'$, we have 
\[\T_{\EE}^{\DD}|_{\DD'}
\simeq\T_{\EE'}^{\DD'},\ \ \F_{\FF}^{\DD}|_{\DD'}
\simeq\F_{\FF'}^{\DD'}.\]
\end{lem}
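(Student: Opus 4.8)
The plan is to reduce the statement to the uniqueness of truncation triangles for a semi-orthogonal decomposition, together with the compatibility identities already recorded in Lemma \ref{Adj1}. First I would check that $\DD'=\EE'\perp\FF'$ is genuinely a semi-orthogonal decomposition: since $\EE'\subset\EE$, $\FF'\subset\FF$ and $\Hom_{\DD}(\EE,\FF)=0$, semi-orthogonality $\Hom_{\DD}(\EE',\FF')=0$ is inherited. As $\EE'$ and $\FF'$ are admissible in $\DD$, Lemma \ref{Adj2} shows that $\DD'$ is an admissible subcategory of $\DD$; and since $\EE'$ is thick, Lemma \ref{Adj1}(a) gives that $\EE'$ is admissible both in $\EE$ and in $\DD'$.

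The crux is the claim that $\T^{\DD}_{\EE}(X)\in\EE'$ for every $X\in\DD'$. To prove it I would take the defining triangle $A'\to X\to B'\to A'[1]$ of the decomposition $\DD'=\EE'\perp\FF'$, so that $A'\in\EE'$ and $B'\in\FF'$. Then $A'\in\EE'\subset\EE$ while $B'\in\FF'\subset\FF=\EE^{\perp}$, so this very triangle is also a truncation triangle for the decomposition $\DD=\EE\perp\FF$. By the uniqueness of such triangles, which holds because $\Hom_{\DD}(\EE,\EE^{\perp})=0$, I obtain canonical isomorphisms $\T^{\DD}_{\EE}(X)\simeq A'\in\EE'$ and, simultaneously, $\F^{\DD}_{\FF}(X)\simeq B'\in\FF'$.

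It then remains to upgrade this object-wise statement to an isomorphism of functors, and here Lemma \ref{Adj1}(b) does the work. Applying it to $\EE'\subset\DD'$ gives $\T^{\DD'}_{\EE'}\simeq\T^{\DD}_{\EE'}|_{\DD'}$, while applying it to $\EE'\subset\EE$ gives $\T^{\DD}_{\EE'}\simeq\T^{\EE}_{\EE'}\,\T^{\DD}_{\EE}$; combining these yields
\[\T^{\DD'}_{\EE'}\simeq\T^{\EE}_{\EE'}\circ\bigl(\T^{\DD}_{\EE}|_{\DD'}\bigr).\]
By the crux, $\T^{\DD}_{\EE}|_{\DD'}$ factors through $\EE'$, and since the right adjoint $\T^{\EE}_{\EE'}$ restricts to the identity on the fully faithful subcategory $\EE'$, the outer functor disappears, giving $\T^{\DD'}_{\EE'}\simeq\T^{\DD}_{\EE}|_{\DD'}$. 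The identity $\F^{\DD'}_{\FF'}\simeq\F^{\DD}_{\FF}|_{\DD'}$ follows by the same argument with the roles of $\EE$, $\FF$ interchanged, or formally by passing to $\DD^{\op}$.

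I expect the only real subtlety to be the crux claim, specifically the justification that the decomposition triangle of $X$ inside $\DD'$ is literally the truncation triangle of $X$ inside $\DD$; this hinges on the inclusion $\FF'\subset\EE^{\perp}$ and on the uniqueness of semi-orthogonal truncations. Once that is in place, the remaining steps are a purely formal manipulation of the adjunction compatibilities of Lemma \ref{Adj1}.
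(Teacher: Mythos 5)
Your proposal is correct and follows essentially the same route as the paper: the paper's proof also takes the truncation triangle $\T_{\EE'}^{\DD'}(X)\to X\to\F_{\FF'}^{\DD'}(X)\to\T_{\EE'}^{\DD'}(X)[1]$ for $X\in\DD'$, observes that its outer terms lie in $\EE$ and $\FF$ respectively, and concludes by uniqueness of the truncation triangle for $\DD=\EE\perp\FF$. Your extra steps (admissibility of $\DD'$ via Lemma \ref{Adj2} and the functorial upgrade via Lemma \ref{Adj1}(b)) merely spell out details the paper leaves implicit.
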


\begin{proof}
For any $X\in\DD'$, there exists a triangle 
\[\T_{\EE'}^{\DD'}(X)
\to X
\to\F_{\FF'}^{\DD'}(X)
\to\T_{\EE'}^{\DD'}(X)[1].\]
Since $\T_{\EE'}^{\DD'}(X)\in\EE$ and 
$\F_{\FF'}^{\DD'}(X)\in\FF$, the assertion follows. 
\end{proof}
The following observation should be well-known but we could not find a reference. 
We give a proof for the convenience of the reader. 
\begin{pro}\label{Serre functor}
Let $\DD$ be a triangulated category with a Serre functor $\S=\S_{\DD}$, 
$\EE$ a left or right admissible subcategory of $\DD$. 
\begin{enumerate}[\rm (a)]
\item $\EE$ is an admissible subcategory of $\DD$ if and only if $\EE$ has a Serre functor 
$\S_{\EE}$.
\end{enumerate}
If the conditions in $(\rm{a})$ are satisfied, 
then the following two assertions hold:
\begin{enumerate}[\rm(a)]
\item[\rm(b)] The functor 
$\S_{\EE}$ in \rm{(a)} satisfies $\S_{\EE}
\simeq\T_{\EE}\S|_{\EE}$ and $\S^{-1}_{\EE}\simeq\F_{\EE}\S^{-1}|_{\EE}$. 
\item[\rm(c)] If $X$, $\S(X)\in\EE$, then 
$\S_{\EE}(X)\simeq \S(X)$. 
\end{enumerate}\end{pro}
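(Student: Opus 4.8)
The plan is to prove the three parts in the order (a), then (b), then (c), exploiting the characterization of admissibility via semi-orthogonal decompositions recorded in the preliminaries together with the defining adjunction property of a Serre functor. For part (a), the forward direction is the substantive one. Assume $\EE$ is admissible, so that $\DD=\EE\perp\EE^{\perp}={}^{\perp}\EE\perp\EE$ and the embedding $\I_{\EE}$ has both adjoints $\T_{\EE}$ (right) and $\F_{\EE}$ (left). The natural candidate for a Serre functor on $\EE$ is $\S_{\EE}:=\T_{\EE}\,\S\,\I_{\EE}$. To verify it works, I would take $X,Y\in\EE$ and compute
\[
\Hom_{\EE}(X,\T_{\EE}\S\I_{\EE}(Y))
\simeq\Hom_{\DD}(\I_{\EE}(X),\S\I_{\EE}(Y))
\simeq\Hom_{\DD}(\I_{\EE}(Y),\I_{\EE}(X))^{\ast}
\simeq\Hom_{\EE}(Y,X)^{\ast},
\]
where the first isomorphism is the defining adjunction $(\I_{\EE},\T_{\EE})$ from the preliminaries, the second is the Serre duality on $\DD$, and the third is fully faithfulness of the embedding $\I_{\EE}$. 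This exhibits $\S_{\EE}$ as a Serre functor provided it is an autoequivalence; its quasi-inverse should be $\F_{\EE}\,\S^{-1}\,\I_{\EE}$, which I would check by the mirror computation using the adjunction $(\F_{\EE},\I_{\EE})$. For the converse, I must show that if $\EE$ (given as, say, right admissible) has a Serre functor then it is also left admissible. The idea is that a Serre functor lets one transpose the one existing adjoint into the other: concretely, if $\I_{\EE}$ has a right adjoint $\T_{\EE}$, then $\S\,\I_{\EE}\,\S_{\EE}^{-1}$ provides a left adjoint, since
\[
\Hom_{\DD}(\S\I_{\EE}\S_{\EE}^{-1}(X'),Y)
\simeq\Hom_{\DD}(Y,\I_{\EE}\S_{\EE}^{-1}(X'))^{\ast}
\simeq\Hom_{\EE}(\T_{\EE}(Y),\S_{\EE}^{-1}(X'))^{\ast}
\simeq\Hom_{\EE}(X',\T_{\EE}(Y))
\]
for $X'\in\EE$, $Y\in\DD$, so $\I_{\EE}$ has a left adjoint and $\EE$ is admissible. (The right-admissible-only hypothesis and the left-admissible-only hypothesis are handled symmetrically.)

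Part (b) is then essentially already proved: the computation in (a) simultaneously identifies the Serre functor constructed there, so $\S_{\EE}\simeq\T_{\EE}\S|_{\EE}$ is forced by uniqueness of Serre functors (Proposition \ref{SF}(b)), and $\S_{\EE}^{-1}\simeq\F_{\EE}\S^{-1}|_{\EE}$ follows by applying the same uniqueness to the quasi-inverse, or dually by replacing $\DD$ with $\EE$ and $\S$ with $\S^{-1}$ throughout. For part (c), suppose $X\in\EE$ and $\S(X)\in\EE$. Since $\S(X)\in\EE$, the counit triangle $\T_{\EE}\S(X)\to\S(X)\to\F_{\EE^{\perp}}\S(X)\to$ has vanishing third term, because $\F_{\EE^{\perp}}\S(X)$ lies in $\EE^{\perp}$ while $\S(X)\in\EE$ forces it to be zero; hence the counit $\T_{\EE}\S(X)\to\S(X)$ is an isomorphism, giving $\S_{\EE}(X)=\T_{\EE}\S(X)\simeq\S(X)$ by part (b).

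The main obstacle I anticipate is the converse of part (a): producing the second adjoint from a Serre functor requires juggling the Serre duality on the ambient $\DD$ with the Serre duality on $\EE$ and keeping the variances straight, and one must be careful that $\S_{\EE}$ and its quasi-inverse genuinely land in $\EE$ and are honest functors (not merely objectwise isomorphisms) before invoking them in the adjunction isomorphism. Everything else reduces to bookkeeping with the adjunction isomorphisms and the orthogonality relations already assembled in the preliminaries, so the conceptual content is concentrated in setting up $\S_{\EE}=\T_{\EE}\S\I_{\EE}$ correctly and in the transposition argument for the converse.
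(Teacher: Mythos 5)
Your handling of the ``only if'' direction of (a), of (b), and of (c) is correct and essentially identical to the paper's: the paper also takes $\S_{\EE}=\T_{\EE}\S|_{\EE}$ with candidate inverse $\F_{\EE}\S^{-1}|_{\EE}$, and settles the autoequivalence point you flag by citing \cite[Lem.\,1.1.5]{RV}, which is exactly your ``mirror computation''.

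The converse direction of (a), however, contains a genuine gap. First, your opening isomorphism $\Hom_{\DD}(\S\I_{\EE}\S_{\EE}^{-1}(X'),Y)\simeq\Hom_{\DD}(Y,\I_{\EE}\S_{\EE}^{-1}(X'))^{\ast}$ is not an instance of Serre duality: the valid forms put $\S$ in the second argument, $\Hom_{\DD}(Y,\S(W))\simeq\Hom_{\DD}(W,Y)^{\ast}$, equivalently $\Hom_{\DD}(\S^{-1}(W),Y)\simeq\Hom_{\DD}(Y,W)^{\ast}$. Concretely, for $\DD=\DDD^{\mathrm{b}}(\mathrm{coh}\,\mathbb{P}^{1})$, $\EE=\langle\mathcal{O}\rangle$, $X'=Y=\mathcal{O}$ (so $\S_{\EE}=\mathrm{id}$), your left-hand side is $\Hom(\mathcal{O}(-2)[1],\mathcal{O})=0$ while the right-hand side is $K$; the two endpoints of your whole chain disagree in this example as well. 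Second, the middle step $\Hom_{\DD}(Y,\I_{\EE}(Z))\simeq\Hom_{\EE}(\T_{\EE}(Y),Z)$ is the adjunction one would have if $\T_{\EE}$ were a \emph{left} adjoint of $\I_{\EE}$; since $\T_{\EE}$ is the right adjoint, this step presupposes the very adjoint you are trying to produce. Third, the argument is aimed at the wrong target: $\S\I_{\EE}\S_{\EE}^{-1}$ is a functor $\EE\to\DD$, so it cannot serve as a left adjoint of $\I_{\EE}$ (which must go $\DD\to\EE$); even after repairing the duality steps, your chain would only exhibit a left adjoint of $\T_{\EE}$, hence something isomorphic to $\I_{\EE}$ itself, and would force $\S(\EE)\subset\EE$ --- false in general, as the same example $\EE=\langle\mathcal{O}\rangle$ shows. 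The paper's proof avoids all of this by constructing the missing adjoint directly as a functor $\DD\to\EE$: assuming $\T_{\EE}$ exists and $\EE$ has a Serre functor $\S_{\EE}$, one sets $\F:=\S_{\EE}^{-1}\T_{\EE}\S$ and checks, for $X\in\EE$ and $Y\in\DD$,
\[
\Hom_{\EE}(\S_{\EE}^{-1}\T_{\EE}\S(Y),X)
\simeq\Hom_{\EE}(X,\T_{\EE}\S(Y))^{\ast}
\simeq\Hom_{\DD}(X,\S(Y))^{\ast}
\simeq\Hom_{\DD}(Y,X),
\]
using Serre duality in $\EE$, the $(\I_{\EE},\T_{\EE})$-adjunction, and Serre duality in $\DD$; this shows $\F$ is a left adjoint of $\I_{\EE}$, so $\EE$ is admissible. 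You should replace your transposition argument by this computation (and its dual for the left-admissible case).
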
 

\begin{proof}
(a) We prove ``only if'' part. 
We show that $\T_{\EE}\S|_{\EE}$ is a Serre functor. 
Since for any $X,Y\in \EE$, there exist functorial isomorphisms 
\begin{gather}
\numberwithin{equation}{section}
\text{$\Hom_{\EE}(X,\T_{\EE}\S(Y))
=\Hom_{\DD}(X,\S(Y))
\simeq \Hom_{\DD}(Y,X)^{\ast}
\simeq \Hom_{\EE}(Y,X)^{\ast},$}\\
\text{$\Hom_{\EE}(\F_{\EE}\S^{-1}(X),Y)
=\Hom_{\DD}(\S^{-1}(X),Y)
\simeq \Hom_{\DD}(Y,X)^{\ast}
\simeq \Hom_{\EE}(Y,X)^{\ast}$}, 
\end{gather}
it follows from \cite[Lem.\,1.1.5]{RV}, that 
$\T_{\EE}\S|_{\EE}$ is a Serre functor of 
$\EE$ with an inverse $\F_{\EE}\S^{-1}|_{\EE}$.\\ 
We prove ``if'' part. 
Let $\T_{\EE}$ be a right adjoint functor of $\I_{\EE}$, and $\S_{\EE}$ a Serre functor of 
$\EE$. 
For any $X, Y\in \EE$, 
there exist functorial isomorphisms 
\begin{align*}
\Hom_{\EE}(\S^{-1}_{\EE} \T_{\EE}\S_{\DD}(Y),X)
&\simeq\Hom_{\EE}(X,\T_{\EE}\S_{\DD}(Y))^{\ast}
\simeq\Hom_{\DD}(X,\S_{\DD}(Y))^{\ast}
\simeq\Hom_{\DD}(Y,X).
\end{align*}
Thus $\S^{-1}_{\EE}\T_{\EE}\S_{\DD}$ is a left adjoint functor of $\I_{\EE}$. \\
(b) In the proof of (a), we proved that 
$\T_{\EE}\S|_{\EE}$ is a Serre functor. By the uniqueness of Serre functor, the assertion follows.\\
(c) This is clear by (b). 
\end{proof}

The proof of the following result is clear from 
Proposition \ref{Serre functor}.
 
\begin{lem}\label{App functor}
Let $\DD$ be a triangulated category with a Serre functor $\S=\S_{\DD}$, and let 
$\EE$ and $\FF$ be two admissible subcategories of $\DD$. 
\begin{enumerate}[\rm (a)] 
\item The functor 
$\F_{\FF}^{\DD}|_{\EE}:\EE\to\FF$ has a right adjoint functor $\T_{\EE}^{\DD}|_{\FF}:\FF\to\EE$.
\item If $\S^{-1}(\FF)\subset\EE$, 
we have an isomorphism 
$\T_{\EE}^{\DD}|_{\FF}
\simeq \S_{\EE}\S^{-1}|_{\FF}:\FF\to\EE$ of 
functors.
\item If $\S(\EE)\subset\FF$, we have an isomorphism $\F_{\FF}^{\DD}|_{\EE}
\simeq \S^{-1}_{\FF}\S|_{\EE}:\EE\to\FF$ of 
functors. 
\item If $\S(\EE)=\FF$, then the functors 
$\F_{\FF}^{\DD}|_{\EE}:\EE\to\FF$ and 
$\T_{\EE}^{\DD}|_{\FF}:\FF\to\EE$ are mutually 
inverse triangle equivalences. 
\end{enumerate}
\end{lem}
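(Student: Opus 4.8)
The plan is to reduce all four parts to the defining adjunction isomorphisms of $\F_{\FF}^{\DD}$ and $\T_{\EE}^{\DD}$ together with Proposition \ref{Serre functor}, which guarantees that the admissible subcategories $\EE$ and $\FF$ carry their own Serre functors $\S_{\EE}$, $\S_{\FF}$ and supplies the formulas $\S_{\EE}\simeq\T_{\EE}^{\DD}\S|_{\EE}$ and $\S_{\FF}^{-1}\simeq\F_{\FF}^{\DD}\S^{-1}|_{\FF}$.

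For (a) I would simply splice the two one-sided adjunctions. Viewing $X\in\EE$ and $Y\in\FF$ as objects of $\DD$ through the inclusions, the left adjoint property of $\F_{\FF}^{\DD}$ gives $\Hom_{\FF}(\F_{\FF}^{\DD}(X),Y)\simeq\Hom_{\DD}(X,Y)$, while the right adjoint property of $\T_{\EE}^{\DD}$ gives $\Hom_{\EE}(X,\T_{\EE}^{\DD}(Y))\simeq\Hom_{\DD}(X,Y)$. Both isomorphisms are functorial in $X$ and $Y$, so composing them identifies $\T_{\EE}^{\DD}|_{\FF}$ as a right adjoint of $\F_{\FF}^{\DD}|_{\EE}$.

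The heart of the argument is (b) and (c), and here the point is that the Proposition \ref{Serre functor}(b) formulas may be evaluated on cleverly chosen objects. For (b), assuming $\S^{-1}(\FF)\subset\EE$, every $Y\in\FF$ has $\S^{-1}(Y)\in\EE$; substituting $Z=\S^{-1}(Y)$ into $\S_{\EE}(Z)\simeq\T_{\EE}^{\DD}(\S(Z))$ gives $\S_{\EE}\S^{-1}(Y)\simeq\T_{\EE}^{\DD}(Y)=\T_{\EE}^{\DD}|_{\FF}(Y)$, naturally in $Y$, which is the asserted isomorphism $\T_{\EE}^{\DD}|_{\FF}\simeq\S_{\EE}\S^{-1}|_{\FF}$. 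Part (c) is the mirror image: assuming $\S(\EE)\subset\FF$, I would substitute $\S(X)$ for $X\in\EE$ into $\S_{\FF}^{-1}(W)\simeq\F_{\FF}^{\DD}(\S^{-1}(W))$ to obtain $\F_{\FF}^{\DD}|_{\EE}(X)\simeq\S_{\FF}^{-1}\S(X)$.

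For (d), the hypothesis $\S(\EE)=\FF$ makes both (b) and (c) applicable. Since $\S$ is an autoequivalence of $\DD$ with $\S(\EE)=\FF$, it restricts to an equivalence $\S|_{\EE}:\EE\to\FF$, so by (c) the functor $\F_{\FF}^{\DD}|_{\EE}\simeq\S_{\FF}^{-1}\S|_{\EE}$ is a composite of equivalences, hence an equivalence; by (a) its right adjoint is $\T_{\EE}^{\DD}|_{\FF}$, and a right adjoint of an equivalence is automatically a quasi-inverse, so the two functors are mutually inverse. They are triangle functors because $\S$ is one by Proposition \ref{SF}(a) and the adjoints $\F_{\FF}^{\DD}$, $\T_{\EE}^{\DD}$ of the triangulated inclusions are triangle functors. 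I expect no real obstacle here; the only points needing care are the naturality of the isomorphisms in (a)--(c), so that uniqueness of adjoints (Yoneda) applies, and the final remark that a right adjoint of an equivalence is a quasi-inverse.
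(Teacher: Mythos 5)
Your proposal is correct and follows essentially the same route as the paper: part (a) by splicing the two adjunction isomorphisms through $\Hom_{\DD}(X,Y)$, parts (b) and (c) by evaluating the formula $\S_{\EE}\simeq\T_{\EE}^{\DD}\S|_{\EE}$ (and its dual) from Proposition \ref{Serre functor}(b) on objects moved into $\EE$ (resp.\ $\FF$) by the hypothesis, and part (d) by writing $\F_{\FF}^{\DD}|_{\EE}$ as a composite of equivalences and invoking (a) for the inverse. No gaps.
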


\begin{proof}
(a) For any $X\in\EE$ and $Y\in\EE$, we have 
functorial isomorphisms 
\begin{align*}
\Hom_{\FF}(\F_{\FF}^{\DD}(X),Y)
\simeq\Hom_{\DD}(X,Y)
=\Hom_{\EE}(X,\T_{\EE}^{\DD}(Y)).
\end{align*}
Thus the assertion follows.\\
(b) Since $\EE$ is an admissible subcategory, 
$\EE$ has a Serre functor 
$\S_{\EE}=\T_{\EE}^{\DD}\S|_{\EE}:\EE\to\EE$  
by Proposition \ref{Serre functor}. 
Since $\S^{-1}(\FF)\subset\EE$, 
Lemma \ref{Serre functor} (b) implies 
$\S_{\EE}\S^{-1}|_{\FF}\simeq\T_{\EE}^{\DD}\S\S^{-1}|_{\FF}
\simeq\T_{\EE}^{\DD}|_{\FF}.$\\
(c) This is the dual of (b).\\
(d) Since the functor $\F_{\FF}^{\DD}|_{\EE}$ is 
the composition of $\S|_{\EE}:\EE\to\FF$ and 
$\S_{\FF}:\FF\to\FF$ by (c), we have that 
it is a triangle equivalence. 
By (a), $\T_{\EE}^{\DD}|_{\FF}$ is an inverse of $\F_{\FF}^{\DD}|_{\EE}$. 
\end{proof}

\subsection{Tilting objects and Exceptional sequences}
Let $\DD$ be a triangulated category. 
An object $T\in\DD$ is called 
a \emph{pretilting object} if 
$\Hom_{\DD}(T,T[n])\simeq 0$ for any integer $n\neq0$. 
A pretilting object $T$ is called a \emph{tilting object} if $\DD=\langle T\rangle$. 
An additive category $\DD$ is said to be \emph{idempotent complete} if any idempotent morphism $e:X\to X$ (i.e.\,endomorphism $e:X\to X$ satisfying $e^{2}=e$) in $\DD$, 
there exist two morphisms $f:X\to Y$ and 
$g:Y\to X$ in $\DD$ such that $fg=1_{Y}$ and $gf=e$. 
A triangulated category $\DD$ is said to be 
\emph{algebraic} if $\DD$ is 
triangle equivalent to the stable category of a Frobenius category. 
\begin{pro}\cite{Ke}\label{tilt1}
Let $\DD$ be an algebraic triangulated 
category, 
$T$ a pretilting object in $\DD$, 
and $A=\End_{\DD}(T)$.  
If $\DD$ is idempotent complete, 
there exists a triangle equivalence 
$F:\langle T\rangle\to\per A$ such that 
$F(T)\simeq A$. 
\end{pro}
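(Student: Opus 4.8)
The plan is to reduce the statement to Keller's formalism of dg-categories, which is exactly the content of \cite{Ke}. Since $\DD$ is algebraic, by definition $\DD\simeq\underline{\mathcal{F}}$ for a Frobenius category $\mathcal{F}$, and the stable category of a Frobenius category carries a canonical dg-enhancement: there is a dg-category $\mathcal{B}$ together with a fully faithful triangle functor $\DD\to H^{0}(\mathcal{B})$ commuting with shifts and compatible with triangles. First I would lift $T$ to an object $\tilde{T}$ of $\mathcal{B}$ and form its dg-endomorphism algebra $\mathcal{E}=\REnd_{\mathcal{B}}(\tilde{T})$, which is a dg-algebra satisfying $H^{n}(\mathcal{E})\simeq\Hom_{\DD}(T,T[n])$ for every $n$.

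The pretilting hypothesis is what makes the argument work: it forces $H^{n}(\mathcal{E})=0$ for $n\neq 0$ and $H^{0}(\mathcal{E})=\End_{\DD}(T)=A$, so the cohomology of $\mathcal{E}$ is concentrated in degree zero. A dg-algebra with this property is linked to its zeroth cohomology by a zig-zag of dg-algebra quasi-isomorphisms: the good truncation $\tau_{\le 0}\mathcal{E}$ is a dg-subalgebra, and both the inclusion $\tau_{\le 0}\mathcal{E}\hookrightarrow\mathcal{E}$ and the projection $\tau_{\le 0}\mathcal{E}\twoheadrightarrow H^{0}(\mathcal{E})=A$ are quasi-isomorphisms of dg-algebras. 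Since quasi-isomorphic dg-algebras have triangle-equivalent perfect derived categories, this yields an equivalence $\per\mathcal{E}\xrightarrow{\sim}\per A$ sending the free rank-one module $\mathcal{E}$ to the free module $A$.

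Next I would build the comparison functor directly in the form we want. The object $\tilde{T}$ makes $\Rhom_{\mathcal{B}}(\tilde{T},Y)$ a dg $\mathcal{E}$-module functorially in $Y$, giving a triangle functor $H=\Rhom_{\mathcal{B}}(\tilde{T},-)\colon\DD\to\DDD(\mathcal{E})$ with $H(T)\simeq\mathcal{E}$, the free module. On the generator this functor realises precisely the identifications $\Hom_{\DD}(T,T[n])\xrightarrow{\sim}H^{n}(\mathcal{E})=\Hom_{\per\mathcal{E}}(\mathcal{E},\mathcal{E}[n])$, so $H$ is fully faithful on the closure of $T$ under shifts and finite direct sums; a standard dévissage (induction on the number of triangles needed to build an object from the generator, using that $H$ is a triangle functor) upgrades this to full faithfulness of $H$ restricted to $\langle T\rangle$. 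The restricted functor lands in $\per\mathcal{E}=\langle\mathcal{E}\rangle$, and since $\langle T\rangle$ is a thick subcategory of the idempotent complete category $\DD$, it is itself idempotent complete; hence the essential image of the fully faithful $H|_{\langle T\rangle}$ is a thick subcategory of $\per\mathcal{E}$ containing $\mathcal{E}=H(T)$, and therefore equals $\thick(\mathcal{E})=\per\mathcal{E}$. Thus $H$ restricts to a triangle equivalence $\langle T\rangle\xrightarrow{\sim}\per\mathcal{E}$, and composing it with the equivalence $\per\mathcal{E}\xrightarrow{\sim}\per A$ of the previous paragraph produces the desired $F\colon\langle T\rangle\to\per A$ with $F(T)\simeq A$.

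The main obstacle is the construction of the dg-enhancement $\mathcal{B}$ of $\DD$ and the verification that $\Rhom_{\mathcal{B}}(\tilde{T},-)$ is a well-defined triangle functor computing the Hom-spaces of $\DD$; this is exactly the machinery of \cite{Ke}, on which everything else rests, and is the reason the hypothesis that $\DD$ be algebraic cannot be dropped. Once that enhancement and functor are available, the remaining steps---the truncation of $\mathcal{E}$, the dévissage establishing full faithfulness, and the idempotent-completeness bookkeeping that pins the essential image down to $\langle T\rangle$---are routine triangulated manipulations.
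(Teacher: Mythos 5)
Your argument is correct and is essentially the standard one: the paper gives no proof of this proposition but simply cites Keller, and your dg-enhancement argument (lift $T$, observe $\REnd$ has cohomology concentrated in degree $0$ by the pretilting hypothesis, pass through the truncation zig-zag to $A$, then use dévissage plus idempotent completeness of the thick subcategory $\langle T\rangle$ to identify it with $\per A$) is precisely the proof from \cite{Ke} that the citation refers to. Nothing is missing; in particular you correctly located the only place where idempotent completeness is needed, namely to show the essential image of the fully faithful comparison functor is thick and hence all of $\per A$.
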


The following result is clear by Proposition 
\ref{tilt1} and $F$ is directly given. 
\begin{ex}\label{tilt2}
Let $A$ be a $K$-algebra such that $\hd A<\infty$, 
and let $(e_{k})_{k\in[1,n]}$ be a 
complete set of primitive orthogonal idempotents of $A$, and 
$\displaystyle e=\sum_{k\in[1,m]} e_{k}$, $B=eAe$. Let 
\[P=eA=\bigoplus_{k\in[1,m]} P_{A}(k),\ \ 
P'=(1-e)A=\bigoplus_{k\in[m+1,n]} P_{A}(k),\] \[S=\bigoplus_{k\in[1,m]} S_{A}(k),\ \  S'=\bigoplus_{k\in[m+1,n]} S_{A}(k).\]
\begin{enumerate}[\rm (a)] 
\item The sequence 
$(e_{k})_{k\in[1,m]}$ is a complete set of primitive orthogonal idempotents of $B$, and 
there exists a triangle equivalence 
$F=\Rhom_{A}(P,-):\langle P\rangle
\to \per B$ 
 such that  
\begin{gather}
\numberwithin{equation}{section}
\text{$F(P_{A}(i))\simeq P_{B}(i)$ for any 
$i\in[1,m]$.}
\end{gather}
\item If $\Hom_{\per A}(P',P)=0$, then 
the following conditions are satisfied:
\begin{gather}
\numberwithin{equation}{section} 
\text{$\per A=\langle P'\rangle\perp 
\langle P\rangle=\langle S\rangle\perp 
\langle S'\rangle$,\ \ 
$\langle P\rangle=\langle S\rangle$. }\\
\text{$F(S_{A}(i))\simeq S_{B}(i)$ for any $i\in[1,m]$. }
\end{gather}
\end{enumerate}
\end{ex}

\begin{proof}
(a) The functor $F=(-)\Lotimes_{A} Ae:\per A\to \per B $ has a left adjoint functor 
$G=(-)\Lotimes_{B} eA:\per B \to \per A$. 
Since 
$eA\Lotimes_{A} Ae\simeq B$ 
as $(B,B)$-bimodules, we have 
$FG\simeq \Id$. Thus  
$F:\langle P\rangle\to \per B $ is a triangle equivalence such that 
\[F(P_{A}(i))\simeq\Rhom_{A}(P,P_{A}(i))
\simeq \Hom_{\per A }(P,P_{A}(i))\simeq e_{i}Ae\simeq P_{B}(i).\] 
(b) Since $\Hom_{\per A}(P,P'[n])\simeq 0$ 
for any integer $n$, we have $\langle P\rangle
=\langle S\rangle$. Since 
$\Hom_{\per A }(P,S'[n])\simeq 0$ for any integer $n$, 
we have that 
$\Hom_{\per A }(S,S'[n])\simeq 0$ for any integer $n$. 
Thus 
\[\per A=\langle S\oplus S'\rangle
=\langle S\rangle\perp \langle S'\rangle.\]
Since 
\[F(S_{A}(i))\simeq 
\Rhom_{A}(P,S_{A}(i))
\simeq \Hom_{\per A}(P,S_{A}(i))
\simeq(e_{i}A/A(1-e_{i})A)e
\simeq S_{B}(i)\]
as $B$-modules, we have $F(S_{A}(i))\simeq 
S_{B}(i)$.
\end{proof}

A pretilting object $E$ is called an 
\emph{exceptional object} in $\DD$ if $\End_{\DD}(E)\simeq K$. 
\begin{lem}\cite[Lem.\,1.58]{Huy}\label{Exc0}
Let $E$ be an exceptional object in $\DD$. Then for any object $X\in\langle E\rangle$, 
\[X\simeq\bigoplus_{n\in\Z} E^{\oplus d_{n}}[n]\]
where $d_{n}=\dim_{K}\Hom_{\DD}(E,X[-n])
=\dim_{K}\Hom_{\DD}(X,E[n])$. 
\end{lem}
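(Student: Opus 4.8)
The plan is to show that the full additive subcategory $\CC:=\add\{E[n]\mid n\in\Z\}$ of $\DD$, consisting of all direct summands of finite direct sums of shifts of $E$, is already a thick subcategory. Since it evidently contains $E$ and is contained in $\langle E\rangle$, this forces $\CC=\langle E\rangle$, which gives the asserted normal form of every object; the dimension formula then follows by a direct $\Hom$ computation.

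First I would pin down the shape of objects and morphisms in $\CC$. Because $E$ is exceptional, $\Hom_{\DD}(E[m],E[n])\simeq\Hom_{\DD}(E,E[n-m])$ equals $K$ when $m=n$ and vanishes otherwise. Hence the endomorphism ring of a finite sum $\bigoplus_{n} E^{\oplus d_{n}}[n]$ is the product $\prod_{n} M_{d_{n}}(K)$, in which every idempotent is block diagonal and splits (over a field an idempotent matrix has image a coordinate subspace). Therefore every summand of such an object is again of the form $\bigoplus_{n} E^{\oplus r_{n}}[n]$, so $\CC$ consists exactly of the objects $\bigoplus_{n\in\Z} E^{\oplus d_{n}}[n]$ with almost all $d_{n}=0$, and $\CC$ is idempotent complete.

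Next I would verify that $\CC$ is triangulated, i.e. closed under shifts (immediate) and cones. Given a morphism $f\colon\bigoplus_{n} E^{\oplus a_{n}}[n]\to\bigoplus_{m} E^{\oplus b_{m}}[m]$, the vanishing of $\Hom_{\DD}$ between distinct shifts shows $f=\bigoplus_{n} f_{n}$ with $f_{n}\colon E^{\oplus a_{n}}[n]\to E^{\oplus b_{n}}[n]$ a $b_{n}\times a_{n}$ matrix over $K$. By Gaussian elimination there are automorphisms of the source and target, realized by invertible matrices in the relevant $GL$, bringing $f_{n}$ into the normal form $\left(\begin{smallmatrix} I_{r_{n}} & 0\\ 0 & 0\end{smallmatrix}\right)$. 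This exhibits $f_{n}$ as $\mathrm{id}_{E^{\oplus r_{n}}[n]}\oplus 0$, so its cone is $\Con(0\colon E^{\oplus(a_{n}-r_{n})}[n]\to E^{\oplus(b_{n}-r_{n})}[n])\simeq E^{\oplus(b_{n}-r_{n})}[n]\oplus E^{\oplus(a_{n}-r_{n})}[n+1]$. Taking the direct sum over $n$ shows $\Con(f)\in\CC$, and the same computation after rotation supplies the third vertex of any triangle with two vertices in $\CC$.

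Combining these two points, $\CC$ is a thick subcategory containing $E$, whence $\langle E\rangle\subseteq\CC$; the reverse inclusion is clear, so $\langle E\rangle=\CC$ and every $X\in\langle E\rangle$ has the form $\bigoplus_{n} E^{\oplus d_{n}}[n]$. Finally the dimension formula falls out by applying $\Hom_{\DD}(E,-)$ and $\Hom_{\DD}(-,E)$ and using again that $\Hom_{\DD}(E,E[k])$ is $K$ for $k=0$ and $0$ otherwise, giving $\Hom_{\DD}(E,X[-n])\simeq K^{\oplus d_{n}}$ and $\Hom_{\DD}(X,E[n])\simeq K^{\oplus d_{n}}$. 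The only mildly delicate step is the cone computation, namely justifying the reduction of a matrix of morphisms to normal form by genuine automorphisms of the objects and then reading off the cone; but this is elementary once the $\Hom$-vanishing between distinct shifts is in hand.
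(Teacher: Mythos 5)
Your proof is correct. The paper itself gives no argument for this lemma, citing \cite[Lem.\,1.58]{Huy} instead, and your proposal is essentially the standard proof behind that citation: since $\Hom_{\DD}(E[m],E[n])$ is $K$ for $m=n$ and $0$ otherwise, the additive hull $\add\{E[n]\mid n\in\Z\}$ is closed under direct summands (idempotent matrices over $K$ are conjugate to coordinate projections) and under cones (Gaussian elimination puts any morphism between finite sums of shifts in the form $\mathrm{id}\oplus 0$, whose cone is again a sum of shifts), hence is thick and therefore equals $\langle E\rangle$; the dimension formula then follows by applying $\Hom_{\DD}(E,-)$ and $\Hom_{\DD}(-,E)$. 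All steps, including the implicit finiteness of the support of $(d_{n})_{n}$, are handled correctly.
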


We recall the notion of exceptional sequences 
which is slightly modified for later use in 
this paper. In fact 
we allow the index set to be a finite 
totally ordered set. 

\begin{dfn}\cite{Huy}
Let $\DD$ be a triangulated category, 
$S$ a finite totally ordered set. 
A family $(E_{k})_{k\in S}$ of objects 
in $\DD$ indexed by $S$ is called an 
\emph{exceptional 
sequence} if the following conditions are satisfied:
\begin{enumerate}[\rm (E1)]
\item $E_{k}$ is an exceptional object 
for any $k\in S$. 
\item If $k< k'$, then 
$\Hom_{\DD}(E_{k},E_{k'}[n])=0$ for any integer $n$.
\item $\langle E_{k}\mid k\in S\rangle$ is an admissible subcategory of $\DD$. 
\end{enumerate}
An exceptional sequence $(E_{k})_{k\in S}$ is 
said to be full if 
\[\langle E_{k}\mid k\in S\rangle=\DD.\] 
\end{dfn}
The properties of exceptional sequences are detailed in \cite{Huy}. 
In general, for any finite totally orderd set $S$, by using the unique ordered isomorphism  
$s:[1,n]\to S$, 
we can identify $(E_{k})_{k\in S}$ with 
$(E_{s(k)})_{k\in [1,n]}$. If $S=[1,n]$, we denote the family $(E_{k})_{k\in S}$ 
by $(E_{1},E_{2},\dots,E_{n})$. 

A triangulated category $\DD$ is said to be 
\emph{$Ext$-finite} over $K$ if 
$\displaystyle\bigoplus_{n\in\Z} \Hom_{\DD}(X,Y[n])$ has a finite dimension for any objects 
$X, Y\in \DD$.

\begin{lem}\cite[Lem.\,1.58]{Huy}
\label{Exc1}
Let $\DD$ be an $\Ext$-finite triangulated category, $E$ an exceptional object in 
$\DD$. Then $(\rm E3)$ is satisfied. 
\end{lem}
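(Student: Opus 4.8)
The plan is to prove directly that $\langle E\rangle$ is both right and left admissible by exhibiting, for each object of $\DD$, the defining triangles of the two semi-orthogonal decompositions $\DD=\langle E\rangle\perp\langle E\rangle^{\perp}$ and $\DD={}^{\perp}\langle E\rangle\perp\langle E\rangle$. Since $\langle E\rangle^{\perp}$ and ${}^{\perp}\langle E\rangle$ are automatically thick and satisfy the required $\Hom$-vanishing by their very definition, it suffices to produce for an arbitrary $X\in\DD$ a triangle $Y\to X\to Z\to Y[1]$ with $Y\in\langle E\rangle$ and $Z\in\langle E\rangle^{\perp}$, together with the dual one, and then invoke the characterization of admissibility recalled above.

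For right admissibility, first I would use $\Ext$-finiteness: the graded space $\bigoplus_{n\in\Z}\Hom_{\DD}(E,X[n])$ is finite-dimensional, so only finitely many $n$ contribute and
\[Y:=\bigoplus_{n\in\Z}\Hom_{\DD}(E,X[n])\otimes_{K}E[-n]\]
is a finite direct sum of shifts of $E$, hence lies in $\langle E\rangle$. The tautological evaluation map $\mathrm{ev}\colon Y\to X$, sending the copy of $E[-n]$ indexed by a basis vector $f\in\Hom_{\DD}(E,X[n])$ to $X$ via $f[-n]$, completes to a triangle $Y\xrightarrow{\mathrm{ev}}X\to Z\to Y[1]$. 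I would then apply $\Hom_{\DD}(E,-)$ and use that $E$ is exceptional, so $\Hom_{\DD}(E,E[k])=K$ for $k=0$ and $0$ otherwise; a direct computation gives $\Hom_{\DD}(E,Y[m])\cong\Hom_{\DD}(E,X[m])$ with $\mathrm{ev}_{*}$ the identity under this identification. Feeding this into the long exact sequence forces $\Hom_{\DD}(E,Z[m])=0$ for all $m$, i.e. $Z\in\langle E\rangle^{\perp}$.

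For left admissibility I would run the dual argument with the co-evaluation map $\mathrm{coev}\colon X\to Y'$, where $Y':=\bigoplus_{n\in\Z}\Hom_{\DD}(X,E[n])\otimes_{K}E[n]\in\langle E\rangle$, and its cone $W\to X\to Y'\to W[1]$. Applying $\Hom_{\DD}(-,E[m])$ and again invoking exceptionality shows $\mathrm{coev}^{*}\colon\Hom_{\DD}(Y',E[m])\to\Hom_{\DD}(X,E[m])$ is an isomorphism for every $m$, whence $\Hom_{\DD}(W,E[m])=0$ and $W\in{}^{\perp}\langle E\rangle$. Combining the two decompositions yields (E3).

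The argument is entirely standard, and the only point requiring care is the bookkeeping of shifts in verifying that the (co)evaluation map induces an isomorphism on $\Hom_{\DD}(E,-)$ (resp.\ $\Hom_{\DD}(-,E)$); this is exactly where the two hypotheses enter, $\Ext$-finiteness to keep $Y$ and $Y'$ inside $\langle E\rangle$, and exceptionality to kill the higher self-extension terms. I do not expect any serious obstacle beyond this verification.
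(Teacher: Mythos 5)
Your argument is correct and is essentially the standard one: the paper gives no proof of its own but cites \cite[Lem.\,1.58]{Huy}, whose proof is exactly this evaluation/co-evaluation construction, using $\Ext$-finiteness to make $\bigoplus_{n}\Hom_{\DD}(E,X[n])\otimes_{K}E[-n]$ a finite sum in $\langle E\rangle$ and exceptionality to show the cone lands in $\langle E\rangle^{\perp}$ (dually in ${}^{\perp}\langle E\rangle$), which by the characterization $\DD=\langle E\rangle\perp\langle E\rangle^{\perp}={}^{\perp}\langle E\rangle\perp\langle E\rangle$ gives admissibility. No gaps; your bookkeeping of shifts and the use of the long exact sequence are exactly the points the cited proof handles in the same way.
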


Let $(E_{k})_{k\in S}$ be an exceptional sequence in $\DD$. 
For any $T\subset S$, define 
\[E_{T}
:=\bigoplus_{k\in T}E_{k}.\]
\begin{lem}\label{Exc2}
Let $\DD$ be an $\Ext$-finite triangulated category, $(E_{k})_{k\in S}$ an 
exceptional sequence in $\DD$. 
For any $T\subset S$, $\langle E_{T}\rangle$ is an admissible subcategory of $\DD$.
\end{lem}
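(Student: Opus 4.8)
The plan is to prove that for any subset $T \subset S$, the subcategory $\langle E_T \rangle$ generated by the direct sum $E_T = \bigoplus_{k \in T} E_k$ is admissible, by reducing to the already-established fact (Lemma \ref{Exc1}, since $\DD$ is $\Ext$-finite) that each individual $\langle E_k \rangle$ is admissible, and then assembling these via the orthogonality structure of the exceptional sequence. First I would observe that $\langle E_T \rangle = \langle E_k \mid k \in T \rangle$, since the thick subcategory generated by a finite direct sum coincides with the one generated by its summands. The key structural input is that the restriction $(E_k)_{k \in T}$ of an exceptional sequence to a subset, with the induced total order, is again an exceptional sequence: conditions (E1) and (E2) are clearly inherited, and (E3) holds because $\DD$ is $\Ext$-finite.

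The main step is an induction on $|T|$ using the semi-orthogonal structure. Write $T = \{k_1 < k_2 < \dots < k_m\}$ and set $T' = \{k_1, \dots, k_{m-1}\}$. By induction, $\langle E_{T'} \rangle$ is an admissible subcategory, and by Lemma \ref{Exc1} each $\langle E_{k_m} \rangle$ is admissible. The semi-orthogonality condition (E2) gives $\Hom_{\DD}(\langle E_{T'} \rangle, \langle E_{k_m} \rangle) = 0$, and one checks that
\[
\langle E_T \rangle = \langle E_{T'} \rangle \perp \langle E_{k_m} \rangle.
\]
Indeed the right-hand side is contained in $\langle E_T \rangle$ and contains all the generators $E_{k_j}$, while the Hom-vanishing makes $\ast$ into $\perp$. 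Now I would invoke Lemma \ref{Adj2}: since $\langle E_{T'} \rangle$ and $\langle E_{k_m} \rangle$ are both admissible subcategories of $\DD$ and $\langle E_T \rangle = \langle E_{T'} \rangle \perp \langle E_{k_m} \rangle$, it follows immediately that $\langle E_T \rangle$ is admissible in $\DD$.

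The main obstacle, and the point that needs genuine care rather than formula-pushing, is verifying that $\langle E_T \rangle$ really does decompose as $\langle E_{T'} \rangle \ast \langle E_{k_m} \rangle$ as \emph{thick} subcategories and not merely that the two sides generate the same objects up to extensions. The subtlety is that $\ast$ of two thick subcategories need not be thick in general, so the decomposition must come from the admissibility machinery: concretely, once $\langle E_{T'} \rangle$ is admissible one has $\DD = \langle E_{T'} \rangle \perp \langle E_{T'} \rangle^{\perp}$, and (E2) places $E_{k_m} \in \langle E_{T'} \rangle^{\perp}$, so the semi-orthogonal decomposition is honest. This is precisely the situation Lemma \ref{Adj2} is built to handle, so the cleanest route is to set up the two admissible pieces and then let Lemma \ref{Adj2} deliver admissibility of their $\perp$-join, completing the induction.
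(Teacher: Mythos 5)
Your proposal is correct and follows essentially the same route as the paper: each $\langle E_{k}\rangle$ is admissible by Lemma \ref{Exc1} (this is where $\Ext$-finiteness enters), $\langle E_{T}\rangle$ decomposes semi-orthogonally into these pieces, and Lemma \ref{Adj2} yields admissibility of the join. The only cosmetic difference is that you make the induction (peeling off one term at a time) explicit, whereas the paper writes the full decomposition $\langle E_{T}\rangle=\langle E_{1}\rangle\perp\dots\perp\langle E_{n}\rangle$ at once and leaves the iteration of Lemma \ref{Adj2} implicit.
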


\begin{proof}
Without loss of generality, we can assume $T=[1,n]$. 
Since 
\[\langle E_{T}\rangle
=\langle E_{1}\rangle\perp
\langle E_{2}\rangle\perp\dots
\perp\langle E_{n}\rangle\]
and $\langle E_{k}\rangle$ are 
admissible subcategories of $\DD$ by Lemma 
\ref{Exc1}, 
$\langle E_{T}\rangle$ is also an admissible 
subcategory of $\DD$ by Lemma \ref{Adj2}. 
\end{proof}

\begin{lem}\label{Exc3}
Let $\DD$ be an $\Ext$-finite triangulated category with a Serre functor $\S:\DD\to\DD$, 
and let \[(E_{1},E_{2},\dots,E_{n})\] be an exceptional sequence in $\DD$. 
\begin{enumerate}[\rm (a)]
\item The sequence 
\[(E_{2},\dots,E_{n},\S_{\langle E_{[1,n]}\rangle}(E_{1}))\] is an exceptional sequence 
and $\langle E_{[1,n]}\rangle
=\langle E_{[2,n]}\rangle\perp
\langle\S_{\langle E_{[1,n]}\rangle}(E_{1})\rangle.$ 
\item The sequence 
\[(\S^{-1}_{\langle E_{[1,n]}\rangle}(E_{n}),
E_{1},\dots,E_{n-1})\] is an exceptional sequence 
and $\langle E_{[1,n]}\rangle
=\langle\S^{-1}_{\langle E_{[1,n]}\rangle}(E_{n})\rangle\perp\langle E_{[1,n-1]}\rangle.$ 
\end{enumerate}
\end{lem}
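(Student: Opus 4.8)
The plan is to reduce both statements to a single rotation principle for semi-orthogonal decompositions, established through Serre duality carried out inside the admissible subcategory $\EE := \langle E_{[1,n]}\rangle$.

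First I would fix the standing setup. By Lemma \ref{Exc2} (with $T=[1,n]$), $\EE$ is an admissible subcategory of $\DD$; hence by Proposition \ref{Serre functor} it carries a Serre functor, which I denote $\S_{\EE}:=\S_{\langle E_{[1,n]}\rangle}$, and by Proposition \ref{SF}(a) this $\S_{\EE}$ is a triangle autoequivalence of $\EE$, in particular commuting with the shift. Moreover every $\langle E_{T}\rangle$ with $T\subseteq[1,n]$ is admissible in $\DD$, hence in $\EE$ by Lemma \ref{Adj1}(a). Since all objects occurring below lie in $\EE$, every $\Hom_{\DD}$-group I need coincides with the corresponding $\Hom_{\EE}$-group, so all computations may be performed inside $\EE$.

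Next I would prove the rotation principle: \emph{if $\EE=\FF\perp\GG$ with $\FF,\GG$ admissible in $\EE$, then $\EE=\GG\perp\S_{\EE}(\FF)$ and $\EE=\S_{\EE}^{-1}(\GG)\perp\FF$.} Since $\GG$ is admissible, $\EE=\GG\perp\GG^{\perp_{\EE}}$, so for the first decomposition it suffices to identify $\GG^{\perp_{\EE}}=\S_{\EE}(\FF)$. The inclusion $\S_{\EE}(\FF)\subseteq\GG^{\perp_{\EE}}$ follows from
\[\Hom_{\EE}(G,\S_{\EE}(F))\simeq\Hom_{\EE}(F,G)^{\ast}=0\quad(F\in\FF,\ G\in\GG),\]
the last equality being the semi-orthogonality $\Hom_{\EE}(\FF,\GG)=0$. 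Conversely, if $X\in\GG^{\perp_{\EE}}$, then for every $G\in\GG$ one has $\Hom_{\EE}(\S_{\EE}^{-1}(X),G)\simeq\Hom_{\EE}(G,X)^{\ast}=0$, so $\S_{\EE}^{-1}(X)\in{}^{\perp_{\EE}}\GG=\FF$, i.e.\ $X\in\S_{\EE}(\FF)$. This gives $\GG^{\perp_{\EE}}=\S_{\EE}(\FF)$ and hence $\EE=\GG\perp\S_{\EE}(\FF)$; the second decomposition is proved symmetrically by identifying ${}^{\perp_{\EE}}\FF=\S_{\EE}^{-1}(\GG)$.

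Finally I apply the principle. For (a) take $\FF=\langle E_{1}\rangle$ and $\GG=\langle E_{[2,n]}\rangle$, so $\EE=\FF\perp\GG$, and the principle yields $\EE=\langle E_{[2,n]}\rangle\perp\langle\S_{\EE}(E_{1})\rangle$, using $\S_{\EE}(\FF)=\langle\S_{\EE}(E_{1})\rangle$. It remains to verify that $(E_{2},\dots,E_{n},\S_{\EE}(E_{1}))$ is an exceptional sequence. Condition (E1) for $\S_{\EE}(E_{1})$ holds because $\S_{\EE}$ is a triangle autoequivalence, so $\Hom_{\EE}(\S_{\EE}E_{1},\S_{\EE}E_{1}[m])\simeq\Hom_{\EE}(E_{1},E_{1}[m])$; condition (E3) holds since the generated subcategory is $\EE$, which is admissible. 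For (E2) the only new relations are, for $k\in[2,n]$,
\[\Hom_{\EE}(E_{k},\S_{\EE}(E_{1})[m])\simeq\Hom_{\EE}(E_{1}[m],E_{k})^{\ast}=\Hom_{\EE}(E_{1},E_{k}[-m])^{\ast}=0,\]
using that $\S_{\EE}$ commutes with $[m]$ and that $1<k$ in the original sequence. Part (b) is entirely dual: take $\FF=\langle E_{[1,n-1]}\rangle$, $\GG=\langle E_{n}\rangle$, apply $\EE=\S_{\EE}^{-1}(\GG)\perp\FF$, and check the exceptional-sequence axioms for $(\S_{\EE}^{-1}(E_{n}),E_{1},\dots,E_{n-1})$ by the mirror computation. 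The only delicate point is bookkeeping rather than depth: one must consistently take orthogonals inside $\EE$ (not $\DD$), use $\S_{\EE}$ rather than the ambient Serre functor, and invoke Proposition \ref{SF}(a) so that shifts pass freely through the Serre-duality isomorphisms; no deeper obstacle is expected.
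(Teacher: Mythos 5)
Your proof is correct. The verification that $\S_{\langle E_{[1,n]}\rangle}(E_{1})$ is exceptional and the Serre-duality computation giving (E2), namely $\Hom(E_{k},\S_{\langle E_{[1,n]}\rangle}(E_{1})[m])\simeq\Hom(E_{1},E_{k}[-m])^{\ast}=0$, coincide with the paper's. Where you diverge is the semi-orthogonal decomposition claim: the paper establishes generation by producing the explicit mutation triangle
\[E_{1}\to \S_{\langle E_{[1,n]}\rangle}(E_{1})\to \F_{\langle E_{[2,n]}\rangle}\S_{\langle E_{[1,n]}\rangle}(E_{1})\to E_{1}[1],\]
obtained from the identity $\T_{\langle E_{1}\rangle}\S_{\langle E_{[1,n]}\rangle}(E_{1})\simeq\S_{\langle E_{1}\rangle}(E_{1})\simeq E_{1}$ (Proposition \ref{Serre functor}(b) plus Lemma \ref{Exc0}), which exhibits $E_{1}$ inside $\langle E_{[2,n]},\S_{\langle E_{[1,n]}\rangle}(E_{1})\rangle$; you instead prove a general rotation principle by identifying $\GG^{\perp_{\EE}}=\S_{\EE}(\FF)$ for any decomposition $\EE=\FF\perp\GG$ into admissible pieces. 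Both are legitimate; your route is slightly more general and packages (a) and (b) as one symmetric statement, while the paper's route produces the mutation triangle explicitly (which is the form in which Lemma \ref{Exc3}(a) is actually reused later, e.g.\ in Proposition \ref{nu condition}). The only step you leave implicit is ${}^{\perp_{\EE}}\GG=\FF$ (uniqueness of the complement in a semi-orthogonal decomposition); this follows from the standard splitting argument — if $Y\in{}^{\perp_{\EE}}\GG$ and $F\to Y\to G\to F[1]$ is its decomposition triangle, then $Y\to G$ vanishes, so $G$ is a retract of $F[1]$ and hence zero since $\Hom(\FF,\GG)=0$ — and is worth a sentence, but it is not a gap of substance.
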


\begin{proof}
(a) Since $\S_{\langle E_{[1,n]}\rangle}:\langle E_{[1,n]}\rangle\to\langle E_{[1,n]}\rangle$ is 
a triangle equivalence and $E_{1}$ is an exceptional object, $\S_{\langle E_{[1,n]}\rangle}(E_{1})$ is also an exceptional object. 
If $k\in [2,n]$, then 
\[\Hom_{\DD}(E_{k},\S_{\langle E_{[1,n]}\rangle}(E_{1})[n])
\simeq\Hom_{\DD}(E_{1},E_{k}[-n])^{\ast}=0\]
for any integer $n$. Thus the sequence 
\[(E_{2},\dots,E_{n},\S_{\langle E_{[1,n]}\rangle}(E_{1}))\]
is an exceptional sequence. 
Since 
\[\T_{\langle E_{1}\rangle}\S_{\langle E_{[1,n]}\rangle}(E_{1})
\stackrel{\ref{Serre functor}\rm{(b)}}{=}\S_{\langle E_{1}\rangle}(E_{1})
\stackrel{\ref{Exc0}}{=} E_{1},\]
there exists a triangle 
\[E_{1}\to \S_{\langle E_{[1,n]}\rangle}(E_{1})\to \F_{\langle E_{[2,n]}\rangle}\S_{\langle E_{[1,n]}\rangle}(E_{1})\to E_{1}[1],\]
thus the last assertion follows.\\
(b) This is the dual of (a). 
\end{proof}

\begin{lem}\label{Exc4}
Let $\DD$ be an $\Ext$-finite triangulated category with a Serre functor $\S:\DD\to\DD$, 
and let $(E_{k})_{k\in[1,n]}$ be a full exceptional sequence in $\DD$. 
If there exist integers 
$p<q$ such that 
\begin{gather}
\numberwithin{equation}{section}
\label{north}\text{$\Hom_{\DD}(E_{[q+1,n]},\S_{\langle E_{[p,q]}\rangle}(E_{p})[n])=0$ 
for any integer $n$,}
\end{gather}
then \[\T_{\langle E_{[1,n]\backslash\{p\}}\rangle}^{\DD}(E_{p})
\simeq\T_{\langle E_{[p+1,q]}\rangle}
^{\langle E_{[p,q]}\rangle}(E_{p}).\]
\end{lem}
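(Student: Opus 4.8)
The plan is to compute both sides as shifted left projections of a single ``$\S$-mutated'' object and then to identify these objects using the hypothesis. Write $\EE=\langle E_{[1,n]\backslash\{p\}}\rangle$, and set
\[E_{p}^{\ast}=\S_{\langle E_{[p,q]}\rangle}(E_{p}),\qquad E_{p}'=\S_{\langle E_{[p,n]}\rangle}(E_{p}),\]
both well defined since admissible subcategories carry Serre functors by Proposition \ref{Serre functor}(a).

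First I would reduce each side. Applying Lemma \ref{Exc3}(a) to the exceptional sequence $(E_{p},E_{p+1},\dots,E_{n})$ in $\langle E_{[p,n]}\rangle$ gives the decomposition $\langle E_{[p,n]}\rangle=\langle E_{[p+1,n]}\rangle\perp\langle E_{p}'\rangle$ together with the triangle produced in its proof,
\[E_{p}\to E_{p}'\to \F_{\langle E_{[p+1,n]}\rangle}^{\langle E_{[p,n]}\rangle}(E_{p}')\to E_{p}[1].\]
Since the full sequence yields $\DD=\langle E_{[1,p-1]}\rangle\perp\langle E_{[p,n]}\rangle$, concatenation gives $\DD=\EE\perp\langle E_{p}'\rangle$ with $\EE=\langle E_{[1,p-1]}\rangle\perp\langle E_{[p+1,n]}\rangle$. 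Rotating the triangle above exhibits it as the canonical right-projection triangle of $E_{p}$ onto $\EE$ (its first term lies in $\langle E_{[p+1,n]}\rangle\subset\EE$, its third in $\langle E_{p}'\rangle=\EE^{\perp}$), so by uniqueness $\T_{\EE}^{\DD}(E_{p})\simeq \F_{\langle E_{[p+1,n]}\rangle}^{\langle E_{[p,n]}\rangle}(E_{p}')[-1]$. The same argument for $(E_{p},\dots,E_{q})$ in $\langle E_{[p,q]}\rangle$ gives $\T_{\langle E_{[p+1,q]}\rangle}^{\langle E_{[p,q]}\rangle}(E_{p})\simeq \F_{\langle E_{[p+1,q]}\rangle}^{\langle E_{[p,q]}\rangle}(E_{p}^{\ast})[-1]$.

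The hard part will be to prove $E_{p}'\simeq E_{p}^{\ast}$; this is the only step using the hypothesis. By Proposition \ref{Serre functor}(b) for the admissible inclusion $\langle E_{[p,q]}\rangle\subset\langle E_{[p,n]}\rangle$ one has $E_{p}^{\ast}=\T_{\langle E_{[p,q]}\rangle}^{\langle E_{[p,n]}\rangle}(E_{p}')$, so the canonical triangle of $E_{p}'$ for $\langle E_{[p,n]}\rangle=\langle E_{[p,q]}\rangle\perp\langle E_{[q+1,n]}\rangle$ reads
\[E_{p}^{\ast}\to E_{p}'\to \F_{\langle E_{[q+1,n]}\rangle}^{\langle E_{[p,n]}\rangle}(E_{p}')\to E_{p}^{\ast}[1].\]
Now $\Hom_{\DD}(\langle E_{[q+1,n]}\rangle,E_{p}^{\ast}[m])=0$ for all $m$ is exactly the hypothesis, while Serre duality in $\langle E_{[p,n]}\rangle$ gives, for $Y\in\langle E_{[q+1,n]}\rangle$,
\[\Hom(Y,E_{p}'[m])\simeq\Hom(E_{p},Y[-m])^{\ast}=0,\]
because $E_{p}\in\langle E_{[p,q]}\rangle$ and $\Hom(\langle E_{[p,q]}\rangle,\langle E_{[q+1,n]}\rangle)=0$. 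Feeding both vanishings into the long exact sequence obtained by applying $\Hom(\langle E_{[q+1,n]}\rangle,-)$ to the triangle forces $\Hom(\langle E_{[q+1,n]}\rangle,\F_{\langle E_{[q+1,n]}\rangle}^{\langle E_{[p,n]}\rangle}(E_{p}')[m])=0$ for all $m$; since this object lies in $\langle E_{[q+1,n]}\rangle$, its identity vanishes, so it is zero and $E_{p}'\simeq E_{p}^{\ast}$.

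Finally I would identify the two left projections. Lemma \ref{Adj3}, applied in the ambient category $\langle E_{[p,n]}\rangle=\langle E_{p}\rangle\perp\langle E_{[p+1,n]}\rangle$ with the sub-decomposition $\langle E_{[p,q]}\rangle=\langle E_{p}\rangle\perp\langle E_{[p+1,q]}\rangle$, shows that $\F_{\langle E_{[p+1,n]}\rangle}^{\langle E_{[p,n]}\rangle}$ and $\F_{\langle E_{[p+1,q]}\rangle}^{\langle E_{[p,q]}\rangle}$ agree on $\langle E_{[p,q]}\rangle$. Applying this to $E_{p}^{\ast}$ and combining with $E_{p}'\simeq E_{p}^{\ast}$ and the two reductions yields
\[\T_{\EE}^{\DD}(E_{p})\simeq \F_{\langle E_{[p+1,n]}\rangle}^{\langle E_{[p,n]}\rangle}(E_{p}^{\ast})[-1]\simeq \F_{\langle E_{[p+1,q]}\rangle}^{\langle E_{[p,q]}\rangle}(E_{p}^{\ast})[-1]\simeq \T_{\langle E_{[p+1,q]}\rangle}^{\langle E_{[p,q]}\rangle}(E_{p}),\]
the desired isomorphism. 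The only delicate point is the third paragraph; everything else is bookkeeping with semi-orthogonal decompositions and the admissibility Lemmas \ref{Adj1}--\ref{Adj3}.
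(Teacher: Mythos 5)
Your argument is correct, but it takes a more computational route than the paper's. The paper's proof never introduces the object $E_{p}'=\S_{\langle E_{[p,n]}\rangle}(E_{p})$ and never writes down a mutation triangle: it works entirely at the level of semi-orthogonal decompositions. Setting $\FF=\langle \S_{\langle E_{[p,q]}\rangle}(E_{p})\rangle$ and $\EE'=\langle E_{[p+1,q]}\rangle$, it uses Lemma \ref{Exc3}(a) to rewrite $\langle E_{[p,q]}\rangle=\EE'\perp\FF$ and $\DD=\langle E_{[1,p-1]}\rangle\perp\EE'\perp\FF\perp\langle E_{[q+1,n]}\rangle$, then invokes the hypothesis (\ref{north}) to commute $\FF$ past $\langle E_{[q+1,n]}\rangle$, obtaining $\DD=\EE\perp\FF$ with $\EE=\langle E_{[1,n]\backslash\{p\}}\rangle$; Lemma \ref{Adj3} applied to the pair $\DD=\EE\perp\FF\supset\langle E_{[p,q]}\rangle=\EE'\perp\FF$ then gives $\T_{\EE}^{\DD}(E_{p})\simeq\T_{\EE'}^{\langle E_{[p,q]}\rangle}(E_{p})$ directly, in about six lines. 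Your version replaces this by (i) expressing each side as a shifted left projection of a Serre-twisted object via the mutation triangle of Lemma \ref{Exc3}(a), (ii) proving the object-level identity $\S_{\langle E_{[p,n]}\rangle}(E_{p})\simeq\S_{\langle E_{[p,q]}\rangle}(E_{p})$ from the hypothesis by killing the third term of a projection triangle, and (iii) matching the two left projections with Lemma \ref{Adj3}. All three steps check out (the uniqueness of decomposition triangles you use in (i), the identification $E_{p}^{\ast}=\T_{\langle E_{[p,q]}\rangle}^{\langle E_{[p,n]}\rangle}(E_{p}')$ from Proposition \ref{Serre functor}(b), and the Serre-duality vanishing in (ii) are all legitimate). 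What your route buys is the explicit intermediate statement $E_{p}'\simeq E_{p}^{\ast}$, which makes visible \emph{why} the hypothesis matters --- the right mutation of $E_{p}$ through $E_{[p+1,q]}$ already lands in the right orthogonal of $E_{[q+1,n]}$ --- at the cost of more bookkeeping; the paper's subcategory-level manipulation is shorter and avoids any appeal to uniqueness of triangles.
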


\begin{proof}
Let $\DD'=\langle E_{[p,q]}\rangle$, 
$\EE=\langle E_{[1,n]\backslash\{p\}}\rangle
$, $\FF=\langle \S_{\langle E_{[p,q]}\rangle}(E_{p})\rangle$, $\EE'=\langle E_{[p+1,q]}\rangle$. 
Then we have semi-orthogonal decompositions 
\[\DD'
=\langle E_{p}\rangle\perp
\langle E_{[p+1,q]}\rangle
\stackrel{\ref{Exc3}(a)}{=}
\EE'\perp\FF \ \  \text{and}\]
\begin{align*} 
\DD &=\langle E_{[1,p-1]}\rangle\perp
\langle E_{p}\rangle\perp
\EE'\perp
\langle E_{[q+1,n]}\rangle
\stackrel{\ref{Exc3}(a)}{=}\langle E_{[1,p-1]}\rangle\perp
\EE'\perp
\FF\perp
\langle E_{[q+1,n]}\rangle\\
&\stackrel{(\ref{north})}
{=}\langle E_{[1,p-1]}\rangle\perp
\EE'\perp
\langle E_{[q+1,n]}\rangle\perp
\FF
=\EE\perp\FF. 
\end{align*}
Thus $\T_{\EE}
^{\DD}(E_{p})
\stackrel{\ref{Adj3}}{=}
\T_{\EE'}^{\DD'}(E_{p})$ 
and the assertion follows.
\end{proof}

\begin{lem}\label{EB}
Let $A$, $B$ be two $K$-algebras, 
$E$ an exceptional object in $\per A$. 
Then 
\[F=E\otimes(-):\per B
\to\per(A\otimes B)\]
is a fully faithful triangle functor and 
induces a triangle equivalence 
$F:\per B\to \langle E\otimes B\rangle$. 
\end{lem}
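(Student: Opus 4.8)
The plan is to verify three things in order: that $F$ is a well-defined triangle functor into $\per(A\otimes B)$ whose essential image lands in $\langle E\otimes B\rangle$; that $F$ is fully faithful; and finally that a fully faithful triangle functor out of an idempotent complete category identifies $\per B$ with the thick subcategory generated by its image.

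First I would check well-definedness. Since $E\in\per A=\thick(A)$ and $B$ is a perfect $B$-module, $E\otimes B$ is a bounded complex of finitely generated projective $A\otimes B$-modules, because $P\otimes Q$ is a finitely generated projective $A\otimes B$-module whenever $P,Q$ are finitely generated projective over $A,B$ respectively; hence $E\otimes B\in\per(A\otimes B)$. As $F=E\otimes(-)$ is additive and exact (tensoring with a fixed complex over the field $K$ preserves triangles), it is a triangle functor, and $F(\per B)=F(\thick(B))\subseteq\thick(E\otimes B)=\langle E\otimes B\rangle$.

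Second, and this is the technical heart, I would establish full faithfulness via a Künneth isomorphism. Fixing perfect representatives (bounded complexes of finitely generated projectives) $P^{\bullet}\simeq E$ over $A$ and $Q^{\bullet}\simeq X$, $R^{\bullet}\simeq Y$ over $B$, the objects $E\otimes X$ and $E\otimes Y$ are represented by the bounded complexes of projectives $P^{\bullet}\otimes Q^{\bullet}$ and $P^{\bullet}\otimes R^{\bullet}$, so the relevant $\Rhom$ is computed by the total Hom complex. The componentwise identity $\Hom_{A\otimes B}(P\otimes Q,P'\otimes R)\simeq\Hom_{A}(P,P')\otimes\Hom_{B}(Q,R)$ for finitely generated projectives (immediate for the free generators and extended by additivity) yields an isomorphism of total complexes
\[\Rhom_{A\otimes B}(E\otimes X,E\otimes Y)\simeq\Rhom_{A}(E,E)\otimes\Rhom_{B}(X,Y),\]
where I use that over the field $K$ the ordinary tensor product of complexes computes the derived tensor product. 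Since $E$ is exceptional, $\Rhom_{A}(E,E)\simeq K$ concentrated in degree $0$, so the right-hand side collapses to $\Rhom_{B}(X,Y)$; tracing the natural transformation shows that the resulting isomorphism $\Hom_{\per(A\otimes B)}(E\otimes X,E\otimes Y[n])\simeq\Hom_{\per B}(X,Y[n])$ is exactly the one induced by $F$, for all $X,Y\in\per B$ and all $n$.

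Finally I would conclude. Full faithfulness means $F$ is an equivalence of $\per B$ onto its essential image. Because $\per B$ is idempotent complete, any idempotent of $\End_{\per(A\otimes B)}(F(X))\simeq\End_{\per B}(X)$ splits in $\per B$, so the essential image is closed under direct summands; being a triangulated subcategory containing $E\otimes B=F(B)$, it is therefore a thick subcategory containing $\langle E\otimes B\rangle$. Combined with the reverse inclusion from the first step, the essential image equals $\langle E\otimes B\rangle$, and $F$ induces a triangle equivalence $\per B\to\langle E\otimes B\rangle$. I expect the main obstacle to be the Künneth isomorphism: one must ensure that tensor products of finitely generated projectives remain finitely generated projective over $A\otimes B$ and that $\Hom$ commutes with $\otimes$ on these, and---since $A$ and $B$ need not be finite-dimensional---rely on the exceptionality of $E$ (rather than any finiteness of $A$) to collapse $\Rhom_{A}(E,E)$ to $K$. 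The standing assumption that $K$ is a field makes all flatness and derived-tensor issues disappear, which is what keeps this step clean.
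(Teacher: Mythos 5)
Your proof is correct, but it takes a different route from the paper's. The paper's argument is a two-line application of Keller's dg Morita theorem (Proposition \ref{tilt1}): it observes that $E\otimes B$ is a pretilting object and that the natural map $B\to\REnd_{A\otimes B}(E\otimes B)$, $b\mapsto 1\otimes b$, is a quasi-isomorphism of dg algebras (both facts resting on the same K\"unneth collapse $\Rhom_A(E,E)\simeq K$ that you use), concludes that $F'=\Rhom_{A\otimes B}(E\otimes B,-):\langle E\otimes B\rangle\to\per B$ is a triangle equivalence, and finishes by checking $F'F\simeq\Id$. You instead prove full faithfulness of $F$ directly on arbitrary pairs of perfect complexes via the componentwise isomorphism $\Hom_{A\otimes B}(P\otimes Q,P'\otimes R)\simeq\Hom_A(P,P')\otimes\Hom_B(Q,R)$, and then identify the essential image with $\langle E\otimes B\rangle$ by the standard argument that the image of a fully faithful triangle functor out of an idempotent complete category is thick. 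The underlying computation is identical, but the packaging differs: the paper's version is shorter because it delegates the ``image equals thick closure'' step to the tilting machinery and only needs the K\"unneth isomorphism for the single object $E\otimes B$ (as a dg algebra quasi-isomorphism), whereas yours is self-contained, avoids dg algebras entirely, and makes explicit the idempotent-completeness input that the citation of Keller would otherwise hide. Both are valid; your closing remarks correctly identify the genuine technical points (finite generation of the projectives for the Hom--tensor interchange, and exceptionality rather than finite-dimensionality of $A$ doing the collapsing).
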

 
\begin{proof}
Since $E\otimes B$ is a pretilting object and 
the morphism $f:B\to\REnd_{A\otimes B}(E\otimes B);b\to 1\otimes b$ is a quasi-isomorphism of dg algebras, the functor 
\[F'=\Rhom_{A\otimes B}(E\otimes B,-):\langle E\otimes B\rangle
\to \per B\]
is a triangle equivalence. 
Since  $F'F\simeq \Id$, the assertion follows. 
\end{proof}

\begin{lem}\label{Id and Serre}
Let $A$ be a $K$-algebra, $B$ an Iwanaga-Gorenstein $K$-algebra, $C=A\otimes B$. 
Then $G=(-)\Lotimes_{C}(A\otimes B^{\ast})
:\per C\to \per C$ is a triangle autoequivalence, 
and for any exceptional object $E\in\per A$, 
$G|_{\langle E\otimes B\rangle}:
\langle E\otimes B\rangle\to 
\langle E\otimes B\rangle$ is a Serre 
functor. 
\end{lem}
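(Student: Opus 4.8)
The plan is to reduce the whole statement to the Serre functor of $\per B$, transported along the equivalence of Lemma \ref{EB}. Since $B$ is Iwanaga-Gorenstein, Proposition \ref{Iwa} provides a Serre functor $\nak_{B}=(-)\Lotimes_{B}B^{\ast}$ of $\per B$ with quasi-inverse $\Rhom_{B}(B^{\ast},-)$. In particular $B^{\ast}=\nak_{B}(B)\in\per B$ is perfect as a right $B$-module, and dually $B^{\ast}$ is perfect as a left $B$-module.

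First I would check that $G$ is a triangle autoequivalence of $\per C$. Since $A$ is free (hence perfect) over $A$ on either side and $B^{\ast}$ is perfect over $B$ on either side, the bimodule $A\otimes B^{\ast}$ is perfect as a one-sided $C$-module on both sides, so $G=(-)\Lotimes_{C}(A\otimes B^{\ast})$ preserves $\per C$. The essential computation is the natural isomorphism
\[(X\otimes Y)\Lotimes_{C}(A\otimes B^{\ast})
\simeq (X\Lotimes_{A}A)\otimes(Y\Lotimes_{B}B^{\ast})
\simeq X\otimes \nak_{B}(Y)\]
for $X\in\per A$, $Y\in\per B$, a K\"unneth-type compatibility of the derived tensor product over $C=A\otimes B$ with the external tensor of bimodules. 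Thus $G$ is, up to natural isomorphism, the external product of $\Id_{\per A}$ with the autoequivalence $\nak_{B}$; its quasi-inverse is realized by the bimodule $A\otimes\Rhom_{B}(B^{\ast},B)$, which gives the external product of $\Id_{\per A}$ with $\nak_{B}^{-1}$. This proves the first assertion.

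Next, fix an exceptional object $E\in\per A$. By Lemma \ref{EB}, $F=E\otimes(-)$ is a triangle equivalence $\per B\to\langle E\otimes B\rangle$. Specializing the displayed isomorphism to $X=E$ yields a natural isomorphism $G\circ F\simeq F\circ\nak_{B}$. Since $\nak_{B}$ is an autoequivalence of $\per B$ and $\langle E\otimes B\rangle=F(\per B)$, it follows that $G$ restricts to an autoequivalence of $\langle E\otimes B\rangle$ with $G|_{\langle E\otimes B\rangle}\simeq F\nak_{B}F^{-1}$. Finally, because $\nak_{B}$ is a Serre functor of $\per B$ and $F$ is a triangle equivalence, Proposition \ref{SF}(c) shows that the conjugate $F\nak_{B}F^{-1}$ is a Serre functor of $\langle E\otimes B\rangle$; hence so is $G|_{\langle E\otimes B\rangle}$, as required.

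I expect the main obstacle to be the K\"unneth-type natural isomorphism of the second paragraph: establishing that derived tensoring over $A\otimes B$ by the external bimodule $A\otimes B^{\ast}$ splits as the external tensor of $(-)\Lotimes_{A}A$ and $(-)\Lotimes_{B}B^{\ast}$, naturally and compatibly with the triangulated structure. Once this is secured, together with the perfectness bookkeeping that keeps everything inside $\per C$, the remaining steps are a purely formal transport of the Serre structure along $F$.
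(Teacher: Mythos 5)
Your proposal is correct and follows essentially the same route as the paper: both rest on the K\"unneth-type identification $G(E\otimes Y)\simeq E\otimes\nu_{B}(Y)$ and on the equivalence $E\otimes(-):\per B\to\langle E\otimes B\rangle$ from Lemma \ref{EB}. The paper verifies the Serre duality isomorphism directly from the fully faithfulness of $E\otimes(-)$ rather than quoting Proposition \ref{SF}(c) for the conjugate $F\nu_{B}F^{-1}$, but this is only a cosmetic difference.
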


\begin{proof}
Since $B$ is an Iwanaga-Gorenstein algebra, 
$G=(-)\Lotimes_{C}(A\otimes B^{\ast})
:\per C\to \per C$ is a triangle equivalence. 
By Lemma \ref{EB}, 
for any object $X\in \langle E\otimes B\rangle$, 
there exists an object $Y\in \per B$ such that 
$X\simeq E\otimes Y$. 
Since there exist functorial isomorphisms 
\[\Hom_{\per C}(E\otimes Y,G(E\otimes Y'))
\simeq\Hom_{\per C}
(E\otimes Y,E\otimes \nu_{B}(Y'))
\simeq \Hom_{\per C}(E\otimes Y',E\otimes Y)^{\ast},\]
we have that $G|_{\langle E\otimes B\rangle}$ is a Serre functor. 
\end{proof}

\section{$S$-families}
\subsection{Weak $S$-families}
In this section, let $\DD$ be a triangulated category satisfying the following conditions: 
\begin{gather}\label{Good}
\text{$\DD$ is algebraic, idempotent complete, 
$\Ext$-finite and has 
a Serre functor $\S$.}
\end{gather}
Let $S$ be a finite subset of $\Z^{2}$. 
For any element $(i,j)\in S$, 
\[S_{i,j}:=([i-1,i]\times[j-1,j])\cap S.\]

\begin{ex}\label{ExS}
Let $I_{1}=[1,5]$, $I_{2}=I_{3}=[1,3]$, 
$I_{4}=\{1\}$ be intervals of $\Z$. 
The figure of $\displaystyle S=\bigsqcup_{i\in[1,4]} \{i\}\times I_{i}$ 
is the following:
\[\small\begin{tikzpicture}[auto]
\draw (0,0)node[]{$(1,1)$};
\draw (1,0)node[]{$(1,2)$};
\draw (2,0)node[]{$(1,3)$};
\draw (3,0)node[]{$(1,4)$};
\draw (4,0)node[]{$(1,5)$};
\draw (0,-2/3)node[]{$(2,1)$};
\draw (1,-2/3)node[]{$(2,2)$};
\draw (2,-2/3)node[]{$(2,3)$};
\draw (0,-4/3)node[]{$(3,1)$};
\draw (1,-4/3)node[]{$(3,2)$};
\draw (2,-4/3)node[]{$(3,3)$};
\draw (0,-2)node[]{$(4,1)$};
\draw (1/2,-1/3)--(5/2,-1/3)--(5/2,-5/3)
--(1/2,-5/3)--(1/2,-1/3);
\end{tikzpicture}\]
In the above figure, the square shows the subset $S_{3,3}$. 
\end{ex}

\begin{dfn}\label{def WLS}
Let $\DD$ be a triangulated category satisfying 
$(\ref{Good})$. 
A family $(X_{i,j})_{(i,j)\in S}$ of objects 
in $\DD$ indexed by a finite subset $S$ of 
$\Z^{2}$ is called a \emph{weak $S$-family} if the following conditions are satisfied: 
\begin{enumerate}[\rm (L1)]
\item $X_{i,j}$ is an exceptional object for any $(i,j)\in S$.
\item $\Hom_{\DD}(X_{i,j},X_{i',j'}[n])=0$ for any integer $n$ unless $(i',j')\in S_{i,j}$.
\end{enumerate}
A weak $S$-family $(X_{i,j})_{(i,j)\in S}$ is said to be full if 
\[\langle X_{i,j}\mid (i,j)\in S\rangle=\DD.\]
\end{dfn}

The name of $S$-family comes from 
lattices. 

\begin{rem}\label{rem of L2}
The condition $(\rm L2)$ is satisfied 
if and only if the following conditions are 
satisfied: 

\begin{enumerate}[\rm (L2.1)]
\item $\Hom_{\DD}(X_{i,j},X_{i',j'}[n])=0$ 
for any integer $n$ unless 
$i'\in [i-1,i]$. 
\item $\Hom_{\DD}(X_{i,j},X_{i',j'}[n])=0$ 
for any integer $n$ unless 
$j'\in [j-1,j]$.
\end{enumerate}
\end{rem}

If $(X_{i,j})_{(i,j)\in S}$ is a weak 
$S$-family, then for any $T\subset S$, a family 
$(X_{i,j})_{(i,j)\in T}$ is a weak $T$-family.  
Let  
$(X_{i,j})_{(i,j)\in S}$ be a family of 
objects in $\DD$. 
For any finite subset $T$ of $S$, let 
\begin{gather}\label{}
X_{T}:=\bigoplus_{(i,j)\in T} X_{i,j}\in\DD.
\end{gather} 
In particular, $X_{k}$ and $X^{k}$ 
are defined as 
\begin{gather}\label{}
X_{k}:=X_{\{k\}\times S_{k}}
=\bigoplus_{j\in S_{k}} X_{k,j}, 
\ \ X^{k}:
=X_{S^{k}\times\{k\}}
=\bigoplus_{i\in S_{k}} X_{i,k}
\end{gather} 
\[\text{where $S_{k}=\{j\in\Z \mid 
(k,j)\in S\},\ \ 
S^{k}=\{i\in\Z\mid (i,k)\in S\}$.}\]
\begin{lem}
Let $(X_{i,j})_{(i,j)\in S}$ be a weak 
$S$-family in $\DD$. Then 
$\langle X_{S}\rangle$ is an admissible subcategory of $\DD$. 
In particular, $\langle X_{i}\rangle$ and 
$\langle X^{j}\rangle$ are admissible subcategories.  \end{lem}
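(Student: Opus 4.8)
The plan is to choose a linear order on $S$ turning $(X_{i,j})_{(i,j)\in S}$ into a semi-orthogonal family of one-object subcategories, and then to assemble $\langle X_{S}\rangle$ from the pieces $\langle X_{i,j}\rangle$ using Lemma \ref{Adj2}.

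First I would fix the total order $<$ on $S$ refining the anti-diagonal grading $(i,j)\mapsto i+j$: set $(i',j')<(i,j)$ whenever $i'+j'<i+j$, breaking ties among elements with a common value of $i+j$ in any fixed manner. The relevance of this choice is exactly condition (L2): if $\Hom_{\DD}(X_{i,j},X_{i',j'}[n])\neq 0$ for some $n$ with $(i,j)\neq(i',j')$, then $(i',j')\in S_{i,j}$, hence $(i',j')$ is one of $(i-1,j-1)$, $(i-1,j)$, $(i,j-1)$, all of which satisfy $i'+j'<i+j$ and therefore $(i',j')<(i,j)$. Writing $S=\{k_{1}<\dots<k_{m}\}$ for this order, it follows that $\Hom_{\DD}(X_{k_{a}},X_{k_{b}}[n])=0$ for all $n$ whenever $a<b$; in other words the family is semi-orthogonal for $<$.

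Next I would show by induction on $\ell$ that $\langle X_{\{k_{1},\dots,k_{\ell}\}}\rangle=\langle X_{k_{1}}\rangle\perp\dots\perp\langle X_{k_{\ell}}\rangle$ is admissible in $\DD$. Each $X_{k_{a}}$ is exceptional by (L1), so $\langle X_{k_{a}}\rangle$ is admissible by Lemma \ref{Exc1}; this is the case $\ell=1$. For the step, put $\FF=\langle X_{\{k_{1},\dots,k_{\ell-1}\}}\rangle$, admissible by the inductive hypothesis, and $\FF'=\langle X_{k_{\ell}}\rangle$, admissible by Lemma \ref{Exc1}. The semi-orthogonality from the previous paragraph gives $\Hom_{\DD}(\FF,\FF')=0$, so $X_{k_{\ell}}\in\FF^{\perp}$ and $\FF'\subset\FF^{\perp}$. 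Since $\FF$ is admissible we have $\DD=\FF\perp\FF^{\perp}$, and the left adjoint projection $\F_{\FF^{\perp}}$ is a triangle functor that vanishes on $\FF$ and fixes $X_{k_{\ell}}$; hence it sends the thick subcategory $\langle X_{\{k_{1},\dots,k_{\ell}\}}\rangle$ into $\FF'$. Therefore, for each object $X$ there, the canonical triangle $\T_{\FF}(X)\to X\to\F_{\FF^{\perp}}(X)\to\T_{\FF}(X)[1]$ has $\T_{\FF}(X)\in\FF$ and $\F_{\FF^{\perp}}(X)\in\FF'$, which gives $\langle X_{\{k_{1},\dots,k_{\ell}\}}\rangle=\FF\ast\FF'=\FF\perp\FF'$. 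Lemma \ref{Adj2} now yields admissibility, and $\ell=m$ proves that $\langle X_{S}\rangle$ is admissible.

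Finally, for the ``in particular'' statements I would observe that a finite direct sum generates the same thick subcategory as its summands, so $\langle X_{i}\rangle=\langle X_{\{i\}\times S_{i}}\rangle$ and $\langle X^{j}\rangle=\langle X_{S^{j}\times\{j\}}\rangle$. Both $\{i\}\times S_{i}$ and $S^{j}\times\{j\}$ are finite subsets of $S$, and the restriction of a weak $S$-family to a subset is again a weak family, so the first part applied to these subsets shows that $\langle X_{i}\rangle$ and $\langle X^{j}\rangle$ are admissible. I expect the only genuinely delicate point to be the equality $\langle X_{\{k_{1},\dots,k_{\ell}\}}\rangle=\FF\ast\FF'$ (as opposed to the trivial inclusion $\supseteq$): what is really being used there is that $\FF\ast\FF'$ is already thick, which rests on the admissibility of $\FF$ providing the projection functors $\T_{\FF}$ and $\F_{\FF^{\perp}}$ — the same mechanism that underlies Lemma \ref{Exc2}.
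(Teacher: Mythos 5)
Your proof is correct and takes essentially the same route as the paper: the paper restricts the lexicographic order of $\Z^{2}$ to $S$, notes that (L2) makes the weak $S$-family an exceptional sequence in that order, and invokes Lemma \ref{Exc2}. Your anti-diagonal order $(i,j)\mapsto i+j$ serves the same purpose, and your induction simply inlines the proof of Lemma \ref{Exc2} (via Lemmas \ref{Exc1} and \ref{Adj2}), with a welcome explicit justification of the equality $\langle X_{\{k_{1},\dots,k_{\ell}\}}\rangle=\FF\ast\FF'$ that the paper leaves implicit.
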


\begin{proof}
By restricting the lexicographic order 
$\preccurlyeq $ of $\Z^{2}$, we regard $S$ as a totally ordered set. 
The lexicographic order of $S$ in Example \ref{ExS} is as the following. 
\begin{gather}\label{lex}
\small \vcenter{
\xymatrix@=15pt{
(1,1)\ar[r] & (1,2) \ar[r] & (1,3) \ar[r]  
& (1,4) \ar[r]  & (1,5)\ar[lllld]\\
(2,1)\ar[r] & (2,2) \ar[r] & (2,3)\ar[lld] \\
(3,1)\ar[r] & (3,2) \ar[r] & (3,3)\ar[lld] \\
(4,1)
}
}
\end{gather}

The condition 
$(\rm L2)$ implies the following one: 
\begin{gather}
\numberwithin{equation}{section}
\label{WLS4} \tag{\rm E2}
\text{If $(i,j)\prec(i',j')$, then 
$\Hom_{\DD}(X_{i,j},X_{i',j'}[n])=0$ for any integer $n$.}
\end{gather}
So we can regard a weak $S$-family 
$(X_{i,j})_{(i,j)\in S}$ as an exceptional sequence. Thus the 
assertion follows from Lemma \ref{Exc2}
\end{proof}
The following observation is clear from (\rm L2). 
\begin{lem}\label{Pair Lemma}
Let $(X_{i,j})_{(i,j)\in S}$ 
be a weak $S$-family in $\DD$. 
Then $X_{S}$ 
is a pretilting object 
if and only if 
$X_{S_{i,j}}$ is a pretilting object 
for any $(i,j)\in S$. 
\end{lem}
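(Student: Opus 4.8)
The plan is to deduce both implications directly from the additivity of $\Hom_{\DD}(-,-)$ over the finite direct sum defining $X_{S}$, letting condition (L2) decide which pairs of indices can contribute a nonzero morphism. Recall that being pretilting only concerns the shifts $n\neq 0$, so throughout I test vanishing only in those degrees.

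First I would dispose of the \emph{only if} direction, which is formal: for each $(i,j)\in S$ we have $S_{i,j}\subset S$, hence $X_{S_{i,j}}$ is a direct summand of $X_{S}$, and $\Hom_{\DD}(X_{S_{i,j}},X_{S_{i,j}}[n])$ is correspondingly a direct summand of $\Hom_{\DD}(X_{S},X_{S}[n])$. If the latter vanishes for all $n\neq0$, so does the former, so each $X_{S_{i,j}}$ is pretilting.

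For the \emph{if} direction I would rewrite the desired conclusion $\Hom_{\DD}(X_{S},X_{S}[n])=0$ (for $n\neq0$) as the vanishing of $\Hom_{\DD}(X_{i,j},X_{i',j'}[n])$ for every ordered pair $(i,j),(i',j')\in S$. Fixing such a pair and $n\neq0$, I split into two cases according to (L2). If $(i',j')\notin S_{i,j}$, then (L2) already gives $\Hom_{\DD}(X_{i,j},X_{i',j'}[n])=0$ for every integer $n$. If instead $(i',j')\in S_{i,j}$, I use the key observation that $(i,j)\in S_{i,j}$ always holds, since $(i,j)$ lies in both $[i-1,i]\times[j-1,j]$ and $S$; thus $X_{i,j}$ and $X_{i',j'}$ are simultaneously direct summands of $X_{S_{i,j}}$, and $\Hom_{\DD}(X_{i,j},X_{i',j'}[n])$ is a direct summand of $\Hom_{\DD}(X_{S_{i,j}},X_{S_{i,j}}[n])$, which vanishes for $n\neq0$ by hypothesis.

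There is no serious obstacle here, as the authors' phrase ``clear from (L2)'' suggests; the only point deserving care is that the asymmetric relation (L2), combined with the remark $(i,j)\in S_{i,j}$, guarantees that every ordered index pair carrying a potentially nonzero morphism is housed inside a single block $S_{i,j}$, so that the block-wise pretilting hypothesis suffices to conclude.
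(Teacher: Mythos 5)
Your argument is correct and coincides with the paper's own proof: the paper likewise reduces to showing $\Hom_{\DD}(X_{i,j},X_{i',j'}[n])=0$ for $n\neq 0$, handling the case $(i',j')\notin S_{i,j}$ by (L2) and the case $(i',j')\in S_{i,j}$ by the pretilting hypothesis on $X_{S_{i,j}}$. Your explicit note that $(i,j)\in S_{i,j}$, and your spelled-out "only if" direction, are just slightly more detailed versions of what the paper leaves implicit.
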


\begin{proof}
It suffices to show that 
$\Hom_{\DD}(X_{i,j},X_{i',j'}[n])=0$ for each $(i,j)$, $(i',j')\in S$ and 
$n\neq 0$. 
If $(i',j')\notin S_{i,j}$, then this holds by 
(\ref{App functor}). Otherwise, this holds since 
$X_{S_{i,j}}$ is a pretilting object.\end{proof}
\subsection{Algebras $\LL(S)$}
Let $\NNN$ be the $K$-linear category defined as 
\[\NNN
:=(K\A_{\infty}^{\infty})^{\op}/
(\rad (K\A_{\infty}^{\infty})^{\op})^{2}\] 
where $\A_{\infty}^{\infty}$ is the quiver 
\[\A_{\infty}^{\infty}
=[\dotsm\stackrel{a_{-3}}{\to} -2\stackrel{a_{-2}}{\to} -1
\stackrel{a_{-1}}{\to} 0\stackrel{a_{0}}{\to} 1\stackrel{a_{1}}{\to} 
2\stackrel{a_{2}}{\to}\dotsm],\]
and $\rad K\A_{\infty}^{\infty}
=\langle a_{n}\mid n\in\Z\rangle$. 
For any subset $I$ of $\Z$, 
let $Q_{I}$ be the full subquiver of 
$\A_{\infty}^{\infty}$ with the set $I$ of vertices. 
We define $\NN(I)$ as 
\[\NN(I):=KQ_{I}^{\op}/(\rad KQ_{I}^{\op})^{2}.\]
In particular, we define $\NN(k)$ as 
\begin{gather*}
\NN(k):=\NN([1,k]).
\end{gather*}
The typical example of a weak $S$-family is 
given by the following:
\begin{ex}\label{adm lemma}
Let 
$\LLL=\NNN\otimes \NNN.$ 
The category $\per\LLL$ is a triangulated category satisfying 
$(\ref{Good})$. 
For any finite subset $S$ of $\Z^{2}$, 
the family $(P_{\LLL}(i,j))_{(i,j)\in S}$ is a weak $S$-family in $\per\LLL$. 
\end{ex}
For any finite subset $S$ of 
$\Z^{2}$, we define the algebra $\LL(S)$ as 
\[\LL(S)=\End_{\per\LLL}(\bigoplus_{(i,j)\in S} 
P_{\LLL}(i,j)).\]
Then there exists a triangle equivalence 
\[\langle P_{\LLL}(i,j)\mid 
(i,j)\in S\rangle_{\per\LLL}\to\per\LL(S).\]
\begin{lem}
The $K$-linear category $\LLL$ is self-injective, and the category $\per\LLL$ is a triangulated category satisfying $(\ref{Good})$.  
\end{lem}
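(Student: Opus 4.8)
The plan is to analyze $\NNN$ directly, establish its self-injectivity, transport this to $\LLL=\NNN\otimes\NNN$, and then read off the four properties packaged in (\ref{Good}).

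\emph{Step 1: the structure of $\NNN$.} Since $\NNN=(K\A_{\infty}^{\infty})^{\op}/(\rad (K\A_{\infty}^{\infty})^{\op})^{2}$, its objects are the integers and its only nonzero morphism spaces are $\Hom_{\NNN}(i,i)=K$ and $\Hom_{\NNN}(i+1,i)=K$. Hence the indecomposable projective $P_{\NNN}(i)=\Hom_{\NNN}(-,i)$ is the length-two uniserial module with support $\{i,i+1\}$, top $S(i)$ and socle $S(i+1)$, while the indecomposable injective $I_{\NNN}(i)=\Hom_{\NNN}(i,-)^{\ast}$ is the length-two uniserial module with support $\{i-1,i\}$, top $S(i-1)$ and socle $S(i)$. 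Comparing these two descriptions yields an isomorphism $P_{\NNN}(i)\cong I_{\NNN}(i+1)$, so $\NNN$ is self-injective with Nakayama permutation $i\mapsto i+1$; by the reversal symmetry of $\A_{\infty}^{\infty}$ the same holds for $\NNN^{\op}$.

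\emph{Step 2: self-injectivity of $\LLL$.} Because $\NNN$ is basic and elementary, the simples, projectives, and injectives of $\LLL=\NNN\otimes\NNN$ are the outer tensor products of those of the factors; in particular $P_{\LLL}(i,j)=P_{\NNN}(i)\otimes P_{\NNN}(j)$ and $I_{\LLL}(i,j)=I_{\NNN}(i)\otimes I_{\NNN}(j)$. Step 1 then gives $P_{\LLL}(i,j)\cong I_{\LLL}(i+1,j+1)$, so $\LLL$ is self-injective. In particular each injective $I_{\LLL}(i,j)\cong P_{\LLL}(i-1,j-1)$ is projective and therefore lies in $\per\LLL$, and symmetrically over $\LLL^{\op}$; thus $\LLL$ is Iwanaga-Gorenstein.

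\emph{Step 3: the conditions in (\ref{Good}).} The category $\per\LLL$ is algebraic, being a thick subcategory of the derived category of the dg category $\LLL$ concentrated in degree $0$, and it is idempotent complete, being a thick subcategory of the idempotent complete category $\DDD(\LLL)$ (the latter has arbitrary coproducts). For $\Ext$-finiteness I use that $\LLL$ is $\Hom$-finite and locally bounded: on generators, $\Hom_{\per\LLL}(P_{\LLL}(i,j),P_{\LLL}(i',j')[n])$ vanishes for $n\neq 0$ and is finite-dimensional for $n=0$, and since every object of $\per\LLL$ is a bounded complex involving only finitely many indecomposable projectives, $\bigoplus_{n}\Hom_{\per\LLL}(X,Y[n])$ is finite-dimensional for all $X,Y$. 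Finally, since $\LLL$ is Iwanaga-Gorenstein, Proposition \ref{Iwa} supplies the Serre functor $\nak=(-)\Lotimes_{\LLL}\LLL^{\ast}$ on $\per\LLL$. The main obstacle is the infinite nature of $\LLL$: it has infinitely many objects, so one cannot simply invoke results about finite-dimensional self-injective algebras, and care is needed to confirm that the Nakayama-functor description of injectives, idempotent completeness, $\Ext$-finiteness, and the Iwanaga-Gorenstein criterion all remain valid for the locally bounded $\Hom$-finite category $\LLL$. The feature that makes this routine is that every indecomposable projective of $\LLL$ has finite support, so all relevant computations localize to finite subquivers.
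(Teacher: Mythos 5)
Your proof is correct and follows essentially the same route as the paper: the key point in both is the isomorphism $I_{\LLL}(i,j)\simeq P_{\LLL}(i-1,j-1)$ (which you derive from $I_{\NNN}(i)\simeq P_{\NNN}(i-1)$ by tensoring), followed by an appeal to Proposition \ref{Iwa} for the Serre functor. You merely spell out in more detail the verification of the remaining conditions in $(\ref{Good})$, which the paper treats as routine.
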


\begin{proof} 
Since \[I_{\LLL}(i,j)=P_{\LLL}(i-1,j-1)\] for any $(i,j)\in\Z^{2}$, 
$\LLL$ is self-injective. 
By Proposition \ref{Iwa},
the assertion follows.\end{proof}

\subsection{$S$-families}
\begin{dfn}\label{def LS}
Let $\DD$ be a triangulated category satisfying $(\ref{Good})$. 
Let $S$ be a finite subset of $\Z^{2}$. 
A weak $S$-family 
$(X_{i,j})_{(i,j)\in S}$ in $\DD$ is called an 
\emph{$S$-family} if the following conditions 
are satisfied:
\begin{enumerate}[\rm (S1)]
\item If $(i,j),(i,j-1)\in S$, then $\S_{\langle X_{i}\rangle}(X_{i,j})\simeq X_{i,j-1}$. 
\item If $(i,j),(i-1,j)\in S$, then $\S_{\langle X^{j}\rangle}(X_{i,j})
\simeq X_{i-1,j}$.
\item If $(i,j),(i-1,j-1)\in S$, then $\S_{\langle X_{S}\rangle}(X_{i,j})\simeq X_{i-1,j-1}$.
\end{enumerate}
An $S$-family $(X_{i,j})_{(i,j)\in S}$ is said to be full if 
\[\langle X_{i,j}\mid (i,j)\in S\rangle=\DD.\]
\end{dfn} 

A typical example of an $S$-family 
is obtained by the following result:
\begin{ex}\label{Trivial family}
For any finite subset $S\subset\Z^{2}$, 
the family $(P_{\LLL}(i,j))_{(i,j)\in S}$ 
is an $S$-family in $\per\LLL$. 
\end{ex}
\begin{proof} 
Let $X_{i,j}=P_{\LLL}(i,j)$. 
The family $(X_{i,j})_{(i,j)\in S}$  
is a weak $S$-family in $\per\LLL$ 
by Example \ref{adm lemma}. 
If $(i,j)$, $(i,j-1) \in S$, 
since 
\[\nak_{\langle P_{\LLL}(i,j')\mid j'\in\Z\rangle}
(X_{i,j})=X_{i,j-1}
\in\langle X_{i}\rangle,\]
we have 
$\nak_{\langle X_{i}\rangle}(X_{i,j})
\stackrel{\ref{Serre functor}(c)}{=} X_{i,j-1}.$ 
Dually, we have that 
if $(i,j)$, $(i-1,j)\in S$, then 
$\nak_{\langle X^{j}\rangle}(X_{i,j})\simeq X_{i-1,j}.$ 
Thus $(\rm S1)$ and $(\rm S2)$ are satisfied. If $(i,j)$, $(i-1,j-1) \in S$, 
\[\nak(X_{i,j})
\simeq X_{i-1,j-1}\in\langle X_{S}\rangle.\] 
So we have 
$\nak_{\langle X_{S}\rangle}(X_{i,j})
\stackrel{\ref{Serre functor}(c)}{=}
X_{i-1,j-1}$ and $(\rm S3)$ is satisfied.
Thus 
the assertion follows. \end{proof}

\begin{ex}{\label{Sq}}
Let $S=[1,p]\times [1,q]$. 
The family $(S(i,j)[-i-j])_{(i,j)\in S}$ is a full $S$-family in $\per (K\A_{p}\otimes K\A_{q})$. There exists a triangle equivalence 
$F:\per(K\A_{p}\otimes K\A_{q})
\to\per(\NN(p)\otimes \NN(q))$ such that 
$F(S(i,j)[-i-j])\simeq P(i,j)$. 

\end{ex}

\begin{proof}
Let $A=K\A_{p}\otimes K\A_{q}$, 
$B=\NN(p)\otimes \NN(q)$, 
\[X_{i,j}=S_{A}(i,j)[-i-j].\] 
We construct a triangle 
equivalence $G:\per A\to \per B$
such that $G(X_{i,j})=P_{B}(i,j)$. 
Since 
\[\Hom_{\per K\A_{m}}
(S_{K\A_{m}}(j)[-j],S_{K\A_{m}}(i)[-i+k])\simeq \begin{cases}
K & \text{$j\in\{i,i+1\}$, $k=0$,}\\
0 & \text{otherwise,}
\end{cases}
\]
the object $\displaystyle T_{m}=\bigoplus_{i\in[1,m]} S_{K\A_{m}}(i)[-i]$ is a tilting object in $\per K\A_{m}$ such that 
$\End_{\per K\A_{m}}(T_{m})\simeq \NN(m)$. 
Then there exists a triangle equivalence 
$F:\per K\A_{m}\to \per \NN(m)$ such that 
$F(S_{K\A_{m}}(i)[-i])\\
\simeq P_{\NN(m)}(i)$. 
Thus $T=T_{p}\otimes T_{q}$ 
is a tilting object such that 
$\End_{\per A}(T)\simeq B$, and there exists a triangle exivalence 
$G:\per A\to \per B$ such that 
$G(X_{i,j})\simeq P_{B}(i,j)$. 
By Example \ref{Trivial family}, the assertion follows. 
\end{proof}

In general, by using the following result, 
if there exists a full $S$-family 
$(X_{i,j})_{(i,j)\in S}$ in $\DD$, then 
there exists a triangle equivalence 
$F:\DD\to \per\LL(S)$ and 
$F$ sends $(X_{i,j})_{(i,j)\in S}$ to 
$(P(i,j))_{(i,j)\in S}$ 
in Example \ref{Trivial family}. 

\begin{thm}\label{Triviality}
Let $\DD$ be a triangulated category satisfying $(\ref{Good})$, 
$S$ a finite subset of $\Z^{2}$. 
For any $S$-family $(X_{i,j})_{(i,j)\in S}$ in $\DD$, $X_{S}$ is a pretilting object 
such that $\End_{\DD}(X_{S})\simeq \LL(S)$, 
and there exists a triangle equivalence 
\[F:\langle X_{S}\rangle\to\per\LL(S)\] 
such that $F(X_{i,j})\simeq P_{\LLL}(i,j)$ 
for any $(i,j)\in S$. In particular, 
there exists a fully faithful triangle functor 
$G:\langle X_{S}\rangle\to\per\LLL$ such 
that $G(X_{i,j})=P_{\LLL}(i,j)$ 
for any $(i,j)\in S$. 
\end{thm}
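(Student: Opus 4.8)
The plan is to reduce the statement to the tilting framework already available via Proposition~\ref{tilt1} and Example~\ref{tilt2}, by verifying that an $S$-family forms a pretilting object whose endomorphism algebra is exactly $\LL(S)$. First I would establish that $X_{S}=\bigoplus_{(i,j)\in S}X_{i,j}$ is a pretilting object. By Lemma~\ref{Pair Lemma}, it suffices to check that each $X_{S_{i,j}}$ is pretilting, i.e.\ that $\Hom_{\DD}(X_{i,j},X_{i',j'}[n])=0$ for $n\neq 0$ when $(i',j')\in S_{i,j}$. The only potentially nonzero Hom-spaces are among the (at most four) objects indexed by $S_{i,j}\subseteq\{i-1,i\}\times\{j-1,j\}$, and conditions (S1)--(S3) identify the ``downward'' maps with Serre-dual shifts of the exceptional objects $X_{i,j}$. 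Concretely, since $X_{i,j}$ is exceptional, Lemma~\ref{Exc0} pins down all morphisms out of $X_{i,j}$ within the admissible subcategory it generates; combining this with the Serre-functor identifications (S1)--(S3) and the Calabi--Yau type shift data should force all higher extensions to vanish and leave only degree-$0$ maps.

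The second step is to identify the algebra $\End_{\DD}(X_{S})$ with $\LL(S)=\End_{\per\LLL}(\bigoplus_{(i,j)\in S}P_{\LLL}(i,j))$. The key comparison is with the trivial $S$-family $(P_{\LLL}(i,j))_{(i,j)\in S}$ of Example~\ref{Trivial family}, which satisfies the same conditions (S1)--(S3) in $\per\LLL$. I would show that the dimensions $\dim_{K}\Hom_{\DD}(X_{i,j},X_{i',j'})$ match $\dim_{K}\Hom_{\per\LLL}(P_{\LLL}(i,j),P_{\LLL}(i',j'))$ entry by entry, using that both are governed purely by the local data of $S_{i,j}$ and the Serre functor on the relevant admissible subcategories $\langle X_{i}\rangle$, $\langle X^{j}\rangle$, $\langle X_{S}\rangle$. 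Because $\LLL=\NNN\otimes\NNN$ has $I_{\LLL}(i,j)=P_{\LLL}(i-1,j-1)$, the multiplication table of $\LL(S)$ is determined by exactly these adjacency relations, so matching the Hom-spaces and the composition of the ``horizontal'' and ``vertical'' generating maps (and checking their composite agrees with the ``diagonal'' map via (S3)) yields an isomorphism of $K$-algebras $\End_{\DD}(X_{S})\simeq\LL(S)$.

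With these two facts in place, the triangle equivalence follows formally. Since $\DD$ satisfies $(\ref{Good})$ it is algebraic and idempotent complete, and $X_{S}$ is pretilting, so Proposition~\ref{tilt1} furnishes a triangle equivalence $F:\langle X_{S}\rangle\to\per\End_{\DD}(X_{S})\simeq\per\LL(S)$ with $F(X_{S})\simeq\LL(S)$; refining the isomorphism $\End_{\DD}(X_{S})\simeq\LL(S)$ to respect the idempotent decomposition (i.e.\ $F(X_{i,j})\simeq P_{\LLL}(i,j)$) is then automatic because $F$ carries the indecomposable summand $X_{i,j}$ to the corresponding indecomposable projective. For the final ``in particular'' clause, composing $F$ with the canonical fully faithful embedding $\per\LL(S)\simeq\langle P_{\LLL}(i,j)\mid(i,j)\in S\rangle_{\per\LLL}\hookrightarrow\per\LLL$ (which exists precisely because $\LL(S)$ is by definition the endomorphism algebra of $\bigoplus_{(i,j)\in S}P_{\LLL}(i,j)$ computed inside $\per\LLL$) produces the desired $G$ with $G(X_{i,j})=P_{\LLL}(i,j)$.

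The main obstacle I expect is the pretilting verification and the simultaneous bookkeeping of degrees: conditions (S1)--(S3) only assert isomorphisms of objects under various \emph{partial} Serre functors $\S_{\langle X_{i}\rangle}$, $\S_{\langle X^{j}\rangle}$, $\S_{\langle X_{S}\rangle}$, and one must confirm that these partial Serre functors genuinely compute the relevant Ext-groups in $\DD$ rather than in some larger ambient category. This requires carefully invoking Proposition~\ref{Serre functor}(b),(c) and Lemma~\ref{App functor} to relate $\S_{\langle X_{i}\rangle}$, $\S_{\langle X^{j}\rangle}$, and $\S_{\langle X_{S}\rangle}$ to the global Serre functor $\S$ via the adjoints $\T$ and $\F$, and then translating the object-level isomorphisms in (S1)--(S3) into the precise statement that the only nonzero morphisms in $\langle X_{S_{i,j}}\rangle$ sit in degree $0$ and compose correctly.
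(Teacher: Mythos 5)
Your outline follows the paper's proof quite closely: the paper likewise reduces the pretilting claim to the local squares $S_{i,j}$ via Lemma \ref{Pair Lemma}, computes $\Hom_{\DD}(X_{i,j},X_{i,j-1}[n])$, $\Hom_{\DD}(X_{i,j},X_{i-1,j}[n])$ and $\Hom_{\DD}(X_{i,j},X_{i-1,j-1}[n])$ by Serre duality on the subcategories $\langle X_i\rangle$, $\langle X^j\rangle$, $\langle X_S\rangle$ (packaged in Lemmas \ref{Row Lemma} and \ref{Column Lemma}), and then applies Keller's tilting theorem. The one place where you diverge, and where your write-up is thinnest, is the identification $\End_{\DD}(X_S)\simeq\LL(S)$: matching Hom-space dimensions with those of $(P_{\LLL}(i,j))$ is \emph{not} enough, since the degenerate algebra in which the composite $X_{i,j}\to X_{i,j-1}\to X_{i-1,j-1}$ vanishes has the same dimension vector but is not isomorphic to $\LL(S)$. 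The paper handles this via Lemma \ref{TS Lemma 1}, verifying the relations (3.12)--(3.13) by means of the reformulations (S1$'$)/(S2$'$), whose equivalence with (S1)/(S2) is the content of Proposition \ref{nu condition} (the units of the adjunctions $\F_{\langle X^{j-1}\rangle}$, $\F_{\langle X_{i-1}\rangle}$ realize the generating maps, so composing with them induces the isomorphisms $\Hom(X_{i,j-1},X_{i-1,j-1})\simeq\Hom(X_{i,j},X_{i-1,j-1})$, forcing the composite to be nonzero). Your appeal to (S3) for this step can in fact be made to work without Proposition \ref{nu condition}: since $X_{i-1,j-1}\simeq\S_{\langle X_S\rangle}(X_{i,j})$, the Serre-duality trace pairing $\Hom(X_{i,j-1},\S_{\langle X_S\rangle}X_{i,j})\times\Hom(X_{i,j},X_{i,j-1})\to\Hom(X_{i,j},\S_{\langle X_S\rangle}X_{i,j})\simeq K$ is nondegenerate, and both factors are one-dimensional, so the composite of nonzero generators is nonzero; the commutativity relation then follows by rescaling inside the one-dimensional diagonal Hom-space, and the zero relations $u_{i,j}u_{i,j+1}=0$, $v_{i,j}v_{i+1,j}=0$ come from (L2). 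With that step made explicit your argument is complete and is arguably a slightly more economical route than the paper's, which buys the same conclusion at the cost of the longer Proposition \ref{nu condition}.
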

 
The following result is a key step of the proof of Theorem $\ref{Triviality}$:
\begin{pro}\label{nu condition}
Let $(X_{i,j})_{(i,j)\in S}$ be a weak $S$-family satisfying $(\rm S3)$. 

\begin{enumerate}[\rm (a)] 
\item The condition $(\rm S1)$ is 
equivalent to the following one:
\begin{enumerate}[\rm (S1$'$)]
\item If $(i,j),(i,j-1)\in S$, then 
$\F_{\langle X^{j-1}\rangle}(X_{i,j})
=X_{i,j-1}$.
\end{enumerate}

\item The condition $(\rm S2)$ is 
equivalent to the following one:
\begin{enumerate}[\rm (S2$'$)]
\item If $(i,j),(i-1,j)\in S$, 
then $\F_{\langle X_{i-1}\rangle}(X_{i,j})
=X_{i-1,j}$. 
\end{enumerate}
\end{enumerate}
\end{pro}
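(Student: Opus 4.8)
I want to prove the equivalence of $(\mathrm{S}1)$ and $(\mathrm{S}1')$ (part (a)); part (b) is dual. The guiding observation is that both conditions are statements about a single column, i.e.\ about the admissible subcategory $\langle X_{i}\rangle=\langle X_{i,j}\mid (i,j)\in S\rangle$, together with a neighbouring column $\langle X^{j-1}\rangle$ or the full category $\langle X_{S}\rangle$. The plan is to express $\F_{\langle X^{j-1}\rangle}$ in terms of the Serre functor using Lemma \ref{App functor}, so that the functorial identity in $(\mathrm{S}1')$ becomes a Serre-functor identity of the type appearing in $(\mathrm{S}1)$. The crucial input allowing this translation is precisely the hypothesis $(\mathrm{S}3)$, which controls how $\S$ moves objects diagonally and hence relates the two columns.

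\textbf{Key steps.} First I would set up the relevant admissible subcategories. Fix $(i,j),(i,j-1)\in S$ and work inside $\DD'=\langle X_{i}\rangle$, whose Serre functor is $\S_{\langle X_{i}\rangle}$ by Proposition \ref{Serre functor}; condition $(\mathrm{S}1)$ says $\S_{\langle X_{i}\rangle}(X_{i,j})\simeq X_{i,j-1}$. Since each column $X_{i,j}\mapsto X_{i,j-1}$ drops the second index by one, I would check using $(\mathrm{L}2)$ (via Remark \ref{rem of L2}) and $(\mathrm{S}3)$ that $\S$ sends $\langle X^{j}\rangle$ into (or onto) $\langle X^{j-1}\rangle$ appropriately, so that Lemma \ref{App functor}(c) applies: with $\EE$ and $\FF$ the two relevant admissible subcategories satisfying $\S(\EE)\subset\FF$, one gets $\F_{\FF}^{\DD}|_{\EE}\simeq \S_{\FF}^{-1}\S|_{\EE}$. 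I would then evaluate this isomorphism on $X_{i,j}$: on the one hand $(\mathrm{S}3)$ computes $\S(X_{i,j})\simeq X_{i-1,j-1}$ inside $\langle X_{S}\rangle$, and on the other hand $\S_{\FF}^{-1}$ (the Serre functor of the column $\langle X^{j-1}\rangle$) brings this back via the horizontal relation, so that $\F_{\langle X^{j-1}\rangle}(X_{i,j})=X_{i,j-1}$ if and only if $\S_{\langle X_{i}\rangle}(X_{i,j})\simeq X_{i,j-1}$. The logical equivalence of the two conditions should then drop out by reading this chain in both directions.

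\textbf{The main obstacle.} The delicate point is bookkeeping of \emph{which} admissible subcategory each adjoint/Serre functor is computed in, and verifying the inclusions of the form $\S(\EE)\subset\FF$ that license Lemma \ref{App functor}. In particular I must confirm that $(\mathrm{S}3)$ alone—\,without assuming $(\mathrm{S}1)$ or $(\mathrm{S}2)$\,—suffices to pin down the action of $\S$ on the two-column subcategory well enough to identify $\F_{\langle X^{j-1}\rangle}(X_{i,j})$ with an element of the $S$-family rather than merely an abstract object. I expect the cleanest route is to restrict attention to the admissible subcategory $\langle X^{j}\cup X^{j-1}\rangle$ spanned by the two adjacent columns, use $(\mathrm{S}3)$ to describe $\S$ on this piece, and then apply Lemma \ref{Adj3} to compatibly restrict the adjoint functor $\F_{\langle X^{j-1}\rangle}$ from $\DD$ down to this subcategory. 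Once the functors are correctly localized, the Serre-duality rewriting is routine, so the real work is the categorical bookkeeping rather than any hard computation.
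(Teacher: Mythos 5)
There is a genuine gap, in fact two. Your plan hinges on applying Lemma \ref{App functor}(c) to rewrite $\F_{\langle X^{j-1}\rangle}|_{\langle X^{j}\rangle}$ as $\S_{\langle X^{j-1}\rangle}^{-1}\S|_{\langle X^{j}\rangle}$, which requires the inclusion $\S(\langle X^{j}\rangle)\subset\langle X^{j-1}\rangle$. Condition $(\rm S3)$ only controls $\S(X_{i',j})$ when the diagonal neighbour $(i'-1,j-1)$ actually lies in $S$; for an arbitrary finite $S$ this can fail, and then $(\rm S3)$ says nothing about where $\S(X_{i,j})$ lands. This is precisely why the paper's proof of the ``only if'' direction splits into two cases: when $(i-1,j-1)\notin S$ it abandons the Serre-functor rewriting entirely and instead shows directly, by $(\rm L2)$ Hom-vanishing and a computation using $(\rm S1)$, that $\F_{\langle X^{j-1}\rangle}(X_{i,j})$ lies in $\langle X_{i,j-1}\rangle$ and then equals $X_{i,j-1}$ by Lemma \ref{Exc0}. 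Your proposal has no mechanism for this case.

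The second and more serious gap is in the case $(i-1,j-1)\in S$, where your rewriting does give $\F_{\langle X^{j-1}\rangle}(X_{i,j})\simeq\S^{-1}_{\langle X^{j-1}\rangle}(X_{i-1,j-1})$ (the paper's (\ref{form0})). You then assert that $\S^{-1}_{\langle X^{j-1}\rangle}$ ``brings this back via the horizontal relation'' to $X_{i,j-1}$ — but that statement is exactly an instance of $(\rm S2)$ for the column $j-1$, which is not among the hypotheses of the proposition and may not hold. Establishing it from $(\rm S1)$ and $(\rm S3)$ alone is the actual content of the paper's argument: one first computes $\Hom_{\DD}(X_{i,j-1},X_{i-1,j-1}[n])$ using $(\rm S1)$ and $(\rm S3)$ to see that $T=X_{i,j-1}\oplus X_{i-1,j-1}$ is a pretilting object with endomorphism algebra $\NN(2)$, then compares two semi-orthogonal decompositions of $\langle X_{\leqslant i,j-1}\rangle$ obtained from Lemma \ref{Exc3}, invoking the fractional Calabi--Yau property of $\per K\A_{2}$ (Lemma \ref{CY lemma}) to identify $\langle\S^{-1}_{\EE}(X_{i-1,j-1})\rangle$ with $\langle X_{i,j-1}\rangle$, and finally a Hom computation with Lemma \ref{Exc0} to remove any shift ambiguity. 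None of this is ``routine Serre-duality rewriting.'' Finally, the converse direction $(\rm S1')\Rightarrow(\rm S1)$ does not ``drop out by reading the chain backwards'': the paper proves it by a separate argument (Lemma \ref{Row Lemma}), showing that $(\rm S1')$ forces $X_{i}$ to be pretilting with $\End_{\DD}(X_{i})\simeq\NN(S_{i})$, so that $\S_{\langle X_{i}\rangle}$ is identified with the Nakayama functor of $\NN(S_{i})$, which sends $P(j)$ to $P(j-1)$.
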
 

To prove Proposition \ref{nu condition}, we prove the following result:
\begin{lem}\label{Row Lemma}
Let $S$ be a finite subset of $\Z^{2}$, 
$i$ an integer such that $S_{i}\neq \emptyset$. 
Let $(X_{i,j})_{(i,j)\in S}$ be a weak $S$-family 
satisfying $(\rm S1')$. 
Then $X_{i}$ is a pretilting object 
such that $\displaystyle\End_{\DD}(X_{i})\simeq 
\NN(S_{i})$, and there exists a triangle 
equivalence $F:\langle X_{i}\rangle\to\per\NN(S_{i})$ such that 
$F(X_{i,j})\simeq P(j)$. 
\end{lem}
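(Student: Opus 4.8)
The plan is to show that $X_i = \bigoplus_{j \in S_i} X_{i,j}$ satisfies the hypotheses of the tilting theorem (Proposition~\ref{tilt1}) with endomorphism algebra $\NN(S_i)$, and then to identify the indecomposable projectives under the resulting equivalence. First I would observe that $X_i$ is a \emph{pretilting} object: this is immediate from Lemma~\ref{Pair Lemma}, since the weak $S$-family structure restricted to the single row $\{i\} \times S_i$ means every $S_{i,j}$ meeting this row is a singleton or a consecutive pair $\{(i,j),(i,j-1)\}$, so it suffices to check $\Hom_\DD(X_{i,j}, X_{i,j-1}[n]) = 0$ for $n \neq 0$ on each such pair. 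Because $\DD$ is algebraic and idempotent complete, Proposition~\ref{tilt1} then yields a triangle equivalence $\langle X_i \rangle \to \per \End_\DD(X_i)$ sending $X_i$ to the regular module. The substance of the lemma is therefore the computation of the algebra $\End_\DD(X_i)$ and the matching of projectives.

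To compute $\End_\DD(X_i)$, the key input is condition (S1$'$), which identifies $X_{i,j-1}$ with $\F_{\langle X^{j-1}\rangle}(X_{i,j})$, equivalently (via Proposition~\ref{nu condition}(a)) with the image of $X_{i,j}$ under the Serre functor of $\langle X_i\rangle$. Applying Lemma~\ref{Exc0} to each exceptional object, I would compute the graded Hom-spaces between the summands. Concretely, $(S1')$ forces a triangle $X_{i,j} \to X_{i,j-1} \to (\text{cone}) \to X_{i,j}[1]$ expressing $X_{i,j-1}$ as $\F_{\langle X^{j-1}\rangle}(X_{i,j})$; combined with the Serre duality $\Hom_\DD(X_{i,j},\S_{\langle X_i\rangle}(X_{i,j})) \simeq \End_\DD(X_{i,j})^\ast \simeq K$, this should give $\dim_K \Hom_\DD(X_{i,j}, X_{i,j-1}) = 1$ and all other off-diagonal Homs (in degree $0$) equal to $0$ except between adjacent indices. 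The endomorphism algebra of a string of exceptional objects with one-dimensional Homs only between consecutive members, and with radical square zero (since no length-two compositions survive), is exactly $\NN(S_i) = KQ^{\op}/(\rad)^2$ for the relevant $\A$-type quiver on the vertex set $S_i$; here I must be careful that the direction of arrows matches the opposite-quiver convention built into the definition of $\NN$, and that $S_i$ need not be a full interval so $Q_{S_i}$ is the induced subquiver.

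Having established $\End_\DD(X_i) \simeq \NN(S_i)$, the equivalence $F : \langle X_i\rangle \to \per\NN(S_i)$ from Proposition~\ref{tilt1} sends $X_i$ to the free module, hence sends each indecomposable summand $X_{i,j}$ to the corresponding indecomposable projective $P(j)$ of $\NN(S_i)$, which is the claimed normalization $F(X_{i,j}) \simeq P(j)$. The pattern here is exactly the one already executed in Example~\ref{Sq}, where $\per K\A_m$ is retilted to $\per\NN(m)$ via $T_m = \bigoplus_i S(i)[-i]$; the present lemma is the abstract version of that computation, with $(S1')$ playing the role that the explicit shift-by-$(-i)$ played there.

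I expect the main obstacle to be the precise Hom-space computation establishing the quiver-with-relations structure of $\End_\DD(X_i)$ --- specifically, confirming that $(S1')$ alone pins down all the degree-zero morphism spaces (both that consecutive Homs are one-dimensional and that non-consecutive Homs vanish in degree $0$), and that the resulting algebra is $\NN(S_i)$ with the correct arrow orientation rather than its opposite. The vanishing of non-consecutive Homs follows from (L2) (a morphism $X_{i,j} \to X_{i,j'}$ requires $(i,j') \in S_{i,j}$, forcing $j' \in [j-1,j]$), so the real work is extracting from the single triangle defining $\F_{\langle X^{j-1}\rangle}(X_{i,j})$ that the adjacent Hom is genuinely one-dimensional and concentrated in degree $0$, and that the radical squares to zero.
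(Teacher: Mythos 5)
Your overall architecture matches the paper's proof: reduce pretilting to the adjacent pairs via Lemma \ref{Pair Lemma}, kill the non-adjacent degree-zero Homs by (L2) (which also forces the radical to square to zero), and then apply Proposition \ref{tilt1}. But the step you yourself flag as the crux --- computing $\Hom_{\DD}(X_{i,j},X_{i,j-1}[n])$ from (S1$'$) --- is done by a mechanism that is not available here. You propose to pass from (S1$'$) to the statement $\S_{\langle X_{i}\rangle}(X_{i,j})\simeq X_{i,j-1}$ ``via Proposition \ref{nu condition}(a)'' and then invoke Serre duality in $\langle X_{i}\rangle$. Proposition \ref{nu condition}, however, assumes the family satisfies (S3), which is not a hypothesis of the present lemma; worse, the paper's proof of the implication (S1$'$)$\Rightarrow$(S1) in Proposition \ref{nu condition} is itself an application of Lemma \ref{Row Lemma}, so your route is circular.

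The missing idea is that no Serre duality is needed at all: (S1$'$) feeds directly into the adjunction isomorphism $(2.1)$ for the left adjoint $\F_{\langle X^{j-1}\rangle}$. Since $X_{i,j-1}\in\langle X^{j-1}\rangle$, one has
\[
\Hom_{\DD}(X_{i,j},X_{i,j-1}[n])
\simeq \Hom_{\DD}\bigl(\F_{\langle X^{j-1}\rangle}(X_{i,j}),X_{i,j-1}[n]\bigr)
\stackrel{(\mathrm{S1}')}{=}\Hom_{\DD}(X_{i,j-1},X_{i,j-1}[n]),
\]
which is $K$ for $n=0$ and $0$ otherwise because $X_{i,j-1}$ is exceptional (L1). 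This single line gives both the one-dimensionality in degree $0$ and the vanishing in nonzero degrees, after which your remaining steps (Pair Lemma, (L2), Proposition \ref{tilt1}) go through exactly as you describe and as in the paper.
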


\begin{proof} 
If $j, j-1\in S_{i}$, then 
\begin{align*}
\Hom_{\DD}(X_{i,j},X_{i,j-1}[n])
&\stackrel{(2.1)}{=}
\Hom_{\DD}(\F_{\langle X^{j-1}\rangle}(X_{i,j}),X_{i,j-1}[n])\\
&\stackrel{(\rm S1')}{=} \Hom_{\DD}(X_{i,j-1},X_{i,j-1}[n])
\stackrel{(\rm L1)}{=}
\begin{cases}
K & n=0,\\
0 & n\neq 0. 
\end{cases}
\end{align*}
By Lemma \ref{Pair Lemma}, $X_{i}$ is a pretilting. 
If $j'\notin[j-1,j]$, since 
$\Hom_{\DD}(X_{i,j},X_{i,j'})
\stackrel{(\rm L2)}{=}0$, 
we have 
$\End_{\DD}(X_{i})\simeq \NN(S_{i})$.\end{proof}
By symmetry, we have the following result:
\begin{lem}\label{Column Lemma}
Let $S$ be a finite subset of $\Z^{2}$, 
and $j$ an integer such that $S^{j}\neq \emptyset.$
Let $(X_{i,j})_{(i,j)\in S}$ be a weak $S$-family 
satisfying $(\rm S2')$. 
Then $X^{j}$ is a pretilting object 
such that $\displaystyle\End_{\DD}(X^{j})
\simeq \NN(S^{j})$, and there exists a triangle 
equivalence $G:\langle X^{j}\rangle\to\per\NN(S^{j})$ such that 
$G(X_{i,j})\simeq P(i)$. 
\end{lem}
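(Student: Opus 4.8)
The plan is to obtain Lemma \ref{Column Lemma} directly from Lemma \ref{Row Lemma} by exploiting the symmetry of the whole setup under the involution that swaps the two factors of $\Z^{2}$. Concretely, I would introduce the transpose operation $(i,j)\mapsto(j,i)$ on $\Z^{2}$, which carries a finite subset $S$ to its transpose $S^{\mathsf{t}}=\{(j,i)\mid(i,j)\in S\}$, and observe that this swap interchanges rows and columns: the column-set $S^{j}=\{i\mid (i,j)\in S\}$ becomes the row-set $(S^{\mathsf{t}})_{j}$. So a statement about columns of $S$ is literally a statement about rows of $S^{\mathsf{t}}$.

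First I would check that the ambient data is symmetric under this swap. The category $\LLL=\NNN\otimes\NNN$ is a tensor square, so the swap of tensor factors gives an equivalence $\sigma:\per\LLL\to\per\LLL$ with $\sigma(P_{\LLL}(i,j))\simeq P_{\LLL}(j,i)$; since $\DD$ itself is an abstract category satisfying $(\ref{Good})$, the relevant symmetry is simply that the defining conditions of a weak $S$-family are invariant under transposing $S$ and relabelling. Indeed, $(\rm L1)$ is symmetric, and $(\rm L2)$ is symmetric because its reformulation in Remark \ref{rem of L2} as the conjunction of $(\rm L2.1)$ and $(\rm L2.2)$ is visibly interchanged by the swap. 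Likewise $(\rm S2')$ for $S$, namely $\F_{\langle X_{i-1}\rangle}(X_{i,j})=X_{i-1,j}$, becomes precisely $(\rm S1')$ for the transposed family $(X'_{j,i}):=(X_{i,j})$ indexed by $S^{\mathsf{t}}$, since the roles of the two coordinates are exchanged.

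Given this, the second step is a one-line deduction: applying Lemma \ref{Row Lemma} to the transposed weak $S^{\mathsf{t}}$-family $(X'_{j,i})_{(j,i)\in S^{\mathsf{t}}}$ at the row index $j$ (which requires $(S^{\mathsf{t}})_{j}=S^{j}\neq\emptyset$ and condition $(\rm S1')$ for the transposed family, both supplied above) yields that $X'_{j}=\bigoplus_{i\in S^{j}}X'_{j,i}=\bigoplus_{i\in S^{j}}X_{i,j}=X^{j}$ is a pretilting object with $\End_{\DD}(X^{j})\simeq\NN((S^{\mathsf{t}})_{j})=\NN(S^{j})$, together with a triangle equivalence $G:\langle X^{j}\rangle\to\per\NN(S^{j})$ sending $X_{i,j}\mapsto P(i)$. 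This is exactly the assertion of Lemma \ref{Column Lemma}.

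The only point requiring care, and hence the main obstacle, is to make the phrase ``by symmetry'' fully rigorous rather than merely plausible: I must verify that every defining ingredient used in the proof of Lemma \ref{Row Lemma}, in particular the identification $\End_{\DD}(X_{i})\simeq\NN(S_{i})$ and the use of $(\rm L2)$ to kill the off-diagonal Hom-spaces, depends on $S$ only through data that is genuinely coordinate-symmetric. Since Lemma \ref{Row Lemma} is proved purely from $(\rm L1)$, $(\rm L2)$ (via Remark \ref{rem of L2}), $(\rm S1')$, and the adjunction identity $(2.1)$, and none of these break under transposing coordinates, the symmetry argument goes through without any genuinely new computation.
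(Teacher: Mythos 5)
Your proposal is correct and coincides with the paper's own argument: the paper deduces Lemma \ref{Column Lemma} from Lemma \ref{Row Lemma} with the single phrase ``by symmetry,'' and your transposition argument --- checking that $(\rm L1)$, $(\rm L2)$ are invariant under swapping coordinates and that $(\rm S2')$ for $S$ is exactly $(\rm S1')$ for the transposed family indexed by ${}^{t}S$, with $({}^{t}S)_{j}=S^{j}$ --- is precisely the rigorous content of that phrase.
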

Now we are ready to prove Proposition 
\ref{nu condition}. 
\begin{proof}[Proof of Proposition 
\ref{nu condition}]
(a) We prove ``if'' part. 
Since (\rm S1)$'$ is satisfied, 
by Lemma \ref{Row Lemma}, 
there exists a triangle equivalence 
$F:\langle X_{i}\rangle\to\per A$ 
such that $F(X_{i,j})\simeq P_{A}(j)$ 
where $\displaystyle A=\NN(S_{i})$. 
If $(i,j), (i,j-1)\in S$, then 
\[\S_{\langle X_{i}\rangle}(X_{i,j})
\simeq\S_{\langle X_{i}\rangle}F^{-1}(P_{A}(j))
\stackrel{\ref{SF}}{=} F^{-1}(\nak_{A}(P_{A}(j)))
\simeq F^{-1}(P_{A}(j-1))\simeq X_{i,j-1}.\]
Thus (\rm S1) is satisfied. 

We prove ``only if'' part. 
Suppose that $(i,j),(i,j-1)\in S$. 
If $(i-1,j-1)\notin S$, then 
\begin{align*}
\langle X^{j-1}\rangle
&=\langle X_{\leqslant i-2, j-1}\rangle\perp\langle X_{i,j-1}\rangle
\perp\langle X_{\geqslant i+1, j-1}\rangle\\
&\stackrel{(\rm L2)}{=}\langle X_{i,j-1}\rangle\perp
\langle X_{\leqslant i-2, j-1}\rangle
\perp\langle X_{\geqslant i+1, j-1}\rangle.
\end{align*}
Since 
\[\Hom_{\DD}(\F_{\langle X^{j-1}\rangle}(X_{i,j}),
X_{\geqslant i+1, j-1}[n])
\simeq \Hom_{\DD}(X_{i,j},
X_{\geqslant i+1 ,j-1}[n])
\stackrel{(\rm L2)}{=} 0\ \  \text{and}\]
\[\Hom_{\DD}(\F_{\langle X^{j-1}\rangle}(X_{i,j}),
X_{\leqslant i-2, j-1}[n])
\simeq \Hom_{\DD}(X_{i,j},
X_{\leqslant i-2 ,j-1}[n])
\stackrel{(\rm L2)}{=} 0\]
for any integer $n$, 
we have 
$\F_{\langle X^{j-1}\rangle}
(X_{i,j})\in \langle X_{i,j-1}\rangle$. 
Since 
\begin{align*}
\Hom_{\DD}(\F_{\langle X^{j-1}\rangle}(X_{i,j}),X_{i,j-1}[n])
&\simeq \Hom_{\DD}(X_{i,j},X_{i,j-1}[n])
\stackrel{(\rm S1)}{=} \Hom_{\DD}(X_{i,j},
\S_{\langle X_{i}\rangle}(X_{i,j}))\\
&\simeq\Hom_{\DD}(X_{i,j},X_{i,j}[n])^{\ast}\stackrel{(\rm L1)}{=}\begin{cases}
K & \text{$n=0$,}\\
0 & \text{$n\neq 0$,}
\end{cases}
\end{align*}
we have $\F_{\langle X^{j-1}\rangle}(X_{i,j})
\stackrel{\ref{Exc0}}{=} X_{i,j-1}$. 
If $(i-1,j-1)\in S$, then 
\begin{equation}\label{form0}
\F_{\langle X^{j-1}\rangle}(X_{i,j})
\stackrel{(\rm S3)}{=}\F_{\langle X^{j-1}\rangle}\S^{-1}(X_{i-1,j-1})
\stackrel{\ref{Serre functor}(b)}{=} \S^{-1}_{\langle X^{j-1}\rangle}(X_{i-1,j-1}).
\end{equation}
Since 
\begin{align*}
\Hom_{\DD}(\S^{-1}_{\langle X^{j-1}\rangle}(X_{i-1,j-1}), 
X_{\geqslant i+1,j-1}[n])
&\stackrel{(\ref{form0})}{=}\Hom_{\DD}(\F_{\langle X^{j-1}\rangle}(X_{i,j}), 
X_{\geqslant i+1,j-1}[n])\\
&\simeq \Hom_{\DD}(X_{i,j},X_{\geqslant i+1,j-1}[n])
\simeq 0
\end{align*}
 for any integer $n$, 
we have $\S^{-1}_{\langle X^{j-1}\rangle}(X_{i-1,j-1})\in 
\langle X_{\leqslant i,j-1}\rangle$, 
and so 
\begin{equation}\label{form1}
\F_{\langle X^{j-1}\rangle}(X_{i,j})
\stackrel{(\ref{form0})}{=} \S^{-1}_{\langle X^{j-1}\rangle}(X_{i-1,j-1})
\stackrel{\ref{Serre functor}(c)}{=} 
\S^{-1}_{\langle X_{\leqslant i,j-1}\rangle}(X_{i-1,j-1}).
\end{equation}
Let $T=X_{i,j-1}\oplus X_{i-1,j-1}$. Since 
\[\T_{\langle X_{i}\rangle}(X_{i-1,j-1})
\stackrel{(\rm S3)}{=} \T_{\langle X_{i}\rangle}
\S_{\langle X_{S}\rangle}(X_{i,j})
\stackrel{(\rm S1)}{=}X_{i,j-1},\]
we have 
\begin{align}\label{form2}
\Hom_{\DD}(X_{i,j-1},X_{i-1,j-1}[n])
&\simeq\Hom_{\DD}(X_{i,j-1},\T_{\langle X_{i}\rangle}(X_{i-1,j-1})[n])\\
&\simeq\Hom_{\DD}(X_{i,j-1},X_{i,j-1}[n])
\stackrel{(\rm L1)}{=}\begin{cases}
K & \text{$n=0$},\\
0 & \text{$n\neq 0$}.\notag
\end{cases}
\end{align}
So $T$ is a pretilting object such that 
$\End_{\DD}(T)\simeq \NN(2)$, 
and so there exists a triangle equivalence 
$F:\langle T\rangle\to \per B$ 
such that $F(P_{B}(1))\simeq X_{i-1,j-1}$ and 
$F(P_{B}(2))\simeq X_{i,j-1}$ 
where $B=\NN(2)$. 
Let $\EE=\langle X_{\leqslant i,j-1}\rangle$, 
$\FF=\langle X_{\leqslant i-2,j-1}\rangle$ 
and $\FF'=\langle \S^{-1}_{\langle T\rangle}(X_{i,j-1})\rangle$. 
Then 
\begin{align*}
\EE
&\stackrel{(\rm L2)}{=}\FF
\perp\langle X_{i-1,j-1}\rangle
\perp \langle X_{i,j-1}\rangle
\stackrel{\ref{Exc3}(b)}{=}\FF\perp\FF'
\perp \langle X_{i-1,j-1}\rangle\\
&\stackrel{\ref{Exc3}(b)}{=}
\langle \S_{\EE}^{-1}(X_{i-1,j-1})\rangle
\perp\FF
\perp\FF'.
\end{align*}
On the other hand, 
\begin{align*}
\EE
&\stackrel{(\rm L2)}{=}\FF\perp\langle X_{i-1,j-1}\rangle
\perp \langle X_{i,j-1}\rangle
\stackrel{\ref{Exc3}(a)}{=}\FF\perp\langle X_{i,j-1}\rangle
\perp \langle \S_{\langle T\rangle}(X_{i-1,j-1})\rangle\\
&\stackrel{(\rm L2)}{=}\langle X_{i,j-1}\rangle
\perp
\FF\perp \langle \S_{\langle T\rangle}(X_{i-1,j-1})\rangle
\stackrel{\ref{CY lemma}}{=}\langle X_{i,j-1}\rangle\perp\FF\perp\FF'.
\end{align*}
Thus $\langle \S_{\EE}^{-1}(X_{i-1,j-1})\rangle
=\langle X_{i,j-1}\rangle$. 
Since 
\[\Hom_{\DD}(\S_{\EE}^{-1}(X_{i-1,j-1}),X_{i,j-1}[n])
\simeq \Hom_{\DD}(X_{i,j-1},X_{i-1,j-1}[-n])^{\ast}
\stackrel{(\ref{form2})}{=} \begin{cases}
K & \text{$n=0$,}\\
0 & \text{$n\neq 0$,}
\end{cases}\]
we have 
$\S_{\EE}^{-1}(X_{i-1,j-1})
\stackrel{\ref{Exc0}}{=} X_{i,j-1}$, 
and so 
$\F_{\langle X^{j-1}\rangle}(X_{i,j})
\stackrel{(\ref{form1})}{=}\S_{\EE}^{-1}(X_{i-1,j-1})
\simeq X_{i,j-1}$. 
Thus (\rm S1)$'$ is satisfied.\\ 
(b) This is the dual of (a). 
\end{proof}

The following result is clear by the definition of $\LL(S)$. 
The proof is left to the reader:
\begin{lem}\label{TS Lemma 1}
Let $(X_{i,j})_{(i,j)\in S}$ be a family 
of objects $X_{i,j}\in\DD$. 
Suppose that there exists a family 
$(\upsilon_{i',j'}^{i,j}:
X_{i,j}\to X_{i',j'})_{(i,j),(i',j')\in S}$ of (possibly zero) morphisms and 
families $(c_{i,j})_{(i,j)\in S}$, 
$(c'_{i,j})_{(i,j)\in S}$ of non-zero scalars 
$c_{i,j}, c'_{i,j}\in K^{\times}$
satisfying the following conditions: 
\begin{gather}
\numberwithin{equation}{section}
\label{LSM1}\text{$\Hom_{\DD}(X_{i,j},X_{i',j'})
=K\upsilon_{i',j'}^{i,j}$.}\\
\label{LSM2}\text{$(i',j')\in S_{i,j}$ if and only if 
$\upsilon_{i',j'}^{i,j}\neq 0$.}\\
\label{LSM3}\text{If $(i,j)$, 
$(i,j-1)$, $(i-1,j-1)\in S$, then 
$c_{i,j}\upsilon_{i-1,j-1}^{i,j-1}
\upsilon_{i,j-1}^{i,j}
=\upsilon_{i-1,j-1}^{i,j}$.}\\
\label{LSM4}\text{If $(i,j)$, 
$(i-1,j)$, $(i-1,j-1)\in S$, then 
$c'_{i,j}\upsilon_{i-1,j-1}^{i-1,j}
\upsilon_{i-1,j}^{i,j}
=\upsilon_{i-1,j-1}^{i,j}$.}
\end{gather}
Then there exists an isomorphism 
$f:\End_{\DD}(X_{S})\to\LL(S)$ of $K$-algebras such that $P(i,j)_{f}
\simeq \Hom_{\DD}(X_{S},X_{i,j})$ as 
$\End_{\DD}(X_{S})$-modules where 
$P(i,j)_{f}$ is an $\End_{\DD}(X_{S})$-module 
associated with $f$ and $P(i,j)$. 
\end{lem}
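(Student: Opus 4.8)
The plan is to define $f$ explicitly on the natural basis of $\End_{\DD}(X_{S})$ and to reduce the requirement that $f$ be an algebra homomorphism to the relations (LSM3) and (LSM4), the scalars $c_{i,j},c'_{i,j}$ being absorbed by a rescaling of that basis. First I would record the algebra structure of $\End_{\DD}(X_{S})$. By (LSM1) and (LSM2) the set $\{\upsilon_{i',j'}^{i,j}\mid (i,j)\in S,\ (i',j')\in S_{i,j}\}$ is a $K$-basis, and since $\End_{\DD}(X_{i,j})=K\upsilon_{i,j}^{i,j}$ is one-dimensional I may normalise $\upsilon_{i,j}^{i,j}=1_{X_{i,j}}$. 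A short case analysis using (LSM2) shows that the only products of two non-identity basis elements that can be nonzero are the two factorisations of a ``diagonal'' $\upsilon_{i-1,j-1}^{i,j}$ through the two intermediate corners of the unit square, governed precisely by (LSM3) and (LSM4); every other product (two horizontals, two verticals, or a product factoring through a diagonal) has target outside the relevant $S_{i',j'}$ and hence vanishes.

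On the other side, the description $\LLL=\NNN\otimes\NNN$ equips $\LL(S)$ with an analogous basis $\{e_{i',j'}^{i,j}\}$, the images of the arrows $\alpha\otimes\beta$ of $\NNN\otimes\NNN$ and their compositions, in which parallel products vanish and, by the interchange law, both factorisations of a diagonal agree with structure constant $1$. I would then set $f(\upsilon_{i',j'}^{i,j})=\mu_{i',j'}^{i,j}\,e_{i',j'}^{i,j}$ for scalars $\mu_{i',j'}^{i,j}\in K^{\times}$ with $\mu_{i,j}^{i,j}=1$; any such assignment is a $K$-linear bijection sending $1_{X_{i,j}}$ to the vertex idempotent $e_{(i,j)}$ of $\LL(S)$. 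Comparing products through the two presentations, and writing $h_{i,j}=\mu_{i-1,j}^{i,j}$, $v_{i,j}=\mu_{i,j-1}^{i,j}$, $d_{i,j}=\mu_{i-1,j-1}^{i,j}$, the map $f$ is an algebra homomorphism if and only if $d_{i,j}=c_{i,j}h_{i,j-1}v_{i,j}$ whenever (LSM3) applies and $d_{i,j}=c'_{i,j}v_{i-1,j}h_{i,j}$ whenever (LSM4) applies; all other product comparisons read $0=0$ and impose nothing.

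The only place these two demands can collide is a full unit square (all four corners of $S_{i,j}$ in $S$), where both equations must hold and therefore force $h_{i,j-1}v_{i,j}/(v_{i-1,j}h_{i,j})=c'_{i,j}/c_{i,j}$. This consistency is the one and only point needing care, and I would settle it by the gauge choice $v_{i,j}:=1$ for every vertical, which turns each full-square condition into $h_{i,j-1}=(c'_{i,j}/c_{i,j})\,h_{i,j}$. For a fixed $i$ these relations link the scalars $h_{i,j}$ along the subset $\{j\mid (i,j)\in S\}\subset\Z$ in finite chains with no cycles, and distinct $i$ are independent; hence I can solve for every $h_{i,j}$ by fixing a value at the top of each chain and propagating downward. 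Each $d_{i,j}$ is then defined by (LSM3) or (LSM4), consistently by the full-square identity just arranged, by the unique applicable relation when only one corner of the square is present, or freely (say $d_{i,j}=1$) when neither relation applies.

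With these scalars $f$ respects every product and is therefore an isomorphism of $K$-algebras. Finally, since $f(1_{X_{i,j}})=e_{(i,j)}$, restriction of scalars along the isomorphism $f$ carries $P(i,j)=e_{(i,j)}\LL(S)$ to $1_{X_{i,j}}\End_{\DD}(X_{S})=\Hom_{\DD}(X_{S},X_{i,j})$, which is the asserted module isomorphism $P(i,j)_{f}\simeq\Hom_{\DD}(X_{S},X_{i,j})$. The main obstacle is precisely the full-square consistency, and the content of the proof is that the gauge choice $v\equiv 1$ reduces it to an acyclic, column-by-column solve, so that the scalars $c_{i,j},c'_{i,j}$ can always be gauged away.
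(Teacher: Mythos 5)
Your argument is correct, and it supplies exactly the detail the paper declines to give: the paper states this lemma with ``the proof is left to the reader,'' so there is no written proof to compare against. Your identification of the basis of $\End_{\DD}(X_{S})$ and of the structure constants of $\LL(S)=e(\NNN\otimes\NNN)e$ (parallel products vanish by the radical-square relation, the two factorisations of a diagonal agree with coefficient $1$, and the diagonal basis element exists even when neither intermediate corner lies in $S$) is accurate, and you correctly isolate the only nontrivial point, namely the consistency of the two determinations of $\mu^{i,j}_{i-1,j-1}$ on a full unit square. The gauge choice $v\equiv 1$ reduces this to the relations $h_{i,j-1}=(c'_{i,j}/c_{i,j})h_{i,j}$, which for each fixed $i$ form an acyclic chain along an interval of $j$'s and are therefore solvable in $K^{\times}$; the remaining cases (one or no factorisation available) impose no constraint. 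The final identification $P(i,j)_{f}\simeq\Hom_{\DD}(X_{S},X_{i,j})$ via $f(1_{X_{i,j}})=e_{(i,j)}$ is also fine. I see no gap.
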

Now we are ready to prove Theorem \ref{Triviality}.
\begin{proof}[Proof of Theorem \ref{Triviality}]
Since $(X_{i,j})_{(i,j)\in S}$ is a weak 
$S$-family, we have 
\[\dim_{K}\Hom_{\DD}(X_{i,j},X_{i-1,j}[n])
\stackrel{\ref{Row Lemma}}{=} \begin{cases}
1 & n=0,\\
0 & n\neq 0, 
\end{cases}\]
\[\dim_{K}\Hom_{\DD}(X_{i,j},X_{i,j-1}[n])
\stackrel{\ref{Column Lemma}}{=}
\begin{cases}
1 & n=0,\\
0 & n\neq 0, 
\end{cases}\] 
\[\dim_{K}\Hom_{\DD}(X_{i,j},X_{i-1,j-1}[n])
\stackrel{(\rm S3)}{=} 
\dim_{K}\Hom_{\DD}(X_{i,j},
\S_{\langle X_{S}\rangle}(X_{i,j})[n])\]
\[=\dim_{K}\Hom_{\DD}(X_{i,j},X_{i,j}[-n])^{\ast}
\stackrel{(\rm L1)}{=} \begin{cases}
1 & n=0,\\
0 & n\neq 0. 
\end{cases}\]
In particular, 
$X_{S_{i,j}}$ is a pretilting object. 
Thus $X_{S}$ is a tilting object by Lemma $\ref{Pair Lemma}$. 

Thanks to the above calculation, the following condition is satisfied:
\begin{gather}
\label{LSH1}\text{If $(i',j')\in S_{i,j}$, then $\dim_{K}\Hom_{\DD}(X_{i,j},X_{i',j'})=1$.}
\end{gather}
By the conditions $(\rm L2)$ and 
$(\ref{LSH1})$, 
thete exists a family $(\upsilon_{i',j'}^{i,j}:X_{i,j}\to X_{i',j'})_{(i,j),(i',j')\in S}$ of morphisms satisfying 
(\ref{LSM1}) and (\ref{LSM2}). 
By (\rm S1$'$) and (\rm S2$'$),  
the conditions (\ref{LSM3}) and (\ref{LSM4}) 
are satisfied. 
\[\xymatrix{
X_{i-1,j-1} & X_{i-1,j} 
\ar[l]_{\upsilon_{i-1,j-1}^{i-1,j}}\\
X_{i,j-1} \ar[u]^{c_{i,j}\upsilon_{i-1,j-1}^{i,j-1}} & X_{i,j} \ar[u]_{c'_{i,j}\upsilon_{i-1,j}^{i,j}}\ar[l]^{\upsilon_{i,j-1}^{i,j}}
\ar[ul]_{\upsilon_{i-1,j-1}^{i,j}}}\]
Thus the assertion follows from Lemma \ref{TS Lemma 1}. 
\end{proof}

\subsection{A property of $S$-families}
Let $(X_{i,j})_{(i,j)\in S}$ be an $S$-family. For any $J\subset \Z$, define 
\[X_{i,J}
=\bigoplus_{j\in J\cap S_{i}} X_{i,j}.\]
In particular 
\[X_{i,>j}=X_{(j,\infty)},\ \ 
X_{i,\geqslant j}=X_{[j,\infty)},\ \ 
X_{i,<j}=X_{(-\infty,j)},\ \ 
X_{i,\leqslant j}=X_{(-\infty,j]}.\]

In this section, we prove the following 
result which is the key step of the proof of 
Theorem \ref{Lpq} in the next section. 
Let $(X_{i,j})_{(i,j)\in S}$ be an $S$-family and 
$(i,j)\in S$. 
If $(i-1,j), (i-1,j-1)\in S$, 
by (\rm S1) and (\rm S3), we have 
$\S_{\langle X_{S}\rangle}(X_{i,j})\simeq X_{i-1,j-1}\simeq\S_{\langle X_{i-1}\rangle}(X_{i-1,j})$. 
The following shows that 
$\S_{\langle X_{S}\rangle}(X_{i,j})
\simeq\S_{\langle X_{i-1}\rangle}(X_{i-1,j})$ 
holds without assuming $(i-1,j-1)\in S$ if 
$S_{i}\subset S_{i-1}$. 

\begin{pro}\label{LSS1}
Let $(X_{i,j})_{(i,j)\in S}$ be an 
$S$-family in $\DD$, and $(i,j)\in S$. 
If $S_{i}\subset S_{i-1}$, then 
$\S_{\langle X_{S}\rangle}(X_{i,j})
\simeq\S_{\langle X_{i-1}\rangle}(X_{i-1,j})
\in\langle X_{i-1}\rangle$.
\end{pro}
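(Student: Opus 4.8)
The plan is to reduce the statement to a single \emph{containment} and then let the adjunction machinery of Section~2 do the rest. First I would replace $\DD$ by $\langle X_S\rangle$: this subcategory is admissible in $\DD$, inherits a Serre functor by Proposition~\ref{Serre functor}, and the family is a full $S$-family inside it, so we may assume $\DD=\langle X_S\rangle$ and write $\S=\S_{\langle X_S\rangle}$. The key reduction is the claim that
\[\S(\langle X_i\rangle)\subseteq\langle X_{i-1}\rangle,\]
equivalently $\S(X_{i,j})\in\langle X_{i-1}\rangle$ for every $j\in S_i$. Granting this, I apply Lemma~\ref{App functor}(c) with $\EE=\langle X_i\rangle$ and $\FF=\langle X_{i-1}\rangle$ (both admissible) to obtain $\F_{\langle X_{i-1}\rangle}|_{\langle X_i\rangle}\simeq\S_{\langle X_{i-1}\rangle}^{-1}\S|_{\langle X_i\rangle}$. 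Since $j\in S_i\subseteq S_{i-1}$ gives $(i-1,j)\in S$, condition (S2$'$) of Proposition~\ref{nu condition} yields $\F_{\langle X_{i-1}\rangle}(X_{i,j})=X_{i-1,j}$; evaluating the functor isomorphism at $X_{i,j}$ produces $X_{i-1,j}\simeq\S_{\langle X_{i-1}\rangle}^{-1}\S(X_{i,j})$, that is $\S(X_{i,j})\simeq\S_{\langle X_{i-1}\rangle}(X_{i-1,j})$, which is the assertion (and locates the object in $\langle X_{i-1}\rangle$). So everything comes down to the containment.

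For the containment I would split on whether $(i-1,j-1)\in S$. If $(i-1,j-1)\in S$, then (S3) gives $\S(X_{i,j})\simeq X_{i-1,j-1}\in\langle X_{i-1}\rangle$ at once. If $(i-1,j-1)\notin S$, the hypothesis $S_i\subseteq S_{i-1}$ forces $j-1\notin S_{i-1}$, hence also $(i,j-1)\notin S$, so among the generators below and to the left $X_{i,j}$ maps nontrivially only to $X_{i-1,j}$. Here $\S(X_{i,j})$ is no longer a single $X_{i',j'}$ and must be computed. Transporting through the fully faithful functor $G$ of Theorem~\ref{Triviality} into $\per\LLL$ (the model of Example~\ref{Trivial family}), the Serre functor becomes the Nakayama functor with $\nu_{\LLL}(P_{\LLL}(i,j))=P_{\LLL}(i-1,j-1)$, so $\S(X_{i,j})$ is identified with the right truncation of $P_{\LLL}(i-1,j-1)$ into $G(\langle X_S\rangle)=\langle P_{\LLL}(a,b)\mid(a,b)\in S\rangle$. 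The containment then asserts that this truncation is concentrated in row $i-1$, and this is exactly where $S_i\subseteq S_{i-1}$ enters: row $i-1$ carries every column occurring in row $i$, so the approximation of $P_{\LLL}(i-1,j-1)$ from $\langle X_S\rangle$ needs no row-$i$ summand.

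The hard part is precisely this containment when $(i-1,j-1)\notin S$, and I want to emphasise that it cannot be gotten by a formal orthogonality argument. Using Serre duality one has $\Hom_{\langle X_S\rangle}(X_{i,j},C)^{\ast}\simeq\Hom_{\langle X_S\rangle}(C,\S(X_{i,j}))$, so the vanishing ``$\Hom_{\langle X_S\rangle}(X_{i,j},C)=0$ for all $C\in{}^{\perp}\langle X_{i-1}\rangle$'' is \emph{equivalent} to, not weaker than, the containment; the obstruction is that, by (L2), row $i$ admits nonzero morphisms into row $i-1$, so $\langle X_{i-1}\rangle$ is not cut out inside $\langle X_S\rangle$ by the vanishing of $\Hom$ from the row-$i$ generators. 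Consequently the object $\S(X_{i,j})$ has to be \emph{built}, not merely characterised.

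I would carry out this construction by an explicit mutation inside the two-row admissible subcategory $\EE=\langle X_{i-1}\oplus X_i\rangle$, which is admissible by Lemma~\ref{Exc2} as a consecutive block of the row semi-orthogonal decomposition. Moving $X_{i,j}$ across the relevant exceptional objects via Lemma~\ref{Exc3} and verifying the north-type vanishing of Lemma~\ref{Exc4} should exhibit the mutated object, the inclusion $S_i\subseteq S_{i-1}$ being exactly what guarantees that this vanishing holds and hence that the result lands in $\langle X_{i-1}\rangle$; one then lifts from $\S_{\EE}$ to $\S_{\langle X_S\rangle}$ using Proposition~\ref{Serre functor}(c). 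This parallels the constructive computation of $\F_{\langle X^{j-1}\rangle}(X_{i,j})$ in the proof of Proposition~\ref{nu condition}, and it is the step I expect to be the main obstacle.
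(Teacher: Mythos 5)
Your overall architecture is sound and close to the paper's. The reduction in your first paragraph is valid: once one knows $\S(X_{i,j'})\in\langle X_{i-1}\rangle$ for all $j'\in S_{i}$, Lemma \ref{App functor}(c) together with (S2$'$) of Proposition \ref{nu condition} (applicable because $j\in S_{i}\subseteq S_{i-1}$ puts $(i-1,j)$ in $S$) gives $\S(X_{i,j})\simeq\S_{\langle X_{i-1}\rangle}(X_{i-1,j})$ — in fact a slightly cleaner endgame than the paper, which instead identifies the truncated object directly via its Lemma \ref{triangle}. You also correctly locate the crux: after transporting to $\per\LLL$ by Theorem \ref{Triviality}, the object $\S_{\langle X_{S}\rangle}(X_{i,j})$ is the truncation $\T_{\langle X_{S}\rangle}(P_{\LLL}(i-1,j-1))$, and everything hinges on showing this lies in $\langle X_{i-1}\rangle$ when $(i-1,j-1)\notin S$.

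That decisive step, however, is exactly what you leave unexecuted, and the device you propose for it does not work as stated. You want to mutate inside $\EE=\langle X_{i-1}\oplus X_{i}\rangle$ and then lift from $\S_{\EE}$ to $\S_{\langle X_{S}\rangle}$ via Proposition \ref{Serre functor}(c); but (c) requires knowing \emph{in advance} that $\S_{\langle X_{S}\rangle}(X_{i,j})\in\EE$, and since rows $>i$ admit nonzero morphisms into row $i$, that containment is itself non-formal — it is of the same nature as the statement being proved, so the lift is circular. The paper's mechanism is the missing content: enlarge $S$ to $T=S\cup\{(i-1,j-1)\}$ (legitimate because in $\per\LLL$ the projectives indexed by \emph{any} finite set form a family), so that (S3) for the $T$-family gives $\S_{\langle X_{T}\rangle}(X_{i,j})\simeq X_{i-1,j-1}$ on the nose and hence $\S_{\langle X_{S}\rangle}(X_{i,j})\simeq\T_{\langle X_{S}\rangle}(X_{i-1,j-1})$ by Proposition \ref{Serre functor}(b); then apply Lemma \ref{Exc4} to the lexicographically ordered exceptional sequence on $T$, with $E_{p}=X_{i-1,j-1}$ and $E_{[p,q]}$ the tail of row $i-1$. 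The hypothesis (\ref{north}) of that lemma is precisely where $S_{i}\subseteq S_{i-1}$ does its work, via a computation you assert but do not supply: for $(i',j')\in S$ with $i'=i$ and $j'>j-1$ one has $j'\in S_{i}\subseteq S_{i-1}$, so (S2$'$) and adjunction replace $\Hom_{\DD}(X_{i,j'},Y[n])$ by $\Hom_{\DD}(X_{i-1,j'},Y[n])$, which vanishes by Serre duality inside row $i-1$ and (L2); the remaining cases vanish by (L2) alone. Only after this does the truncation collapse into $\langle X_{i-1}\rangle$, at which point either your endgame or the paper's Lemma \ref{triangle} finishes the proof.
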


To prove Proposition \ref{LSS1}, we prove the following result:
\begin{lem}\label{triangle}
Let $(X_{i,j})_{(i,j)\in S}$ be an $S$-family in $\DD$ 
and $(i,j)$, $(i,j-1)\in S$. Then 
\[\T_{\langle X_{i,\geqslant j}\rangle}
^{\langle X_{i}\rangle}(X_{i,j-1})
=\S_{\langle X_{i,S_{i}
\backslash\{j-1\}}\rangle}(X_{i,j}).\]
\end{lem}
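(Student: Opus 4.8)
The plan is to notice that every functor in the statement is defined on subcategories of the single row $\langle X_{i}\rangle$, so the whole identity may be proved inside $\DD':=\langle X_{i}\rangle$. This category is admissible in $\DD$ (by the lemma following Example \ref{ExS}, or Lemma \ref{Exc2}), hence it is $\Ext$-finite and, by Proposition \ref{Serre functor}(a), carries a Serre functor $\S_{\langle X_{i}\rangle}$; moreover $(X_{i,j'})_{j'\in S_{i}}$, ordered by increasing $j'$, is a full exceptional sequence in $\DD'$ (it is the restriction to row $i$ of the $S$-family viewed as an exceptional sequence, cf. Lemma \ref{Row Lemma}). The only structural inputs I need are that $\S_{\langle X_{i}\rangle}$ sends $X_{i,j}\mapsto X_{i,j-1}$ by $\mathrm{(S1)}$ and that every subset of the sequence generates an admissible subcategory of $\DD'$ (Lemma \ref{Exc2}).

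First I would rewrite the right-hand side. Put $\EE:=\langle X_{i,S_{i}\backslash\{j-1\}}\rangle$, which is admissible in $\DD'$. Proposition \ref{Serre functor}(b) gives $\S_{\EE}\simeq\T_{\EE}^{\langle X_{i}\rangle}\,\S_{\langle X_{i}\rangle}|_{\EE}$, so applying it to $X_{i,j}$ and using $\mathrm{(S1)}$ to replace $\S_{\langle X_{i}\rangle}(X_{i,j})$ by $X_{i,j-1}$, the claim reduces to the equality of two right truncations of the single object $X_{i,j-1}$, namely $\T_{\EE}^{\langle X_{i}\rangle}(X_{i,j-1})=\T_{\langle X_{i,\geqslant j}\rangle}^{\langle X_{i}\rangle}(X_{i,j-1})$. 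Informally, this asserts that deleting the objects $X_{i,j'}$ with $j'<j-1$ from the target does not alter the right truncation of $X_{i,j-1}$.

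This comparison is exactly what Lemma \ref{Exc4} is designed to produce. I would apply it to the full exceptional sequence $(X_{i,j'})_{j'\in S_{i}}$ with $p$ the position of $X_{i,j-1}$ and $q$ the top position (corresponding to $\max S_{i}$). With this choice $E_{[q+1,n]}=0$, so the orthogonality hypothesis $(\ref{north})$ is satisfied vacuously, and Lemma \ref{Exc4} yields $\T_{\EE}^{\langle X_{i}\rangle}(X_{i,j-1})=\T_{\langle X_{i,\geqslant j}\rangle}^{\langle X_{i,\geqslant j-1}\rangle}(X_{i,j-1})$, the right-hand ambient category being $\langle X_{i,\geqslant j-1}\rangle=\langle E_{[p,q]}\rangle$.

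It then remains to pass from the intermediate ambient category $\langle X_{i,\geqslant j-1}\rangle$ back to $\langle X_{i}\rangle$. Since $\langle X_{i,\geqslant j}\rangle\subset\langle X_{i,\geqslant j-1}\rangle$ are both admissible in $\DD'$ and $X_{i,j-1}\in\langle X_{i,\geqslant j-1}\rangle$, Lemma \ref{Adj1}(b) gives $\T_{\langle X_{i,\geqslant j}\rangle}^{\langle X_{i,\geqslant j-1}\rangle}(X_{i,j-1})=\T_{\langle X_{i,\geqslant j}\rangle}^{\langle X_{i}\rangle}(X_{i,j-1})$, and concatenating the three equalities gives the assertion. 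I expect the one delicate point to be the bookkeeping in Lemma \ref{Exc4}: one must index the sequence so that $p$ is the position of $X_{i,j-1}$ and take $q$ maximal so that $E_{[q+1,n]}$ vanishes and $(\ref{north})$ costs nothing; everything else is routine admissibility manipulation.
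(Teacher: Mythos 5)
Your argument is correct: every step checks out, including the slightly delicate application of Lemma \ref{Exc4} with $q=n$ (the hypothesis $(\ref{north})$ is indeed vacuous there, and the proof of that lemma goes through with $\langle E_{[q+1,n]}\rangle=0$), and the identifications $\langle E_{[p+1,n]}\rangle=\langle X_{i,\geqslant j}\rangle$, $\langle E_{[p,n]}\rangle=\langle X_{i,\geqslant j-1}\rangle$ are valid even when $S_{i}$ is not an interval. The route, however, differs from the paper's. Both proofs share the opening move of rewriting $\S_{\langle X_{i,S_{i}\backslash\{j-1\}}\rangle}(X_{i,j})$ as $\T_{\langle X_{i,S_{i}\backslash\{j-1\}}\rangle}^{\langle X_{i}\rangle}(X_{i,j-1})$ via Proposition \ref{Serre functor}(b) and $(\rm S1)$. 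After that, the paper works with the explicit exceptional object $E=\S_{\langle X_{i,\geqslant j}\rangle}(X_{i,j})$: using Lemma \ref{Exc3}(a) it writes $\langle X_{i,\geqslant j}\rangle=\langle X_{i,>j}\rangle\perp\langle E\rangle$, checks by a direct $\Hom$-computation that $Y=\T_{\langle X_{i,\geqslant j}\rangle}^{\langle X_{i}\rangle}(X_{i,j-1})$ lies in $\langle E\rangle$ and satisfies $\dim_{K}\Hom_{\DD}(Y,E[n])=\delta_{n,0}$, and concludes $Y\simeq E$ from Lemma \ref{Exc0}; the right-hand side is then matched to $Y$ separately. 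You instead never introduce $E$ and compare the two right truncations of $X_{i,j-1}$ directly, delegating the semi-orthogonal shuffle to Lemma \ref{Exc4} and finishing with Lemma \ref{Adj1}(b). Your version is shorter and reuses machinery the paper has already set up for Proposition \ref{LSS1}; the paper's version is more hands-on and, as a byproduct, identifies the common value of both sides with the concrete object $\S_{\langle X_{i,\geqslant j}\rangle}(X_{i,j})$.
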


\begin{proof}
Let $Y=\T_{\langle X_{i,\geqslant j}\rangle}
^{\langle X_{i}\rangle}(X_{i,j-1})$. 
Since 
$(X_{i,j})_{j\in S_{i}}$
is regarded as the exceptional sequence,
\[\langle X_{i,\geqslant j}\rangle
=\langle X_{i,j}\rangle\perp\langle 
X_{i,> j}\rangle
\stackrel{\ref{Exc3}(a)}{=}\langle 
X_{i,> j}\rangle
\perp\langle E\rangle\]
where $E=\S_{\langle X_{i,\geqslant j}\rangle}(X_{i,j})$. 
Then 
\[\Hom_{\DD}(X_{i,>j},
Y[n])
\simeq \Hom_{\DD}(X_{i,>j},X_{i,j-1}[n])=0\]
for any integer $n$. We have 
$Y\in\langle E\rangle.$ 
Since $E$ is an exceptional object and 
\[\Hom_{\DD}(Y,E[n])
\simeq\Hom_{\DD}(X_{i,j},
Y[-n])^{\ast}
\simeq\Hom_{\DD}(X_{i,j},X_{i,j-1}[-n])^{\ast}\stackrel{(\rm S1)}{=}
\begin{cases}
K & n=0,\\
0 & \text{otherwise},
\end{cases}\]
we have 
$Y\stackrel{\ref{Exc0}}{=}E.$ 
Since 
\[\langle X_{i,S_{i}
\backslash\{j-1\}}\rangle=
\langle X_{i,<j-1}\rangle
\perp\langle X_{i,\geqslant j}\rangle
\stackrel{(\rm L2)}{=}
\langle X_{i,\geqslant j}\rangle
\perp\langle X_{i,<j-1}\rangle
,\]
we have 
\[\S_{\langle X_{i,S_{i}\backslash\{j-1\}}
\rangle}(X_{i,j})
\simeq \T_{\langle X_{i,S_{i}\backslash\{j-1\}}\rangle}
^{\langle X_{i}\rangle}\S_{\langle X_{i}\rangle}(X_{i,j})
\simeq \T_{\langle X_{i,S_{i}\backslash\{j-1\}}\rangle}
^{\langle X_{i}\rangle}(X_{i,j-1})
\simeq Y\simeq E.\]
Thus the assertion follows.
\end{proof}
Now we are ready to prove Proposition \ref{LSS1}. 
\begin{proof}[Proof of Proposition \ref{LSS1}]
By Theorem \ref{Triviality}, we can assume that 
$\DD=\per\LLL$, $X_{i,j}=P_{\LLL}(i,j)\in\per\LLL$. 
If $(i-1,j-1)\in S$, 
 \[\S_{\langle X_{S}\rangle}(X_{i,j})\simeq X_{i-1,j-1}
\simeq\S_{\langle X_{i-1}\rangle}(X_{i-1,j}).\]
If $(i-1,j-1)\notin S$, let 
$T:=S\cup \{(i-1,j-1)\}$. 
Then $(X_{i,j})_{(i,j)\in T}$ is 
an $T$-family in $\DD$. 
By restricting the lexicographic order 
$\preccurlyeq$ of $\Z^{2}$, we regard $T$ as a totally ordered set (see the figure (\ref{lex})). Let $s:[1,n]\to T$ be the ordered isomorphism, and let 
$E_{k}=X_{s(k)}$. 
Let $p,q$ be two integers such that 
\begin{gather}\label{def p,q}
\text{$p=s^{-1}(i-1,j-1)$, 
$q=s^{-1}(i-1,j_{0})$ where $j_{0}=\sup\{j'\in \Z\mid (i-1,j')\in T\}$,}
\end{gather} 
and let 
$Y=\S_{\langle X_{i-1,\geqslant j-1}\rangle}
(X_{i-1,j-1})=\S_{\langle E_{[p,q]}\rangle}(E_{p})$. \\
If $(i',j')\in S$ and $i'> i$, 
\[\text{$\Hom_{\DD}(X_{i',j'},Y[n])
\stackrel{(\rm L2)}{=}0$ 
for any integer $n$.}\]
If $(i',j')\in S$, $i'=i$, and $j'<j-1$, 
\[\text{$\Hom_{\DD}(X_{i',j'},Y[n])
\stackrel{(\rm L2)}{=}0$ 
for any integer $n$.}\]
If $(i',j')\in S$, $i'=i$, and $j'>j-1$, 
\begin{align*}
\Hom_{\DD}(X_{i',j'},
Y[n])
\stackrel{(\rm S2')}{=}\Hom_{\DD}(X_{i-1,j'},Y[n])
\simeq\Hom_{\DD}(X_{i-1,j-1},
X_{i-1,j'}[-n])^{\ast}
\stackrel{(\rm L2)}{=}0
\end{align*}
for any integer $n$. So we have that 
\[\text{$\Hom_{\DD}(E_{[q+1,n]},\S_{\langle E_{[p,q]}\rangle}(E_{p})[n])=0$ 
for any integer $n$.}\] 
Thus 
\begin{align*}
\S_{\langle X_{S}\rangle}(X_{i,j})
&\stackrel{\ref{Serre functor}(b)}{=} 
\T_{\langle X_{S}\rangle}\S_{\langle X_{T}\rangle}(X_{i,j})
\simeq\T_{\langle X_{S}\rangle}(X_{i-1,j-1})
=\T_{\langle E_{[1,n]\backslash\{p\}}\rangle}(E_{p})\\
&\stackrel{\ref{Exc4}}{=}\T_{\langle E_{[p+1,q]}\rangle}
^{\langle E_{[p,q]}\rangle}(E_{p})
\stackrel{(\ref{def p,q})}{=}\T_{\langle X_{i-1,J\backslash\{j-1\}}
\rangle}
^{\langle X_{i-1,J}
\rangle}(X_{i-1,j-1})
\stackrel{\ref{triangle}}{=}
\S_{\langle X_{i-1}\rangle}(X_{i-1,j})
\end{align*}
where $J=[j-1,j_{0}]\cap S_{i-1}$. 
\end{proof}

\section{$S$-families in derived categories of Nakayama algebras}
\subsection{The algebras 
$\LL(\vp;\vq)$ and $\LL^{!}(\vp;\vq)$}

Let $\vp=(p_{k})_{k\in [1,r]}$ and 
$\vq=(q_{k})_{k\in [1,r]}$ be 
two sequences of positive integers. 
For any integer $s\in[0,r]$, define 
\[\np_{s}:=\sum_{k=1}^{s}p_{k},\ \ 
\nq_{s}:=\sum_{k=1}^{s}q_{k},\ \  
\np:=\np_{r}, \ \ \nq:=\nq_{r}.\] 
Consider the Young diagram 
\[\Y(\vp;\vq):=\bigcup_{k\in[0,r-1]}[1+\np_{k},\np_{k+1}]
\times [1,\nq_{r-k}]
= \bigcup_{k\in[0,r-1]}
[1,\np_{r-k}]\times[1+\nq_{k},\nq_{k+1}]\]
(see Figure 2).
\begin{figure}

\small\begin{tikzpicture}[scale=2/3]
\draw (0,0) 
-- (2,0) 
-- (2,1) 
-- (4,1) 
-- (4,2); 
\draw[dashed] (4,2) 
-- (5,3); 
\draw (5,3) 
-- (7,3);
\draw (7,3) 
-- (7,4);
\draw[dashed] (7,4) 
-- (8,5);
\draw(8,5) 
-- (10,5) 
-- (10,6) 
-- (12,6) 
-- (12,7) 
-- (0,7)
-- (0,0);
\draw (1,0)node[below]{$q_{1}$};
\draw (2,0.5)node[right]{$p_{r}$};
\draw (3,1)node[below]{$q_{2}$};
\draw (4,1.5)node[right]{$p_{r-1}$};

\draw (6,3)node[below]{$q_{r-k+1}$};
\draw (7,3.5)node[right]{$p_{k}$};

\draw (9,5)node[below]{$q_{r-1}$};
\draw (10,5.5)node[right]{$p_{2}$};
\draw (11,6)node[below]{$q_{r}$};
\draw (12,6.5)node[right]{$p_{1}$};
\end{tikzpicture}
\caption{The shape of the Young 
diagram $\Y(\vp;\vq)$}
\end{figure}
Let \[e(\vp;\vq)
:=\sum_{(i,j)\in \Y(\vp;\vq)} 
e_{i}\otimes e_{j},
\ \  e^{!}(\vp;\vq)
:=\sum_{(i,j)\in \Y(\vp;\vq)} e^{!}_{i}\otimes e^{!}_{j}\]
where $e_{i}\otimes e_{j}$ 
(resp.\,$e^{!}_{i}\otimes e^{!}_{j}$) is the primitive idempotent in 
$\NN(\np)\otimes\NN(\nq)$ 
(resp.\,$K\A_{\np}\otimes K\A_{\nq}$) corresponding to the vertex 
$(i,j)\in\Y(\vp;\vq)$. 
Then the algebras $\LL(\vp;\vq)$ and 
$\LL^{!}(\vp;\vq)$ 
are defined as 
\begin{align}
\label{def Lpq}
\LL(\vp;\vq)
&=\LL(p_{1},\dots,p_{r};q_{1},\dots,q_{r})
:=\LL(\Y(\vp;\vq))
=e(\vp;\vq)(\NN(\np)\otimes\NN(\nq))
e(\vp;\vq)\\
&\simeq (\NN(\np)\otimes\NN(\nq))
/\langle 1-e(\vp;\vq)\rangle,\notag\\
\label{def L!pq}\LL^{!}(\vp;\vq)&
=\LL^{!}(p_{1},\dots,p_{r};q_{1},\dots,q_{r})
:=e^{!}(\vp;\vq)(K\A_{\np}\otimes K\A_{\nq})
e^{!}(\vp;\vq)\\
&\simeq (K\A_{\np}\otimes K\A_{\nq})
/\langle 1-e^{!}(\vp;\vq)\rangle.\notag
\end{align}
By the definition of $\LL(\vp;\vq)$ and 
$\LL^{!}(\vp;\vq)$, there exist  
natural isomorphisms $\LL(\vp;\vq)\to\LL(\vq;\vp)$ and $\LL^{!}(\vp;\vq)
\to\LL^{!}(\vq;\vp)$. 

\begin{ex}
The quivers of $\LL(3;4)$ and $\LL^{!}(3;4)$ are the following respectively:
\[\small\xymatrix@=15pt{
(1,1) & (1,2) \ar[l]_-{u_{1,1}} 
& (1,3) \ar[l]_-{u_{1,2}} 
& (1,4)\ar[l]_-{u_{1,3}}\\
(2,1)\ar[u]_-{v_{1,1}} 
& (2,2) \ar[l]_-{u_{2,1}}\ar[u]_-{v_{1,2}} 
& (2,3)\ar[l]_-{u_{2,2}}\ar[u]_-{v_{1,3}} 
& (2,4)\ar[l]_-{u_{2,3}}\ar[u]_-{v_{1,4}}\\
(3,1) \ar[u]_-{v_{2,1}}
& (3,2)\ar[l]_-{u_{3,1}}\ar[u]_-{v_{2,2}} 
& (3,3)\ar[l]_-{u_{3,2}} \ar[u]_-{v_{2,3}}
& (3,4)\ar[l]_-{u_{3,3}}\ar[u]_-{v_{2,4}}}
\ \ \ \ \ \ \ \ \ \ \ 
\small\xymatrix@=15pt{
(1,1)\ar[r]^-{u_{1,1}^{!}}\ar[d]^-{v_{1,1}^{!}} 
& (1,2) \ar[r]^-{u_{1,2}^{!}}\ar[d]^-{v_{1,2}^{!}}
 & (1,3) \ar[r]^-{u_{1,3}^{!}}
 \ar[d]^-{v_{1,3}^{!}} 
 & (1,4)\ar[d]^-{v_{1,4}^{!}}\\
(2,1)\ar[r]^-{u_{2,1}^{!}}\ar[d]^-{v_{2,1}^{!}} 
& (2,2) \ar[r]^-{u_{2,2}^{!}}\ar[d]^-{v_{2,2}^{!}} 
& (2,3)\ar[r]^-{u_{2,3}^{!}}\ar[d]^-{v_{2,3}^{!}} 
& (2,4)\ar[d]^-{v_{2,4}^{!}}\\
(3,1)\ar[r]^-{u_{3,1}^{!}} 
& (3,2) \ar[r]^-{u_{3,2}^{!}} 
& (3,3)\ar[r]^-{u_{3,3}^{!}} 
& (3,4)}\]
The quivers of $\LL(1,2,1;1,2,2)$ and 
$\LL^{!}(1,2,1;1,2,2)$ are the following respectively:
\[\small\xymatrix@=15pt{
(1,1)  & (1,2) \ar[l]_-{u_{1,1}} & (1,3) \ar[l]_-{u_{1,2}}  
& (1,4) \ar[l]_-{u_{1,3}}  & (1,5)\ar[l]_-{u_{1,4}} \\
(2,1)\ar[u]_-{v_{1,1}} & (2,2) \ar[l]_-{u_{2,1}}\ar[u]_-{v_{1,2}} 
& (2,3)\ar[l]_-{u_{2,2}} \ar[u]_-{v_{1,3}} \\
(3,1)\ar[u]_-{v_{2,1}} & (3,2)\ar[u]_-{v_{2,2}} \ar[l]_-{u_{3,1}} & (3,3)\ar[u]_-{v_{2,3}}\ar[l]_-{u_{3,2}}\\
(4,1)\ar[u]_-{v_{3,1}}}
\ \ \ \ \ \ 
\small\xymatrix@=15pt{
(1,1)\ar[r]^-{u_{1,1}^{!}} \ar[d]^-{v_{1,1}^{!}} & (1,2) \ar[r]^-{u_{1,2}^{!}}\ar[d]^-{v_{1,2}^{!}} & (1,3) \ar[r]^-{u_{1,3}^{!}} \ar[d]^-{v_{1,3}^{!}} 
& (1,4) \ar[r]^-{u_{1,4}^{!}}  & (1,5)\\
(2,1)\ar[r]^-{u_{2,1}^{!}}\ar[d]^-{v_{2,1}^{!}} 
& (2,2) \ar[r]^-{u_{2,2}^{!}}\ar[d]^-{v_{2,2}^{!}} 
& (2,3)\ar[d]^-{v_{2,3}^{!}} \\
(3,1)\ar[r]^-{u_{3,1}^{!}}\ar[d]^-{v_{3,1}^{!}} & (3,2) \ar[r]^-{u_{3,2}^{!}} & (3,3) \\
(4,1)
}\]
and the relations is the following respectively: 
\[u_{i,j}u_{i,j+1}=0,\ \ v_{i,j}v_{i+1,j}=0,\ \ u_{i,j}v_{i,j+1}-v_{i,j}u_{i+1,j}=0,\ \ 
 v^{!}_{i,j+1}u^{!}_{i,j}
-u^{!}_{i+1,j}v^{!}_{i,j}=0.\]
\end{ex} 
From the following result, $\LL(\vp;\vq)$
is derived equivalent to $\LL^{!}(\vp;\vq)$. 
\begin{pro}\label{Duality} 
The family $(X_{i,j})_{(i,j)\in \Y(\vp;\vq)}$ 
of objects 
\[X_{i,j}=S_{\LL^{!}(\vp;\vq)}(i,j)[-i-j]\]
is a full $\Y(\vp;\vq)$-family in $\per\LL^{!}(\vp;\vq)$. 
Then the object 
$\displaystyle X_{\Y(\vp;\vq)}=\bigoplus_{(i,j)\in \Y(\vp;\vq)}
X_{i,j}$ 
is a tilting object in 
$\per\LL^{!}(\vp;\vq)$ such that 
$\End_{\per\LL^{!}(\vp;\vq)}(X_{\Y(\vp;\vq)})\simeq 
\LL(\vp;\vq)$, 
and there exists a triangle equivalence 
$F:\per\LL^{!}(\vp;\vq)
\to\per\LL(\vp;\vq)$ 
such that 
$F(X_{i,j})\simeq P(i,j)$ for any 
$(i,j)\in \Y(\vp;\vq)$. 
\end{pro}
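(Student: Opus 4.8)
The plan is to deduce the whole statement from Theorem \ref{Triviality}. Everything except the claim that $(X_{i,j})_{(i,j)\in\Y(\vp;\vq)}$ is a full $\Y(\vp;\vq)$-family is then automatic: Theorem \ref{Triviality} turns such a family into a tilting object $X_{\Y(\vp;\vq)}$ with $\End\simeq\LL(\Y(\vp;\vq))=\LL(\vp;\vq)$ (using (\ref{def Lpq})), together with a triangle equivalence $\per\LL^{!}(\vp;\vq)=\langle X_{\Y(\vp;\vq)}\rangle\to\per\LL(\vp;\vq)$ sending each $X_{i,j}$ to $P(i,j)$. So I would concentrate on producing the full $\Y(\vp;\vq)$-family.

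I would build it inside the rectangle and then cut it down. Put $A=K\A_{\np}\otimes K\A_{\nq}$ and $\overline{X}_{i,j}=S_{A}(i,j)[-i-j]\in\per A$. By Example \ref{Sq}, $(\overline{X}_{i,j})_{(i,j)\in[1,\np]\times[1,\nq]}$ is a full $([1,\np]\times[1,\nq])$-family in $\per A$. Restricting to $\Y=\Y(\vp;\vq)$ gives a weak $\Y$-family, so (L1) and (L2) hold; moreover the subcategory $\EE:=\langle\overline{X}_{i,j}\mid(i,j)\in\Y\rangle$ is admissible in $\per A$ (Lemma \ref{Exc2}), hence satisfies (\ref{Good}) and carries a Serre functor (Proposition \ref{Serre functor}(a)). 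To obtain (S1)--(S3) for the restricted family I transfer the corresponding identities of the rectangle family along subcategory inclusions via Proposition \ref{Serre functor}(c). For (S1), the restricted row $\langle\overline{X}_{i}\rangle$ is an admissible subcategory of the full row $\langle\overline{X}_i^{\mathrm{full}}\rangle$ (Lemma \ref{Adj1}), both $\overline{X}_{i,j}$ and $\S_{\langle\overline{X}_i^{\mathrm{full}}\rangle}(\overline{X}_{i,j})=\overline{X}_{i,j-1}$ lie in $\langle\overline{X}_i\rangle$, so Proposition \ref{Serre functor}(c) yields $\S_{\langle\overline{X}_i\rangle}(\overline{X}_{i,j})\simeq\overline{X}_{i,j-1}$; condition (S2) is dual, and (S3) is the same argument with $\per A$ itself in place of the row, since $\S_{\per A}(\overline{X}_{i,j})=\overline{X}_{i-1,j-1}$ again lies in $\EE$. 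In each case the target index is one of the vertices required to be in $\Y$ by the hypothesis of the axiom, so it automatically belongs to $\EE$.

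It remains to identify $\EE$ with $\per\LL^{!}(\vp;\vq)$ and carry the family over. Set $e=e^{!}(\vp;\vq)$, so $\LL^{!}(\vp;\vq)=eAe$, and note $\hd A=2$. The decisive point is that $\Y$ is an \emph{order ideal} of the product poset $[1,\np]\times[1,\nq]$ (if $(i,j)\in\Y$ and $(i',j')\leqslant(i,j)$ then $(i',j')\in\Y$); since the arrows of $A$ increase coordinates, every path of $A$ ending in $\Y$ already starts in $\Y$, whence $\Hom_{\per A}((1-e)A,\,eA)=0$. Example \ref{tilt2}(b) then provides a triangle equivalence $F=\Rhom_{A}(eA,-)\colon\EE=\langle eA\rangle\to\per\LL^{!}(\vp;\vq)$ with $F(S_{A}(i,j))\simeq S_{\LL^{!}(\vp;\vq)}(i,j)$, hence $F(\overline{X}_{i,j})\simeq X_{i,j}$. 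As a triangle equivalence carries the Serre functor of an admissible subcategory to the corresponding Serre functor (Proposition \ref{SF}(c)), $F$ transports the full $\Y$-family $(\overline{X}_{i,j})$ to the full $\Y$-family $(X_{i,j})$ in $\per\LL^{!}(\vp;\vq)$, and Theorem \ref{Triviality} finishes the proof.

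The step I expect to require the most care is the second paragraph: I must check that the nested subcategories $\langle\overline{X}_i\rangle\subset\langle\overline{X}_i^{\mathrm{full}}\rangle\subset\per A$ (and their analogues for columns and for $\EE\subset\per A$) are all admissible and hence carry Serre functors, so that Proposition \ref{Serre functor}(c) legitimately applies at each transfer. The genuinely new geometric input, by contrast, is light: it is only the order-ideal property of the Young diagram $\Y(\vp;\vq)$, which is exactly what makes the $\Hom$-vanishing hypothesis of Example \ref{tilt2}(b) hold and lets the idempotent truncation $eAe$ realize $\EE$ as $\per\LL^{!}(\vp;\vq)$.
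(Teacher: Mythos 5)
Your argument is correct, and it proves the statement by a route that overlaps with, but is not identical to, the paper's. The paper also starts from Example \ref{Sq} and also uses the order-ideal property of $\Y(\vp;\vq)$ to get $\Hom_{\per A}(P',P)=0$ and hence the truncation equivalence $F_{2}:\langle e^{!}(\vp;\vq)A\rangle\to\per\LL^{!}(\vp;\vq)$ of Example \ref{tilt2}; but where you verify the axioms (S1)--(S3) for the restricted rectangle family directly inside $\per A$ (by the admissible-subcategory transfer of Proposition \ref{Serre functor}(c)), the paper instead performs a second idempotent truncation on the Nakayama side, $F_{1}:\langle e(\vp;\vq)B\rangle\to\per\LL(\vp;\vq)$ with $B=\NN(\np)\otimes\NN(\nq)$, and observes that the composite $F_{1}GF_{2}^{-1}$ carries the shifted simples $X_{i,j}$ to the projectives $P_{\LL(\vp;\vq)}(i,j)$, which are already known to form a full $\Y(\vp;\vq)$-family by Example \ref{Trivial family}; the family property is then pulled back along the equivalence. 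Your restriction argument in effect re-proves, inside $\per A$, the same transfer that Example \ref{Trivial family} carries out inside the self-injective category $\per\LLL$, so nothing is lost; what you gain is that the $B$-side truncation is not needed to establish the family (the equivalence $\per\LL^{!}(\vp;\vq)\to\per\LL(\vp;\vq)$ then comes for free from Theorem \ref{Triviality}), while the paper's version makes the equivalence completely explicit as $F_{1}GF_{2}^{-1}$. Both hinge on the same two facts you isolate: Example \ref{Sq} and the down-set property of the Young diagram.
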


\begin{proof}
Let 
$A=\LL^{!}(\np;\nq)=K\A_{\np}\otimes K\A_{\nq}$,  
$B=\NN(\np)\otimes\NN(\nq)$. 
By Example \ref{Sq}, there exists a triangle 
equivalence $G:\per A\to \per B$
such that $G(S_{A}(i,j)[-i-j])
\simeq P_{B}(i,j)$ for any $(i,j)\in\Y(\np,\nq)$. Let 
\[P=e^{!}(\vp;\vq)A, \ \ 
P'=(1-e^{!}(\vp;\vq))A,\ \  
T=\bigoplus_{(i,j)\in \Y(\vp;\vq)}
S_{A}(i,j)[-i-j],  \ \ 
R=G(T)\simeq e(\vp;\vq)B.\] 
Since  
$\End_{\per B}(R)\simeq\LL(\vp;\vq)$, 
there exists a triangle equivalence 
$F_{1}:\langle R\rangle\to \per\LL(\vp;\vq)$ 
such that $F_{1}(P_{B}(i,j))
\simeq P_{\LL(\vp;\vq)}(i,j)$ 
for any $(i,j)\in\Y(\vp;\vq)$ by Example \ref{tilt2}. 

Since $\End_{\per A}(P)\simeq\LL^{!}(\vp;\vq)$ and 
$\Hom_{\per A}(P',P)\simeq 0,$ 
there exists a triangle equivalence 
$F_{2}:\langle P\rangle\to\per \LL^{!}(\vp;\vq)$ such that 
$F_{2}(P_{A}(i,j))\simeq 
P_{\LL^{!}(\vp;\vq)}(i,j)$ and 
$F_{2}(S_{A}(i,j))
\simeq S_{\LL^{!}(\vp;\vq)}(i,j)$ 
for any $(i,j)\in\Y(\vp;\vq)$ by Example \ref{tilt2}. 
Since 
\[F_{1}GF_{2}^{-1}(X_{i,j})
\simeq P_{\LL(\vp;\vq)}(i,j),\]
the triangle equivalence 
$F_{1}GF_{2}^{-1}:\per \LL^{!}(\vp;\vq)\to \per \LL(\vp;\vq)$ 
send the family 
$(X_{i,j})_{(i,j)\in \Y(\vp;\vq)}$ to 
the full $\Y(\vp;\vq)$-family
$(P_{\LL(\vp;\vq)}(i,j))_{(i,j)\in\Y(\vp;\vq)}$ 
in $\per\LL(\vp;\vq)$ given by 
Example \ref{Trivial family}. 
Thus the assertion follows. 
\begin{equation}\label{cd}
\vcenter{
\xymatrix{
\per \LL^{!}(\vp;\vq) 
&\langle P\rangle\ar@{^{(}->}[d]
\ar@{=}[r]\ar[l]_-{F_{2}}^-{\simeq}
&\langle T\rangle 
\ar[r]^-{G}_-{\simeq}
\ar@{^{(}->}[d] 
& \langle R\rangle \ar[r]^-{F_{1}}_-{\simeq}
\ar@{^{(}->}[d] 
& \per \LL(\vp;\vq)\\
& \per A \ar@{=}[r]
&\per A\ar[r]_-{G}^-{\simeq} & \per B. 
& 
}
}
\end{equation}

\vspace*{-1.5\baselineskip}
\end{proof}

Recall that $\DD$ satisfies the condition $(\ref{Good})$ if 
\begin{gather*}
\text{$\DD$ is algebraic, idempotent complete, $\Ext$-finite and has 
a Serre functor $\S$.}
\end{gather*} 
For any two sequences of positive integers 
$\vp=(p_{k})_{k\in [1,r]}$ and 
$\vq=(q_{k})_{k\in [1,r]}$, define 
\[\lambda_{\vp;\vq}(i)=\lambda(i)
:=\sup\{j'\in\Z\mid (i,j')\in \Y(\vp;\vq)\}.\]
When $S=\Y(\vp;\vq)$, 
the definition of $S$-families are 
characterized by the following much simpler 
conditons than $(\rm L1)$, $(\rm L2)$, 
$(\rm S1)$-$(\rm S3)$. 
\begin{thm}\label{Lpq}
Let $\DD$ be a triangulated category satisfying $(\ref{Good})$, 
$(X_{i,j})_{(i,j)\in \Y(\vp;\vq)}$ a 
family of exceptional objects in $\DD$. 
Then 
 $(X_{i,j})_{(i,j)\in \Y(\vp;\vq)}$ is 
a full $\Y(\vp;\vq)$-family in $\DD$ if and 
only if 
 $(X_{i,j})_{(i,j)\in \Y(\vp;\vq)}$ 
satisfies the following conditions: 
\begin{enumerate}[\rm (Y1)]
\item $\DD=\langle X_{1}\rangle\perp\langle X_{2}\rangle\perp\dots\perp\langle X_{\np}\rangle$. 
\item  $\langle X_{1}\rangle
=\langle X_{1,1}\rangle
\perp\langle X_{1,2}\rangle
\perp\dots\perp\langle X_{1,\lambda(1)}\rangle$. 
\item  $\S_{\langle X_{1}\rangle}(X_{1,j})\simeq X_{1,j-1}$ for any integer $j\in(1,\lambda(i)]$. 
\item $\S(X_{i,j})\simeq \S_{\langle X_{i-1}\rangle}(X_{i-1,j})$ for any integers $i\in(1,\np]$ and 
$j\in[1,\lambda(i)]$.
\end{enumerate}
In this case, there exists a triangle equivalence 
\[F:\DD\to\per \LL(\vp;\vq)\]
such that $F(X_{i,j})\simeq P(i,j)$ 
for any $(i,j)\in\Y(\vp;\vq)$.
\end{thm}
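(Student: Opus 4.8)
The plan is to prove both implications of Theorem \ref{Lpq} by translating the row-based conditions (Y1)--(Y4) into the conditions (L1), (L2), (S1)--(S3) of Definition \ref{def LS}, exploiting that for a Young diagram $S=\Y(\vp;\vq)$ the rows are nested: $S_{i}=[1,\lambda(i)]$ with $\lambda(1)\geqslant\lambda(2)\geqslant\dots\geqslant\lambda(\np)$, so that $S_{i}\subset S_{i-1}$. For the ``only if'' part I would assume $(X_{i,j})$ is a full $\Y(\vp;\vq)$-family, view it as an exceptional sequence for the lexicographic order, and group the indices into the row-blocks $\{i\}\times S_{i}$: Lemma \ref{Exc2} together with the weak $S$-family structure yields the semi-orthogonal decompositions of (Y1) and (Y2), while fullness supplies $\DD$ in (Y1), and (Y3) is just (S1) for $i=1$. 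Condition (Y4) is then immediate from Proposition \ref{LSS1}, which applies precisely because $S_{i}\subset S_{i-1}$; fullness identifies $\S_{\langle X_{S}\rangle}$ with the ambient $\S$.

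For the ``if'' part I would assume (Y1)--(Y4); fullness of the family is immediate from (Y1). The heart of the argument is an induction on the row index $i$ showing that row $i$ is \emph{standard}: that $\langle X_{i}\rangle=\langle X_{i,1}\rangle\perp\dots\perp\langle X_{i,\lambda(i)}\rangle$, that the $\Hom$-pattern inside row $i$ is that of $\per\NN(\lambda(i))$ with $X_{i,j}$ playing the role of $P(j)$, and that $\S_{\langle X_{i}\rangle}(X_{i,j})\simeq X_{i,j-1}$ for $j\in(1,\lambda(i)]$. The base case $i=1$ is (Y2) together with (Y3). For the inductive step I would use (Y4) and the standardness of row $i-1$ to identify $\S(X_{i,j})\simeq X_{i-1,j-1}$ for $j\geqslant 2$, and then convert this, through Serre duality $\Hom_{\DD}(X_{i,j},X_{i',j'}[n])\simeq\Hom_{\DD}(X_{i',j'},\S(X_{i,j})[-n])^{\ast}$, into vanishing of $\Hom$-spaces between adjacent rows and, in turn, inside row $i$; this is exactly (L2) for neighbouring indices, and it forces $X_{i}$ to be a pretilting object with $\End_{\DD}(X_{i})\simeq\NN(\lambda(i))$, so that $\langle X_{i}\rangle\simeq\per\NN(\lambda(i))$ by Proposition \ref{tilt1} and the Serre shift follows from $\S_{\langle X_{i}\rangle}$ being the Nakayama functor (Proposition \ref{Iwa}). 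The remaining case of (L2), vanishing of $\Hom_{\DD}(X_{i,j},X_{i',j'}[n])$ when $|i-i'|\geqslant 2$, then follows: for $i'>i$ it is the semi-orthogonality of (Y1), and for $i'\leqslant i-2$ it comes again from Serre duality, since $\S(X_{i,j})\in\langle X_{i-1}\rangle$ while $\Hom_{\DD}(\langle X_{i'}\rangle,\langle X_{i-1}\rangle[n])=0$ by (Y1).

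Once (L1) and (L2) are in place, the symmetry between rows and columns of $S$ takes over. The $\Hom$-pattern makes each column $X^{j}$ a pretilting object with $\End_{\DD}(X^{j})\simeq\NN(S^{j})$, so by Proposition \ref{tilt1} one has $\langle X^{j}\rangle\simeq\per\NN(S^{j})$ with $X_{i,j}$ in the role of $P(i)$, and the Nakayama functor gives $\S_{\langle X^{j}\rangle}(X_{i,j})\simeq X_{i-1,j}$, which is (S2) (the column analogue of Lemma \ref{Column Lemma}). Condition (S1) is the Serre shift inside standard rows, and (S3) is $\S(X_{i,j})\simeq X_{i-1,j-1}$, already obtained from (Y4). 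Hence $(X_{i,j})$ is a full $\Y(\vp;\vq)$-family, and the triangle equivalence $F\colon\DD\to\per\LL(\vp;\vq)$ with $F(X_{i,j})\simeq P(i,j)$ is produced by Theorem \ref{Triviality} applied to $S=\Y(\vp;\vq)$.

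The main obstacle I anticipate is the inductive step of the ``if'' part, and specifically its boundary case $j=1$: whereas $\S(X_{i,j})\simeq X_{i-1,j-1}$ for $j\geqslant 2$, the object $\S(X_{i,1})\simeq\S_{\langle X_{i-1}\rangle}(X_{i-1,1})$ is \emph{not} one of the $X_{i-1,\bullet}$ but the simple-type exceptional object of $\per\NN(\lambda(i-1))$, which has full support across row $i-1$. Controlling the $\Hom$-spaces out of row $i$ into this object, and thereby verifying the left-edge vanishing inside row $i$, is exactly where the projection-functor computations of Proposition \ref{nu condition}, Lemma \ref{triangle} and Lemma \ref{App functor}, relating $\S$, $\S_{\langle X_{i}\rangle}$ and the adjoints $\T,\F$, must be deployed with care.
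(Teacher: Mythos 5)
Your proposal is correct and follows essentially the same route as the paper: the ``only if'' direction via Proposition \ref{LSS1} for (Y4), and the ``if'' direction via an induction on rows (packaged in the paper as Lemma \ref{ind}), Serre duality to get the inter-row orthogonality (L2.1) (the paper's Lemma \ref{orth}), the identification of (S2) through (S2)$'$ and Proposition \ref{nu condition}, and a final appeal to Theorem \ref{Triviality}. The boundary issue you flag at $j=1$ is handled in the paper exactly as you anticipate, by working with $\S(X_{i})\in\langle X_{i-1}\rangle$ at the level of whole rows rather than individual objects.
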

To prove Theorem \ref{Lpq}, we prepare the following two results. 
\begin{lem}\label{orth}
Let $\DD$ be a triangulated category satisfying $(\ref{Good})$, 
$(X_{i})_{i\in[1,n]}$ a 
family of objects in $\DD$. 
If the conditons 
\begin{gather}
\numberwithin{equation}{section}
\label{orth1}\text{$\DD=\langle X_{1}\rangle\perp 
\langle X_{2}\rangle\perp\dots \perp\langle X_{n}\rangle$}\\
\label{orth2}\text{$\S(X_{i})\in\langle X_{i-1}\rangle$ 
for any integer $i\in(1,n]$}
\end{gather}
are satisfied, then 
\begin{gather}
\numberwithin{equation}{section}
\text{$\Hom_{\DD}(X_{i},X_{i'}[n])=0$ for any integer $n$ unless $i'\in[i-1,i]$.}
\end{gather}
\end{lem}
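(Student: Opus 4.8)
The plan is to split the verification into two cases according to the sign of $i'-i$, using the semi-orthogonal decomposition $(\ref{orth1})$ directly when $i'>i$ and combining the Serre functor with $(\ref{orth2})$ when $i'<i-1$. Recall first that, by the very definition of a semi-orthogonal decomposition, $(\ref{orth1})$ means $\Hom_{\DD}(\langle X_{k}\rangle,\langle X_{k'}\rangle)=0$ whenever $k<k'$. Since $X_{i}\in\langle X_{i}\rangle$ and $X_{i'}[n]\in\langle X_{i'}\rangle$ for every integer $n$, this immediately gives $\Hom_{\DD}(X_{i},X_{i'}[n])=0$ for all $i<i'$, which disposes of every $i'>i$ at once.

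For the remaining case $i'<i-1$ (note this forces $i\geqslant 3$, so that $(\ref{orth2})$ does apply to the index $i$), I would invoke the defining isomorphism of the Serre functor $\S$, namely $\Hom_{\DD}(X,\S(Y))\simeq\Hom_{\DD}(Y,X)^{\ast}$, with $X=X_{i'}[n]$ and $Y=X_{i}$. Dualizing and shifting yields $\Hom_{\DD}(X_{i},X_{i'}[n])\simeq\Hom_{\DD}(X_{i'}[n],\S(X_{i}))^{\ast}\simeq\Hom_{\DD}(X_{i'},\S(X_{i})[-n])^{\ast}$. By hypothesis $(\ref{orth2})$ we have $\S(X_{i})\in\langle X_{i-1}\rangle$, and since $\langle X_{i-1}\rangle$ is a thick subcategory it is closed under shifts, so $\S(X_{i})[-n]\in\langle X_{i-1}\rangle$ as well. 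Because $i'<i-1$, a second application of the semi-orthogonality in $(\ref{orth1})$ gives $\Hom_{\DD}(\langle X_{i'}\rangle,\langle X_{i-1}\rangle)=0$, and hence $\Hom_{\DD}(X_{i'},\S(X_{i})[-n])=0$. Therefore $\Hom_{\DD}(X_{i},X_{i'}[n])=0$ for all $i'<i-1$.

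Putting the two cases together shows that $\Hom_{\DD}(X_{i},X_{i'}[n])$ can be nonzero only when $i'\in\{i-1,i\}=[i-1,i]$, which is the assertion. I do not anticipate any genuine obstacle here; the single point demanding mild care is that, after passing through Serre duality, the object $\S(X_{i})$ must be shifted \emph{one step to the left}, into $\langle X_{i-1}\rangle$ rather than $\langle X_{i}\rangle$. It is exactly this displacement that makes the strict gap $i'<i-1$ (as opposed to merely $i'<i$) the correct hypothesis under which semi-orthogonality can be reused, and this is precisely why $(\ref{orth2})$ is phrased with $\langle X_{i-1}\rangle$.
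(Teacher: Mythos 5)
Your proposal is correct and follows exactly the paper's argument: the case $i'>i$ is handled directly by the semi-orthogonality in $(\ref{orth1})$, and the case $i-i'\geqslant 2$ by Serre duality together with $\S(X_{i})\in\langle X_{i-1}\rangle$ from $(\ref{orth2})$ and a second application of $(\ref{orth1})$. No gaps; the only difference is that you spell out the intermediate steps (closure of thick subcategories under shifts, the sign of the shift) more explicitly than the paper does.
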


\begin{proof}
If $i<i'$, 
$\Hom_{\DD}(X_{i},X_{i'}[n])\stackrel{(\ref{orth1})}{=}0$ 
for any integer $n$. 
If $i-i'\geqslant 2$, then $\S(X_{i})\in \langle X_{i-1}\rangle$ by (\ref{orth2}) and 
$\Hom_{\DD}(X_{i},X_{i'}[n])
\simeq \Hom_{\DD}(X_{i'},\S(X_{i})[-n])^{\ast}
\stackrel{(\ref{orth1})}{=}0$ 
for any integer $n$. 
Thus the assertion follows.
\end{proof}

\begin{lem}\label{ind}
Let $\DD$ be a triangulated category satisfying $(\ref{Good})$, 
$(X_{i,j})_{(i,j)\in \Y(\vp; \vq)}$ a 
family of exceptional objects in $\DD$ 
satisfying $(\rm Y1)$-$(\rm Y4)$. 
Then $(X_{i,j})_{(i,j)\in \Y(\vp;\vq)}$ 
satisfies the following conditions:
\begin{gather}
\numberwithin{equation}{section}
\tag{\rm Y2$'$}\text{ $\langle X_{i}\rangle
=\langle X_{i,1}\rangle
\perp\langle X_{i,2}\rangle
\perp\dots\perp\langle X_{i,\lambda(i)}\rangle$ for any integer $i\in[1,\np]$.}\\
\tag{\rm S1}\text{ $\S_{\langle X_{i}\rangle}(X_{i,j})\simeq X_{i,j-1}$ for any integers $i\in[1,\np]$, $j\in(1,\lambda(i)]$.}
\end{gather}
\end{lem}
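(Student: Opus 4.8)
The plan is to prove (Y2$'$) and (S1) together by induction on the row index $i\in[1,\np]$, carrying as inductive datum a triangle equivalence $G_{i}\colon\langle X_{i}\rangle\to\per\NN(\lambda(i))$ with $G_{i}(X_{i,j})\simeq P(j)$ for all $j\in[1,\lambda(i)]$; by Proposition \ref{SF}(c) such a $G_{i}$ automatically intertwines $\S_{\langle X_{i}\rangle}$ with the Serre functor $\nu_{\NN(\lambda(i))}$ of $\per\NN(\lambda(i))$. Once $G_{i}$ is available, both conclusions are immediate: the semiorthogonal decomposition $\per\NN(\lambda(i))=\langle P(1)\rangle\perp\dots\perp\langle P(\lambda(i))\rangle$ pulls back to (Y2$'$), and the computation $\nu_{\NN(\lambda(i))}(P(j))\simeq I(j)\simeq P(j-1)$ for $2\le j\le\lambda(i)$ — valid because $\NN(\lambda(i))$ is a radical-square-zero Nakayama algebra — pulls back to $\S_{\langle X_{i}\rangle}(X_{i,j})\simeq X_{i,j-1}$, i.e., (S1). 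So the task reduces to producing $G_{i}$.

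For the base case $i=1$, the hypotheses (Y2) and (Y3) are precisely (Y2$'$) and (S1) for the first row. Using (Y3) to rewrite $\S_{\langle X_{1}\rangle}(X_{1,j})\simeq X_{1,j-1}$, the invariance of graded Hom-spaces under $\S_{\langle X_{1}\rangle}$ reduces every $\Hom_{\DD}(X_{1,j},X_{1,j'}[n])$ to one with $j'=1$, and then Serre duality together with the vanishing furnished by (Y2) shows this dimension is $1$ when $n=0$ and $j'\in\{j-1,j\}$ and $0$ otherwise. These are exactly the dimensions of $\Hom_{\per\NN(\lambda(1))}(P(j),P(j')[n])$, so $\bigoplus_{j}X_{1,j}$ is a tilting object of $\langle X_{1}\rangle$ with $\End_{\DD}(\bigoplus_{j}X_{1,j})\simeq\NN(\lambda(1))$, and Proposition \ref{tilt1} produces $G_{1}$.

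For the inductive step I assume $G_{i-1}$, so that $\langle X_{i-1}\rangle\simeq\per\NN(\lambda(i-1))$ with $X_{i-1,j}\leftrightarrow P(j)$ and $\S_{\langle X_{i-1}\rangle}\leftrightarrow\nu$. Hypothesis (Y4) then reads $\S(X_{i,j})\simeq\S_{\langle X_{i-1}\rangle}(X_{i-1,j})$, which $G_{i-1}$ identifies with $\nu_{\NN(\lambda(i-1))}P(j)=I(j)$; in particular every $\S(X_{i,j})$ lies in $\langle X_{i-1}\rangle$. Since $\S$ is a triangle autoequivalence of $\DD$ it preserves graded Hom-spaces, whence
\[\Hom_{\DD}(X_{i,j},X_{i,j'}[n])\simeq\Hom_{\per\NN(\lambda(i-1))}(I(j),I(j')[n])\simeq\Hom_{\per\NN(\lambda(i-1))}(P(j),P(j')[n]),\]
which vanishes for $n\ne0$ and for $n=0,\ j<j'$; thus $(X_{i,j})_{j}$ is an exceptional sequence generating $\langle X_{i}\rangle$. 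Moreover $\S$ induces an isomorphism on endomorphism algebras, and since $\bigoplus_{j\le\lambda(i)}I(j)=\nu(\bigoplus_{j\le\lambda(i)}P(j))$ we get $\End_{\DD}(\bigoplus_{j}X_{i,j})\simeq\End(\bigoplus_{j\le\lambda(i)}P(j))=e\,\NN(\lambda(i-1))\,e\simeq\NN(\lambda(i))$, where $e=\sum_{j\le\lambda(i)}e_{j}$ and the last isomorphism is the idempotent truncation of the radical-square-zero algebra $\NN(\lambda(i-1))$ to its first $\lambda(i)$ vertices (note $\lambda(i)\le\lambda(i-1)$ since $\Y(\vp;\vq)$ is a Young diagram). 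Proposition \ref{tilt1} now yields $G_{i}$, completing the induction.

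The step I expect to be most delicate is reading (Y4) through $G_{i-1}$ and carrying out the endomorphism computation: it hinges on the facts that in a radical-square-zero Nakayama algebra the Nakayama functor sends $P(j)$ to $I(j)\simeq P(j-1)$ and that an initial idempotent truncation of $\NN(\lambda(i-1))$ is again of the form $\NN(\cdot)$, and getting the orientation conventions (hence the direction of the semiorthogonal decompositions and the exact shapes of $P(j)$ and $I(j)$) exactly right requires care. Finally, Lemma \ref{orth}, applied to the objects $X_{i}=\bigoplus_{j}X_{i,j}$ with its second hypothesis $\S(X_{i})\in\langle X_{i-1}\rangle$ supplied by (Y4), records the complementary inter-row vanishing $\Hom_{\DD}(X_{i,j},X_{i',j'}[n])=0$ unless $i'\in[i-1,i]$; this is not needed for (Y2$'$) or (S1) themselves but will be used in the surrounding proof of Theorem \ref{Lpq}.
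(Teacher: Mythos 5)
Your proof is correct, and its engine is the same as the paper's: induct on the row index, using (Y4) to transport Hom-spaces and Serre functors from (a truncation of) row $i-1$ to row $i$. The implementations differ. The paper stays inside $\DD$: it forms $F=\S^{-1}\S_{\langle X_{i}\rangle}|_{\langle X_{i,\leqslant\lambda(i+1)}\rangle}$, observes that (Y4) says $F(X_{i,j})\simeq X_{i+1,j}$, and then deduces (Y2$'$) for the next row from full faithfulness of $F$ and (S1) from the fact that equivalences intertwine Serre functors (Proposition \ref{SF}, Proposition \ref{Serre functor}(c)); it never needs to identify $\End_{\DD}(X_{i})$. You instead carry an explicit tilting equivalence $G_{i}\colon\langle X_{i}\rangle\to\per\NN(\lambda(i))$, which obliges you to recompute the endomorphism algebra at each stage via $\nu(P(j))\simeq I(j)\simeq P(j-1)$ and the truncation $e\,\NN(\lambda(i-1))\,e\simeq\NN(\lambda(i))$ (valid since $\lambda(i)\leqslant\lambda(i-1)$). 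That is more machinery than the lemma needs — you are essentially folding Lemma \ref{Row Lemma} into the induction — but every step checks out: the base-case Hom computation follows from (Y2), (Y3) and Serre duality in $\langle X_{1}\rangle$ as you say, the radical-square-zero relations hold automatically because the relevant target Hom-spaces vanish, and Proposition \ref{tilt1} applies because thick subcategories of $\DD$ inherit algebraicity and idempotent completeness. What your route buys is an explicit model $\per\NN(\lambda(i))$ for each row, which the paper also needs but establishes separately (Lemma \ref{Row Lemma}) rather than inside this lemma; what the paper's route buys is brevity, since for (Y2$'$) and (S1) alone the abstract equivalence $F$ suffices.
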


\begin{proof}
When $i=1$, (\rm Y2$'$) is nothing but (\rm Y2). Assume that (\rm Y2$'$) holds for $i$. 
If $j<j'$, 
\begin{align*}
\Hom_{\DD}(X_{i+1,j},X_{i+1,j'}[n])
&\stackrel{(\rm Y4)}{=}\Hom_{\DD}
(\S^{-1}\S_{\langle X_{i}\rangle}(X_{i,j}),
\S^{-1}\S_{\langle X_{i}\rangle}(X_{i,j'})[n])\\
&\simeq\Hom_{\DD}(X_{i,j},X_{i,j'}[n])=0
\end{align*}
for any integer $n$. So (\rm Y2$'$) holds for $i+1$. 

When $i=1$, (\rm S1) 
is nothing but (\rm Y3). Assume that (\rm S1) holds for $i$. 
Then $F=\S^{-1}\S_{\langle X_{i}\rangle}
:\langle X_{i,\leqslant\lambda(i+1)}\rangle
\to\langle X_{i+1}\rangle$ is a triangle equivalence by (\rm Y4) and 
\begin{align*} 
\S_{\langle X_{i+1}\rangle}
(X_{i+1,j})
&\stackrel{(\rm Y4)}{=} 
\S_{\langle X_{i+1}\rangle}F(X_{i,j})
\stackrel{\ref{SF}(b)(c)}{=} 
F\S_{\langle X_{i,\leqslant\lambda(i+1)}\rangle}(X_{i,j})\\
&\stackrel{\ref{Serre functor}(c)}{=}
F\S_{\langle X_{i}\rangle}(X_{i,j})
\stackrel{(\rm S1)}{=} F(X_{i,j-1})
\stackrel{(\rm Y4)}{=} X_{i+1,j-1}.
\end{align*}
Thus (\rm S1) holds for $i+1$. 
\begin{equation*}\begin{xymatrix}
{\langle X_{i}\rangle
\ar[d]^{\S_{\langle X_{i}\rangle}} 
& \langle X_{i,\leqslant \lambda(i+1)}\rangle\ar@{_{(}->}[l]\ar[r]^{F}
\ar[d]^{\S_{\langle X_{i,\leqslant\lambda(i+1)}\rangle}} 
& \langle X_{i+1}\rangle
\ar[d]^{\S_{\langle X_{i+1}\rangle}}\\
\langle X_{i}\rangle
\ar[r]_{\T_{\langle X_{i,\leqslant \lambda(i+1)}\rangle}} & \langle X_{i,\leqslant \lambda(i+1)}\rangle\ar[r]_{F} & \langle X_{i+1}\rangle}\end{xymatrix}
\end{equation*}

\vspace*{-2\baselineskip}
\end{proof}

Now we are ready to prove Theorem \ref{Lpq}.
\begin{proof}[Proof of Theorem \ref{Lpq}]
If $(X_{i,j})_{(i,j)\in\Y(\vp;\vq)}$ is a full 
$\Y(\vp;\vq)$-family, 
there exists a triangle equivalence 
$F:\DD\to\per\LL(\vp;\vq)$ by Theorem \ref{Triviality}. 
So we will prove the equivalence of two conditions. 

We prove ``only if'' part. Since 
$(X_{i,j})_{(i,j)\in \Y(\vp;\vq)}$ is a 
full weak $\Y(\vp;\vq)$-family, 
(\rm Y1) and (\rm Y2) are satisfied. 
By (\rm S2), we have that (\rm Y3) is satisfied. 
By Proposition $\ref{LSS1}$, (\rm Y4) is satisfied. Thus the assertion follows. 

We prove ``if'' part. By the assumption, 
(\rm L1) is satisfied. By (\rm Y1) and (\rm Y4), we see that (\rm L2.1) is satisfied by Lemma \ref{orth}. 
Since 
\[\F_{\langle X_{i-1}\rangle}(X_{i,j})
\simeq
\S_{\langle X_{i-1}\rangle}^{-1}\S(X_{i,j})
\stackrel{(\rm Y4)}{=}
\S_{\langle X_{i-1}\rangle}^{-1}\S_{\langle X_{i-1}\rangle}(X_{i-1,j})\simeq X_{i-1,j},\] 
(\rm S2)$'$ is satisfied. 
If $j-j'\geqslant 2$,
\begin{align*}
\Hom_{\DD}(X_{i,j},X_{i-1,j'}[n])
\simeq\Hom_{\DD}(\F_{\langle X_{i-1}\rangle}(X_{i,j}),X_{i-1,j'}[n])
\stackrel{(\rm S2)'}{=}
\Hom(X_{i-1,j},X_{i,j'})
\stackrel{(\rm L2.1)}{=}0.
\end{align*}
So (\rm L2.2) is satisfied. 
Thus $(X_{i,j})_{(i,j)\in \Y(\vp;\vq)}$ is 
a weak $S$-family satisfying (\rm S2)$'$. 

In the last, we prove that $(X_{i,j})_{(i,j)\in \Y(\vp;\vq)}$ is an $S$-family. 
By Lemma \ref{ind}, $(\rm S1)$ is satisfied. By (\rm S1) and (\rm Y4), 
we see that (\rm S3) is satisfied. Thus the assertion follows. 
 \end{proof}

\subsection{$\Y(p,q,r)$-families}
The aim of this section is to prove Theorem 
\ref{Nak}. Let $p$, $q$, and $r$ be three positive integers such that $pq>q+1$, $0\leqslant r\leqslant q$. 
Let 
\[\Y(p,q,r)=\begin{cases}
\Y(p-1,1;q-r,r) & r<q,\\
\Y(p-1;q) & r=q,
\end{cases}\]
\[\LL(p,q,r)=\LL(\Y(p,q,r)).\]

The following application of Theorem \ref{Lpq} is useful to construct a $\Y(p;q)$-family:
\begin{pro}\label{Lad0}
Let $\DD$ be a triangulated category satisfying $(\ref{Good})$, $(E_{k})_{k\in[1,q]}$ an exceptional sequece in $\DD$, and $\displaystyle E=\bigoplus_{k\in[1,q]} E_{k}$. 
Suppose that the following conditions are satisfied:
\begin{gather}
\numberwithin{equation}{section}
\label{E1}\text{$\DD=\langle \S^{p-1}(E)\rangle\perp\langle \S^{p-2}(E)\rangle\perp\dots \perp\langle E\rangle$.}\\
\label{E2}\text{$\S_{\langle E\rangle}(E_{i})\simeq E_{i-1}$ for any integer $i\in(1,p]$.}
\end{gather}
Then the family $(X_{i,j})_{(i,j)\in \Y(p;q)}$ of objects 
\[X_{i,j}:=\F_{\langle \S^{p-i}(E)\rangle}
\F_{\langle \S^{p-i-1}(E)\rangle}\cdots\F_{\langle \S(E)\rangle}(E_{j})\] is a full $\Y(p;q)$-family in $\DD$. 
In particular, there exists a triangle equivalence 
\[F:\DD\to\per \LL(p;q)\]
such that $F(X_{i,j})\simeq P(i,j)$ 
for any $(i,j)\in \Y(p;q)$. 
\end{pro}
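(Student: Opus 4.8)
The plan is to verify the four conditions (Y1)--(Y4) of Theorem \ref{Lpq} for the Young diagram $\Y(p;q)=[1,p]\times[1,q]$ (so $\lambda(i)=q$ for all $i$) and then quote that theorem verbatim. First I would abbreviate $\DD_{k}:=\langle\S^{p-k}(E)\rangle$ for $k\in[1,p]$, so that $\DD_{p}=\langle E\rangle$ and $(\ref{E1})$ becomes $\DD=\DD_{1}\perp\DD_{2}\perp\dots\perp\DD_{p}$. Since $(E_{j})_{j\in[1,q]}$ is an exceptional sequence, so is its image $(\S^{p-k}(E_{j}))_{j\in[1,q]}$ under the autoequivalence $\S^{p-k}$; hence each $\DD_{k}$ is admissible in $\DD$ by Lemma \ref{Exc2} and carries its own Serre functor $\S_{\DD_{k}}$ by Proposition \ref{Serre functor}. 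The structural fact driving everything is that $\S(\DD_{k+1})=\langle\S^{p-k}(E)\rangle=\DD_{k}$ for every $k\in[1,p-1]$.

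Next I would feed this into Lemma \ref{App functor}. Since $\S(\DD_{k+1})=\DD_{k}$, part (d) shows that $\phi_{k}:=\F_{\DD_{k}}|_{\DD_{k+1}}\colon\DD_{k+1}\to\DD_{k}$ is a triangle equivalence, and part (c) identifies it as $\phi_{k}\simeq\S^{-1}_{\DD_{k}}\S|_{\DD_{k+1}}$. By the very definition of the objects, $X_{i,j}=\phi_{i}\phi_{i+1}\cdots\phi_{p-1}(E_{j})$ (an empty composite for $i=p$, so $X_{p,j}=E_{j}$); in particular $X_{i,j}\in\DD_{i}$, and $X_{i-1,j}=\F_{\DD_{i-1}}(X_{i,j})=\phi_{i-1}(X_{i,j})$. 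Writing $\Phi_{i}:=\phi_{i}\cdots\phi_{p-1}\colon\DD_{p}\to\DD_{i}$, a triangle equivalence, I obtain $\langle X_{i}\rangle=\Phi_{i}(\langle E\rangle)=\DD_{i}$.

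With these identifications the conditions follow formally. Condition (Y1) is precisely $(\ref{E1})$ once $\langle X_{i}\rangle=\DD_{i}$ is known. Condition (Y2) is obtained by transporting the semi-orthogonal decomposition $\langle E\rangle=\langle E_{1}\rangle\perp\dots\perp\langle E_{q}\rangle$ (valid because $(E_{j})$ is an exceptional sequence) through the equivalence $\Phi_{1}$. For (Y3) I use that $\S_{\DD_{1}}\simeq\Phi_{1}\S_{\langle E\rangle}\Phi_{1}^{-1}$ by Proposition \ref{SF}(c), whence $(\ref{E2})$ gives $\S_{\langle X_{1}\rangle}(X_{1,j})\simeq\Phi_{1}\S_{\langle E\rangle}(E_{j})\simeq\Phi_{1}(E_{j-1})=X_{1,j-1}$. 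For (Y4) the identity $\phi_{i-1}\simeq\S^{-1}_{\DD_{i-1}}\S|_{\DD_{i}}$ evaluated at $X_{i,j}$ reads $X_{i-1,j}\simeq\S^{-1}_{\langle X_{i-1}\rangle}\S(X_{i,j})$, i.e.\ $\S(X_{i,j})\simeq\S_{\langle X_{i-1}\rangle}(X_{i-1,j})$. Theorem \ref{Lpq} then yields that $(X_{i,j})_{(i,j)\in\Y(p;q)}$ is a full $\Y(p;q)$-family together with the triangle equivalence $F\colon\DD\to\per\LL(p;q)$ satisfying $F(X_{i,j})\simeq P(i,j)$.

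The point demanding care --- and the only real obstacle --- is the bookkeeping of Serre functors on the nested admissible subcategories: one must consistently use $\S(\DD_{k+1})=\DD_{k}$ to read each truncation $\F_{\DD_{k}}|_{\DD_{k+1}}$ simultaneously as a triangle equivalence (Lemma \ref{App functor}(d)) and as $\S^{-1}_{\DD_{k}}\S$ (Lemma \ref{App functor}(c)), and to pass between the ambient Serre functor $\S$ and the relative ones $\S_{\DD_{k}}$ via Proposition \ref{SF}(c) and Proposition \ref{Serre functor}. Once the equivalences $\phi_{k}$ are in place, (Y1)--(Y4) are immediate.
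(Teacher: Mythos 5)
Your proposal is correct and follows essentially the same route as the paper: identify $\langle X_{i}\rangle=\langle\S^{p-i}(E)\rangle$ via Lemma \ref{App functor}(d), get (Y1) from $(\ref{E1})$, get (Y4) from the identification $\F_{\langle X_{i-1}\rangle}|_{\langle X_{i}\rangle}\simeq\S^{-1}_{\langle X_{i-1}\rangle}\S$ of Lemma \ref{App functor}(c), and transport the exceptional sequence $(E_{j})_{j}$ and the relation $(\ref{E2})$ through the composite equivalence onto $\langle X_{1}\rangle$ to obtain (Y2) and (Y3), then invoke Theorem \ref{Lpq}. The only difference is notational (your $\phi_{k}$, $\Phi_{i}$ versus the paper's $G$), so no further comment is needed.
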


\begin{proof}
It suffices to check the conditions 
(\rm Y1)-(\rm Y4). 
Since $\F_{\langle \S^{i+1}(E)\rangle}
:\langle \S^{i}(E)\rangle\to\langle\S^{i+1}(E)\rangle$ is a  
triangle equivalence for each $i\in\Z$ by Lemma 
\ref{App functor}(d), 
$\langle X_{i}\rangle
=\langle \S^{p-i}(E)\rangle$. 
Since $(\ref{E1})$ is satisfied, $(\rm Y1)$ is satisfied. 
Then we see that 
$\S(\langle X_{i}\rangle)=\langle X_{i-1}\rangle.$ 
Since 
\[\S_{\langle X_{i-1}\rangle}^{-1}\S(X_{i,j})
\stackrel{\ref{App functor}(c)}{=} 
\F_{\langle X_{i-1}\rangle}(X_{i,j})
=X_{i-1,j},\]
(\rm Y4) is satisfied. 
Since $\F_{\langle \S^{i+1}(E)\rangle}
:\langle \S^{i}(E)\rangle\to\langle\S^{i+1}(E)\rangle$ is a  
triangle equivalence for each $i\in\Z$ by Lemma 
\ref{App functor}(d), 
\[G=\F_{\langle \S^{p-1}(E)\rangle}
\F_{\langle \S^{p-2}(E)\rangle}\cdots\F_{\langle \S(E)\rangle}:\langle E\rangle\to\langle X_{1}\rangle\]
is a triangle equivalence. 
Since $G$ sends the exceptional sequence 
$(E_{i})_{i\in[1,q]}$ to 
the family $(X_{1,j})_{j\in[1,q]}$, 
it follows that (\rm Y2) is satisfied. 
If $j\in(1,q]$, 
\[\S_{\langle X_{1}\rangle}
(X_{1,j})
\simeq \S_{\langle X_{1}\rangle}G(E_{j})
\simeq 
G\S_{\langle E\rangle}(E_{j})
\stackrel{(\ref{E2})}{=}
G(E_{j-1})\simeq X_{1,j-1}.\]
So (\rm Y3) is satisfied. Thus the assertion 
follows from Theorem \ref{Lpq}. 
\end{proof}

The following result is one of our main results: 
\begin{thm}\label{Nak}
Let $p$, $q$, and $r$ be three positive integers such that $pq>q+1$, $0\leqslant r\leqslant q-1$. Then 
there exists a full $\Y(p,q,r)$-family $(X_{i,j})_{(i,j)\in \Y(p,q,r)}$ in 
$\per \AA(pq-r,q+1)$. In particular, 
there exists a triangle equivalence 
\[F:\per\AA(pq-r,q+1)\to\per\LL(p,q,r)\]
such that $F(X_{i,j})=P_{\LL(p,q,r)}(i,j)$ 
for any $(i,j)\in\Y(p,q,r)$.
\end{thm}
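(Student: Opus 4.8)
The plan is to invoke Theorem~\ref{Lpq}: it suffices to produce in $\DD=\per\AA(pq-r,q+1)$ a family $(X_{i,j})_{(i,j)\in\Y(p,q,r)}$ of exceptional objects satisfying (Y1)--(Y4), whereupon Theorem~\ref{Lpq} simultaneously upgrades it to a full $\Y(p,q,r)$-family and produces the triangle equivalence $F$ with $F(X_{i,j})\simeq P(i,j)$. Since $\AA(pq-r,q+1)$ has finite global dimension it is Iwanaga--Gorenstein, so by Proposition~\ref{Iwa} the category $\DD$ is algebraic, idempotent complete and $\Ext$-finite with Serre functor $\S\simeq\nak$; thus the standing hypothesis $(\ref{Good})$ of Theorem~\ref{Lpq} is in force.

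The shape $\Y(p,q,r)$ consists of $p-1$ rows of length $q$ together with one bottom row of length $q-r$, for a total of $pq-r$ boxes, matching the number of simples of $\AA(pq-r,q+1)$. First I would separate the truncated bottom row from the rest by the idempotent semi-orthogonal decomposition of Example~\ref{tilt2}: for the prefix idempotent $e=e_{1}+\dots+e_{q-r}$ one has $eAe=\AA(q-r,q+1)=K\A_{q-r}$ (as $(\rad K\A_{q-r})^{q+1}=0$) and the complementary block is $\AA((p-1)q,q+1)$, yielding
\[
\DD=\langle\UU\rangle\perp\langle\BB\rangle,\qquad \UU\simeq\per\AA((p-1)q,q+1),\quad \BB\simeq\per K\A_{q-r}.
\]
The block $\UU$ is the \emph{rectangular} case, already accessible: by Ladkani's equivalence $(\ref{IntroLad})$ combined with Example~\ref{Sq} (or, for $p=2$, directly by Example~\ref{Sq}) it carries a full $\Y(p-1;q)$-family, which I assign to rows $1,\dots,p-1$. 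For the bottom row I would use the tilting object $\bigoplus_{j}S(j)[-j]$ of Example~\ref{Sq} to realize $\BB\simeq\per\NN(q-r)$ and to name $X_{p,1},\dots,X_{p,q-r}$. With this labelling (Y1) is the displayed decomposition refined inside $\UU$, while (Y2), (Y3) and the row relations within $\UU$ hold as part of its $\Y(p-1;q)$-family, and the bottom-row analogue of (Y3) follows from the Serre functor of $\per\NN(q-r)$.

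The heart of the matter, and what I expect to be the main obstacle, is condition (Y4): $\S(X_{i,j})\simeq\S_{\langle X_{i-1}\rangle}(X_{i-1,j})$, where $\S=\nak$ is the Serre functor of the \emph{whole} category $\DD$ and not that of the block $\UU$. For $i$ interior to $\UU$ the identity $\S_{\UU}(X_{i,j})\simeq X_{i-1,j-1}$ holds inside $\UU$, but to promote it to the global $\S$ one must verify that $\S(X_{i,j})$ acquires no component in $\BB=\UU^{\perp}$; by Proposition~\ref{Serre functor}(c) this reduces to the vanishing $\Hom_{\DD}(\S(X_{i,j}),\BB)=0$, which is not formal and requires a genuine computation of $\nak$. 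The crucial passage is the junction $i=p$, where $\nak$ must carry the $q-r$ objects $X_{p,1},\dots,X_{p,q-r}$ of the short bottom row into row $p-1$ while accounting for the $r$ \emph{missing} boxes.

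Here the computation I would carry out rests on the syzygy periodicity $\Omega^{2}S(i)\simeq S(i+q+1)$ of $\AA(pq-r,q+1)$; equivalently, since $\nak(P(i))\simeq I(i)$, on the shape of the injective coresolutions, so that homological degree advances by $2$ each time the vertex index advances by the relation length $\ell=q+1$. This is precisely the mechanism by which the arithmetic constraint $n=pq-r$, $\ell=q+1$ forces the bottom row to have length $q-r$. The fractional Calabi--Yau identity $\nak_{K\A_{q}}^{q+1}\simeq[q-1]$ of Lemma~\ref{CY lemma} governs the corresponding calculation inside each full row, and Lemma~\ref{App functor} lets me pass between $\S$, $\S_{\langle X_{i-1}\rangle}$ and the projection functors $\T$, $\F$. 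Once (Y1)--(Y4) are checked, Theorem~\ref{Lpq} yields the full $\Y(p,q,r)$-family together with the desired equivalence $F:\per\AA(pq-r,q+1)\to\per\LL(p,q,r)$.
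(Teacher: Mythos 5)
You reduce the theorem to checking (Y1)--(Y4) for a family built from the semi-orthogonal decomposition $\per\AA(pq-r,q+1)=\mathcal{U}\perp\mathcal{B}$ with $\mathcal{U}\simeq\per\AA((p-1)q,q+1)$ carrying rows $1,\dots,p-1$ and $\mathcal{B}\simeq\per K\A_{q-r}$ carrying the short bottom row, and you correctly identify that everything hinges on (Y4) at the junction $i=p$, where the global Serre functor $\nak$ of $\per\AA(pq-r,q+1)$ must carry the bottom row into $\S_{\langle X_{p-1}\rangle}$ of row $p-1$. But the proposal stops exactly there: (Y4) is never verified, only described as ``a genuine computation of $\nak$'' whose expected mechanism (syzygy periodicity, Lemma \ref{CY lemma}) is named but not executed. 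Since this is precisely where the arithmetic $n=pq-r$, $\ell=q+1$ enters, it is the entire content of the theorem, not a routine check. Worse, the construction as stated is underdetermined in a way that makes (Y4) fail for generic choices: a full $\Y(p-1;q)$-family in $\mathcal{U}$ obtained by transporting Example \ref{Sq} along an unspecified equivalence is only defined up to an autoequivalence of $\mathcal{U}$, and replacing every $X_{i,j}$ with $i\leqslant p-1$ by $X_{i,j}[1]$ preserves (Y1)--(Y3) and all relations internal to $\mathcal{U}$ while destroying (Y4) at $i=p$. The two blocks must be normalized against each other, and specifying (and then verifying) that normalization is the missing argument.

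For comparison, the paper never confronts the junction. It first treats $r=0$ by an explicit construction in $\per\AA(pq,q+1)$ (Propositions \ref{Lad0} and \ref{Lad1}): there the Nakayama functor permutes the blocks of $q$ consecutive projectives, so $\S(\langle X_{i}\rangle)=\langle X_{i-1}\rangle$ and (Y4) follows formally from Lemma \ref{App functor}. For $r>0$ it then realizes $\per\AA(pq-r,q+1)$ as the right perpendicular of the last $r$ bottom-row objects $X'_{p,j}=S(pq-j)[-q+j]$, $j\in[q-r+1,q]$, inside $\per\AA(pq,q+1)$, and restricts the rectangular family; a subfamily of an $S$-family indexed by a subset is again a family by Proposition \ref{Serre functor}(c), so the truncation costs nothing. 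If you wish to keep your route, you would essentially have to reconstruct row $p-1$ from the bottom row by applying $\S$ and then propagate upward as in Proposition \ref{Lad0}, which amounts to redoing the paper's $r=0$ computation inside $\AA(pq-r,q+1)$ rather than $\AA(pq,q+1)$.
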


We first prove the case $r=0$ of Theorem 
\ref{Nak} by applying Proposition \ref{Lad0}.  

\begin{pro}\label{Lad1}
Let $p$ and $q$ be two positive integers such that $pq>q+1$, $A=\AA(pq,q+1)$. 
\begin{enumerate}[\rm (a)]
\item 
Let $(X_{i,j})_{(i,j)\in \Y(p;q)}$ be a family of objects $X_{i,j}\in\per A$ defined as 
\[X_{i,j}=\F_{\langle \nak^{p-i}(P)\rangle}
\F_{\langle \nak^{p-i-1}(P)\rangle}\cdots\F_{\langle\nak(P)\rangle}(S(j))[-j]\]
where $\displaystyle P=\bigoplus_{k\in[1,q]} P(k)$. 
Then $(X_{i,j})_{(i,j)\in \Y(p;q)}$ is a full $\Y(p;q)$-family in $\per A$.
\item Let $(X'_{i,j})_{(i,j)\in \Y(p;q)}$ be a family of objects $X'_{i,j}\in\per A$ defined as 
\[X'_{i,j}=\F_{\langle \nak^{p-i}(S)\rangle}
\F_{\langle \nak^{p-i-1}(S)\rangle}\cdots\F_{\langle\nak(S)\rangle}(S((p-1)q+j))
[-j]\]
where 
$\displaystyle S=\bigoplus_{k\in [1,q]} S((p-1)q+k)$. 
Then 
$(X'_{i,j})_{(i,j)\in \Y(p;q)}$ is a 
full $\Y(p;q)$-family in $\per A$. 
\end{enumerate}
\end{pro}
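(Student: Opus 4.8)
The plan is to derive both (a) and (b) from Proposition \ref{Lad0}, applied to two explicit exceptional sequences in $\per A$ where $A=\AA(pq,q+1)$: for (a) the sequence $E_j:=S(j)[-j]$, $j\in[1,q]$, and for (b) the sequence $E'_j:=S((p-1)q+j)[-j]$, $j\in[1,q]$. In each case, writing $E=\bigoplus_j E_j$ (resp. $E'$), the family produced by Proposition \ref{Lad0} is literally the one in the statement, because the translation $[-j]$ commutes with every functor $\F_{\langle-\rangle}$. So the whole proof reduces to checking, for each sequence, four things: that it is an exceptional sequence, that the thick subcategory it generates is $\langle P\rangle$ (resp. $\langle S\rangle$), and the two conditions (E1) and (E2) of Proposition \ref{Lad0} with $\S=\nak$.

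For (a) I would first record the syzygies of simples in $A$: for $j\le q$ one has $\rad P(j)\simeq P(j-1)$, so there is a triangle $P(j-1)\to P(j)\to S(j)\to P(j-1)[1]$ and each $S(j)$, $j\le q$, has projective dimension $\le 1$. From this, $\Ext^n(S(j),S(j'))$ is nonzero (and then one-dimensional) only for $n=0,\ j=j'$ and $n=1,\ j'=j-1$; hence each $E_j$ is exceptional and, ordered increasingly, $(E_1,\dots,E_q)$ is an exceptional sequence. The same triangle shows $P(j)$ and $S(j)$ ($j\le q$) generate the same thick subcategory, so $\langle E\rangle=\langle P\rangle$. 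The heart of (a) is (E1): the key computation is the module isomorphism $\nak(P(k))=I(k)\simeq P(k+q)$, valid for all $k\in[1,(p-1)q]$, which gives $\langle \nak^i(P)\rangle=\langle P(iq+1),\dots,P(iq+q)\rangle$ (the $i$-th block of $q$ consecutive projectives) for $0\le i\le p-1$. Since $\Hom_A(P(k),P(k'))=0$ whenever $k>k'$ and the projectives generate $\per A$ (finite global dimension), these blocks form the semiorthogonal decomposition (E1). For (E2) I would identify $\langle P\rangle$ with $\per K\A_q$: for $e=e_1+\dots+e_q$ the subalgebra $B=eAe$ satisfies $\dim_K e_iAe_j=1$ iff $j\le i\le q$, so $B\cong K\A_q$, and Example \ref{tilt2} gives a triangle equivalence $\langle P\rangle\to\per B$ sending $P(j)\mapsto P_B(j)$ and $S(j)\mapsto S_B(j)$. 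Under it $\S_{\langle P\rangle}$ becomes $\nak_B$, and $\nak_B(S_B(j))\simeq S_B(j-1)[1]$ (directly, or from Lemma \ref{CY lemma}); translating back gives $\S_{\langle P\rangle}(S(j)[-j])\simeq S(j-1)[-(j-1)]$, which is (E2).

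For (b) the local computations are the same. Using the two-periodicity $\Omega^2 S(a)\simeq S(a-q-1)$ of syzygies, the extension pattern between the top-block simples $S((p-1)q+1),\dots,S(pq)$ is identical to the bottom block, so $(E'_1,\dots,E'_q)$ is an exceptional sequence and $\langle E'\rangle=\langle S\rangle$. The new point is (E1): because $\nak^p(P(k))=I((p-1)q+k)$ for $k\in[1,q]$, and these top injectives generate $\langle S\rangle$, one has $\langle S\rangle=\langle\nak^p(P)\rangle$; applying the covariant autoequivalence $\nak^p$ to the decomposition (E1) of (a) then yields $\per A=\langle\nak^{p-1}(S)\rangle\perp\dots\perp\langle S\rangle$, which is (E1) for (b) for free. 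For (E2), since $\langle S\rangle=\nak^p\langle P\rangle$ with $\langle P\rangle$ admissible, $\langle S\rangle$ is admissible and hence has a Serre functor $\S_{\langle S\rangle}$ by Proposition \ref{Serre functor}; a Serre-duality computation using $\Ext^1(S(a),S(a-1))=K$ together with Lemma \ref{Exc0} pins down $\S_{\langle S\rangle}(S(a))\simeq S(a-1)[1]$, giving $\S_{\langle S\rangle}(E'_j)\simeq E'_{j-1}$ for $j\in(1,q]$.

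The main obstacle is the explicit control of the Serre functor $\nak$ needed for (E1). For (a) everything hinges on the clean identity $\nak(P(k))\simeq P(k+q)$ (i.e. $I(k)\simeq P(k+q)$ in the relevant range), which makes the projective blocks visibly semiorthogonal, together with the idempotent identification $eAe\cong K\A_q$ that feeds (E2). The subtlety in (b) is that $\langle S\rangle$ is generated by the \emph{top injectives} rather than by projectives, so its decomposition cannot be read off directly from $\Hom$'s among projectives; the device that removes this obstacle is transporting the decomposition of (a) through the autoequivalence $\nak^p$, using $\langle\nak^p(P)\rangle=\langle S\rangle$. Once these structural facts are established, both statements follow immediately from Proposition \ref{Lad0}.
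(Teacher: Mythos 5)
Your part (a) is correct and follows essentially the paper's own route: both reduce to checking (E1) and (E2) of Proposition \ref{Lad0} for $E_j=S(j)[-j]$, with (E1) coming from $\nu(P(k))\simeq P(k+q)$ and the resulting blocks of consecutive projectives, and (E2) from identifying $\langle P\rangle$ with a perfect derived category on which the Serre functor is explicit (you use $eAe\cong K\A_q$ and $\nu_{K\A_q}(S(j))\simeq S(j-1)[1]$; the paper uses $\End(\bigoplus_jE_j)\cong \NN(q)$ and $\nu_{\NN(q)}(P(j))\simeq P(j-1)$ --- the two are interchangeable).

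Part (b) is where you genuinely diverge, and your route is the more careful one. The paper disposes of (b) in one line by asserting $\nu^{p-1}(S(k))\simeq S((p-1)q+k)$ and hence $\langle\nu^{p-1}(P)\rangle=\langle S\rangle$; but $\nu^{p-1}(S(1))=\nu^{p-1}(P(1))\simeq P(1+(p-1)q)$ has dimension $q+1$, so it is not simple, and in fact $\langle\nu^{p-1}(P)\rangle={}^{\perp}\langle P(1),\dots,P((p-1)q)\rangle$ while $\langle S\rangle=\langle P(1),\dots,P((p-1)q)\rangle^{\perp}$; these differ (e.g.\ in $N(6,4)$ one has $S(4)\notin\langle P(4),P(5),P(6)\rangle$ because $\Rhom_{A}(S(4),P(3))\neq0$). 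Your replacement --- $\nu^{p}(P(k))\simeq I((p-1)q+k)$, the top injectives generate $\langle S\rangle$, hence $\langle S\rangle=\langle\nu^{p}(P)\rangle$, and (E1) for (b) is obtained by applying the autoequivalence $\nu^{p}$ to the decomposition (E1) of (a) --- is exactly the correct way to get the semiorthogonal decomposition, and it rightly treats (b) as requiring its own verification rather than being transported from (a). The one step you should tighten is (E2) for (b): Serre duality gives the dimensions of $\Hom(E'_k,\S_{\langle S\rangle}(E'_j)[n])$ for all generators $E'_k$, but this alone does not determine the object; you must first show $\S_{\langle S\rangle}(E'_j)\in\langle E'_{j-1}\rangle$ before Lemma \ref{Exc0} applies. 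The cleanest repair is the same device you used in (a): your $\Ext$-computation between top-block simples (with the length-two compositions landing in $\Ext^2(S(a),S(a-2))=0$) shows $\bigoplus_jE'_j$ is a tilting object of $\langle S\rangle$ with endomorphism algebra $\NN(q)$, so the resulting equivalence $\langle S\rangle\to\per\NN(q)$, $E'_j\mapsto P(j)$, transports $\S_{\langle S\rangle}$ to $\nu_{\NN(q)}$ and gives $\S_{\langle S\rangle}(E'_j)\simeq E'_{j-1}$ directly.
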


\begin{proof}
(a) It suffices to check the conditions 
(\ref{E1})-(\ref{E2}) by Proposition \ref{Lad0}. 
Let $E_{i}=S(i)[-i]$, $\displaystyle E=\bigoplus_{i\in[1,q]} E_{i}$. 
Since $\langle E\rangle=\langle P\rangle$ and 
$\nak^{k}(P(i))\simeq P(pk+i)$, we have 
\[\DD=\langle \nak^{p-1}(P)\rangle
\perp\langle \nak^{p-2}(P)\rangle
\perp\dots\perp\langle P\rangle
=\langle \nak^{p-1}(E)\rangle
\perp\langle \nak^{p-2}(E)\rangle
\perp\dots\perp\langle E\rangle,\]
and so (\ref{E1}) is satisfied. 

Since $\displaystyle 
E=E_{[1,q]}$ is a pretilting object such that 
$\End_{\DD}(E)\simeq \NN(q)$, there exists a triangle functor $F:\langle E\rangle\to \per\NN(q)$ such that $F(E_{i})\simeq P(i)$, and so (\ref{E2}) is satisfied. 
Thus the assertion follows.\\
(b) Let 
$\displaystyle S'=\bigoplus_{k\in[1,q]} S(k).$ 
Since $\nu^{p-1}(S(k))\simeq S((p-1)q+k)$ for 
any $k\in[1,q]$, we have $\langle \nu^{p-1}(P)\rangle=\langle \nu^{p-1}(S')\rangle=\langle S\rangle$, and so 
 the assertion follows from (a). 
 \end{proof}

Now we are ready to prove Theorem 
\ref{Nak}. 
\begin{proof}[Proof of Theorem \ref{Nak}]
Let $A=\AA(pq,q+1)$, $B=\AA(pq-r,q+1)$ and let 
\[P=\bigoplus_{k\in I}P(k),\ \ 
P'=\bigoplus_{k\in [1,pq]\backslash I}P(k),\ \ S=\bigoplus_{k\in I} S(k),\ \ 
S'=\bigoplus_{k\in [1,pq]\backslash I}S(k)\]
where $I=[1,pq-r]$. 
Since $\Hom_{\per A}(P',P)\simeq 0$ and 
$\End_{\per A}(P)\simeq B$, we have 
$\langle P\rangle
=\langle S\rangle$ and 
there exists a triangle equivalence 
$F:\langle P\rangle\to \per B$ such that 
$F(P_{A}(i))=P_{B}(i)$, $F(S_{A}(i))\simeq S_{B}(i)$ by Example \ref{tilt2}. 
Thus it suffices to show that there exists 
a full $\Y(p,q,r)$-family in $\langle S\rangle$. 

Let $(X'_{i,j})_{(i,j)\in \Y(p;q)}$ be a full 
$\Y(p;q)$-family in $\per N(pq,q+1)$ given by 
Proposition \ref{Lad1}(b). Since 
$Y_{p,q-j}=S(pq-j)[-q+j]$, 
we have $\langle S'\rangle
=\langle X'_{p,[q-r+1,q]}\rangle$, and so 
$\langle X'_{\Y(p,q,r)}\rangle
=\langle S'\rangle^{\perp_{\per A}}=\langle S\rangle$ 
by Example $\ref{tilt2}$.  
Thus the subfamily 
$(X'_{i,j})_{(i,j)\in\Y(p,q,r)}$ is a full $\Y(p,q,r)$-family in $\langle S\rangle$. 
\end{proof}

\section{Mutations of $S$-families}
In this section, let $\DD$ be a triangulated category satisfying 
$(\ref{Good})$. 
The purpose of this section is to 
introduce mutations of $S$-families on 
some assumption for $S$, and to prove 
Theorem \ref{Main 1} and Theorem \ref{Main3} by using those results. 
Let 
\[\fP_{\fin}(\Z^{2})
:=\{S\subset\Z^{2}\mid |S|<\infty\}.\]
In $\fP_{\fin}(\Z^{2})$, the relation $S\equiv S'$ defined as 
\[\text{``there exists an element $v\in\Z^{2}$ 
such that $S'=S+v$''}\]
is an equivalence relation on 
$\fP_{\fin}(\Z^{2})$. 
Clearly, if a family $(X_{i,j})_{(i,j)\in S}$ 
is an $S$-family, then 
for $S'=S+(a,b)$ with $(a,b)\in\Z^{2}$ , 
 the family $(X_{i-a,j-b})_{(i,j)\in S'}$ is an $S'$-family. 
 For any $S\in\fP_{\fin}(\Z^{2})$, 
define 
\[{}^{t}S:=\{(i,j)\in \Z^{2}\mid (j,i)\in S\}.\]
Clearly, a family $\XXX=(X_{i,j})_{(i,j)\in S}$ 
is an $S$-family if and only if the family 
$^{t}\XXX=(X_{i,j})_{(i,j)\in {}^{t}S}$ is an 
${}^{t}S$-family. 

\subsection{Gluings of $S$-families}
In this section, we introduce gluings of 
$S$-families (Proposition \ref{Glu1}, 
Proposition \ref{Glu2}). 
For any interval $I$ of $\Z$, 
\[S_{I}:=\{(i,j)\in S\mid i\in I\},\ \ 
S^{I}:=\{(i,j)\in S\mid j\in I\}.\]
In particular, 
\[S_{\leqslant k}:=S_{(\infty,k]},\ \ 
S_{\geqslant k}:=S_{[k,\infty)}, \ \ 
S^{\leqslant k}:=S^{(\infty,k]},\ \ 
S^{\geqslant k}:=S^{[k,\infty)}.\]
Let $\XXX=(X_{i,j})$ be a family of objects 
in $\DD$ indexed by 
$S\in \fP_{\fin}(\Z^{2})$. For any integer $k$, define 
\[\XXX_{\leqslant k}=(X_{i,j})_{(i,j)\in S_{\leqslant k}},
\ \ \XXX_{\geqslant k}=(X_{i,j})_{(i,j)\in S_{\geqslant k}},\] 
\[\XXX^{\leqslant k}=(X_{i,j})_{(i,j)\in S^{\leqslant k}},
\ \ \XXX^{\geqslant k}=(X_{i,j})_{(i,j)\in S^{\geqslant k}}.\]

\begin{lem}\label{Glu}
Let $\XXX=(X_{i,j})_{(i,j)\in S}$ be a family 
of objects in $\DD$ indexed by $S\in \fP_{\fin}(\Z^{2})$ satisfying 
$(\rm L2.1)$. 
Let $k$ be an integer.
\begin{enumerate}[\rm (a)]
\item \text{If $(i,j)\in S_{\geqslant k+1}$, then $\S_{\langle X_{S}\rangle}(X_{i,j})
\simeq \S_{\langle X_{S_{\geqslant k}}\rangle}(X_{i,j})$ and $\S_{\langle X^{j}\rangle}(X_{i,j})
\simeq \S_{\langle X_{\geqslant k,j}\rangle}(X_{i,j}).$}
\item \text{If $(i,j)\in S_{\leqslant k-1}$, then $\S^{-1}_{\langle X_{S}\rangle}(X_{i,j})
\simeq \S^{-1}_{\langle X_{S_{\leqslant k}}\rangle}(X_{i,j})$ and $\S^{-1}_{\langle X^{j}\rangle}(X_{i,j})
\simeq \S^{-1}_{\langle X_{\leqslant k,j}\rangle}(X_{i,j}).$}
\end{enumerate}
\end{lem}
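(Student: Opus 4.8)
The plan is to reduce every one of the four isomorphisms to Proposition \ref{Serre functor}(c), which guarantees that if an object $Z$ and its Serre image $\S_{\EE}(Z)$ both lie in an admissible subcategory $\FF$, then $\S_{\FF}(Z)\simeq\S_{\EE}(Z)$. Thus I first extract from $(\rm L2.1)$ the relevant semi-orthogonal decompositions, and then check that the Serre functor does not push $X_{i,j}$ out of the smaller subcategory. The subcategories appearing in the statement are admissible: by Proposition \ref{Serre functor}(a) this is exactly what the existence of the Serre functors $\S_{\langle X_S\rangle}$, $\S_{\langle X_{S_{\geqslant k}}\rangle}$, $\S_{\langle X^{j}\rangle}$, $\S_{\langle X_{\geqslant k,j}\rangle}$ presupposes, so I may freely use the projection functors and Serre duality on each of them.

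For the first isomorphism in (a), set $\EE=\langle X_S\rangle$, $\GG=\langle X_{S_{\leqslant k-1}}\rangle$ and $\FF=\langle X_{S_{\geqslant k}}\rangle$. If $i'\leqslant k-1$ and $i''\geqslant k$ then $i''>i'$, so $(\rm L2.1)$ forces $\Hom_{\DD}(X_{i',j'},X_{i'',j''}[n])=0$ for all $n$; hence $\Hom_{\DD}(\GG,\FF)=0$ and $\EE=\GG\perp\FF$ with $\FF=\GG^{\perp_{\EE}}$. Now fix $(i,j)\in S_{\geqslant k+1}$, so $i\geqslant k+1$ and $X_{i,j}\in\FF$. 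For any generator $X_{i',j'}$ of $\GG$ one has $i'\leqslant k-1<k\leqslant i-1$, so $(\rm L2.1)$ gives $\Hom_{\DD}(X_{i,j},X_{i',j'}[n])=0$ for all $n$, and therefore $\Hom_{\EE}(X_{i,j},Y)=0$ for every $Y\in\GG$. By Serre duality in $\EE$ we get $\Hom_{\EE}(Y,\S_{\EE}(X_{i,j}))\simeq\Hom_{\EE}(X_{i,j},Y)^{\ast}=0$, so $\S_{\EE}(X_{i,j})\in\GG^{\perp_{\EE}}=\FF$. Since both $X_{i,j}$ and $\S_{\EE}(X_{i,j})$ lie in $\FF$, Proposition \ref{Serre functor}(c) yields $\S_{\langle X_{S_{\geqslant k}}\rangle}(X_{i,j})\simeq\S_{\langle X_S\rangle}(X_{i,j})$.

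The second isomorphism in (a) is the same computation carried out with $\langle X^{j}\rangle$ in place of $\EE$: $(\rm L2.1)$ gives $\langle X^{j}\rangle=\langle X_{\leqslant k-1,j}\rangle\perp\langle X_{\geqslant k,j}\rangle$, and the identical Hom-vanishing together with Serre duality places $\S_{\langle X^{j}\rangle}(X_{i,j})$ in $\langle X_{\geqslant k,j}\rangle$, so Proposition \ref{Serre functor}(c) applies again. Part (b) is dual: cutting at $k$ one obtains $\langle X_S\rangle=\langle X_{S_{\leqslant k}}\rangle\perp\langle X_{S_{\geqslant k+1}}\rangle$, and the analogous estimate shows that for $(i,j)\in S_{\leqslant k-1}$ the object $\S^{-1}_{\langle X_S\rangle}(X_{i,j})$ lies in $\langle X_{S_{\leqslant k}}\rangle$. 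To conclude I apply Proposition \ref{Serre functor}(c) to $Z=\S^{-1}_{\langle X_S\rangle}(X_{i,j})$, for which $\S_{\langle X_S\rangle}(Z)=X_{i,j}$ also lies in $\langle X_{S_{\leqslant k}}\rangle$; this gives $\S_{\langle X_{S_{\leqslant k}}\rangle}(Z)\simeq X_{i,j}$, and inverting yields $\S^{-1}_{\langle X_S\rangle}(X_{i,j})\simeq\S^{-1}_{\langle X_{S_{\leqslant k}}\rangle}(X_{i,j})$. The corresponding column statement is obtained exactly as in the first part.

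The only genuinely delicate points are bookkeeping ones: keeping the inequalities $i'\leqslant k-1<i-1$ (and their transposes for the columns and for part (b)) aligned with the direction of $(\rm L2.1)$ so that the correct Hom groups vanish, and matching the four slightly different cutoffs across (a) and (b). I expect this alignment to be the main obstacle; once the decompositions and the direction of Serre duality are fixed, each case collapses to a single application of Proposition \ref{Serre functor}(c).
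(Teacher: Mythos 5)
Your proposal is correct and follows essentially the same route as the paper: use $(\mathrm{L2.1})$ to get the semi-orthogonal decomposition $\langle X_S\rangle=\langle X_{S_{\leqslant k-1}}\rangle\perp\langle X_{S_{\geqslant k}}\rangle$, apply Serre duality to show $\S_{\langle X_S\rangle}(X_{i,j})$ stays in $\langle X_{S_{\geqslant k}}\rangle$, and conclude by Proposition \ref{Serre functor}(c), with the column statement and part (b) obtained by restriction and duality exactly as in the paper.
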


\begin{proof}
(a) From (\rm L2.1), we have 
$\langle X_{S}\rangle=\langle X_{S_{\leqslant k-1}}\rangle\perp\langle X_{S_{\geqslant k}}\rangle.$ 
Since 
\[\Hom_{\DD}(X_{S_{\leqslant k-1}},
\S_{\langle X_{S}\rangle}(X_{i,j})[n])
\simeq \Hom_{\DD}(X_{i,j},X_{S_{\leqslant k-1}}[-n])^{\ast}
\stackrel{(\rm L2.1)}{=} 0,\] we have 
$\S_{\langle X_{S}\rangle}(X_{i,j})\in \langle X_{S_{\geqslant k}}\rangle$. Thus the assertion 
follows from Proposition 
$\ref{Serre functor}$(c). 
Since the subfamily $(X_{i,j})_{(i,j)\in 
S^{j}\times\{j\}}$ satisfies 
$(\rm L2.1)$, the assertion follows.\\
(b) This is the dual of (a).\end{proof}

\begin{pro}[Gluing \rm I]\label{Glu1}
Let $\XXX=(X_{i,j})_{(i,j)\in S}$ be a family 
of objects in $\DD$ indexed by $S\in \fP_{\fin}(\Z^{2})$ satisfying 
$(\rm L2.1)$. 
For any integer $k$, the following conditions 
are equivalent: 
\begin{enumerate}[\rm (i)]
\item $\XXX$ is an $S$-family. 
\item $\XXX_{\leqslant k}$ is 
an $S_{\leqslant k}$-family and 
$\XXX_{\geqslant k}$ is 
an $S_{\geqslant k}$-family. 
\end{enumerate}
\end{pro}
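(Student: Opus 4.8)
The plan is to verify the equivalence one condition at a time against Definition~\ref{def LS}, comparing each of (L1), (L2), (S1), (S2), (S3) for $\XXX$ with the analogous conditions for $\XXX_{\leqslant k}$ and $\XXX_{\geqslant k}$. Three of these split trivially. Since every $(i,j)\in S$ lies in $S_{\leqslant k}$ or in $S_{\geqslant k}$, condition (L1) for $\XXX$ is equivalent to (L1) for the two pieces. For (L2) I would invoke Remark~\ref{rem of L2} to write it as (L2.1) together with (L2.2); the former is the standing hypothesis, and (L2.2) must be checked only for pairs $(i,j),(i',j')$ whose first coordinates lie on the same side of $k$, since any pair with $i\leqslant k-1$ and $i'\geqslant k+1$ (or the reverse) has $|i-i'|\geqslant 2$ and hence already satisfies $\Hom_{\DD}(X_{i,j},X_{i',j'}[n])=0$ by (L2.1). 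Condition (S1) concerns a single column $X_i$, which sits entirely inside $\XXX_{\leqslant k}$ when $i\leqslant k$ and inside $\XXX_{\geqslant k}$ when $i\geqslant k$ (the column $X_k$ appearing harmlessly in both, with identical data), so (S1) for $\XXX$ is equivalent to (S1) for the two pieces.

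The substantive point is (S2) and (S3), whose Serre functors $\S_{\langle X^{j}\rangle}$ and $\S_{\langle X_{S}\rangle}$ are computed over the full row and the full family, both of which straddle the cut at $k$. This is exactly what Lemma~\ref{Glu} resolves: under the standing hypothesis (L2.1) it identifies these global Serre functors with localized ones. For $(i,j)$ with $i\geqslant k+1$, Lemma~\ref{Glu}(a) gives $\S_{\langle X^{j}\rangle}(X_{i,j})\simeq\S_{\langle X_{\geqslant k,j}\rangle}(X_{i,j})$ and $\S_{\langle X_{S}\rangle}(X_{i,j})\simeq\S_{\langle X_{S_{\geqslant k}}\rangle}(X_{i,j})$, and since $\langle X_{\geqslant k,j}\rangle$ and $\langle X_{S_{\geqslant k}}\rangle$ are precisely the $j$-th row and the ambient category of $\XXX_{\geqslant k}$, the instances of (S2) and (S3) at such $(i,j)$ agree for $\XXX$ and $\XXX_{\geqslant k}$. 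For $(i,j)$ with $i\leqslant k$ I would first rewrite (S2) as $X_{i,j}\simeq\S^{-1}_{\langle X^{j}\rangle}(X_{i-1,j})$ and (S3) as $X_{i,j}\simeq\S^{-1}_{\langle X_{S}\rangle}(X_{i-1,j-1})$, so that the inverse Serre functor is applied to an object whose first coordinate is $\leqslant k-1$; then Lemma~\ref{Glu}(b) replaces $\S^{-1}_{\langle X^{j}\rangle}$ and $\S^{-1}_{\langle X_{S}\rangle}$ by $\S^{-1}_{\langle X_{\leqslant k,j}\rangle}$ and $\S^{-1}_{\langle X_{S_{\leqslant k}}\rangle}$, which are the row and ambient data of $\XXX_{\leqslant k}$. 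Because every relevant index satisfies $i\leqslant k$ or $i\geqslant k+1$, and because (S2), (S3) are vacuous at the column $k$ inside each piece (their premises $(i-1,j),(i-1,j-1)\in S_{\geqslant k}$ fail there), this covers all non-vacuous instances with no gap.

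With these identifications both implications become formal and symmetric. For (i)$\Rightarrow$(ii) one restricts: a sub-family of a weak $S$-family is again a weak family, so (L1) and (L2) descend to $\XXX_{\leqslant k}$ and $\XXX_{\geqslant k}$, while the displayed identities transfer (S1)--(S3). For (ii)$\Rightarrow$(i) one reassembles, reading the same identities in the opposite direction and using the standing (L2.1) to supply the vanishing of the cross-homs that neither piece alone can see. The main obstacle I expect is exactly that (S2) and (S3) are \emph{not} local to either half, their Serre functors living in categories that cross the cut; the entire argument hinges on Lemma~\ref{Glu}, and the one extra wrinkle is that Lemma~\ref{Glu}(b) is stated for $\S^{-1}$ acting on objects to the left of the cut, which is what forces the $\S^{-1}$-reformulation of (S2) and (S3) when treating the columns $i\leqslant k$.
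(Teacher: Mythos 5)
Your proposal is correct and follows essentially the same route as the paper: reduce to checking the axioms one by one, dispose of the cross-terms in (L2.2) via the standing hypothesis (L2.1), and localize the Serre functors in (S2) and (S3) using Lemma~\ref{Glu}(a) for the indices with $i\geqslant k+1$ and the $\S^{-1}$-reformulation together with Lemma~\ref{Glu}(b) for $i\leqslant k$. The only difference is cosmetic: you spell out why the forward implication also needs Lemma~\ref{Glu}, which the paper dismisses as clear.
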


\begin{proof}
(i)$\Rightarrow$ (ii): This is clear. \\
(ii)$\Rightarrow$ (i): 
We need to show that $\XXX$ satisfies 
the conditions (\rm L1), (\rm L2.2) and 
(\rm S1)-(\rm S3). 
Clearly (\rm L1) is satisfied. 
We show that (\rm L2.2) is satisfied. 
Let $(i,j)$ and $(i',j')$ be two elements in $S$ such that $|j-j'|>1$. 
If $(i,j)\in S_{\leqslant k}\backslash S_{\{k\}}$ and $(i',j')\in S_{\geqslant k}
\backslash S_{\{k\}}$, we have 
$\Hom_{\DD}(X_{i,j},X_{i',j'}[n])\simeq 0 
\ \text{and}\ 
\Hom_{\DD}(X_{i',j'},X_{i,j}[n])\simeq 0$ 
from (\rm L2.1). 
If $(i,j),(i',j')\in S_{\leqslant k}$ or $(i,j),(i',j')\in S_{\geqslant k}$, then 
$\Hom_{\DD}(X_{i,j},X_{i',j'}[n])\simeq 0$ since 
$\XXX_{\leqslant k}$ is 
an $S_{\leqslant k}$-family and 
$\XXX_{\geqslant k}$ is 
an $S_{\geqslant k}$-family. Thus  
$(\rm L2.2)$ is satisfied, 
and so $\XXX$ is a weak $S$-family. 

Since $\XXX_{\leqslant k}$ is 
an $S_{\leqslant k}$-family and 
$\XXX_{\geqslant k}$ is 
an $S_{\geqslant k}$-family, 
$(\rm S1)$ is satisfied.\\ 
Suppose that $(i,j),(i-1,j)\in S$. 
If $(i,j)\in S_{\geqslant k+1}$, then 
$\S_{\langle X^{j}\rangle}(X_{i,j})
\stackrel{\ref{Glu}(a)}{=}
\S_{\langle X_{\geqslant k,j}\rangle}(X_{i,j})\stackrel{(\rm S2)}{=}X_{i-1,j}$. 
If $(i,j)\in S_{\leqslant k}$, then 
$(i-1,j)\in S_{\leqslant k-1}$ and 
$\S^{-1}_{\langle X^{j}\rangle}(X_{i-1,j})
\stackrel{\ref{Glu}(b)}{=}\S^{-1}_{\langle X_{\leqslant k,j}\rangle}(X_{i-1,j})\stackrel{(\rm S2)}{=}X_{i,j}$. 
Thus $(\rm S2)$ is satisfied. \\
Suppose that $(i,j),(i-1,j-1)\in S$. 
If $(i,j)\in S_{\geqslant k+1}$, then 
$(i-1,j-1)\in S_{\geqslant k}$ and 
$\S_{\langle X_{S}\rangle}(X_{i,j})
\stackrel{\ref{Glu}(a)}{=}\S_{\langle X_{S_{\geqslant k}}\rangle}(X_{i,j})\stackrel{(\rm S3)}{=}X_{i-1,j-1}$. 
If $(i,j)\in S_{\leqslant k}$, then 
$(i-1,j-1)\in S_{\leqslant k-1}$ and 
$\S^{-1}_{\langle X_{S}\rangle}(X_{i-1,j-1})
\stackrel{\ref{Glu}(b)}{=}\S^{-1}_{\langle X_{S_{\leqslant k}}\rangle}(X_{i-1,j-1})\stackrel{(\rm S3)}{=}X_{i,j}$. 
Thus $(\rm S3)$ is satisfied. 
\end{proof}
By transposing, 
we have the following result. 
\begin{pro}[Gluing \rm I\hspace{-.01em}I]\label{Glu2}
Let $\XXX=(X_{i,j})_{(i,j)\in S}$ be a family 
of objects in $\DD$ indexed by $S\in \fP_{\fin}(\Z^{2})$ satisfying 
$(\rm L2.2)$. 
For any integer $k$, the following conditions 
are equivalent: 
\begin{enumerate}[\rm (i)]
\item $\XXX$ is an $S$-family. 
\item $\XXX^{\leqslant k}$ is 
an $S^{\leqslant k}$-family and 
$\XXX^{\geqslant k}$ is 
an $S^{\geqslant k}$-family. 
\end{enumerate}\end{pro}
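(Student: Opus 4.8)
The plan is to deduce this from Proposition \ref{Glu1} by means of the transposition operation introduced at the beginning of this section. Recall that for any $S\in\fP_{\fin}(\Z^{2})$ one has ${}^{t}S=\{(i,j)\mid (j,i)\in S\}$, and that a family $\XXX$ is an $S$-family if and only if its transpose is a ${}^{t}S$-family. Writing the transposed family as $(Y_{a,b})_{(a,b)\in{}^{t}S}$ with $Y_{a,b}=X_{b,a}$, my first step is to record the compatibility of transposition with every piece of data occurring in the statement.

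First I would check that $\XXX$ satisfies $(\rm L2.2)$ if and only if $(Y_{a,b})_{(a,b)\in{}^{t}S}$ satisfies $(\rm L2.1)$. Indeed, the vanishing of $\Hom_{\DD}(X_{i,j},X_{i',j'}[n])$ unless $j'\in[j-1,j]$ becomes, after the relabelling $a=j$, $b=i$, $a'=j'$, $b'=i'$, the vanishing of $\Hom_{\DD}(Y_{a,b},Y_{a',b'}[n])$ unless $a'\in[a-1,a]$, which is exactly $(\rm L2.1)$ for the transposed family. Thus $(Y_{a,b})_{(a,b)\in{}^{t}S}$ is a family indexed by ${}^{t}S$ satisfying $(\rm L2.1)$, so Proposition \ref{Glu1} is applicable to it.

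Second, I would verify that transposition interchanges the column-truncations with the row-truncations, namely ${}^{t}(S^{\leqslant k})=({}^{t}S)_{\leqslant k}$ and ${}^{t}(S^{\geqslant k})=({}^{t}S)_{\geqslant k}$, together with the evident matching of the corresponding subfamilies. Combining this with the transposition criterion for $S$-families, condition (i) for $\XXX$ is equivalent to the assertion that $(Y_{a,b})_{(a,b)\in{}^{t}S}$ is a ${}^{t}S$-family, while condition (ii) for $\XXX$ is equivalent to the assertion that $(Y_{a,b})_{(a,b)\in({}^{t}S)_{\leqslant k}}$ is an $({}^{t}S)_{\leqslant k}$-family and $(Y_{a,b})_{(a,b)\in({}^{t}S)_{\geqslant k}}$ is an $({}^{t}S)_{\geqslant k}$-family. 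These last two statements are precisely conditions (i) and (ii) of Proposition \ref{Glu1} applied to $(Y_{a,b})_{(a,b)\in{}^{t}S}$, hence equivalent; the claimed equivalence follows at once.

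There is no genuine obstacle here: the whole content is the bookkeeping needed to confirm that each ingredient of the statement — the hypothesis $(\rm L2.2)$, the notion of $S$-family, and the column splitting $S=S^{\leqslant k}\cup S^{\geqslant k}$ — is carried by transposition onto the matching ingredient of Proposition \ref{Glu1}, namely the hypothesis $(\rm L2.1)$, the notion of ${}^{t}S$-family, and the row splitting. The only point demanding attention is keeping the index swap $(i,j)\mapsto(j,i)$ consistent throughout, and in particular making sure that $(\rm L2.2)$ is matched with $(\rm L2.1)$ rather than with itself.
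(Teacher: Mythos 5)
Your proof is correct and is exactly the paper's argument: the paper derives Proposition \ref{Glu2} from Proposition \ref{Glu1} with the single phrase ``by transposing,'' relying on the earlier observation that $\XXX$ is an $S$-family if and only if ${}^{t}\XXX$ is a ${}^{t}S$-family. Your write-up merely makes explicit the bookkeeping (matching $(\rm L2.2)$ with $(\rm L2.1)$ and ${}^{t}(S^{\leqslant k})$ with $({}^{t}S)_{\leqslant k}$) that the paper leaves implicit.
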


\subsection{Mutations of $S$-families}
In this subsection, we prove results for 
mutations of $S$-families (Theorem \ref{Mutation Lemma 1}, 
Theorem \ref{Mutation Lemma 2}). 
Let $S\in\fP_{\fin}(\Z^{2})$. 
Recall that 
\[S_{k}:=\{j\in\Z\mid (k,j)\in S\},\ \ 
S^{k}:=\{i\in\Z\mid (i,k)\in S\}\]
for any integer $k$. 
Let 
$\sigma_{\leqslant k},\sigma_{\geqslant k},\rho_{\leqslant k},\rho_{\geqslant k}$ be 
permutaions of $\Z^{2}$ such that 
\[\sigma_{\leqslant k}(i,j)=\begin{cases}
(i,j-1) & i\leqslant k,\\
(i,j) & k< i, 
\end{cases}\ \ \sigma_{\geqslant k}(i,j)=\begin{cases}
(i,j-1) & i\geqslant k,\\
(i,j) & k>i, 
\end{cases}\]
\[\rho_{\leqslant k}(i,j)=\begin{cases}
(i-1,j) & j\leqslant k,\\
(i,j) & k< j, 
\end{cases} \ \ 
\rho_{\geqslant k}(i,j)=\begin{cases}
(i-1,j) & j\geqslant k,\\
(i,j) & k>j. 
\end{cases}\]
For any subset $I$ of $\Z$ and any integer  $n\in\Z$, we denote by $I+n$ 
the subset \[I+n=\{j\in \Z\mid 
\exists i\in I; j=i+n\}.\] 

\begin{dfn}
Let $k$ be a nonnegative integer. 
A finite subset $S$ of $\Z^{2}$ 
is called an \emph{$M^{+}_k$-subset} if the following 
conditions are satisfied: 
\begin{enumerate}[\rm (M$^{+}_{k}$1)]
\item\label{M1} If $i\in[0,k+1]$, 
then $S_{i}$ is an interval of $\Z$.
\item\label{M2} If $i\in[1,k]$, 
then $S_{i}=S_{i-1}$ or $S_{i}
=S_{i-1}+1$.
\item\label{M3} $S_{k+1}\subset S_{k}$.
\end{enumerate}
A finite subset $S'$ of $\Z^{2}$ 
is called an \emph{$M^{-}_{k}$-subset} if the following 
conditions are satisfied: 
\begin{enumerate}[\rm (M$^{-}_{k}$1)]
\item \label{M1'} If $i\in[0,k+1]$, 
then $S'_{i}$ is an interval of $\Z$.
\item\label{M2'}If $i\in[1,k]$, 
then $S'_{i}=S'_{i-1}$ or $S'_{i}
=S'_{i-1}+1$.
\item\label{M3'}$S'_{k+1}\subset S'_{k}+1$.
\end{enumerate}
For any $(i,j)\in S$, if $S-(i,j)$ is an 
$M^{+}_{k}$-subset $($resp. $M^{-}_{k}$-subset
$)$, $S$ is called an \emph{$M_{k}^{+}(i,j)$-subset} 
$($resp. \emph{$M^{-}_{k}(i,j)$-subset}$)$. 
\end{dfn} 
\begin{tabular}{|c|c|}\hline
S & S$'$ \\ \hline 
\begin{tikzpicture}
\draw[ultra thin] (0,1/4) node {};

\draw[ultra thin] (0,0)--(0,-2/4)--(1/4,-2/4)
--(1/4,-3/4)--(2/4,-3/4)--(2/4,-5/4)
--(3/4,-5/4)--(3/4,-7/4)--(0,-7/4)
--(0,-9/4)--(8/4,-9/4)--(8/4,-7/4)
--(6/4,-7/4)--(6/4,-6/4)--(7/4,-6/4)
--(7/4,-5/4)--(6/4,-5/4)
--(6/4,-3/4)--(5/4,-3/4)--(5/4,-2/4)
--(8/4,-2/4)--(8/4,0)--(0,0);

\draw[ultra thin] (1/4,-2/4)--(5/4,-2/4);
\draw[ultra thin] (2/4,-3/4)--(6/4,-3/4);
\draw[ultra thin] (2/4,-4/4)--(6/4,-4/4);
\draw[ultra thin] (2/4,-5/4)--(6/4,-5/4);
\draw[ultra thin] (3/4,-6/4)--(7/4,-6/4);
\draw[ultra thin] (3/4,-7/4)--(6/4,-7/4);

\draw[ultra thin] (2/4,-2/4)--(2/4,-3/4);
\draw[ultra thin] (3/4,-2/4)--(3/4,-7/4);
\draw[ultra thin] (4/4,-2/4)--(4/4,-7/4);
\draw[ultra thin] (5/4,-2/4)--(5/4,-7/4);
\draw[ultra thin] (6/4,-3/4)--(6/4,-7/4);

\draw[ultra thin] (4/4,-1/4) node {?};
\draw[ultra thin] (4/4,-8/4) node {?};

\draw[ultra thin] (-3/4,-5/8) node {$i=0$};
\draw[ultra thin] (-3/4,-8/8) node {$\vdots$};
\draw[ultra thin] (-3/4,-11/8) node {$i=k$};
\draw[ultra thin] (-3/4,-13/8) node {$i=k+1$};
\end{tikzpicture}
&
\begin{tikzpicture}
\draw[ultra thin] (0,0)--(0,-2/4)
--(1/4,-2/4)--(1/4,-3/4)
--(2/4,-3/4)--(2/4,-5/4)
--(3/4,-5/4)--(3/4,-6/4)
--(4/4,-6/4)--(4/4,-7/4)
--(1/4,-7/4)--(1/4,-9/4)
--(9/4,-9/4)--(9/4,-7/4)
--(7/4,-7/4)--(7/4,-5/4)--(6/4,-5/4)
--(6/4,-3/4)--(5/4,-3/4)--(5/4,-2/4)
--(8/4,-2/4)--(8/4,0)--(0,0);

\draw[ultra thin] (1/4,-2/4)--(5/4,-2/4);
\draw[ultra thin] (2/4,-3/4)--(6/4,-3/4);
\draw[ultra thin] (2/4,-4/4)--(6/4,-4/4);
\draw[ultra thin] (2/4,-5/4)--(6/4,-5/4);
\draw[ultra thin] (3/4,-6/4)--(7/4,-6/4);
\draw[ultra thin] (4/4,-7/4)--(7/4,-7/4);

\draw[ultra thin] (2/4,-2/4)--(2/4,-3/4);
\draw[ultra thin] (3/4,-2/4)--(3/4,-5/4);
\draw[ultra thin] (4/4,-2/4)--(4/4,-6/4);
\draw[ultra thin] (5/4,-2/4)--(5/4,-7/4);
\draw[ultra thin] (6/4,-3/4)--(6/4,-7/4);

\draw[ultra thin] (4/4,-1/4) node {?};
\draw[ultra thin] (5/4,-8/4) node {?};

\draw[ultra thin] (-3/4,-5/8) node {$i=0$};
\draw[ultra thin] (-3/4,-8/8) node {$\vdots$};
\draw[ultra thin] (-3/4,-11/8) node {$i=k$};
\draw[ultra thin] (-3/4,-13/8) node {$i=k+1$};\end{tikzpicture}\\ \hline 
\end{tabular}\\ 

By definitions, the following 
result is clear:  
\begin{lem}
Let $k$ be a nonnegative integer, 
$S$ and $S'$ two finite subsets of $\Z^{2}$ such that $S'=\sigma_{\leqslant k}(S)$. 
Then $S$ is an $M^{+}_k$-subset 
if and only if $S'$ is an $M^{-}_k$-subset. \end{lem}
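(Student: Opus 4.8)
The plan is to read the rows of $S'$ directly off the definition of $\sigma_{\leqslant k}$ and then match the three defining conditions one at a time; the whole statement is essentially a bookkeeping of a single translation. First I would observe that $\sigma_{\leqslant k}$ acts on rows as a pure horizontal translation that is switched off starting at row $k+1$: for $i\leqslant k$ it sends $(i,j)\mapsto(i,j-1)$, so that $S'_{i}=S_{i}-1$, while for $i>k$ it fixes every point, so that $S'_{i}=S_{i}$. In particular $S'_{i}=S_{i}-1$ for all $i\in[0,k]$ and $S'_{k+1}=S_{k+1}$, and these two formulas are exactly what the three conditions need.

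With these formulas in hand each equivalence is immediate. For the interval condition, $S_{i}$ is an interval of $\Z$ if and only if its translate $S_{i}-1$ is, and $S'_{k+1}=S_{k+1}$, so $(\mathrm{M}^{+}_{k}1)$ holds for $S$ exactly when $(\mathrm{M}^{-}_{k}1)$ holds for $S'$. For the step condition, whenever $i\in[1,k]$ both $S_{i}$ and $S_{i-1}$ are shifted by the same $-1$, hence $S'_{i}=S'_{i-1}$ (resp.\ $S'_{i}=S'_{i-1}+1$) is equivalent to $S_{i}=S_{i-1}$ (resp.\ $S_{i}=S_{i-1}+1$); this yields $(\mathrm{M}^{+}_{k}2)\Leftrightarrow(\mathrm{M}^{-}_{k}2)$.

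The only place where the translation is asymmetric is between rows $k$ and $k+1$, and this is precisely where the extra $+1$ in the last condition compensates. Since $S'_{k}+1=(S_{k}-1)+1=S_{k}$ and $S'_{k+1}=S_{k+1}$, the inclusion $S'_{k+1}\subset S'_{k}+1$ required by $(\mathrm{M}^{-}_{k}3)$ is literally the inclusion $S_{k+1}\subset S_{k}$ required by $(\mathrm{M}^{+}_{k}3)$. Combining the three equivalences proves the lemma. I do not expect any genuine obstacle here; the one point that requires care is tracking the index at which the translation stops, so that the $+1$ appearing in $(\mathrm{M}^{-}_{k}3)$ is correctly attributed to the fact that row $k$ is shifted while row $k+1$ is not.
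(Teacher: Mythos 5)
Your proof is correct and is exactly the unfolding of the definitions that the paper has in mind (the paper states the lemma without proof, calling it clear from the definitions): the key observations $S'_{i}=S_{i}-1$ for $i\in[0,k]$ and $S'_{k+1}=S_{k+1}$, together with the cancellation $S'_{k}+1=S_{k}$ in the third condition, are precisely the intended bookkeeping. No issues.
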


The following result is the one of 
main results in this subsection:
\begin{thm}[Mutation \rm I]
\label{Mutation Lemma 1}
Let $\DD$ be a triangulated category satisfying 
$(\ref{Good})$, 
$S$ an $M^{+}_k$-subset satisfying 
$S_{\leqslant -1}=\emptyset$, and let $S'=\sigma_{\leqslant k}(S)$. 

\begin{enumerate}[\rm (a)]
\item If $\XXX=(X_{i,j})_{(i,j)\in S}$ is an $S$-family, then 
the family $\XXX'=(X'_{i,j})_{(i,j)\in S'}$ 
 is an $S'$-family where 
 \begin{equation*}
X'_{i,j}=\begin{cases}
\S_{\langle X_{i}\rangle}(X_{i,j+1}) 
& i\leqslant k,\\
X_{i,j} & k< i. 
\end{cases}
\end{equation*}
In particular, there exists a triangle equivalence 
$\per \LL(S)\to \per \LL(S')$.

\item If $\YYY=(Y_{i,j})_{(i,j)\in S'}$ is an 
$S'$-family, 
then the family 
$\YYY'=(Y'_{i,j})_{(i,j)\in S}$ is an 
 $S$-family where 
 \begin{equation*}
Y'_{i,j}=\begin{cases}
\S_{\langle Y_{i}\rangle}^{-1}(Y_{i,j-1}) 
& i\leqslant k,\\
Y_{i,j} & k< i.
\end{cases}
\end{equation*}
In particular, there exists a triangle equivalence 
$\per \LL(S')\to \per \LL(S)$.
\end{enumerate}
\end{thm}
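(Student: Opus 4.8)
The plan is to establish (a) and to read off (b) as its inverse. The two recipes are mutually inverse: for $i\leqslant k$ one computes $\S^{-1}_{\langle X_i\rangle}(X'_{i,j-1})=\S^{-1}_{\langle X_i\rangle}\S_{\langle X_i\rangle}(X_{i,j})=X_{i,j}$, so applying (b) to $\XXX'$ returns $\XXX$; and since $S'=\sigma_{\leqslant k}(S)$ is an $M^{-}_k$-subset exactly when $S$ is an $M^{+}_k$-subset, part (b) is (a) read backwards and is proved by the mirror argument with $\S^{-1}$ in place of $\S$. To prepare for (a), I would first replace $\DD$ by the admissible subcategory $\langle X_S\rangle$, which again satisfies $(\ref{Good})$ (it inherits the Serre functor by Proposition \ref{Serre functor}), and then use Theorem \ref{Triviality} to identify it with $\per\LL(S)$; it is convenient to realize this inside $\per\LLL$ so that $X_{i,j}=P_{\LLL}(i,j)$ and the ambient Serre functor is the explicit $\nak$ with $\nak(P_{\LLL}(i,j))=P_{\LLL}(i-1,j-1)$, which renders the residual support computations finite. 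The first observations, valid abstractly, are that $\S_{\langle X_i\rangle}$ is an autoequivalence of $\langle X_i\rangle$ carrying the generating family $(X_{i,j})_{j\in S_i}$ to $(X'_{i,j})_{j\in S'_i}$, so $\langle X'_i\rangle=\langle X_i\rangle$ and hence $\langle X'_{S'}\rangle=\langle X_S\rangle$; that (L1) is immediate; and that (S1) for $\XXX'$ follows from (S1) for $\XXX$, since $\S_{\langle X'_i\rangle}(X'_{i,j})=\S_{\langle X_i\rangle}\S_{\langle X_i\rangle}(X_{i,j+1})=\S_{\langle X_i\rangle}(X_{i,j})=X'_{i,j-1}$.

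Next I would localize the problem to adjacent rows. Because the mutation keeps each object in its original row, condition (L2.1) is inherited verbatim from $\XXX$, so Gluing I (Proposition \ref{Glu1}) applies; iterating it reduces the claim that $\XXX'$ is an $S'$-family to the claim that each consecutive pair $\XXX'_{\{i,i+1\}}$ is a two-row $S'_{\{i,i+1\}}$-family. Three kinds of pair occur. When $i\geqslant k+1$ both rows are unchanged, so $\XXX'_{\{i,i+1\}}=\XXX_{\{i,i+1\}}$ is a family by restriction. For the boundary pair $\{k,k+1\}$ and for a shifted pair with $S_{i+1}=S_i$, the two rows satisfy the containment $S_{i+1}\subset S_i$ (this is $(\mathrm{M}^{+}_{k}3)$ at the boundary, and equality in $(\mathrm{M}^{+}_{k}2)$ in the staircase). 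Here (S3) follows from Proposition \ref{LSS1}: since $\langle X_{S'}\rangle=\langle X_S\rangle$ and the relevant $X'_{i+1,j}$ equals the old $X_{i+1,j}$, one gets $\S_{\langle X_{S'}\rangle}(X'_{i+1,j})=\S_{\langle X_S\rangle}(X_{i+1,j})=\S_{\langle X_i\rangle}(X_{i,j})=X'_{i,j-1}$; and (S2) follows from the inherited $(\mathrm{S2}')$ of $\XXX$ together with Proposition \ref{nu condition} (available once (S3) holds), using that $j\in S_{i+1}\subset S_i$ forces $(i,j)\in S$. The weak-family condition (L2.2) for each pair is a finite support check that I would carry out in the model $\per\LLL$.

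The remaining, and I expect hardest, case is a staggered shifted pair, i.e.\ $i+1\leqslant k$ with $S_{i+1}=S_i+1$. Writing $S_i=[a,b]$, so $S_{i+1}=[a+1,b+1]$ and $S'_i=[a-1,b-1]$, $S'_{i+1}=[a,b]$, every interior instance of (S2), (S3) reduces as above to the corresponding relation for $\XXX$, because there $X'_{i+1,j}=X_{i+1,j}$ and $X'_{i,j-1}=X_{i,j-1}$. The genuinely new relation is at the bottom corner $j=a$, where both objects are new: I must show $\S_{\langle X_{S'}\rangle}\bigl(\S_{\langle X_{i+1}\rangle}(X_{i+1,a+1})\bigr)\simeq\S_{\langle X_i\rangle}(X_{i,a})$. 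The key idea I would use is that, precisely when $S_{i+1}=S_i+1$, condition (S3) for $\XXX$ gives $\S_{\langle X_S\rangle}(X_{i+1,j})=X_{i,j-1}$ for every $j\in S_{i+1}$, so $\S_{\langle X_S\rangle}$ restricts to a triangle equivalence $\langle X_{i+1}\rangle\to\langle X_i\rangle$. By Proposition \ref{SF}(c) this equivalence transports the Serre functor, i.e.\ $\S_{\langle X_S\rangle}\S_{\langle X_{i+1}\rangle}\simeq\S_{\langle X_i\rangle}\S_{\langle X_S\rangle}$ on $\langle X_{i+1}\rangle$, whence $\S_{\langle X_{S'}\rangle}\S_{\langle X_{i+1}\rangle}(X_{i+1,a+1})\simeq\S_{\langle X_i\rangle}\S_{\langle X_S\rangle}(X_{i+1,a+1})\simeq\S_{\langle X_i\rangle}(X_{i,a})=X'_{i,a-1}$, as required; (S2) at the corner is handled by the same transport principle applied to the one- or two-object column subcategories.

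Assembling the pairs through Gluing I shows that $\XXX'$ is a full $S'$-family in $\langle X'_{S'}\rangle=\langle X_S\rangle$. The asserted triangle equivalence is then immediate from Theorem \ref{Triviality}: the theorem provides $F:\langle X_S\rangle\to\per\LL(S)$ with $F(X_{i,j})\simeq P(i,j)$ and $F':\langle X_S\rangle=\langle X'_{S'}\rangle\to\per\LL(S')$ with $F'(X'_{i,j})\simeq P(i,j)$, so $F'\circ F^{-1}\colon\per\LL(S)\to\per\LL(S')$ is the desired equivalence. I expect the only real difficulty to be the staggered corner above; the containment cases are routine consequences of Propositions \ref{LSS1} and \ref{nu condition}, and everything else is bookkeeping that the model $\per\LLL$ makes finite.
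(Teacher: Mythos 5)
Your overall strategy is sound and reaches the same endpoint, but it takes a genuinely different route from the paper on the part of $S$ lying in rows $0,\dots,k$. The paper does not reduce to consecutive pairs there: it first shows (Lemma \ref{Mutation Lemma 00}, via Lemma \ref{Eq}) that $\langle X_{S_{\leqslant k}}\rangle$ carries a rectangular $\Y(k+1;h)$-family, identifies it with $\per\LL(k+1;h)$, and transports the explicit autoequivalence $(-)\Lotimes(\NN(k+1)\otimes\NN(h)^{\ast})$, which restricts to $\S_{\langle X_i\rangle}$ on every row simultaneously; since a global autoequivalence preserves all of $(\rm L1)$, $(\rm L2)$, $(\rm S1)$--$(\rm S3)$, the mutated rows $0,\dots,k$ form a family with no case analysis at all. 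Only the boundary pair $\{k,k+1\}$ is then treated by hand (Lemma \ref{Mutation Lemma 0}), and the two pieces are glued by Proposition \ref{Glu1}. Your pairwise decomposition plus the Serre-functor-transport argument (using that $\S_{\langle X_{S_{[i,i+1]}}\rangle}$ restricts to an equivalence $\langle X_{i+1}\rangle\to\langle X_{i}\rangle$ and Proposition \ref{SF}(c)) is a viable alternative that avoids Lemma \ref{Eq}; the trade-off is that you must confront corner instances that the paper's global argument makes invisible.

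That trade-off is where your write-up has a concrete hole. In your case (ii) you treat an interior pair with $S_{i+1}=S_{i}$ and $i+1\leqslant k$ together with the boundary pair, and you justify $(\rm S2)$ by ``$j\in S_{i+1}\subset S_{i}$ forces $(i,j)\in S$.'' But for such an interior pair \emph{both} rows are mutated, and at $j=\min(S_{i})-1$ the objects $X'_{i,j}=\S_{\langle X_{i}\rangle}(X_{i,\min S_{i}})$ and $X'_{i+1,j}=\S_{\langle X_{i+1}\rangle}(X_{i+1,\min S_{i}})$ are both new, while $j\notin S_{i+1}$, so the stated justification does not apply; this new--new $(\rm S2)$ corner is exactly analogous to the one you isolate in case (iii) and needs the same transport argument (note $\S_{\langle X_{S_{[i,i+1]}}\rangle}(\langle X_{i+1}\rangle)=\langle X_{i}\rangle$ holds here too, by Proposition \ref{LSS1}). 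With that repaired, the remaining gaps are of lower order: the $(\rm L2.2)$ checks for each pair are asserted but not performed (the paper does them by direct Serre-duality computations in Lemma \ref{Mutation Lemma 0}; your plan to compute in $\per\LLL$ after Theorem \ref{Triviality} also works), and part (b) is not the formal inverse of part (a) for an \emph{arbitrary} $S'$-family $\YYY$ --- the mirror argument does go through, but it is not a relabelling: the boundary pair in (b) has $S'_{k+1}\subset S'_{k}+1$ rather than $S'_{k+1}\subset S'_{k}$, so the roles of $(\rm S2)'$ and $(\rm S3)$ are interchanged, as in the paper's separate computation in Lemma \ref{Mutation Lemma 0}(b).
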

\begin{tabular}{|c|c|}\hline
S & S$'$ \\ \hline 
\begin{tikzpicture}
\draw (0,1/4) node {};
\draw (-1,-1/8) node {$i=0$};
\draw (-1,-1/8-7/16) node {$\vdots$};
\draw (-1,-1/8-4/4) node {$i=k$};
\draw (-1,-1/8-5/4) node {$i=k+1$};

\foreach \z in {0,1,2,3} 
\draw[ultra thin] (\z/4,0)--(\z/4,-6/4); 
\foreach \z in {4} 
\draw[ultra thin] (\z/4,0)--(\z/4,-5/4);

\foreach \w in 
{0,1,2,3,4,5}
\draw[ultra thin] (0,-\w/4)--(4/4,-\w/4); 
\foreach \w in 
{6}
\draw[ultra thin] (0,-\w/4)--(3/4,-\w/4); 

\foreach \z in {-1,5} 
\draw[ultra thin] (\z/4,-6/4)--(\z/4,-8/4); 
\foreach \w in {6,8}
\draw[ultra thin] (-1/4,-\w/4)--(5/4,-\w/4); 

\draw (4/8,-7/4) node {?};
\end{tikzpicture}
&
\begin{tikzpicture}
\draw (0,1/4) node {};
\draw (-1,-1/8) node {$i=0$};
\draw (-1,-1/8-7/16) node {$\vdots$};
\draw (-1,-1/8-4/4) node {$i=k$};
\draw (-1,-1/8-5/4) node {$i=k+1$};

\foreach \z in {0} 
\draw[ultra thin] (\z/4,0)--(\z/4,-5/4);
\foreach \z in {1,2,3,4} 
\draw[ultra thin] (\z/4,0)--(\z/4,-6/4); 

\foreach \w in 
{0,1,2,3,4}
\draw[ultra thin] (0,-\w/4)--(4/4,-\w/4); 

\draw[ultra thin] (0,-5/4)--(4/4,-5/4); 
\draw[ultra thin] (1/4,-6/4)--(4/4,-6/4); 

\foreach \z in {0,6} 
\draw[ultra thin] (\z/4,-6/4)--(\z/4,-8/4); 
\foreach \w in {6,8}
\draw[ultra thin] (0,-\w/4)--(6/4,-\w/4); 

\draw (6/8,-7/4) node {?};
\end{tikzpicture}\\ \hline 
\end{tabular}\\ 

In Theorem \ref{Mutation Lemma 1} (a), 
by the definitions of $\XXX$ and $\XXX'$, 
there exists a 
sequence of mutations of 
exceptional sequences 
from $\XXX$ to $\XXX'$. 
\begin{rem}
Let $I_{-1}=\{2\}$, $I_{0}=[1,2]$, 
$I_{1}=\{2\}$ and 
\[S=\bigsqcup_{k\in[-1,1]} \{k\}\times I_{k},\ \ S'=\sigma_{\leqslant 0}(S).\]
Then $S$ is an $M^{+}_{0}$-subset such that 
$S_{\leqslant -1}\neq \emptyset$. 
Since there exist triangle equivalences
\[\text{$\per \LL(S)\to \per K\mathbb{A}_{4}$ 
and 
$\per\LL(S')\to \per K\mathbb{D}_{4}$,}\]
we have that $\per \LL(S)$ and $\per \LL(S')$ are not triangle equivalent to each other. 
\end{rem}

To prove Theorem \ref{Mutation Lemma 1}, we prepare the following three results. 
\begin{lem}\label{Eq}
Let $\DD$ be a triangulated category satisfying 
$(\ref{Good})$, 
$(X_{i,j})_{(i,j)\in \Y(p;q)}$ 
a $\Y(p;q)$-family in $\DD$. 
Then there exists a triangle autoequivalence 
$G:\langle X_{\Y(p;q)}\rangle
\to\langle X_{\Y(p;q)}\rangle$ 
such that 
\begin{gather}
\numberwithin{equation}{section}
\label{Eq1}\text{$G|_{\langle X_{i}\rangle}\simeq 
\S_{\langle X_{i}\rangle}:\langle X_{i}\rangle
\to\langle X_{i}\rangle$ 
for any $i\in[1,p]$.}
\end{gather}
\end{lem}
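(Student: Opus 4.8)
The plan is to transport the problem to the concrete category $\per\LL(p;q)$ and then to exhibit $G$ as a single autoequivalence arising from the tensor structure of $\LL(p;q)$. First, since $(X_{i,j})_{(i,j)\in\Y(p;q)}$ is a $\Y(p;q)$-family, Theorem \ref{Triviality} supplies a triangle equivalence $F:\langle X_{\Y(p;q)}\rangle\to\per\LL(p;q)$ with $F(X_{i,j})\simeq P(i,j)$ for all $(i,j)$. Because $\Y(p;q)=[1,p]\times[1,q]$ is the full rectangle, the idempotent $e(\vp;\vq)$ equals $1$, so $\LL(p;q)=\NN(p)\otimes\NN(q)$. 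Writing $A=\NN(p)$, $B=\NN(q)$ and $C=A\otimes B$, I record that $B$ is Iwanaga--Gorenstein: by Example \ref{Sq} the category $\per\NN(q)$ is triangle equivalent to $\per K\A_q$, which has a Serre functor by Lemma \ref{CY lemma}, so Proposition \ref{Iwa} applies to $B$.

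Next, I apply Lemma \ref{Id and Serre} with these $A$, $B$, $C$. This produces a single triangle autoequivalence $G_{0}=(-)\Lotimes_{C}(A\otimes B^{\ast}):\per C\to\per C$ with the property that for \emph{every} exceptional object $E\in\per A$ the restriction $G_{0}|_{\langle E\otimes B\rangle}$ is a Serre functor of $\langle E\otimes B\rangle$. Each $P_{A}(i)$ is exceptional, being projective (hence without self-extensions) with $\End_{\per A}(P_{A}(i))=e_{i}Ae_{i}=K$. Since $F(X_{i,j})\simeq P_{A}(i)\otimes P_{B}(j)$, summing over $j$ gives $F(\langle X_{i}\rangle)=\langle P_{A}(i)\otimes B\rangle$. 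Thus the one fixed functor $G_{0}$, independent of $i$, restricts to a Serre functor on each of these $p$ column subcategories simultaneously; this uniformity over all columns is precisely what the statement demands.

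Finally, I set $G:=F^{-1}G_{0}F:\langle X_{\Y(p;q)}\rangle\to\langle X_{\Y(p;q)}\rangle$, a triangle autoequivalence. For each $i$, the restriction $G|_{\langle X_{i}\rangle}$ is the conjugate by the triangle equivalence $F$ of the Serre functor $G_{0}|_{\langle P_{A}(i)\otimes B\rangle}$, hence is itself a Serre functor of $\langle X_{i}\rangle$ by Proposition \ref{SF}(c). By uniqueness of Serre functors (Proposition \ref{SF}(b)) it is isomorphic to $\S_{\langle X_{i}\rangle}$, which establishes (\ref{Eq1}).

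The main obstacle is not any hard computation but the structural recognition that $\LL(p;q)$ is a tensor product $\NN(p)\otimes\NN(q)$ and that the column subcategories $\langle X_{i}\rangle$ correspond under $F$ to the subcategories $\langle E\otimes B\rangle$ of Lemma \ref{Id and Serre}; once this identification and the tensor description are in place, Lemma \ref{Id and Serre} furnishes the required uniform autoequivalence and the rest is bookkeeping with the uniqueness of Serre functors.
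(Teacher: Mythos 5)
Your proposal is correct and follows essentially the same route as the paper: transport via the equivalence $F$ from Theorem \ref{Triviality} to $\per(\NN(p)\otimes\NN(q))$, apply Lemma \ref{Id and Serre} to get the single autoequivalence $(-)\Lotimes(\NN(p)\otimes\NN(q)^{\ast})$ restricting to a Serre functor on each column $\langle P(i)\otimes\NN(q)\rangle$, and conjugate back. The only difference is that you spell out the routine checks (that $\NN(q)$ is Iwanaga--Gorenstein and that each $P_{\NN(p)}(i)$ is exceptional) which the paper leaves implicit.
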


\begin{proof}
Let $A=\NN(p)\otimes\NN(q)$. 
By Theorem \ref{Triviality}, 
there exists a triangle equivalence 
$F:\langle X_{\Y(p;q)}\rangle
\to \per A$ 
such that $F(X_{i,j})\simeq P(i,j)$ for any 
$(i,j)\in\Y(p;q)$. Then 
$F(X_{i})\simeq P(i)\otimes \NN(q)$ for any 
$i\in[1,p]$. 
By Lemma \ref{Id and Serre}, 
\[G'=(-)\Lotimes_{A}
(\NN(p)\otimes\NN(q)^{\ast}):\per A\to\per A\]
is a triangle autoequivalence such that 
\[G'|_{\langle P(i)\otimes\NN(q)\rangle}:
\langle P(i)\otimes\NN(q)\rangle\to 
\langle P(i)\otimes\NN(q)\rangle\] 
is a Serre functor for any $i\in[1,p]$. 
Thus $G=F^{-1}G'F$ satisfies (\ref{Eq1}). 
\end{proof}

By the following result, 
if $S$ is an $M^{+}_k$ subset 
satisfying $S_{\leqslant -1}=\emptyset$ and $S_{\geqslant k+1}=\emptyset$, 
any $S$-family is a mutation of a 
$\Y(k+1;h)$-family. 

\begin{lem}\label{Mutation Lemma 00}
Let $\DD$ be a triangulated category satisfying 
$(\ref{Good})$, 
$S$ an $M^{+}_k$-subset satisfying 
$S_{\geqslant k+1}=\emptyset$. 
Let $(X_{i,j})_{(i,j)\in S}$ be an $S$-family in $\DD$ and $h=|S_{0}|$. 
\begin{enumerate}[\rm (a)]
\item There exists a $\Y(k+1;h)$-family 
$(Y_{i,j})_{(i,j)\in\Y(k+1;h)}$ such that 
$\langle X_{i}\rangle=\langle Y_{i}\rangle$ for 
any $i\in[0,k]$, and there 
exists a triangle equivalence 
$\langle X_{S}\rangle\to\per \LL(k+1;h)$ such that $F(Y_{i,j})\simeq P(i,j)$.
\item The family $(X'_{i,j})_{(i,j)\in S}$ of objects 
$X'_{i,j}:=\S_{\langle X_{i}\rangle}(X_{i,j})$ 
is an $S$-family such that 
$\langle X_{i}\rangle=\langle X'_{i}\rangle$ 
for any $i\in[0,k]$. 
\item The family $(X''_{i,j})_{(i,j)\in S}$ of objects 
$X''_{i,j}:=\S^{-1}_{\langle X_{i}\rangle}(X_{i,j})$ 
is an $S$-family such that 
$\langle X_{i}\rangle=\langle X''_{i}\rangle$ 
for any $i\in[0,k]$.\end{enumerate}
\end{lem}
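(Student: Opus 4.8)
The plan is to pass to the admissible thick subcategory $\DD':=\langle X_S\rangle$ of $\DD$, which again satisfies $(\ref{Good})$ with Serre functor $\S_{\langle X_S\rangle}$, so that Proposition \ref{Lad0} may be applied to it. Since $(X_{i,j})_{(i,j)\in S}$ is a weak $S$-family, each row $\langle X_i\rangle$ is admissible and $(\rm L2.1)$ together with the lexicographic order yields the semi-orthogonal decomposition $\DD'=\langle X_0\rangle\perp\langle X_1\rangle\perp\dots\perp\langle X_k\rangle$; here I use that $S$ is concentrated in the rows $0\leqslant i\leqslant k$. Condition $(\rm M^{+}_{k}2)$ shows $|S_i|=|S_{i-1}|$ for $i\in[1,k]$, so every row has the common cardinality $h=|S_0|$. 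For (a) I take $E:=X_k$, whose components $(X_{k,j})_{j\in S_k}$ form an exceptional sequence of length $h$; condition $(\ref{E2})$ of Proposition \ref{Lad0} for $E$ is then precisely $(\rm S1)$ in row $k$.

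The decisive step, which I expect to be the main obstacle, is to verify $(\ref{E1})$, i.e.\ to prove $\langle\S_{\langle X_S\rangle}(X_i)\rangle=\langle X_{i-1}\rangle$ for each $i\in[1,k]$; granting this, the row decomposition exhibits $\DD'$ as $\langle\S^{k}_{\langle X_S\rangle}(E)\rangle\perp\dots\perp\langle\S_{\langle X_S\rangle}(E)\rangle\perp\langle E\rangle$, which is $(\ref{E1})$ with $p=k+1$. I establish the identification by splitting along $(\rm M^{+}_{k}2)$. If $S_i=S_{i-1}+1$, then $j-1\in S_{i-1}$ for every $j\in S_i$, so $(\rm S3)$ gives $\S_{\langle X_S\rangle}(X_{i,j})\simeq X_{i-1,j-1}$ and these objects run through all of row $i-1$, whence $\S_{\langle X_S\rangle}(X_i)\simeq X_{i-1}$. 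If instead $S_i=S_{i-1}$, then $S_i\subset S_{i-1}$ and Proposition \ref{LSS1} gives $\S_{\langle X_S\rangle}(X_{i,j})\simeq\S_{\langle X_{i-1}\rangle}(X_{i-1,j})\in\langle X_{i-1}\rangle$; as $\S_{\langle X_{i-1}\rangle}$ is an autoequivalence of $\langle X_{i-1}\rangle$, these objects again generate $\langle X_{i-1}\rangle$. Feeding $(\ref{E1})$ and $(\ref{E2})$ into Proposition \ref{Lad0} (after translating the index set of $\Y(k+1;h)$ so that its rows are numbered $0,\dots,k$, which is harmless since $S$-families are translation invariant) produces the required $\Y(k+1;h)$-family $(Y_{i,j})$ with $\langle Y_i\rangle=\langle\S^{k-i}_{\langle X_S\rangle}(X_k)\rangle=\langle X_i\rangle$ and the triangle equivalence $F:\langle X_S\rangle\to\per\LL(k+1;h)$ with $F(Y_{i,j})\simeq P(i,j)$, proving (a).

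For (b) and (c) I realize the row-wise twist as a genuine autoequivalence. Applying Lemma \ref{Eq} to the $\Y(k+1;h)$-family $(Y_{i,j})$ from (a) yields a triangle autoequivalence $G$ of $\langle X_S\rangle$ with $G|_{\langle Y_i\rangle}\simeq\S_{\langle Y_i\rangle}$ for each $i$. Since $\langle Y_i\rangle=\langle X_i\rangle$ and a Serre functor of a subcategory is intrinsic, $G|_{\langle X_i\rangle}\simeq\S_{\langle X_i\rangle}$, hence $G(X_{i,j})\simeq\S_{\langle X_i\rangle}(X_{i,j})=X'_{i,j}$. Thus $(X'_{i,j})$ is the image of the $S$-family $(X_{i,j})$ under a triangle autoequivalence of $\langle X_S\rangle$, and being an $S$-family is invariant under triangle equivalences: $(\rm L1)$ and $(\rm L2)$ are immediate, while $(\rm S1)$--$(\rm S3)$ follow from $G\,\S_{\langle X_i\rangle}\,G^{-1}\simeq\S_{\langle G(X_i)\rangle}$ and its analogues for columns and for $\langle X_S\rangle$, by Proposition \ref{SF}(c). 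Moreover $G$ fixes each $\langle X_i\rangle$ setwise, so $\langle X'_i\rangle=\langle X_i\rangle$. Part (c) is identical with $G^{-1}$ in place of $G$, since $G^{-1}|_{\langle X_i\rangle}\simeq\S^{-1}_{\langle X_i\rangle}$ gives $G^{-1}(X_{i,j})\simeq X''_{i,j}$.
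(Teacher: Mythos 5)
Your proposal is correct and follows essentially the same route as the paper: part (a) is proved by feeding the bottom row $X_k$ into Proposition \ref{Lad0}, verifying $(\ref{E1})$ by the same case split on $(\rm M^{+}_{k}2)$ (using $(\rm S3)$ when $S_i=S_{i-1}+1$ and Proposition \ref{LSS1} when $S_i=S_{i-1}$), and parts (b), (c) are obtained from the autoequivalence of Lemma \ref{Eq}. The only difference is that you spell out more carefully why $G$ restricts to $\S_{\langle X_i\rangle}$ on each row and why $S$-families are preserved under triangle autoequivalences, steps the paper leaves implicit.
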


\begin{proof}
(a) We show that $(X_{k,j})_{j\in[0,h]}$ is 
an exceptional sequence satisfying 
(\ref{E1})-(\ref{E2}) in Proposition 
\ref{Lad0}. 
From (\rm L2), 
$(X_{k,j})_{j\in[0,h]}$ is an exceptional sequence.  From (\rm S1), we have that (\ref{E2}) is satisfied. 
From (\rm L2), 
\[\langle X_{S}\rangle=\langle X_{0}\rangle\perp\langle X_{2}\rangle\perp\dots\perp\langle X_{k}\rangle.\]
Let $i\in[1,k]$. 
If $S_{i}=S_{i-1}$, then 
$\S(X_{i,j})\stackrel{\ref{LSS1}}{=}
\S_{\langle X_{i-1}\rangle}(X_{i-1,j})$, 
and so we have 
$\langle\S(X_{i})\rangle=
\langle X_{i-1}\rangle$. 
If $S_{i}=S_{i-1}+1$, then 
$\S(X_{i,j})\stackrel{(\rm S3)}{=}X_{i-1,j-1}$, 
and so we have 
$\langle\S(X_{i})\rangle=
\langle X_{i-1}\rangle$. 
So (\ref{E1}) is satisfied. 
Thus the assertion follows from Proposition \ref{Lad0}.\\ 
(b) By Lemma \ref{Eq}, there exists a triangle autoequivalence 
$G:\langle X_{S}\rangle\to\langle X_{S}\rangle$ 
such that 
$G(X_{i,j})
=\S_{\langle X_{i}\rangle}(X_{i,j})$. 
Thus the assertion follows. \\
(c) This is the dual of (b). 
\end{proof}

The following result is equivalent to 
Theorem \ref{Mutation Lemma 1} in the case $k=0$ and $S_{>k+1}=\emptyset$.
\begin{lem}
\label{Mutation Lemma 0}
Let $\DD$ be a triangulated category satisfying 
$(\ref{Good})$, 
and let $I_{0}$ and $I_{1}$ be two 
intervals of $\Z$ such that $I_{1}\subset I_{0}$, 
$S$ and $S'$ two finite subsets of $\Z^{2}$ such that 
\[S=\bigsqcup_{k\in[0,1]}
\{k\}\times I_{k},\ \ 
S'=\sigma_{\leqslant 0}(S).\] 
\begin{enumerate}[\rm (a)]
\item If $\XXX=(X_{i,j})_{(i,j)\in S}$ is an $S$-family, then 
the family $\XXX'=(X'_{i,j})_{(i,j)\in S'}$
is an $S'$-family where 
\begin{equation*}
X'_{i,j}=\begin{cases}
\S_{\langle X_{0}\rangle}(X_{0,j+1}) 
& i=0,\\
X_{1,j} & i=1. 
\end{cases}
\end{equation*}
In particular, there exists a triangle equivalence 
$\per \LL(S)\to \per \LL(S')$.
  
\item If $\YYY=(Y_{i,j})_{(i,j)\in S'}$ 
is an $S'$-family, 
then the family 
$\YYY'=(Y'_{i,j})_{(i,j)\in S}$ 
is an $S$-family where 
\begin{equation*}
Y'_{i,j}=\begin{cases}
\S_{\langle Y_{0}\rangle}^{-1}(Y_{0,j-1}) 
& i=0,\\
Y_{1,j} & i=1.
\end{cases}
\end{equation*}
In particular, there exists a triangle equivalence 
$\per \LL(S')\to \per \LL(S)$.
\end{enumerate}
\end{lem}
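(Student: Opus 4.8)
The plan is to verify directly that $\XXX'$ (resp.\ $\YYY'$) satisfies the defining conditions $(\rm L1)$, $(\rm L2)$, $(\rm S1)$--$(\rm S3)$ of an $S'$-family (resp.\ $S$-family), after reducing to the full case. First I would pass to the admissible subcategory $\langle X_S\rangle$, which again satisfies $(\ref{Good})$ by Proposition \ref{Serre functor}, and observe that since $\S_{\langle X_0\rangle}$ is a triangle autoequivalence of $\langle X_0\rangle$ one has $\langle X'_0\rangle=\langle X_0\rangle$, $\langle X'_1\rangle=\langle X_1\rangle$, and hence $\langle X'_{S'}\rangle=\langle X_S\rangle$. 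Thus I may assume $\DD=\langle X_S\rangle$ and that $\S=\S_{\langle X_S\rangle}$ is the Serre functor of $\DD$. Writing $I_0=[a,b]$, a short computation with $(\rm S1)$ shows that only the leftmost object $X'_{0,a-1}=\S_{\langle X_0\rangle}(X_{0,a})$ is genuinely new, while $X'_{0,j}=X_{0,j}$ for $j\in[a,b-1]$ and the whole top row is unchanged.

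For part (a), $(\rm L1)$ is immediate since $X'_{0,j}$ is the image of an exceptional object under an autoequivalence. For $(\rm L2)$ the internal vanishing in row $0$ follows by transporting the row-$0$ exceptional structure through $\S_{\langle X_0\rangle}$, and $\Hom(X'_{0,j},X_{1,j'}[n])=0$ because $X'_{0,j}\in\langle X_0\rangle$ and $\Hom(\langle X_0\rangle,X_{1,j'}[n])=0$ by $(\rm L2.1)$ for $\XXX$. The crucial case is $\Hom(X_{1,j},X'_{0,j'}[n])$: here I would apply Serre duality in $\DD$ and then Proposition \ref{LSS1} (valid because $S_1\subset S_0$) to rewrite $\S(X_{1,j})\simeq\S_{\langle X_0\rangle}(X_{0,j})$, obtaining $\Hom(X_{1,j},X'_{0,j'}[n])\simeq\Hom(X_{0,j'+1},X_{0,j}[-n])^{\ast}$, which vanishes unless $j'\in[j-1,j]$. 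This same chain yields $(\rm S3)$, since for $j\in I_1$ it gives $\S(X_{1,j})\simeq\S_{\langle X_0\rangle}(X_{0,j})=X'_{0,j-1}$; $(\rm S1)$ for the new row follows from $\S_{\langle X'_0\rangle}=\S_{\langle X_0\rangle}$ together with $(\rm S1)$ for $\XXX$, and $(\rm S2)$ is untouched because the only new object sits in the column $j=a-1$, which carries no entry in row $1$. So part (a) is entirely clean.

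For part (b) the roles of $\S$ and $\S^{-1}$ are interchanged and the same manipulations apply, now using $(\rm S3)$ for $\YYY$ directly in place of Proposition \ref{LSS1} to identify the diagonal Serre twist (two applications of Serre duality, in $\DD$ and in $\langle Y_0\rangle$, again reduce the relevant $\Hom$ to one inside the unchanged row $0$). The genuine difference, and the step I expect to be the real obstacle, is that the new object $Y'_{0,b}=\S^{-1}_{\langle Y_0\rangle}(Y_{0,b-1})$ now lies in the column $j=b$, which does carry a row-$1$ entry $Y_{1,b}$ whenever $b\in I_1$; so unlike in (a) I must verify $(\rm S2)$ at this corner. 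I would do this by first checking, via the same Serre-duality computation, that $(Y'_{0,b},Y_{1,b})$ is an exceptional pair with $\Hom(Y_{1,b},Y'_{0,b}[n])\simeq\delta_{n,0}K$ and $\Hom(Y'_{0,b},Y_{1,b}[n])=0$; then $Y'_{0,b}\oplus Y_{1,b}$ is pretilting with endomorphism algebra $\NN(2)$, so $\langle Y'_{0,b}\oplus Y_{1,b}\rangle\simeq\per\NN(2)$ by Lemma \ref{Column Lemma}, and its Serre functor sends $Y_{1,b}$ to $Y'_{0,b}$, which is exactly $(\rm S2)$ at the corner.

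Finally, the asserted triangle equivalences come for free from Theorem \ref{Triviality}: since $\XXX$ is an $S$-family and $\XXX'$ an $S'$-family with $\langle X_S\rangle=\langle X'_{S'}\rangle$, that theorem gives $\per\LL(S)\simeq\langle X_S\rangle=\langle X'_{S'}\rangle\simeq\per\LL(S')$, and symmetrically for (b); taking $\XXX$ to be the trivial family of Example \ref{Trivial family} shows such an equivalence exists in general. I would also record at the outset that $\XXX\mapsto\XXX'$ and $\YYY\mapsto\YYY'$ are mutually inverse on the level of objects, which both streamlines the bookkeeping and makes the two halves of the statement visibly dual.
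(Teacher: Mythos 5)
Your argument is correct and takes essentially the same route as the paper: a direct verification of (L1), (L2) and (S1)--(S3), with the crucial cross-row computation resting on the identification $\S(X_{1,j})\simeq\S_{\langle X_{0}\rangle}(X_{0,j})$ from Proposition \ref{LSS1} (the paper uses \ref{LSS1} for (S3) in the same way, and handles the Hom-vanishing by a short case analysis via (S1), (S2)$'$ and Serre duality instead). The one local divergence is the corner of part (b), where you verify (S2) by exhibiting $Y'_{0,b}\oplus Y_{1,b}$ as a pretilting object with endomorphism algebra $\NN(2)$, while the paper checks (S2)$'$ uniformly using $\F_{\langle Y_{0}\rangle}\S^{-1}\simeq\S^{-1}_{\langle Y_{0}\rangle}$ (Proposition \ref{Serre functor}(b)) and passes to (S2) by Proposition \ref{nu condition}; your version is also valid, but the equivalence $\langle Y'_{0,b}\oplus Y_{1,b}\rangle\simeq\per\NN(2)$ should be justified by Proposition \ref{tilt1} rather than Lemma \ref{Column Lemma}, whose hypothesis (S2)$'$ is exactly what is being established at that point.
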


\begin{proof}
Without loss of generality, 
we can assume that $I_{0}=[p_{1},p_{2}]$ and 
$I_{1}=[1,q]$. 

(a) Since $\langle X'_{i}\rangle=
\langle \S_{\langle X_{i}\rangle}(X_{i})\rangle=\langle X_{i}\rangle$ for any $i\in[0,1]$, we have that 
$\XXX'$ satisfies (\rm L2.1). 
Let $j$ and $j'$ be two integers such that 
$|j-j'|>1$. Then 
\begin{align*}
\Hom_{\DD}(X'_{0,j},X'_{0,j'})
&=
\Hom_{\DD}(\S_{\langle X_{0}\rangle}(X_{0,j+1}),
\S_{\langle X_{0}\rangle}(X_{0,j'+1}))\\
&\simeq \Hom_{\DD}(X_{0,j+1},X_{0,j'+1})
\stackrel{\text{(\rm L2) for $\XXX$}}{=} 0,
\end{align*}
\begin{align*}
\Hom_{\DD}(X'_{1,j},X'_{1,j'})=
\Hom_{\DD}(X_{1,j},X_{1,j'})
\stackrel{\text{(\rm L2) for $\XXX$}}{=} 0. 
\end{align*}
If $j'\neq p_{1}-1$, 
\begin{align*}
\Hom_{\DD}(X'_{1,j},X'_{0,j'})
&=\Hom_{\DD}(X_{1,j},
\S_{\langle X_{0}\rangle}(X_{0,j'+1}))
\stackrel{\text{(\rm S1) for $\XXX$}}{=}\Hom_{\DD}(X_{1,j},X_{0,j'})\\
&\stackrel{\text{(\rm S2)$'$ for $\XXX$}}{=} \Hom_{\DD}(X_{0,j},
X_{0,j'})
\stackrel{\text{(\rm L2) for $\XXX$}}{=} 0.
\end{align*}
If $j'=p_{1}-1$, 
\begin{align*}
\Hom_{\DD}(X'_{1,j},X'_{0,p_{1}-1})
&=\Hom_{\DD}(X_{1,j},
\S_{\langle X_{0}\rangle}(X_{0,p_{1}}))
\stackrel{\text{(\rm S2)$'$ for $\XXX$}}{=} \Hom_{\DD}(X_{0,j},
\S_{\langle X_{0}\rangle}(X_{0,p_{1}}))\\
&\simeq \Hom_{\DD}(X_{0,p_{1}}, X_{0,j})^{\ast}
\stackrel{\text{(\rm L2) for $\XXX$}}{=} 0.
\end{align*}
Thus $\XXX'$ satisfies (\rm L2.2), and so 
$\XXX'$ is a weak $S'$-family. 

Clearly $\XXX'$ satisfies (\rm S1). 
By Proposition \ref{LSS1}, $\XXX'$ satisfies (\rm S3). 
If $(0,j),(1,j)\in S'$, then $(0,j+1)\in S$ and 
\[\F_{\langle X'_{0}\rangle}(X'_{1,j})
=\F_{\langle X_{0}\rangle}(X_{1,j})
\stackrel{\text{(\rm S2)$'$ for $\XXX$}}{=}X_{0,j}
\stackrel{\text{(\rm S1) for $\XXX$}}{=}
\S_{\langle X_{0}\rangle}(X_{0,j+1})=X'_{0,j}.\]
So $\XXX'$ satisfies (\rm S2)$'$. 
Thus the assertion follows. 

(b) Since $\langle Y'_{i}\rangle=
\langle \S_{\langle Y_{i}\rangle}(Y_{i})\rangle=\langle Y_{i}\rangle$ for any $i\in[0,1]$, we have that 
$\YYY$ satisfies (\rm L2.1). 
Let $j$ and $j'$ be two integers such that 
$|j-j'|>1$. Then 
\begin{align*}
\Hom_{\DD}(Y'_{0,j},Y'_{0,j'})
&= \Hom_{\DD}(\S^{-1}_{\langle Y_{0}\rangle}(Y_{0,j-1}),
\S^{-1}_{\langle Y_{0}\rangle}(Y_{0,j'-1}))\\
&\simeq \Hom_{\DD}(Y_{0,j-1},Y_{0,j'-1})
\stackrel{\text{(\rm L2) for $\YYY$}}{=} 0,
\end{align*}
\begin{align*}
\Hom_{\DD}(Y'_{1,j},Y'_{1,j'})
=\Hom_{\DD}(Y_{1,j},Y_{1,j'})\stackrel{\text{(\rm L2) for $\YYY$}}{=} 0. 
\end{align*}
Since $S'_{1}=S_{1}\subset S_{0}=S'_{0}+1$, 
\begin{align*}
\Hom_{\DD}(Y'_{1,j},Y'_{0,j'})
&=\Hom_{\DD}(Y_{1,j},
\S^{-1}_{\langle Y_{0}\rangle}(Y_{0,j'-1}))
\stackrel{\text{(\rm S3) for $\YYY$}}{=} \Hom_{\DD}(\S^{-1}(Y_{0,j-1}),
\S^{-1}_{\langle Y_{0}\rangle}(Y_{0,j'-1}))\\
&\simeq \Hom_{\DD}(\S^{-1}_{\langle Y_{0}\rangle}(Y_{0,j'-1}), Y_{0,j-1})^{\ast}
\simeq \Hom_{\DD}(Y_{0,j-1}, Y_{0,j'-1})
\stackrel{\text{(\rm L2) for $\YYY$}}{=} 0.
\end{align*}
Thus $\YYY$ satisfies (\rm L2.2), and so 
$\YYY$ is a weak $S'$-family. 

Since $\YYY$ satisfies (\rm S1), we have 
\begin{equation}\label{M0'}
Y_{i,j}=\begin{cases}
\S_{\langle Y_{0}\rangle}^{-1}(Y_{0,p_{2}-1}) 
& (i,j)=(1,p_{2}),\\
Y_{i,j} & \text{otherwise}.
\end{cases} 
\end{equation}
From (\ref{M0'}), 
$\YYY$ satisfies 
(\rm S1) and (\rm S3). 
If $(1,j),(2,j)\in S'$, then 
\[\F_{\langle Y'_{0}\rangle}(Y'_{1,j})
=\F_{\langle Y_{0}\rangle}(Y_{1,j})
\stackrel{\text{(\rm S3) for $\YYY$}}{=} 
\F_{\langle Y_{0}\rangle}\S^{-1}(Y_{0,j-1})
\stackrel{\ref{Serre functor}(\rm b)}{=}\S^{-1}_{\langle Y_{0}\rangle}
(Y_{0,j-1})
=Y'_{0,j},\]
and so $\YYY$ satisfies (\rm S2)$'$. 
Thus the assertion follows. 
\end{proof}
Now we are ready to prove Theorem \ref{Mutation Lemma 1}.
\begin{proof}[Proof of Theorem 
\ref{Mutation Lemma 1}]
(a) Let $\XXX'=(X'_{i,j})_{(i,j)\in S'}$. 
By Lemma \ref{Mutation Lemma 00}, 
$\XXX'_{\leqslant k}$ is 
an $S_{\leqslant k}$-family in $\DD$. 
By Lemma \ref{Mutation Lemma 0}, 
$\XXX'_{\geqslant k}$ is 
an $S_{\geqslant k}$-family in $\DD$. 
Thus the assertion follows from Proposition \ref{Glu1}. \\
(b) Let $\XXX'=(X'_{i,j})_{(i,j)\in S'}$. 
By Lemma \ref{Mutation Lemma 00}, 
$\XXX'_{\leqslant k}$ is 
an $S_{\leqslant k}$-family in $\DD$. 
By Lemma \ref{Mutation Lemma 0}, 
$\XXX'_{\geqslant k}$ is 
an $S_{\geqslant k}$-family in $\DD$. 
Thus the assertion follows from Proposition \ref{Glu1}. 
\end{proof}

A subset $S$ of $\Z^{2}$ 
is called a \emph{${}^{t}M^{+}_k$-subset} 
if ${}^{t}S$ is an $M^{+}_k$-subset. 
By Theorem \ref{Mutation Lemma 1}, 
we obtain the following result:
\begin{thm}[Mutation \rm $^{t}$I]
\label{Mutation Lemma 1op}
Let $\DD$ be a triangulated category satisfying 
$(\ref{Good})$, 
$S$ a ${}^{t}M^{+}_k$-subset satisfying 
$S^{\leqslant -1}=\emptyset$, and let $S'=\rho_{\leqslant k}(S)$. 

\begin{enumerate}[\rm (a)]
\item If $\XXX=(X_{i,j})_{(i,j)\in S}$ is an $S$-family, then 
the family $\XXX'=(X'_{i,j})_{(i,j)\in S'}$ 
 is an $S'$-family where 
 \begin{equation*}
X'_{i,j}=\begin{cases}
\S_{\langle X^{j}\rangle}(X_{i+1,j}) 
& j\leqslant k,\\
X_{i,j} & k<j. 
\end{cases}
\end{equation*}
In particular, there exists a triangle equivalence 
$\per \LL(S)\to \per \LL(S')$.

\item If $\YYY=(Y_{i,j})_{(i,j)\in S'}$ is an $S'$-family, 
then the family 
$\YYY'=(Y'_{i,j})_{(i,j)\in S}$ is an 
 $S$-family where 
 \begin{equation*}
Y'_{i,j}=\begin{cases}
\S_{\langle Y^{j}\rangle}^{-1}(Y_{i-1,j}) 
& j\leqslant k,\\
Y_{i,j} & k<j.
\end{cases}
\end{equation*}
In particular, there exists a triangle equivalence 
$\per \LL(S')\to \per \LL(S)$.
\end{enumerate}
\end{thm}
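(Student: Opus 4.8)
The plan is to deduce this statement from Theorem \ref{Mutation Lemma 1} by transposing all the data, as the label ``Mutation $^{t}$I'' and the definition of a ${}^{t}M^{+}_k$-subset already suggest. The first task is to record the combinatorial compatibility of the transpose $S\mapsto {}^{t}S$ with the permutations $\sigma_{\leqslant k}$ and $\rho_{\leqslant k}$. A direct check on points gives $\rho_{\leqslant k}(i,j)={}^{t}\bigl(\sigma_{\leqslant k}({}^{t}(i,j))\bigr)$, where ${}^{t}(i,j)=(j,i)$, and hence ${}^{t}S'={}^{t}\rho_{\leqslant k}(S)=\sigma_{\leqslant k}({}^{t}S)$. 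Moreover $S^{\leqslant -1}=\emptyset$ is equivalent to $({}^{t}S)_{\leqslant -1}=\emptyset$, and by the definition of a ${}^{t}M^{+}_k$-subset, ${}^{t}S$ is an $M^{+}_k$-subset. Thus ${}^{t}S$ satisfies exactly the hypotheses of Theorem \ref{Mutation Lemma 1}.

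Next I would transpose the family. Writing ${}^{t}\XXX=(X_{j,i})_{(i,j)\in {}^{t}S}$, the transpose-invariance of the $S$-family condition recorded at the beginning of this section shows that $\XXX$ is an $S$-family if and only if ${}^{t}\XXX$ is an ${}^{t}S$-family. For part (a) I then apply Theorem \ref{Mutation Lemma 1}(a) to ${}^{t}\XXX$: its image $({}^{t}\XXX)'=(W_{i,j})_{(i,j)\in\sigma_{\leqslant k}({}^{t}S)}$ is a $\sigma_{\leqslant k}({}^{t}S)$-family, that is, an ${}^{t}S'$-family, together with a triangle equivalence $\per\LL({}^{t}S)\to\per\LL({}^{t}S')$. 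The key identification is that the $i$-th row object of ${}^{t}\XXX$ is $\bigoplus_{j\in({}^{t}S)_{i}}X_{j,i}=X^{i}$, so $\langle({}^{t}\XXX)_{i}\rangle=\langle X^{i}\rangle$. Substituting this into the row-formula of Theorem \ref{Mutation Lemma 1}, namely $W_{i,j}=\S_{\langle X^{i}\rangle}(X_{j+1,i})$ for $i\leqslant k$ and $W_{i,j}=X_{j,i}$ for $k<i$, and then transposing back (so that the object at $(i,j)\in S'$ is $W_{j,i}$) yields exactly $\S_{\langle X^{j}\rangle}(X_{i+1,j})$ for $j\leqslant k$ and $X_{i,j}$ for $k<j$. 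Hence the transpose of $({}^{t}\XXX)'$ is precisely the family $\XXX'$ of the statement, and by the same transpose-invariance $\XXX'$ is an $S'$-family.

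Finally, to obtain the asserted equivalence $\per\LL(S)\to\per\LL(S')$, I would note that each $X'_{i,j}$ lies in $\langle X_{S}\rangle$, being obtained by applying the Serre autoequivalence of the admissible subcategory $\langle X^{j}\rangle\subseteq\langle X_{S}\rangle$; comparing cardinalities of the two exceptional sequences gives $\langle X_{S}\rangle=\langle X'_{S'}\rangle$, so Theorem \ref{Triviality} applied to both $\XXX$ and $\XXX'$ furnishes $\per\LL(S)\simeq\langle X_{S}\rangle=\langle X'_{S'}\rangle\simeq\per\LL(S')$. (Equivalently, one may transport the equivalence $\per\LL({}^{t}S)\to\per\LL({}^{t}S')$ along the isomorphisms $\LL(S)\cong\LL({}^{t}S)$ and $\LL(S')\cong\LL({}^{t}S')$ induced by the symmetry of $\LLL=\NNN\otimes\NNN$ swapping its two tensor factors.) Part (b) is handled identically, applying Theorem \ref{Mutation Lemma 1}(b) in place of (a). The only genuine work is the bookkeeping in the second paragraph: verifying that row and column indices, and the localized Serre functors, transpose correctly so that the explicit formula for $X'_{i,j}$ matches. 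I expect this index-matching to be the main, though entirely routine, obstacle.
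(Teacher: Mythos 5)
Your proposal is correct and is exactly the paper's (essentially unwritten) argument: the paper derives Mutation $^{t}$I from Mutation I by transposition, relying on the remark at the start of Section 5 that $\XXX$ is an $S$-family if and only if ${}^{t}\XXX$ is a ${}^{t}S$-family. Your index bookkeeping (in particular ${}^{t}\rho_{\leqslant k}(S)=\sigma_{\leqslant k}({}^{t}S)$ and $\langle({}^{t}\XXX)_{i}\rangle=\langle X^{i}\rangle$) checks out.
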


The following result is the one of main results in this subsection. 

\begin{thm}[Mutation 
\rm I\hspace{-.01em}I]\label{Mutation Lemma 2}
Let $\DD$ be a triangulated category satisfying 
$(\ref{Good})$, 
and let $k$, $h$ be two positive integers such that $k\in (h+1)\Z$, and let 
$s=\frac{(h-1)k}
{h+1}$. 
Let $S$ be an $M^{+}_{k-1}$-subset of $\Z^{2}$ 
satisfying $|S_{0}|=h$ and $S_{i}=S_{i-1}$ for 
any $i\in[1,k-1]$, and let 
$S'=\sigma_{\leqslant 0}\sigma_{\leqslant 1}
\dots\sigma_{\leqslant k-1}(S)$.  
\begin{enumerate}[\rm (a)]
\item If $\XXX=(X_{i,j})_{(i,j)\in S}$ is an $S$-family, then 
the family $\XXX'=(X'_{i,j})_{(i,j)\in S'}$ 
is an $S'$-family such that 
$\langle X_{S}\rangle=\langle X'_{S'}\rangle$ 
where 
\begin{equation*}
X'_{i,j}=
\begin{cases}
X_{i,j}[s] & i<0,\\
\S_{\langle X_{i}\rangle}^{k-i}(X_{i,j}) 
& 0\leqslant i\leqslant k-1,\\
X_{i,j} & i> k-1. 
\end{cases}
\end{equation*}
In particular, there exists a triangle equivalence 
$\per \LL(S)\to \per \LL(S')$. 
\item If $\YYY=(Y_{i,j})_{(i,j)\in S'}$ 
is an $S'$-family, 
then the family 
$\YYY'=(Y'_{i,j})_{(i,j)\in S}$
is an $S$-family such that 
$\langle Y_{S}\rangle=\langle Y'_{S'}\rangle$ 
where 
\begin{equation*}
Y'_{i,j}=
\begin{cases}
Y_{i,j}[-s] & i< 0,\\
\S_{\langle Y_{i}\rangle}^{-k+i}(Y_{i,j}) 
& 0\leqslant i\leqslant k-1,\\
Y_{i,j} & i> k-1. 
\end{cases}
\end{equation*}
In particular, there exists a triangle equivalence 
$\per \LL(S')\to \per \LL(S)$. 
\end{enumerate}
\end{thm}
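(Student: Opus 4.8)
The plan is to realize the mutation as an iteration of the elementary move of Theorem \ref{Mutation Lemma 1}, glued together by Proposition \ref{Glu1}, with a fractional Calabi--Yau computation supplying the shift $[s]$. First, (b) is the inverse construction to (a): if one sets $\YYY=\XXX'$, then $\langle Y_{i}\rangle=\langle X_{i}\rangle$ and $\S^{-(k-i)}_{\langle X_{i}\rangle}\S^{k-i}_{\langle X_{i}\rangle}\simeq\Id$, so, after matching the positional effect of the permutation $\sigma_{\leqslant 0}\cdots\sigma_{\leqslant k-1}$, the formula of (b) recovers $\XXX$. Hence it suffices to prove (a) and that $\XXX\mapsto\XXX'$ is bijective.

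The algebraic core is the following computation. Because $S_{i}=S_{i-1}$ for $i\in[1,k-1]$, each row with $0\leqslant i\leqslant k-1$ has $|S_{i}|=h$, so Lemma \ref{Row Lemma} together with Example \ref{Sq} gives $\langle X_{i}\rangle\simeq\per\NN(h)\simeq\per K\A_{h}$. Serre functors transport along equivalences (Proposition \ref{SF}(c)), so Lemma \ref{CY lemma} yields $\S^{h+1}_{\langle X_{i}\rangle}\simeq[h-1]$. Writing $k=(h+1)m$ this gives
\[\S^{k}_{\langle X_{i}\rangle}\simeq[m(h-1)]=[s].\]
In particular $X'_{0,j}=\S^{k}_{\langle X_{0}\rangle}(X_{0,j})\simeq X_{0,j}[s]$, so the middle and bottom cases in the definition of $X'$ agree along $i=0$; more importantly, this is the mechanism by which a genuine shift $[s]$, rather than a fractional Serre power, is forced on every row $i\leqslant 0$, and the divisibility $h+1\mid k$ is exactly what makes $s$ an integer so that the construction closes up.

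Next I would split $S$ into three regions using Proposition \ref{Glu1} (and its transpose): the rows $i\geqslant k$, which the formula leaves fixed; the rectangle of rows $0\leqslant i\leqslant k-1$ over the common interval $S_{0}$, together with the overhang $S_{k}\subset S_{k-1}$; and the rows $i\leqslant 0$. After splitting off the outer rows, the restricted family on the rectangle is a $\Y(k;h)$-family, so by Theorem \ref{Triviality} and Example \ref{Sq} one has $\langle X_{[0,k-1]}\rangle\simeq\per(K\A_{k}\otimes K\A_{h})$, and Lemma \ref{Mutation Lemma 00}(b),(c) guarantees that applying $\S^{\pm 1}_{\langle X_{i}\rangle}$ row by row keeps each row an exceptional sequence with unchanged thick closure $\langle X'_{i}\rangle=\langle X_{i}\rangle$. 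Conditions (L1) and (L2) for $\XXX'$ are then immediate, and (S1) is internal to each row; the substance is (S2) and (S3), which I would verify by identifying the global twist $\S_{\langle X_{S}\rangle}$ with the row-wise twist $\S_{\langle X_{i}\rangle}$ on the nested supports via Proposition \ref{LSS1}, exactly as in the proofs of Theorem \ref{Mutation Lemma 1} and Lemma \ref{Mutation Lemma 0}.

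The hard part is the interaction of the iteration with the rows that are pushed to negative index: row $i$ ends at support $S_{i}-(k-i)$, which for small $i$ lies well below $0$, so the hypothesis $S_{\leqslant -1}=\emptyset$ of Theorem \ref{Mutation Lemma 1} fails and the elementary move cannot be applied there directly. This is resolved precisely by the periodicity of the previous paragraph: a full sweep of $k$ Serre twists on a width-$h$ row collapses to $[s]$, so the whole sub-configuration on the rows $i\leqslant 0$ is transported rigidly --- translated down by $k$ and twisted by $[s]$ --- and, translation and shift being triangle autoequivalences, this rigid move is automatically compatible with the $S'$-family axioms. The one delicate check is that the interface between this rigidly moved block and the actively twisted rows $1\leqslant i\leqslant k-1$ still satisfies (S2) and (S3); once this compatibility is confirmed (again through Proposition \ref{LSS1} and Proposition \ref{Glu1}), the equality $\langle X_{S}\rangle=\langle X'_{S'}\rangle$ holds and Theorem \ref{Triviality} produces the desired triangle equivalence $\per\LL(S)\to\per\LL(S')$.
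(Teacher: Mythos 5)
Your proposal is correct and follows essentially the same route as the paper: iterate Theorem \ref{Mutation Lemma 1} on the rows $i\geqslant 0$ (where the hypothesis $S_{\leqslant -1}=\emptyset$ does hold), use the fractional Calabi--Yau identity $\S^{k}_{\langle X_{0}\rangle}\simeq[s]$ coming from Lemma \ref{CY lemma} to see that the block of rows $i\leqslant 0$ is just $\XXX_{\leqslant 0}$ rigidly shifted by $[s]$, and glue the two overlapping pieces along row $0$ with Proposition \ref{Glu1}. The only cosmetic differences are that the paper splits at the single row $i=0$ rather than into three regions, and proves (b) as the formal dual of (a) (running the argument with $\S^{-1}$ and Theorem \ref{Mutation Lemma 1}(b)) rather than through your slightly circular reduction to bijectivity of $\XXX\mapsto\XXX'$.
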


\begin{tabular}{|c|c|}\hline
$S$ & $S'$ \\ \hline 
\begin{tikzpicture}
\draw (0,3/4) node {};
\draw (-1,-1/8) node {$i=0$};
\draw (-1,-1/8-7/16) node {$\vdots$};
\draw (-1,-1/8-4/4) node {$i=k-1$};
\draw (-1,-1/8-5/4) node {$i=k$};

\foreach \z in {0,1,2,3} 
\draw[ultra thin] (\z/4,0)--(\z/4,-6/4); 
\foreach \z in {4} 
\draw[ultra thin] (\z/4,0)--(\z/4,-5/4);

\foreach \w in 
{0,1,2,3,4,5}
\draw[ultra thin] (0,-\w/4)--(4/4,-\w/4); 
\draw[ultra thin] (0,-6/4)--(3/4,-6/4); 

\foreach \w in {-1,5}
\draw[ultra thin] (\w/4,0)--(\w/4,2/4);
\foreach \w in {0,2}
\draw[ultra thin] (-1/4,\w/4)--(5/4,\w/4);

\foreach \z in {-1,5} 
\draw[ultra thin] (\z/4,-6/4)--(\z/4,-8/4); 
\foreach \w in {6,8}
\draw[ultra thin] (-1/4,-\w/4)--(5/4,-\w/4); 

\draw (4/8,1/4) node {?};
\draw (4/8,-7/4) node {?};
\end{tikzpicture}
&
\begin{tikzpicture}
\draw (0,1/4) node {};
\draw (-1,-1/8) node {$i=0$};
\draw (-1,-1/8-7/16) node {$\vdots$};
\draw (-1,-1/8-4/4) node {$i=k-1$};
\draw (-1,-1/8-5/4) node {$i=k$};

\foreach \z in {0,1,2,3} 
\draw[ultra thin] (\z/4,0)--(\z/4,-\z/4-1/4);

\foreach \z in {4} 
\draw[ultra thin] (\z/4,-\z/4+4/4)
--(\z/4,-\z/4-1/4); 

\foreach \z in {5} 
\draw[ultra thin] (\z/4,-\z/4+4/4)
--(\z/4,-6/4); 

\foreach \z in {6,7,8} 
\draw[ultra thin] (\z/4,-\z/4+4/4)
--(\z/4,-6/4);  

\draw[ultra thin] (0,0)--(4/4,0); 
\foreach \w in 
{1,2,3,4}
\draw[ultra thin] (\w/4-1/4,-\w/4)
--(\w/4+4/4,-\w/4); 

\foreach \w in 
{5}
\draw[ultra thin] (4/4,-\w/4)
--(8/4,-\w/4); 
\draw[ultra thin] (5/4,-6/4)
--(8/4,-6/4); 

\foreach \w in {-1,5}
\draw[ultra thin] (\w/4,0)--(\w/4,2/4);
\foreach \w in {0,2}
\draw[ultra thin] (-1/4,\w/4)--(5/4,\w/4);

\foreach \z in {4,10} 
\draw[ultra thin] (\z/4,-6/4)--(\z/4,-8/4); 
\foreach \w in {6,8}
\draw[ultra thin] (4/4,-\w/4)--(10/4,-\w/4);

\draw (4/8,1/4) node {?};
\draw (7/4,-7/4) node {?};
\end{tikzpicture}\\ \hline 
\end{tabular}

\begin{proof}
(a) Since $S_{\geqslant 0}$ is an 
$M^{+}_{k-1}$-subset satisfying $(S_{\geqslant 0})_{\leqslant -1}=\emptyset$, the subfamily 
$\XXX'_{\geqslant 0}$ is 
an $S'_{\geqslant 0}$-family
by Theorem \ref{Mutation Lemma 1}(a). 
Since there exists a triangle equivalence 
$F:\langle X_{0}\rangle
\to\per\NN(S_{0})$ such that $F(X_{0,j})\simeq P(j)$ 
for any $j\in S_{0}$ by Lemma \ref{Row Lemma}, 
we have 
\[X'_{0,j}=\S_{\langle X_{0}\rangle}^{k}(X_{0,j}) \stackrel{\ref{CY lemma}}{=}
X_{0,j}[s],\]
and so we have that 
$\XXX'_{\leqslant 0}=(X_{i,j}[s])_{(i,j)\in S'_{\leqslant 0}}$ is 
an $S'_{\leqslant 0}$-family. 
Thus the assertion follows from 
Proposition \ref{Glu1}. \\
(b) This is the dual of (a). 
\end{proof}
By transposing, we obtain the following result.
\begin{thm}[Mutation 
$^{t}$\rm I\hspace{-.01em}I]\label{Mutation Lemma 2op}
Let $\DD$ be a triangulated category satisfying 
$(\ref{Good})$, 
and let $k$, $h$ be two positive integers such that $k\in (h+1)\Z$, and let $s=\frac{(h-1)k}
{h+1}$. 
Let $S$ be an $^{t}M^{+}_{k-1}$-subset of $\Z^{2}$ 
satisfying $|S^{0}|=h$ and $S^{i}=S^{i-1}$ for 
any $i\in[1,k-1]$, and let 
$S'=\rho_{\leqslant 0}\rho_{\leqslant 1}
\dots\rho_{\leqslant k-1}(S)$.  
\begin{enumerate}[\rm (a)]
\item If $\XXX=(X_{i,j})_{(i,j)\in S}$ is an $S$-family, then 
the family $\XXX'=(X'_{i,j})_{(i,j)\in S'}$ 
is an $S'$-family such that 
$\langle X_{S}\rangle=\langle X'_{S'}\rangle$ 
where 
\begin{equation*}
X'_{i,j}=
\begin{cases}
X_{i,j}[s] & j< 0,\\
\S_{\langle X^{j}\rangle}^{k-j}(X_{i,j}) 
& 0\leqslant j\leqslant k-1,\\
X_{i,j} & j>k-1. 
\end{cases}
\end{equation*}
In particular, there exists a triangle equivalence 
$\per \LL(S)\to \per \LL(S')$. 
\item If $\YYY=(Y_{i,j})_{(i,j)\in S'}$ 
is an $S'$-family, 
then the family 
$\YYY'=(Y'_{i,j})_{(i,j)\in S}$
is an $S$-family such that 
$\langle Y_{S}\rangle=\langle Y'_{S'}\rangle$ 
where 
\begin{equation*}
Y'_{i,j}=
\begin{cases}
Y_{i,j}[-s] & j< 0,\\
\S_{\langle Y^{j}\rangle}^{-k+j}(Y_{i,j}) 
& 0\leqslant j\leqslant k-1,\\
Y_{i,j} & j>k-1. 
\end{cases}
\end{equation*}
In particular, there exists a triangle equivalence 
$\per \LL(S')\to \per \LL(S)$. 
\end{enumerate}
\end{thm}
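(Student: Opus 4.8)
The plan is to deduce this statement from Theorem \ref{Mutation Lemma 2} by transposing coordinates, exactly in the way Theorem \ref{Mutation Lemma 1op} was deduced from Theorem \ref{Mutation Lemma 1}. Recall two facts already recorded: a family $\XXX=(X_{i,j})_{(i,j)\in S}$ is an $S$-family if and only if its transpose ${}^{t}\XXX$ is a ${}^{t}S$-family, and $S$ is a ${}^{t}M^{+}_{k-1}$-subset precisely when $T:={}^{t}S$ is an $M^{+}_{k-1}$-subset. Write $\tau(i,j)=(j,i)$ for the transposition of $\Z^{2}$, so that ${}^{t}S=\tau(S)$ and $(^{t}\XXX)_{i,j}=X_{j,i}$.

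First I would translate the hypotheses. Under $\tau$, the conditions $|S^{0}|=h$ and $S^{i}=S^{i-1}$ for $i\in[1,k-1]$ become $|T_{0}|=h$ and $T_{i}=T_{i-1}$ for $i\in[1,k-1]$, so $T$ satisfies all the hypotheses of Theorem \ref{Mutation Lemma 2}. The compatibility I need between the two families of permutations is the identity $\tau\circ\rho_{\leqslant m}=\sigma_{\leqslant m}\circ\tau$ on $\Z^{2}$, which one verifies directly by comparing the cases $j\leqslant m$ and $j>m$ in the definitions of $\rho_{\leqslant m}$ and $\sigma_{\leqslant m}$. Iterating this $k$ times gives
\[
{}^{t}S'=\tau\bigl(\rho_{\leqslant 0}\cdots\rho_{\leqslant k-1}(S)\bigr)
=\sigma_{\leqslant 0}\cdots\sigma_{\leqslant k-1}(T)=T',
\]
where $T'$ is the subset produced by Theorem \ref{Mutation Lemma 2} applied to $T$; thus $S'={}^{t}T'$.

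Now, given an $S$-family $\XXX$, the family ${}^{t}\XXX$ is a $T$-family, and Theorem \ref{Mutation Lemma 2}(a) produces a $T'$-family $({}^{t}\XXX)'$ whose objects are obtained by applying $\S_{\langle({}^{t}X)_{i}\rangle}^{k-i}$ for $0\leqslant i\leqslant k-1$, the shift $[s]$ for $i<0$, and the identity for $i>k-1$, and which satisfies $\langle({}^{t}X)_{S}\rangle=\langle({}^{t}X)'_{T'}\rangle$. Transposing this $T'$-family back through $\tau$ yields an $S'$-family; the essential observation is that the row subcategory $\langle({}^{t}X)_{i}\rangle$ of ${}^{t}\XXX$ is exactly the column subcategory $\langle X^{i}\rangle$ of $\XXX$, so under transposition $\S_{\langle({}^{t}X)_{i}\rangle}^{k-i}$ becomes $\S_{\langle X^{j}\rangle}^{k-j}$ and the case distinction passes from $i$ to $j$. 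This is precisely the formula stated for $\XXX'$, and the equality of thick closures $\langle X_{S}\rangle=\langle X'_{S'}\rangle$ follows from the corresponding equality for $T$. The same argument applied to Theorem \ref{Mutation Lemma 2}(b) gives part (b). The only point requiring care is this last bookkeeping step, confirming that the row Serre functor of the transposed family matches the column Serre functor of the original and that the shift is unaffected; but since the defining conditions (L1), (L2), (S1)--(S3) are manifestly symmetric in the two coordinates, no genuine obstacle arises, and all the mathematical content sits in Theorem \ref{Mutation Lemma 2}.
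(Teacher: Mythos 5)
Your proposal is correct and matches the paper's approach: the paper derives Theorem \ref{Mutation Lemma 2op} from Theorem \ref{Mutation Lemma 2} simply "by transposing," and your argument fills in exactly that formal transposition (the identity $\tau\circ\rho_{\leqslant m}=\sigma_{\leqslant m}\circ\tau$, the translation of hypotheses, and the matching of column Serre functors of $\XXX$ with row Serre functors of ${}^{t}\XXX$). No discrepancy to report.
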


\subsection{Applications of mutations of 
$S$-families}
We are ready to prove one of our main results 
by applying Theorem \ref{Mutation Lemma 1}
and Theorem \ref{Mutation Lemma 2op}
as follows where 
\[t'=t-1,\ \ u'=u-s,\ \ 
Y=Y(s,t,u),\ \  
Y'=Y(s,t',u').\] 
\begin{tabular}{|cccc|}\hline
\multicolumn{4}{|c|}
{\begin{tikzpicture}[auto]
\node (z) at (0,0){};
\node (a) at (0.8,0){$Y$};
\node (b) at (4.5,0){$S$};
\node (c) at (8.2,0){$S'$};
\node (d) at (11.9,0)
{${}^{t}Y'$};
\draw[->] (a) to node
{\text{Mutation \rm I}} (b);
\draw[->] (b) to node
{\text{Mutation \rm  ${}^{t}$I\hspace{-.01em}I}} (c);
\draw[->] (c) to node
{\text{Mutation \rm I}} (d);
\end{tikzpicture}
} \\ \hline
\begin{tikzpicture}[scale=1/12]
\draw[ultra thin] 
(0,0)--(0,-9)--(9,-9)--(9,0)--(0,0);
\draw[ultra thin] 
(9,-8)--(25,-8)--(25,0)--(9,0);
\draw[ultra thin] 
(10,0)--(10,-8);

\draw[thin, |-|] (-2,0)--(-2,-9);
\draw (-3,-9/2) node[auto=left] {\tiny$s$};

\draw[thin, |-|] (27,0)--(27,-8);
\draw (31,-8/2) node[auto=right] {\tiny$s-1$};

\draw[thin, |-|] (0,2)--(9,2);
\draw (9/2,4) node[auto=left] {\tiny$t-u$};

\draw[thin, -|] (9,2)--(10,2);
\draw (19/2,4) node[auto=left] {\tiny$1$};
\draw[thin, -|] (10,2)--(25,2);
\draw (17,4) node[auto=left] {\tiny$u-1$};
\end{tikzpicture} 
 & 
\begin{tikzpicture}[scale=1/12]
\draw[ultra thin] 
(0,0)--(0,-1)--(1,-1)--(1,-2)--(2,-2)--(2,-3)
--(3,-3);
\draw[very thin, densely dotted] 
(3,-3)--(5,-5);
\draw[ultra thin] 
(5,-5)--(5,-6)--(6,-6)--(6,-7)--(7,-7)
--(7,-8)--(8,-8)--(8,0)--(0,0);

\draw[ultra thin] 
(8,0)--(8,-9)--(17,-9)--(17,0)--(8,0);

\draw[ultra thin] 
(17,-8)--(18,-8)--(18,0)--(17,0)--(17,-8);

\draw[ultra thin] 
(18,-8)--(26,-8)--(26,0)--(18,0)--(18,-8);

\draw[ultra thin] 
(26,-8)--(33,-8)--(33,-7)--(32,-7)--(32,-6)
--(31,-6)--(31,-5)--(30,-5);
\draw[very thin, densely dotted] 
(30,-5)--(29,-4);
\draw[ultra thin] 
(29,-4)--(29,-3)--(28,-3)--(28,-2)
--(27,-2)--(27,-1)--(26,-1)--(26,-8);

\draw[thin, |-|] (-2,0)--(-2,-9);
\draw (-3,-9/2) node[auto=left] {\tiny$s$};
\draw[very thin, densely dotted] 
(0,-9)--(8,-9);
\draw[very thin, densely dotted] 
(0,-1)--(0,-9);

\draw[thin, |-|] (35,0)--(35,-8);
\draw (39,-8/2) node[auto=right] {\tiny$s-1$};
\draw[very thin, densely dotted] 
(26,0)--(33,0);

\draw[thin, |-|] (0,2)--(8,2);
\draw (8/2,4) node {\tiny$s-1$};

\draw[thin, -|] (8,2)--(17,2);
\draw (25/2,4) node {\tiny$t-u$};

\draw[thin, -|] (17,2)--(18,2);
\draw (35/2,4) node[auto=left] {\tiny$1$};

\draw[thin, -|] (18,2)--(26,2);
\draw (22,4) node[auto=left] {\tiny$u-s$};

\draw[thin, -|] (26,2)--(33,2);
\draw (59/2,4) node[auto=left] {\tiny$s-2$};
\draw[very thin, densely dotted] 
(33,0)--(33,-7);
\end{tikzpicture}
& 
\begin{tikzpicture}[scale=1/12]
\draw[ultra thin] 
(8,0)--(0,0)--(0,-1)--(1,-1)--(1,-2)--(2,-2)--(2,-3)
--(3,-3);
\draw[very thin, densely dotted] 
(3,-3)--(5,-5);
\draw[ultra thin] 
(5,-5)--(5,-6)--(6,-6)--(6,-7)--(7,-7)
--(7,-8)--(8,-8);

\draw[ultra thin] 
(8,-8)--(8,-9)--(9,-9)--(9,-10)--(10,-10)
--(10,-11)--(11,-11)--(11,-12)--(12,-12)
--(12,-13)--(13,-13)--(13,-14)--(14,-14)
--(14,-15)--(15,-15)--(15,-16)--(16,-16)
--(16,-17)--(17,-17)--(17,-8);
\draw[ultra thin] 
(8,0)--(9,0)--(9,-1)--(10,-1)
--(10,-2)--(11,-2)--(11,-3)--(12,-3)
--(12,-4)--(13,-4)--(13,-5)--(14,-5)
--(14,-6)--(15,-6)--(15,-7)--(16,-7)
--(16,-8)--(17,-8);

\draw[ultra thin] 
(17,-17)--(18,-17)--(18,-9)--(17,-9)--(17,-17);

\draw[ultra thin] 
(18,-18)--(19,-18)--(19,-19)--(20,-19)--(20,-20)
--(21,-20);
\draw[very thin, densely dotted] 
(21,-20)--(23,-22);
\draw[ultra thin]
(23,-22)--(23,-23)--(24,-23)--(24,-24)
--(25,-24)--(25,-25)--(26,-25);

\draw[ultra thin] 
(18,-18)--(18,-10)--(19,-10)--(19,-11)
--(20,-11)--(20,-12)
--(21,-12)--(21,-13);

\draw[ultra thin]
(22,-14)--(23,-14)--(23,-15)--(24,-15)
--(24,-16)--(25,-16)--(25,-17)--(26,-17)
--(26,-18);
\draw[very thin, densely dotted] 
(21,-13)--(22,-14);

\draw[ultra thin] 
(26,-25)--(33,-25)--(33,-24)--(32,-24)--(32,-23)
--(31,-23)--(31,-22)--(30,-22);
\draw[very thin, densely dotted] 
(30,-22)--(28,-20);
\draw[ultra thin]
(28,-20)--(28,-19)
--(27,-19)--(27,-18)--(26,-18);

\draw[thin, |-|] (35,0)--(35,-17);
\draw (40,-17/2) node[auto=right] 
{\tiny$t'-u'$};

\draw[very thin, densely dotted] 
(9,0)--(35,0);
\draw[very thin, densely dotted] 
(26,-17)--(34,-17);

\draw[ultra thin]
(18,-17)--(26,-17);

\draw[thin, -|] (35,-17)--(35,-25);
\draw (37,-21) node[auto=right] {\tiny$u'$};

\draw[thin, |-|] (0,2)--(9,2);
\draw (9/2,4) node {\tiny$s$};
\draw[thin, |-|] (25,-27)--(33,-27);
\draw (29,-29) node {\tiny$s-1$};
\end{tikzpicture}

& 
\begin{tikzpicture}[scale=1/12]
\draw[ultra thin] 
(0,0)--(0,-25)--(8,-25)--(8,-17)--(9,-17)
--(9,0)--(0,0);
\draw[thin, |-|] (11,0)--(11,-17);
\draw (16,-17/2) node[auto=right] 
{\tiny $t'-u'$};

\draw[thin, -|] (11,-17)--(11,-25);
\draw (13,-42/2) node[auto=right] {\tiny$u'$};
\draw[ultra thin]
(0,-17)--(8,-17);

\draw[thin, |-|] (0,2)--(9,2);
\draw (9/2,4) node {\tiny$s$};
\draw[thin, |-|] (0,-27)--(8,-27);
\draw (4,-29) node {\tiny$s-1$};
\end{tikzpicture}
  \\
\hline\end{tabular}

\begin{thm}\label{Main 1}
Let $s$, $t$, $u$ be 
three positive integers such that $1\leqslant u\leqslant t$. 
Suppose that one of the following 
conditions is satisfied. 
\begin{enumerate}[\rm (a)]
\item $u\in \mathbb{Z}s$ 
and $t-u\in \mathbb{Z}(s+1)$. 
\item $s=2$ and 
$t-u\in 3\mathbb{Z}$. 
\end{enumerate}
Then there exist triangle equivalences 
\[\per\LL(s,t,u)\to\per \LL(S)\to\per \LL(S')
\to\per\LL(s,t-1,u-s)\]
where 
\[S=\sigma_{\leqslant 1}
\sigma_{\leqslant 2}\dots\sigma_{\leqslant s-1}
(\Y(s,t,u)),\]
\[S'=(\rho_{\leqslant 1}
\rho_{\leqslant 2}\dots
\rho_{\leqslant t-u})
(\rho^{-1}_{\geqslant t-s+1}
\rho^{-1}_{\geqslant t-s}
\dots
\rho^{-1}_{\geqslant t-u+2})(S).\]
\end{thm}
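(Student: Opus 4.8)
The plan is to read the three arrows $\Y(s,t,u)\to S\to S'\to{}^{t}\Y(s,t-1,u-s)$ of the diagram preceding the statement as a chain of mutations of $S$-families, each of which produces a triangle equivalence between the corresponding algebras via the ``In particular'' clauses of the mutation theorems, and then to finish with the transpose identity $\LL({}^{t}T)\cong\LL(T)$. By Theorem~\ref{Triviality} it suffices to start from the canonical full $\Y(s,t,u)$-family $(P_{\LLL}(i,j))$ of Example~\ref{Trivial family} in $\per\LL(s,t,u)=\per\LL(\Y(s,t,u))$ and transform it, step by step, into a full ${}^{t}\Y(s,t-1,u-s)$-family. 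Translating vertically so that the rows of $\Y(s,t,u)$ are indexed by $[0,s-1]$ (which changes neither $\LL$ nor $\per\LL$), rows $0,\dots,s-2$ all equal $[1,t]$ and row $s-1$ equals $[1,t-u]$; I will use that the hypotheses force $u\geqslant s$, so that $u-s\geqslant0$ and the target is defined.

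\emph{First arrow.} After the translation the composite $\sigma_{\leqslant1}\cdots\sigma_{\leqslant s-1}$ becomes $\sigma_{\leqslant0}\cdots\sigma_{\leqslant s-2}$, and I would realize it by $s-1$ successive applications of Mutation~I (Theorem~\ref{Mutation Lemma 1}), in the order $\sigma_{\leqslant s-2},\sigma_{\leqslant s-3},\dots,\sigma_{\leqslant0}$. No divisibility is needed here: the translated $\Y(s,t,u)$ is an $M^{+}_{s-2}$-subset with $S_{\leqslant-1}=\emptyset$ (the coincident top rows and the containment $[1,t-u]\subset[1,t]$ supply the defining conditions), and each application of $\sigma_{\leqslant k}$ shifts the coincident top block one step to the left while leaving the rows below untouched, so the result is again an $M^{+}_{k-1}$-subset to which the next factor applies. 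Iterating peels off one full row into a left staircase at each stage and produces the staircase-plus-blocks subset $S$ of the second cell, together with an equivalence $\per\LL(s,t,u)\to\per\LL(S)$.

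\emph{Second arrow.} This is where hypotheses (a) and (b) are consumed. In $S$ the columns $[1,t-u]$ form a block of full height $s$, to the right of which (past one pivot column) lies a block of height $s-1$ and width $u-s$ followed by a right staircase. The outer factors $\rho_{\leqslant1}\cdots\rho_{\leqslant t-u}$ apply Mutation~${}^{t}\mathrm{II}$ (Theorem~\ref{Mutation Lemma 2op}) to the full-height block, with $k=t-u$ equal columns of height $h=s$; this is licensed exactly by $k\in(h+1)\Z$, i.e.\ $(s+1)\mid(t-u)$, which holds in both (a) and (b). The inner factors $\rho^{-1}_{\geqslant t-s+1}\cdots\rho^{-1}_{\geqslant t-u+2}$ treat the secondary block from the right, by the inverse mutations (parts (b)) of Theorems~\ref{Mutation Lemma 2op} and \ref{Mutation Lemma 1op}. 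When $s\geqslant3$ this block has height $s-1\geqslant2$, so it must be moved by Mutation~${}^{t}\mathrm{II}$ with $k=u-s$ and $h=s-1$, which needs $s\mid(u-s)$, equivalently $s\mid u$: this is case (a). When $s=2$ the block has height $1$, each of its columns is a single exceptional object, and it can be moved by repeated single mutations (Mutation~${}^{t}\mathrm{I}$, Theorem~\ref{Mutation Lemma 1op}), for which no divisibility is required: this is case (b), where only $3\mid(t-u)$ is assumed. After verifying the ${}^{t}M^{+}$-conditions on the intermediate subsets, this yields an equivalence $\per\LL(S)\to\per\LL(S')$.

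\emph{Third arrow and conclusion.} The subset $S'$ is once more a staircase-plus-block configuration, now to be straightened along its rows; I would apply Mutation~I (Theorem~\ref{Mutation Lemma 1}) repeatedly, exactly as in the first arrow and again without divisibility, to reach ${}^{t}\Y(s,t-1,u-s)$ and an equivalence $\per\LL(S')\to\per\LL({}^{t}\Y(s,t-1,u-s))$. Since transposing interchanges the two $\NNN$-factors, $\LL({}^{t}\Y(s,t-1,u-s))\cong\LL(\Y(s,t-1,u-s))=\LL(s,t-1,u-s)$, so $\per\LL({}^{t}\Y(s,t-1,u-s))=\per\LL(s,t-1,u-s)$ and the chain is complete. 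The main obstacle is the bookkeeping of the second arrow: one must track the intervals $S_i$ and the transposed intervals $S^{j}$ through the long $\sigma$- and $\rho$-compositions, verify the $M^{+}_k$ and ${}^{t}M^{+}_k$ hypotheses at every intermediate subset, and in particular justify processing the secondary block from the right (the $\rho^{-1}_{\geqslant}$ factors) as a mirror image of the left-hand situation covered by the stated theorems. The divisibility conditions (a), (b) are used precisely and only to license Mutation~${}^{t}\mathrm{II}$ on the two rectangular blocks, and the split $s\geqslant3$ versus $s=2$ is exactly what separates (a) from (b).
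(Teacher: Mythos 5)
Your proposal follows essentially the same route as the paper: the first and third arrows are realized by iterated applications of Mutation I (Theorem \ref{Mutation Lemma 1}) with no divisibility hypotheses, the second arrow consumes conditions (a)/(b) via Mutation ${}^{t}$II (Theorem \ref{Mutation Lemma 2op}) applied to the full-height block (with $k=t-u$, $h=s$) and to the secondary block viewed as a mirror image (the paper phrases this by checking that $-S$ is a ${}^{t}M^{+}$-subset), with the $s=2$ case handled by single mutations via Theorem \ref{Mutation Lemma 1op}, and the chain closes with $\LL({}^{t}T)\cong\LL(T)$. This matches the paper's proof in both structure and in where each hypothesis is used.
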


\begin{proof}
(a) 
There exists a triangle equivalence 
$\per \LL(s,t,u)\to \per \LL(S)$ by 
Theorem \ref{Mutation Lemma 1}.

Let $I=[1,t-u]$, $J=[t-u+2,t-s+1]$. 
Since $S^{I}\equiv 
\Y(s;t-u)$ and $S^{t-u+1}\subset S^{t-u}$, 
we have that $S$ is a 
${}^{t}M^{+}_{t-u-1}(1,1)$-subset 
satisfying $t-u\in \Z (s+1)$. 
Since $S^{J}\equiv 
\Y(s-1;u-s)$ and 
$S^{t-u+2}=S^{t-u+1}$, we have that 
$-S$ is a 
${}^{t}M^{+}_{u-s-1}(-1,-t+s-1)$-subset satisfying $u-s\in \Z s$. 
Thus there exists a triangle equivalence 
$\per \LL(S)\to \per \LL(S')$ by 
Theorem \ref{Mutation Lemma 2op}. 
 
Since 
\[{}^{t}\Y(s,t-1,u-s)
\equiv(\sigma^{-1}_{\leqslant s-t+u}
\sigma^{-1}_{\leqslant s-t+u+1}\dots
\sigma^{-2}_{\leqslant s-1})
(\sigma_{\geqslant u}
\sigma_{\geqslant u-1}
\dots\sigma_{\geqslant s+1})(S'),\]
there exists a triangle equivalence 
$\per \LL(S')\to \per \LL(s,t-1,u-s)$ by 
Theorem \ref{Mutation Lemma 1}. \\
(b) There exists a triangle equivalence 
$\per \LL(2,t,u)\to \per \LL(S)$ by 
Theorem \ref{Mutation Lemma 1}. 

Let $I=[1,t-u]$, $J=[t-u+2,t-1]$. 
Since $S^{I}\equiv 
\Y(2;t-u)$ and $S^{t-u+1}\subset S^{t-u}$, 
we have that $S$ is a 
${}^{t}M^{+}_{t-u-1}(1,1)$-subset 
satisfying $t-u\in 3\Z$. 
Since $S^{J}\equiv 
\Y(1;u-s)$, we have that 
$-S$ is a ${}^{t}M^{+}_{u-3}(-1,-t+1)$-subset. 
Thus there exists a triangle equivalence 
$\per \LL(S)\to \per \LL(S')$ by 
Theorem \ref{Mutation Lemma 1op} 
and Theorem \ref{Mutation Lemma 2op}. 

Since 
\[{}^{t}\Y(2,t-1,u-2)
\equiv
(\sigma^{-1}_{\leqslant -t+u+2}
\sigma^{-1}_{\leqslant -t+u+3}\dots
\sigma^{-2}_{\leqslant 1})
(\sigma_{\geqslant u}
\sigma_{\geqslant u-1}
\dots\sigma_{\geqslant 3})(S'),\]
there exists a triangle equivalence 
$\per \LL(S')\to \per \LL(2,t-1,u-2)$ by 
Theorem \ref{Mutation Lemma 1}. 
\end{proof} 
By Theorem \ref{Nak}, we have the following result. 
\begin{cor}\label{Main 2}
Let $p,q$ be two integers such that 
$p\geqslant 2$, $q\geqslant 1$. 
Suppose that one of the following 
conditions is satisfied. 
\begin{enumerate}[\rm (a)]
\item $r\in\Z_{\geqslant 0}$.
\item $p=2$ and 
$r\in \frac{1}{2}\Z_{\geqslant 0}$. 
\end{enumerate}
Then there exists a triangle equivalence 
\[\per\AA(n,\ell+1)
\to \per\AA(n,\ell) 
\text{ where 
$n=p(p+1)q+p(p-1)r$, $\ell=(p+1)q+pr$}.\]
\end{cor}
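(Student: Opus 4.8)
The plan is to deduce the statement from Theorem~\ref{Main 1}, which gives a triangle equivalence $\per\LL(s,t,u)\to\per\LL(s,t-1,u-s)$, together with Theorem~\ref{Nak}, which realizes these idempotent algebras as Nakayama algebras via $\per\AA(st-u,t+1)\simeq\per\LL(s,t,u)$. Thus it suffices to choose $(s,t,u)$ so that the two ends of the Theorem~\ref{Main 1} equivalence translate, under Theorem~\ref{Nak}, to the two Nakayama algebras in the corollary.

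First I would substitute. For condition~(a) with $r\geqslant 1$, set $(s,t,u)=(p,(p+1)q+pr,pr)$: then $u=pr\in\Z p=\Z s$ and $t-u=(p+1)q\in\Z(p+1)=\Z(s+1)$, while $1\leqslant u\leqslant t$ is clear, so Theorem~\ref{Main 1}(a) applies. Translating both ends by Theorem~\ref{Nak} gives the source $\per\AA(st-u,t+1)$ and the target $\per\AA(s(t-1)-(u-s),t)=\per\AA(st-u,t)$, and the identities $st-u=p(p+1)q+p(p-1)r=n$ and $t=(p+1)q+pr=\ell$ turn these into $\per\AA(n,\ell+1)\to\per\AA(n,\ell)$. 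Condition~(b) is parallel with $s=2$, $(s,t,u)=(2,3q+2r,2r)$: here $t-u=3q\in 3\Z$ and $u=2r\in\Z$ whenever $r\in\frac12\Z$, Theorem~\ref{Main 1}(b) applies once $r\geqslant 1$, and the same translation gives the equivalence with $n=6q+2r$, $\ell=3q+2r$. Along the way I would check the side conditions $0\leqslant u\leqslant t-1$, $0\leqslant u-s\leqslant t-2$ and $st>t+1$ that license Theorem~\ref{Nak} at both ends; these are immediate from the standing hypotheses $p\geqslant 2$, $q\geqslant 1$, $r\geqslant 1$.

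The main obstacle is that this substitution needs $u\geqslant s$ for the target $\LL(s,t-1,u-s)$ to be defined, i.e. $r\geqslant 1$, whereas the corollary also asserts the boundary values $r=0$ (both cases) and $r=\frac12$ (case~(b), where $u=1<s=2$); these I would handle separately. For $r=0$ one has $u=0$, so I would use the relation $\LL(p,(p+1)q,0)=\LL(p+1,(p+1)q,(p+1)q)$ from the introduction to rewrite the source with $(s',t',u')=(p+1,(p+1)q,(p+1)q)$, which satisfies $u'\in\Z(p+1)$ and $t'-u'=0\in\Z(p+2)$ with $1\leqslant u'\leqslant t'$; Theorem~\ref{Main 1}(a) sends it to $\per\LL(p+1,(p+1)q-1,(p+1)(q-1))$, which Theorem~\ref{Nak} identifies with $\per\AA(p(p+1)q,(p+1)q)=\per\AA(n,\ell)$. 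For $r=\frac12$ in case~(b) I would instead combine the transpose symmetry $\LL(\vp;\vq)\cong\LL(\vq;\vp)$ with Theorem~\ref{Main3}: the source $\per\AA(6q+1,3q+2)\simeq\per\LL(2,3q+1,1)$ is isomorphic by transposition to $\per\LL(3q+1,2,1)=\per\LL((3q)+1,2,2-1)$, to which Theorem~\ref{Main3} applies and produces $\per\LL(3,3q,3q-1)$, which Theorem~\ref{Nak} identifies with the target $\per\AA(6q+1,3q+1)$. Combining the generic substitution with these two boundary arguments gives the asserted triangle equivalence in every case.
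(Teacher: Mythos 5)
Your proposal follows the paper's own route for the generic case: the paper's proof is exactly the one--line argument ``set $(s,t,u)=(p,(p+1)q+pr,pr)$, identify both ends via Theorem \ref{Nak}, and apply Theorem \ref{Main 1}''. Where you go beyond the paper is in the boundary cases, and this is a genuine improvement rather than a detour: for $r=0$ one has $u=0$, which violates the hypothesis $1\leqslant u$ of Theorem \ref{Main 1}, and for $p=2$, $r=\tfrac12$ one has $u-s=-1$, so the target $\LL(s,t-1,u-s)$ is not even defined; the paper's proof silently skips both. Your patches are correct: for $r=0$ the identity $\LL(p,(p+1)q,0)=\LL(p+1,(p+1)q,(p+1)q)$ puts you back in the scope of Theorem \ref{Main 1}(a) (with $t'-u'=0\in\Z(p+2)$), and the arithmetic $\per\LL(p+1,(p+1)q-1,(p+1)(q-1))\simeq\per\AA(p(p+1)q,(p+1)q)$ checks out; for $r=\tfrac12$ the transposition ${}^{t}\Y(2,3q+1,1)=\Y(3q+1,2,1)$ followed by Theorem \ref{Main3} (applicable since $3q\geqslant 2$) and Theorem \ref{Nak} lands on $\per\AA(6q+1,3q+1)$ as required. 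The side conditions $pq>q+1$ and $0\leqslant r\leqslant q-1$ for each invocation of Theorem \ref{Nak} hold as you indicate. So your argument is a correct and in fact more complete version of the paper's proof.
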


\begin{proof}
Let $(s,t,u)=(p,(p+1)q+pr,pr)$. 
By Theorem \ref{Nak}, there exist 
triangle equivalences 
\[\per N(n,\ell+1)\to \per \LL(s,t,u),\ \ 
\per N(n,\ell)\to \per \LL(s,t-1,u-s).\]
Thus the assertion follows from 
Theorem \ref{Main 1}. 
\end{proof}

\begin{figure}
\begin{tabular}{|c|c|c|c|c|}\hline
$(s,t,u)$ & $\Y(s,t,u)$ & $S$ & $S'$ 
& ${}^{t}\Y(s,t-1,u-s)$\\ 
\hline
\begin{tikzpicture}
\draw (0,7/8) node {(2,8,5)};
\draw (0,0) node {};
\end{tikzpicture}
 & 
\begin{tikzpicture}
\draw (0,1/4) node {};

\foreach \z in {0,1,2,3} 
\draw[ultra thin] (\z/4,0)--(\z/4,-2/4);
\foreach \z in {4,5,6,7,8} 
\draw[ultra thin] (\z/4,0)--(\z/4,-1/4);

\foreach \w in 
{0,1}
\draw[ultra thin] (0,-\w/4)--(8/4,-\w/4); 
\draw[ultra thin] (0,-2/4)--(3/4,-2/4); \end{tikzpicture} 
& 
\begin{tikzpicture}
\draw (0,1/4) node {};

\draw[ultra thin] (0,0)--(0,-1/4);
\foreach \z in {1,2,3,4} 
\draw[ultra thin] (\z/4,0)--(\z/4,-2/4);
\foreach \z in {5,6,7,8} 
\draw[ultra thin] (\z/4,0)--(\z/4,-1/4);

\foreach \w in {0,1}
\draw[ultra thin] (0,-\w/4)--(8/4,-\w/4); 
\draw[ultra thin] (1/4,-2/4)--(4/4,-2/4); \end{tikzpicture} 
& 
\begin{tikzpicture}
\draw (0,1/4) node {};

\draw[ultra thin] (0,0)--(2/4,0); 
\foreach \z in {0,1} 
\draw[ultra thin] 
(\z/4,-\z/4-1/4)--(\z/4+3/4,-\z/4-1/4);
\draw[ultra thin] 
(2/4,-3/4)--(5/4,-3/4);
\draw[ultra thin] 
(3/4,-4/4)--(6/4,-4/4); 

\draw[ultra thin] 
(0,0)--(0,-1/4);
\draw[ultra thin] 
(1/4,0)--(1/4,-2/4);
\draw[ultra thin] 
(2/4,0)--(2/4,-3/4);
\draw[ultra thin] 
(3/4,-1/4)--(3/4,-4/4);
\draw[ultra thin] 
(4/4,-2/4)--(4/4,-4/4);
\draw[ultra thin] 
(5/4,-3/4)--(5/4,-4/4);

\draw[ultra thin] 
(5/4,-4/4)--(5/4,-5/4);
\draw[ultra thin] 
(6/4,-4/4)--(6/4,-6/4);
\draw[ultra thin] 
(7/4,-5/4)--(7/4,-7/4);
\draw[ultra thin] 
(8/4,-6/4)--(8/4,-7/4);
 
\draw[ultra thin] 
(5/4,-5/4)--(7/4,-5/4);
\draw[ultra thin] 
(6/4,-6/4)--(8/4,-6/4);
\draw[ultra thin] 
(7/4,-7/4)--(8/4,-7/4);

\end{tikzpicture}
 & 
\begin{tikzpicture}
\draw (0,1/4) node {};

\foreach \z in {0,1} 
\draw[ultra thin] (\z/4,0)--(\z/4,-7/4);
\draw[ultra thin] (2/4,0)--(2/4,-4/4);

\foreach \z in {0,1,2,3,4} 
\draw[ultra thin] (0,-\z/4)--(2/4,-\z/4);
\foreach \z in {5,6,7} 
\draw[ultra thin] (0,-\z/4)--(1/4,-\z/4);
\end{tikzpicture}\\ 

\hline
\begin{tikzpicture}
\draw (0,4/4) node {(3,10,6)};
\draw (0,0) node {};
\end{tikzpicture}
& 
\begin{tikzpicture}
\foreach \z in {0,1,2,3,4} 
\draw[ultra thin] (\z/4,0)--(\z/4,-3/4);
\foreach \z in {5,6,7,8,9,10} 
\draw[ultra thin] (\z/4,0)--(\z/4,-2/4);

\foreach \w in 
{0,1,2}
\draw[ultra thin] (0,-\w/4)--(10/4,-\w/4); 

\draw[ultra thin] (0,-3/4)--(4/4,-3/4); \end{tikzpicture}
& 
\begin{tikzpicture}
\foreach \z in {0,1,2} 
\draw[ultra thin] (\z/4,0)--(\z/4,-\z/4-1/4);

\foreach \z in {3,4,5,6} 
\draw[ultra thin] (\z/4,0)--(\z/4,-3/4); 
\foreach \z in {7,8,9,10} 
\draw[ultra thin] (\z/4,0)--(\z/4,-2/4);
\draw[ultra thin] (11/4,-1/4)--(11/4,-2/4);

\draw[ultra thin] (0,0)
--(10/4,0);
\foreach \w in 
{1,2}
\draw[ultra thin] (\w/4-1/4,-\w/4)
--(11/4,-\w/4); 

\draw[ultra thin] (2/4,-3/4)
--(6/4,-3/4);
\end{tikzpicture}
& 
\begin{tikzpicture}
\draw (0,1/4) node {};

\foreach \z in {0,1,2,3} 
\draw[ultra thin] (\z/4,0)--(\z/4,-\z/4-1/4);

\foreach \z in {4,5} 
\draw[ultra thin] (\z/4,-\z/4+3/4)
--(\z/4,-\z/4-1/4);

\foreach \z in {6,7,8,9} 
\draw[ultra thin] (\z/4,-\z/4+3/4)
--(\z/4,-\z/4); 

\foreach \z in {10,11} 
\draw[ultra thin] (\z/4,-\z/4+3/4)
--(\z/4,-9/4);

\draw[ultra thin] (0,0)--(3/4,0); 
\foreach \w in 
{1,2,3,4,5,6}
\draw[ultra thin] (\w/4-1/4,-\w/4)
--(\w/4+3/4,-\w/4); 

\draw[ultra thin] (7/4,-7/4)--(10/4,-7/4); 
\foreach \w in 
{8,9}
\draw[ultra thin] (\w/4,-\w/4)
--(11/4,-\w/4); 

\end{tikzpicture} 
& 
\begin{tikzpicture}
\draw (0,1/4) node {};

\foreach \z in {0,1,2} 
\draw[ultra thin] (\z/4,0)--(\z/4,-9/4);
\draw[ultra thin] (3/4,0)--(3/4,-6/4);

\foreach \w in 
{0,1,2,3,4,5,6}
\draw[ultra thin] (0,-\w/4)--(3/4,-\w/4); 

\foreach \w in 
{7,8,9}
\draw[ultra thin] (0,-\w/4)--(2/4,-\w/4); 
\end{tikzpicture}\\ 
\hline
\begin{tikzpicture}
\draw (0,4/4) node {(4,9,4)};
\draw (0,0) node {};
\end{tikzpicture}
& 
\begin{tikzpicture}
\foreach \z in {0,1,2,3,4,5} 
\draw[ultra thin] (\z/4,0)--(\z/4,-4/4);
\foreach \z in {6,7,8,9} 
\draw[ultra thin] (\z/4,0)--(\z/4,-3/4);

\foreach \w in 
{0,1,2,3}
\draw[ultra thin] (0,-\w/4)--(9/4,-\w/4); 

\draw[ultra thin] (0,-4/4)--(5/4,-4/4); \end{tikzpicture}
& 
\begin{tikzpicture}
\foreach \z in {0,1,2} 
\draw[ultra thin] (\z/4,0)--(\z/4,-\z/4-1/4);

\foreach \z in {3,4,5,6,7,8} 
\draw[ultra thin] (\z/4,0)--(\z/4,-4/4); 

\foreach \z in {9,10,11} 
\draw[ultra thin] (\z/4,-\z/4+9/4)--(\z/4,-3/4);

\draw[ultra thin] (0,0)--(9/4,0); 

\foreach \w in 
{1,2}
\draw[ultra thin] (\w/4-1/4,-\w/4)
--(\w/4+9/4,-\w/4); 

\draw[ultra thin] (2/4,-3/4)
--(11/4,-3/4);
\draw[ultra thin] (3/4,-4/4)
--(8/4,-4/4);
\end{tikzpicture}
&
\begin{tikzpicture}
\draw (0,1/4) node {};

\foreach \z in {0,1,2,3} 
\draw[ultra thin] (\z/4,0)--(\z/4,-\z/4-1/4);

\foreach \z in {4,5,6,7} 
\draw[ultra thin] (\z/4,-\z/4+4/4)
--(\z/4,-\z/4-1/4); 

\foreach \z in {8,9,10,11} 
\draw[ultra thin] (\z/4,-\z/4+4/4)
--(\z/4,-8/4);

\draw[ultra thin] (0,0)--(4/4,0); 
\foreach \w in 
{1,2,3,4,5,6,7}
\draw[ultra thin] (\w/4-1/4,-\w/4)
--(\w/4+4/4,-\w/4); 

\draw[ultra thin] (7/4,-8/4)--(11/4,-8/4); 
\end{tikzpicture}
&
\begin{tikzpicture}
\draw (0,1/4) node {};
\foreach \z in {0,1,2,3,4} 
\draw[ultra thin] (\z/4,0)--(\z/4,-8/4);

\foreach \w in 
{0,1,2,3,4,5,6,7,8}
\draw[ultra thin] (0,-\w/4)--(4/4,-\w/4); 
\end{tikzpicture}\\ 
\hline
\end{tabular}
\caption{Mutations from $\Y(s,t,u)$ to 
${}^{t}\Y(s,t-1,u-s)$}
\end{figure}

The following result is proved 
by applying Theorem \ref{Mutation Lemma 1} 
and Theorem \ref{Mutation Lemma 1op} as follows where 
\[Y=\Y(p+1,q,q-1),\ \ Y'=\Y(q+1,p,p-1).\] 
\begin{tabular}{|ccc|}\hline
\multicolumn{3}{|c|}
{\begin{tikzpicture}[auto]
\node (z) at (0,0){};
\node (a) at (1.7,0){$Y$};
\node (b) at (4.9,0){$S$};
\node (c) at (8.1,0){${}^{t}Y'$};
\draw[->] (a) to node
{\text{Mutation \rm I}} (b);
\draw[->] (b) to node
{\text{Mutation \rm  ${}^{t}$I}} (c);
\end{tikzpicture}
} \\ \hline
\begin{tikzpicture}[scale=1/3]
\draw[ultra thin] (0,0)--(5,0)--(5,-4)--(0,-4)
--(0,0); 
\draw[ultra thin] (0,-4)--(1,-4)--(1,-5)--(0,-5)--(0,-4);

\draw[thin, |-|] (-1,0)--(-1,-5);
\draw (-3,-5/2) node[auto=left] {$p+1$};

\draw[thin, |-|] (0,1)--(5,1);
\draw (5/2,2) node[auto=left] {$q$};
\end{tikzpicture}
 & 
\begin{tikzpicture}[scale=1/3]
\draw[ultra thin] (0,0)--(5,0)--(5,-4)--(0,-4)
--(0,0); 
\draw[ultra thin] (5,-4)--(6,-4)--(6,-5)--(5,-5)--(5,-4);

\draw[thin, |-|] (-1,0)--(-1,-4);
\draw (-2,-4/2) node[auto=left] {$p$};

\draw[thin, |-|] (0,1)--(5,1);
\draw (5/2,2) node[auto=left] {$q$};
\end{tikzpicture} 
& 
\begin{tikzpicture}[scale=1/3]
\draw[ultra thin] (0,0)--(5,0)--(5,-4)--(0,-4)
--(0,0); 
\draw[ultra thin] (5,0)--(6,0)
--(6,-1)--(5,-1)--(5,0);
\draw[thin, |-|] (-1,0)--(-1,-4);
\draw (-2,-4/2) node[auto=left] {$p$};

\draw[thin, |-|] (0,1)--(6,1);
\draw (6/2,2) node[auto=left] {$q+1$};
\end{tikzpicture} \\ \hline
\end{tabular}

\begin{thm}\label{Main3}
Let $p$, $q$ be two integers such that 
$p\geqslant 2$, $q\geqslant 2$. 
Then there exists a full 
$\Y(q+1,p,p-1)$-family in 
$\per \LL(p+1,q,q-1)$. 
In particular, 
there exists a triangle equivalence 
\[\per\LL(p+1,q,q-1)
\to \per\LL(q+1,p,p-1).\]
\end{thm}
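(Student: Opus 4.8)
The plan is to realise the required full $\Y(q+1,p,p-1)$-family inside $\DD:=\per\LL(p+1,q,q-1)$ by a short sequence of mutations of $S$-families, starting from the tautological one. Unwinding the definition of Section~4.2, $\Y(p+1,q,q-1)=\Y(p,1;1,q-1)$ is the rectangle $[1,p]\times[1,q]$ together with the single extra box $(p+1,1)$ hanging below its bottom-left corner, while ${}^{t}\Y(q+1,p,p-1)$ is the same rectangle together with the single box $(1,q+1)$ attached to the right of its top-right corner. By Example~\ref{Trivial family} (together with the identification $\langle P_{\LLL}(i,j)\mid (i,j)\in S\rangle\simeq\per\LL(S)$), the family of indecomposable projectives is a full $\Y(p+1,q,q-1)$-family in $\DD$; this is the starting family $\XXX$.

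First I would move the extra box from the bottom-left to the bottom-right corner by iterating Mutation~I. After translating so that the rectangle occupies rows $0,\dots,p-1$ and the box lies in row $p$, the shape is an $M^{+}_{p-1}$-subset with empty negative rows, so Theorem~\ref{Mutation Lemma 1} applies and replaces the current family by a full family indexed by the image under $\sigma_{\leqslant p-1}$; this permutation shifts the rectangle one step to the left while fixing the row containing the box. I would verify that the hypotheses of Theorem~\ref{Mutation Lemma 1} persist after each shift (the box remaining compatible with the shrinking overlap $S_{p}\subset S_{p-1}$), so that the step can be repeated exactly $q$ times. After these $q$ applications the box has effectively travelled past the right edge, and one obtains a full $S$-family with $S=[1,p]\times[1,q]\cup\{(p+1,q+1)\}$, the intermediate shape in the diagram preceding the statement.

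Next I would dualise. Transposing, $S$ becomes (after a suitable translation and negation, exactly as in the proof of Theorem~\ref{Main 1}) a ${}^{t}M^{+}_{k}$-subset, so that iterating Mutation~${}^{t}$I (Theorem~\ref{Mutation Lemma 1op}) walks the extra box up the column $q+1$ by way of $\rho_{\leqslant k}$, keeping the rectangle fixed, until it arrives at row $1$. This yields a full ${}^{t}\Y(q+1,p,p-1)$-family in $\DD$. Applying the transpose correspondence between $S$-families and ${}^{t}S$-families (so that $\YYY$ is an $S$-family iff ${}^{t}\YYY$ is a ${}^{t}S$-family), the re-indexed family is a full $\Y(q+1,p,p-1)$-family in $\DD$, and Theorem~\ref{Triviality} (equivalently the ``only if'' direction of Theorem~\ref{Lpq}) then produces a triangle equivalence $F\colon\DD\to\per\LL(\Y(q+1,p,p-1))=\per\LL(q+1,p,p-1)$ sending this family to the indecomposable projectives, which is the assertion.

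The main obstacle I anticipate is the combinatorial bookkeeping in the two iterations. At each single application of Mutation~I (respectively Mutation~${}^{t}$I) one must confirm that the resulting shape still satisfies the defining conditions of an $M^{+}_{k}$-subset (respectively a ${}^{t}M^{+}_{k}$-subset) with the correct emptiness hypothesis, so that the next application is legitimate. Concretely, one has to track how the lone off-corner box interacts with the moving rectangle and check that the condition $S_{k+1}\subset S_{k}$ (respectively its transpose) holds at every intermediate stage and fails precisely when the box reaches its target position, so that each iteration terminates exactly at $S$ and then at ${}^{t}\Y(q+1,p,p-1)$ rather than overshooting.
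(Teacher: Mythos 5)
Your proposal is correct and follows essentially the same route as the paper: starting from the tautological family of Example \ref{Trivial family}, apply Mutation I $q$ times to carry the extra box from $(p+1,1)$ to the position diagonally below the bottom-right corner (the paper's $S=\sigma_{\leqslant p}^{q}(\Y(p+1,q,q-1))$), then apply Mutation ${}^{t}$I $p$ times to reach ${}^{t}\Y(q+1,p,p-1)\equiv\rho_{\leqslant 0}^{-p}(S)$, transpose, and invoke Theorem \ref{Triviality}. The only caveat is that the second stage uses the \emph{inverse} direction, i.e.\ part (b) of Theorem \ref{Mutation Lemma 1op} (the paper writes $\rho_{\leqslant 0}^{-p}$), which your "suitable translation and negation" hedge already covers.
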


\begin{proof}
Let $S$ be a subset of $\Z^{2}$ such that 
$S=\sigma_{\leqslant p}^{q}(Y(p+1,q,q-1))$. 
Since 
\[{}^{t}Y(q+1,p,p-1)\equiv \rho_{\leqslant 0}^{-p}(S),\]
the assertion follows from Theorem 
\ref{Mutation Lemma 1} and Theorem 
\ref{Mutation Lemma 1op}.
\end{proof}

\begin{cor}\label{Main 4}
There exists a triangle equivalence 
\[\text{$\per\AA(pq+1,q+1)
\to \per\AA(pq+1,p+1)$\ \ 
for any integers $p\geqslant 2$, 
$q\geqslant 2$.}\]
\end{cor}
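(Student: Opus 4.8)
The plan is to deduce this statement from Theorem \ref{Main3} by transporting the equivalence found there along the identifications of Nakayama algebras with the algebras $\LL(p,q,r)$ furnished by Theorem \ref{Nak}. Concretely, I would realize each of $\per\AA(pq+1,q+1)$ and $\per\AA(pq+1,p+1)$ as the perfect derived category of one of the two algebras appearing in Theorem \ref{Main3}, and then compose the resulting triangle equivalences.

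First I would apply Theorem \ref{Nak} with the parameter triple $(p',q',r')=(p+1,q,q-1)$. Here $p'q'-r'=(p+1)q-(q-1)=pq+1$ and $q'+1=q+1$, so Theorem \ref{Nak} gives a triangle equivalence $\per\AA(pq+1,q+1)\to\per\LL(p+1,q,q-1)$; its hypotheses $p'q'>q'+1$ and $0\leqslant r'\leqslant q'-1$ hold since $(p+1)q>q+1$ and $0\leqslant q-1\leqslant q-1$. Symmetrically, applying Theorem \ref{Nak} with $(p',q',r')=(q+1,p,p-1)$ --- for which $p'q'-r'=(q+1)p-(p-1)=pq+1$ and $q'+1=p+1$ --- yields a triangle equivalence $\per\AA(pq+1,p+1)\to\per\LL(q+1,p,p-1)$, the hypotheses being valid because $(q+1)p>p+1$ and $0\leqslant p-1\leqslant p-1$.

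Finally I would invoke Theorem \ref{Main3}, which supplies a triangle equivalence $\per\LL(p+1,q,q-1)\to\per\LL(q+1,p,p-1)$ for all $p\geqslant 2$, $q\geqslant 2$, and compose it with the two equivalences above (inverting the second). The resulting composite $\per\AA(pq+1,q+1)\to\per\AA(pq+1,p+1)$ is the desired equivalence. There is essentially no obstacle here beyond bookkeeping: all of the representation-theoretic content is already packaged in Theorems \ref{Nak} and \ref{Main3}, and the only point to verify is the elementary arithmetic that makes the parameters of the two $\LL$-algebras match the hypotheses of those two theorems, which is exactly what the chosen triples $(p+1,q,q-1)$ and $(q+1,p,p-1)$ accomplish.
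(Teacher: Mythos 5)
Your proposal is correct and coincides with the paper's own proof: Theorem \ref{Nak} applied with the triples $(p+1,q,q-1)$ and $(q+1,p,p-1)$ gives the two equivalences $\per\AA(pq+1,q+1)\to\per\LL(p+1,q,q-1)$ and $\per\AA(pq+1,p+1)\to\per\LL(q+1,p,p-1)$, and composing with Theorem \ref{Main3} finishes the argument. The parameter checks you carry out are exactly the ones implicit in the paper.
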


\begin{proof} 
By Theorem \ref{Nak}, there exist 
triangle equivalences 
\[\per N(pq+1,q+1)\to \per \LL(p+1,q,q-1),\ \ 
\per N(pq+1,p+1)\to \per \LL(q+1,p,p-1).\]
Thus the assertion follows from 
Theorem \ref{Main3}. 
\end{proof}

\end{document}